\numberwithin{equation}{section}
\theoremstyle{plain}
\newtheorem{theorem}[subsubsection]{Theorem}
\newtheorem{lemma}[subsubsection]{Lemma}
\newtheorem{prop}[subsubsection]{Proposition}
\newtheorem{cor}[subsubsection]{Corollary}
\theoremstyle{definition}
\newtheorem{defn}[subsubsection]{Definition}
\newtheorem{remark}[subsubsection]{Remark}
\def\CC{\mathbb{C}}
\def\NN{\mathbb{N}}
\def\RR{\mathbb{R}}
\def\ZZ{\mathbb{Z}}
\newcommand\cA{\mathcal{A}}
\newcommand\cB{\mathcal{B}}
\newcommand\cC{\mathcal{C}}
\newcommand\cE{\mathcal{E}}
\newcommand\cF{\mathcal{F}}
\newcommand\cG{\mathcal{G}}
\newcommand\cH{\mathcal{H}}
\newcommand\cM{\mathcal{M}}
\newcommand\cO{\mathcal{O}}
\newcommand\cP{\mathcal{P}}
\newcommand\cR{\mathcal{R}}
\newcommand\cU{\mathcal{U}}
\newcommand\cV{\mathcal{V}}
\newcommand\cW{\mathcal{W}}
\newcommand\cX{\mathcal{X}}
\newcommand\cY{\mathcal{Y}}
\newcommand\cZ{\mathcal{Z}}
\newcommand\frh{\mathfrak{h}}
\newcommand\tilW{\widetilde{W}}
\newcommand\id{\textup{id}}
\newcommand{\Ind}{\textup{Ind}}
\newcommand\Rep{\textup{Rep}}
\newcommand\Hom{\textup{Hom}}
\renewcommand\sl{\mathfrak{sl}}
\newcommand{\btimes}{\boxtimes}
\newcommand\quash[1]{}
\newcommand\one{\mathbf{1}}
\renewcommand\a\alpha
\renewcommand\b\beta
\newcommand\g\gamma
\renewcommand\d\delta
\newcommand\D\Delta
\renewcommand{\l}{\lambda}
\newcommand{\om}{\omega}
\begin{document}
    \bibliographystyle{alpha}

\title{On ribbon categories for singlet vertex algebras}
\author{Thomas Creutzig, Robert McRae and Jinwei Yang}
\date{}

\maketitle

\begin{abstract}
We construct two non-semisimple braided ribbon tensor categories of modules for each singlet vertex operator algebra $\mathcal{M}(p)$, $p\geq 2$. The first category consists of all finite-length $\mathcal{M}(p)$-modules with atypical composition factors, while the second is the subcategory of modules that induce to local modules for the triplet vertex operator algebra $\mathcal{W}(p)$. We show that every irreducible module has a projective cover in the second of these categories, although not in the first, and we compute all fusion products involving atypical irreducible modules and their projective covers.
\end{abstract}

\tableofcontents

\section{Introduction}

The singlet vertex algebras $\cM(p)$, $p \in \mathbb Z_{\geq 2}$, first appeared in the physics literature in the early 1990s \cite{Ka}. Together with their simple current extensions, the triplet algebras $\cW(p)$, they are the first examples of vertex algebras associated to logarithmic conformal field theories. While the triplet algebras have been thoroughly studied \cite{AM1, AM2, CF, FHST, FGST1, FGST2, NT, TW}, especially the monoidal structure on their module categories \cite{TW}, the representation theory of the singlet algebras is not yet completely understood, although see \cite{A, AM_log_intw, AM5, CM1, CMR, CGR}. The triplet algebras are $C_2$-cofinite \cite{AM1, CF} and hence have only finitely many inequivalent simple modules; moreover every (grading-restricted generalized) $\cW(p)$-module has finite length and the full category of $\cW(p)$-modules is a braided tensor category \cite{H5}. The singlet algebras, on the other hand, are not $C_2$-cofinite. They have uncountably many inequivalent simple modules, and indecomposable modules do not necessarily have finite length. These properties make understanding the representation theory of the singlet algebras a rather challenging problem.
This problem is not only interesting in its own right: representation categories of singlet algebras have important connections and applications to higher-dimensional supersymmetric gauge theories, subregular $W$-algebras at admissible levels, and $3$-manifold invariants. We now describe our main results and then comment on the implications of our work to these applications.

\subsection{Main results}

In this paper, we study two locally-finite categories of grading-restricted generalized $\cM(p)$-modules. To define these categories, let $\cO_p$ denote the category of $C_1$-cofinite grading-restricted generalized modules for the Virasoro vertex operator algebra $L(c_p,0)$ at central charge $c_p=13-6p-6p^{-1}$. The category $\cO_p$ and its direct limit completion $\Ind(\cO_p)$ are  braided tensor categories  \cite{CJORY, CMY}. Then the singlet algebra $\cM(p)$ is a commutative algebra in the braided tensor category $\Ind(\cO_p)$ \cite{HKL}, and we have an associated braided tensor category $\Rep^0\,\cM(p)$ of generalized $\cM(p)$-modules which are objects of $\Ind(\cO_p)$ when viewed as $L(c_p,0)$-modules \cite{KO,CKM1}. The irreducible $\cM(p)$-modules in $\Rep^0\,\cM(p)$ are precisely the atypical irreducibles $\cM_{r,s}$, indexed by $r\in\ZZ$, $1\leq s\leq p$.

We can then view the triplet algebra $\cW(p)$ as a commutative algebra in $\Rep^0\,\cM(p)$, and we have an associated tensor category $\Rep\,\cW(p)$ of not-necessarily-local $\cW(p)$-modules which are objects of $\Rep^0\,\cM(p)$ when viewed as $\cM(p)$-modules. As shown in \cite{KO,CKM1}, there is an induction tensor functor $\cF_{\cW(p)}: \Rep^0\,\cM(p)\rightarrow\Rep\,\cW(p)$. Now we can define our two locally-finite categories of $\cM(p)$-modules:
\begin{itemize}
 \item The category $\cC_{\cM(p)}$ is the full subcategory of finite-length grading-restricted generalized $\cM(p)$-modules whose composition factors come from the $\cM_{r,s}$ for $r\in\ZZ$, $1\leq s\leq p$.

 \item The category $\cC_{\cM(p)}^0$ is the full subcategory of generalized $\cM(p)$-modules in $\Rep^0\,\cM(p)$ that induce to (local) grading-restricted generalized $\cW(p)$-modules.
\end{itemize}
Our main results are summarized in the following theorem:
\begin{theorem}${}$
 \begin{itemize}
  \item[(1)] The category $\cC_{\cM(p)}^0$ is a proper subcategory of $\cC_{\cM(p)}$ with the same irreducible objects as $\cC_{\cM(p)}$.

  \item[(2)] The categories $\cC_{\cM(p)}$ and $\cC_{\cM(p)}^0$ are braided ribbon tensor categories, with the vertex algebraic braided tensor category structure of \cite{HLZ1}-\cite{HLZ8} and with duals given by the contragredient modules of \cite{FHL}.

  \item[(3)] Every irreducible module $\cM_{r,s}$ has a projective cover $\cP_{r,s}$ in $\cC_{\cM(p)}^0$.

  \item[(4)] The fusion rules of the irreducible modules in $\cC_{\cM(p)}$ and $\cC_{\cM(p)}^0$ are as follows:
  \begin{equation*}
   \cM_{r,s}\btimes \cM_{r',s'}   = \bigg(\bigoplus_{\substack{\ell = |s-s'|+1 \\ \ell+s+s' \equiv 1\; (\mathrm{mod}\; 2)}}^{{\rm min}\{s+s'-1, 2p-1-s-s'\}}\cM_{r+r'-1, \ell}\bigg) \oplus \bigg(\bigoplus_{\substack{\ell = 2p+1-s-s'\\ \ell+s+s' \equiv 1\; (\mathrm{mod}\; 2)}}^{p}\cP_{r+r'-1, \ell}\bigg)
  \end{equation*}
for $r,r'\in\ZZ$, $1\leq s,s'\leq p$, with sums taken to be empty if the lower bound exceeds the upper bound.
 \end{itemize}
\end{theorem}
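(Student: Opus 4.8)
The strategy is to reduce almost everything to the category $\Rep^0\,\cW(p)$ of local (i.e.\ grading-restricted generalized) $\cW(p)$-modules, which --- since $\cW(p)$ is $C_2$-cofinite --- is a finite braided ribbon tensor category whose simple objects, fusion rules, and indecomposable projectives are already known (by \cite{H5} and \cite{TW}), and to transport this information along the induction functor $\cF_{\cW(p)}\colon\Rep^0\,\cM(p)\to\Rep\,\cW(p)$ and its right adjoint, the restriction functor. Since $\cW(p)$ is an infinite-cyclic simple-current extension of $\cM(p)$ --- the simple currents being the $\cM_{r,1}$, $r\in\ZZ$, which form a group isomorphic to $\ZZ$ under fusion --- one should think of $\cC^0_{\cM(p)}$ as a $\ZZ$-graded refinement of $\Rep^0\,\cW(p)$, with the extra index $r$ pinned down by $L_0$-eigenvalues.

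\emph{Part (2).} I would first check $\cC^0_{\cM(p)}\subseteq\cC_{\cM(p)}\subseteq\Rep^0\,\cM(p)$: each $\cM_{r,s}$ lies in $\Rep^0\,\cM(p)$, and $\Ind(\cO_p)$ is closed under extensions as a category of Virasoro modules, so every finite-length $\cM(p)$-module with atypical composition factors lies in $\Rep^0\,\cM(p)$; the inclusion $\cC^0_{\cM(p)}\subseteq\cC_{\cM(p)}$ uses that each $\cF_{\cW(p)}(\cM_{r,s})$ is grading-restricted. Closure of $\cC^0_{\cM(p)}$ under $\btimes$ and contragredients then follows from the tensor-functor identity $\cF_{\cW(p)}(M_1\btimes M_2)\cong\cF_{\cW(p)}(M_1)\btimes\cF_{\cW(p)}(M_2)$, from $\cF_{\cW(p)}(M')\cong\cF_{\cW(p)}(M)'$ (self-duality of $\cW(p)$ as an $\cM(p)$-module), and from closure of $\Rep^0\,\cW(p)$ under these operations; closure of $\cC_{\cM(p)}$ reduces, using right-exactness of $\btimes$ and composition series, to finite-lengthness with atypical factors of $\cM_{r,s}\btimes\cM_{r',s'}$, i.e.\ to part (4). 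The twist is supplied by $e^{2\pi i L_0}$ as in \cite{HLZ1}--\cite{HLZ8}, so the only substantial point is rigidity: the $\cM_{r,1}$ are invertible and hence rigid, and I would establish rigidity of the remaining generating atypicals $\cM_{1,s}$, $2\leq s\leq p$ --- the technical heart --- by importing rigidity from $\Rep^0\,\cW(p)$ (or from the Virasoro category $\cO_p$) through the induction/restriction functors, supplemented by explicit intertwining-operator computations.

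\emph{Part (4).} Using $\cM_{r,1}\btimes\cM_{r',s'}\cong\cM_{r+r'-1,s'}$ for the simple currents, it suffices to compute $\cM_{1,s}\btimes\cM_{1,s'}$ for $1\leq s,s'\leq p$. Applying $\cF_{\cW(p)}$ turns this into a fusion product in $\Rep^0\,\cW(p)$, whose decomposition into simples and projectives is given by the Tsuchiya--Wood rules; the $\mathfrak{sl}_2$-truncation pattern --- a semisimple part with $\ell$ up to $\min\{s+s'-1,2p-1-s-s'\}$ and a ``reflected'' part of projectives for $2p+1-s-s'\leq\ell\leq p$, both with the parity constraint --- is exactly the $\cW(p)$ pattern. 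Since the fibers of $\cF_{\cW(p)}$ over a given simple (resp.\ its projective cover) form a torsor under the simple-current group, de-inducing and matching conformal weights forces the first index to be $r+r'-1$; completeness of the list of summands is then checked using that $\cF_{\cW(p)}$ is exact and faithful on $\cC^0_{\cM(p)}$ together with a character or direct Virasoro/Fock-space computation ruling out extra indecomposables. This argument produces, as a byproduct, the objects needed for part (3).

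\emph{Parts (3) and (1).} By $C_2$-cofiniteness, $\Rep^0\,\cW(p)$ has a projective cover $\cP^{\cW}_s$ of each simple object (cf.\ \cite{TW}); using the known $\cM(p)$-module structure of $\cP^{\cW}_s$, I would define $\cP_{r,s}$ as the indecomposable direct summand of the restriction of $\cP^{\cW}_s$ singled out by its conformal weight (equivalently $\cP_{r,s}:=\cM_{r,1}\btimes\cP_{1,s}$), so that $\cF_{\cW(p)}(\cP_{r,s})\cong\cP^{\cW}_s$. Projectivity of $\cP_{r,s}$ in $\cC^0_{\cM(p)}$ follows from the adjunction $\Hom_{\cC^0_{\cM(p)}}(\cP_{r,s},-)\cong\Hom_{\Rep^0\,\cW(p)}(\cP^{\cW}_s,\cF_{\cW(p)}(-))$ together with exactness of $\cF_{\cW(p)}$; since $\cP_{r,s}$ is indecomposable and surjects onto $\cM_{r,s}$ with kernel contained in its radical, it is a projective cover. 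For part (1), $\cC^0_{\cM(p)}\subseteq\cC_{\cM(p)}$ and the coincidence of simple objects are then immediate, and properness amounts to exhibiting a non-split logarithmic extension of some $\cM_{r,s}$ inside $\cC_{\cM(p)}$ whose induction to $\cW(p)$ is not local; equivalently, $\Ext^1_{\cC_{\cM(p)}}(\cM_{r,s},-)$ strictly dominates its $\cC^0_{\cM(p)}$ analogue, which simultaneously shows $\cP_{r,s}$ is not projective in $\cC_{\cM(p)}$. The main obstacle throughout is the rigidity input in part (2): since $\cM(p)$ is not $C_2$-cofinite, no abstract finiteness theorem applies, and essentially everything else is bookkeeping along $\cF_{\cW(p)}$ over the known structure of $\Rep^0\,\cW(p)$.
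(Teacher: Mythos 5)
Your overall architecture (work inside $\Rep^0\,\cM(p)$, exploit the simple currents $\cM_{r,1}$ and the exact monoidal induction $\cF_{\cW(p)}$, transport fusion and projectivity data from $\cC_{\cW(p)}$) is indeed the paper's framework, but the proposal has gaps precisely at the two technical hearts. First, rigidity: you cannot ``import'' rigidity of $\cM_{1,2}$ from $\cW_{1,2}$ through $\cF_{\cW(p)}$. A tensor functor sends rigid objects to rigid objects, but rigidity does not descend along a non-full tensor functor, and no adjunction produces (co)evaluations for $\cM_{1,2}$ from those of $\cW_{1,2}$. The paper must prove rigidity of $\cM_{1,2}$ from scratch, by analyzing compositions of intertwining operators through the BPZ second-order ODE and hypergeometric connection formulas (with a separate logarithmic argument at $p=2$ that also uses projectivity of $\cP_{1,1}$ in $\cC^0_{\cM(p)}$), exactly as \cite{TW} did on the triplet side. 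Moreover, the ribbon statement in part (2) requires rigidity of \emph{every} finite-length object, not just the simple generators; this is a genuinely separate step, carried out in the paper by Theorem \ref{thm:rigid_simple_to_fin_len} (showing $\Phi_{W_1,W_2}:W_2'\boxtimes W_1'\to(W_1\boxtimes W_2)'$ is an isomorphism by induction on length), and it cannot be replaced by the lemma of \cite{KL5} used in \cite{TW}, since $\cC_{\cM(p)}$ has no nonzero projectives. Your proposal omits this passage from simples to finite length entirely.

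Second, projective covers and the fusion index. Defining $\cP_{r,s}$ as a summand of the restriction of a $\cW(p)$-projective cover presupposes the construction and $\cM(p)$-module structure of all $\cR_{r,s}$, which for $s\leq p-2$ were only outlined in \cite{NT}; the paper deliberately uses only $\cR_{r,p-1}$ (the explicit lattice construction of \cite{AM2}) and $V_{\alpha_{r,p}+L}$, and then builds $\cP_{r,s}$ for smaller $s$ recursively as direct summands of $\cM_{1,2}\boxtimes\cP_{r,s+1}$ inside $\cC^0_{\cM(p)}$, verifying the Loewy diagrams by Hom computations that use rigidity of $\cM_{1,2}$. Also, your adjunction $\Hom_{\cC^0_{\cM(p)}}(\cP_{r,s},-)\cong\Hom(\cP^{\cW}_s,\cF_{\cW(p)}(-))$ is not Frobenius reciprocity (which has restriction, not induction, in the second slot); the correct mechanism is Lemma \ref{lem:ind_proj}, proved by lifting along induction and projecting to the $n=0$ simple-current component. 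Finally, in part (4), ``de-inducing and matching conformal weights'' does not pin down the first index: after inducing, the atypical composition factors are determined only up to the action of the even simple currents, and the paper needs an explicit construction (a surjection $\cM_{1,2}\boxtimes\cF_{\alpha_{r-1,p-s}}\twoheadrightarrow\cM_{r,s-1}$ descending to a nonzero map on $\cM_{1,2}\boxtimes\cM_{r,s}$) to exclude $\cM_{r+2n,s-1}$ with $n\neq 0$; similarly the $s=p$ case is not a transcription of \cite{TW} but uses projectivity of $\cP_{r,p-1}$ and $\cM_{r,p}$ in $\cC^0_{\cM(p)}$ together with indecomposability of $\cR_{\bar r,p-1}$.
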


For the fusion products of the projective covers $\cP_{r,s}$ with the irreducible modules and with each other, see Theorem \ref{generalfusionrules} below. Note that although the modules $\cP_{r,s}$ are projective in $\cC_{\cM(p)}^0$, they are not projective in the larger tensor category $\cC_{\cM(p)}$; this is one reason for introducing the smaller category $\cC_{\cM(p)}^0$.

The proof of our main theorem uses the existence of braided tensor category structure on the direct limit completion $\Ind(\cO_p)$ \cite{CJORY, CMY} as well as
three important results on the triplet $W$-algebras. First, we use the construction of certain logarithmic modules
$\mathcal R_{r, p-1}$ for $r=1, 2$ \cite{AM2}; it turns out that they contain as $\cM(p)$-module summands the projective covers $\cP_{r,p-1}$ for $r\in\ZZ$.
Secondly, \cite[Theorem 5.9]{NT} says that the $\mathcal R_{r, p-1}$ are projective $\cW(p)$-modules; we use this to show that the $\cM(p)$-modules $\cP_{r,p-1}$ are projective in $\cC_{\cM(p)}^0$. Finally, we use the fusion rules calculated in \cite{TW} for the simple $\cW(p)$-module $\mathcal W_{1, 2}$ with the remaining simple $\cW(p)$-modules; these fusion rules are needed for computing fusion products of $\cM(p)$-modules, for proving rigidity, and for constructing the remaining projective covers in $\cC_{\cM(p)}^0$. Note that our main theorem does not require the additional logarithmic projective covers of simple $\cW(p)$-modules discussed in \cite{NT}.
Also, the results we need from \cite{AM2, NT, TW} have recently been given independent proofs in \cite[Section 7]{MY}, using the structure of the Virasoro tensor category $\cO_p$, results on the category of $\cW(p)$-modules proved in \cite{AM1}, and an induction functor from $\cO_p$ to modules for $\cW(p)$.


To prove that our tensor categories of $\cM(p)$-modules are rigid, we first prove rigidity for the simple modules $\cM_{r,s}$. This part of the argument is essentially the same as the rigidity proof for $\cW(p)$ in \cite{TW}: we use BPZ equations to show that $\cM_{1,2}$ is rigid and then use fusion rules to get rigidity for the remaining simple modules. Then we prove rigidity for arbitrary finite-length modules using the following general theorem (Theorem \ref{thm:rigid_simple_to_fin_len} in the main text), which we expect to have many future applications to non-semisimple module categories for vertex operator algebras. Indeed, as we mention in Remark \ref{rem:O1_rigid} below, this theorem, combined with results from \cite{McR1, CJORY}, implies rigidity for the category $\cO_1$ of $C_1$-cofinite Virasoro modules at central charge $1$:
\begin{theorem}
 Assume that $V$ is a self-contragredient vertex operator algebra and $\cC$ is a category of grading-restricted generalized $V$-modules such that:
 \begin{itemize}
 \item[(1)] The category $\cC$ is closed under submodules, quotients, and contragredients, and every module in $\cC$ has finite length.

  \item[(2)] The category $\cC$ has braided tensor category structure as in \cite{HLZ8}.

  \item[(3)] Every simple module in $\cC$ is rigid.
 \end{itemize}
Then $\cC$ is a rigid tensor category.
\end{theorem}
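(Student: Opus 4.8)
The plan is to reduce rigidity of an arbitrary finite-length module $M \in \cC$ to rigidity of simple modules, by inducting on the length of $M$. The key tool is that in a braided tensor category with duals available (here given by the contragredient functor, which is an anti-equivalence on $\cC$ since $\cC$ is closed under contragredients and $V$ is self-contragredient), the candidate dual of $M$ is $M' := M^\vee$, and the question is whether the natural evaluation and coevaluation candidates satisfy the zig-zag (triangle) identities. A standard reformulation: $M$ is rigid if and only if the natural map $M \btimes N \to \Hom$-type object is an isomorphism for enough $N$; more usefully, I would use the criterion that rigidity of $M$ is equivalent to exactness of the functor $M \btimes (-)$ together with the fact that a candidate coevaluation $\one \to M \btimes M'$ admits a one-sided inverse. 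The cleanest route is via the following two observations: (i) the full subcategory of rigid objects in a braided tensor category is closed under direct summands, tensor products, and duals; (ii) if $0 \to A \to B \to C \to 0$ is exact with $A$ and $C$ rigid, then $B$ need not be rigid in general — so the induction cannot be naive and must exploit self-contragredience.

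First I would set up the contragredient functor $M \mapsto M'$ on $\cC$ and recall (from \cite{FHL} and the compatibility with \cite{HLZ8} tensor structure, cf. \cite{CKM1}) that it is a contravariant tensor-structure-preserving equivalence, so that $(M \btimes N)' \cong M' \btimes N'$ naturally, and $\one' \cong \one$ since $V$ is self-contragredient. Then I would observe that for any $M \in \cC$ there are natural morphisms $e_M : M' \btimes M \to \one$ (from the canonical pairing) and I would need to produce $i_M : \one \to M \btimes M'$ and check the two triangle identities.

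The heart of the argument: I would induct on $\mathrm{length}(M)$. The base case $\mathrm{length} = 1$ is hypothesis (3). For the inductive step, pick a simple submodule $S \hookrightarrow M$ with quotient $Q$, so $0 \to S \to M \to Q \to 0$ with $S$ simple (hence rigid) and $Q$ of smaller length (hence rigid by induction). Now the point is that rigidity of $S$ and $Q$ makes the functors $S \btimes (-)$ and $Q \btimes (-)$ exact, hence $M \btimes (-)$ is exact (a sequence of functors, the outer two exact forces the middle exact — here using that $\btimes$ is right exact always and the snake lemma). Exactness of $M \btimes (-)$ is equivalent (this is a known criterion, e.g. via \cite{CMY} or the general nonsemisimple tensor category literature) to $M$ having a dual object, provided the category has enough structure; more precisely, I would invoke the fact that in a braided tensor category where every object has finite length, an object $M$ is rigid if and only if $M \btimes (-)$ is exact and $M$ is "self-paired" in the sense that the contragredient $M'$ serves as a two-sided dual. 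The self-contragredient hypothesis on $V$ guarantees that the natural pairing $M' \btimes M \to \one$ is nondegenerate enough; combined with exactness, the zig-zag maps become isomorphisms because they are nonzero endomorphisms of $M$ (resp.\ $M'$) that one checks are the identity by evaluating on composition factors, using rigidity of the simple constituents and naturality.

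The main obstacle I anticipate is proving the exactness-implies-rigidity step cleanly at the level of generality stated, since exactness of $M \btimes (-)$ alone does not force rigidity in an arbitrary tensor category — one genuinely needs the contragredient to supply the dual. So the crux will be: given that $M \btimes (-)$ is exact, construct the coevaluation $i_M : \one \to M \btimes M'$ and verify the triangle identities. I expect this to be handled by a dévissage argument: the triangle identity $(\id_M \btimes\, e_M)\circ(i_M \btimes \id_M) = \id_M$ is an endomorphism of $M$; using exactness of $M \btimes(-)$ one reduces checking it to the composition factors, where it follows from rigidity of the simples together with the fact that $e$ and $i$ are built naturally from the canonical contragredient pairings (which restrict compatibly to sub/quotient modules). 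A secondary subtlety is ensuring $i_M$ exists at all — here I would construct it dually, as the contragredient/transpose of $e_{M'}$ under the identifications $(M' \btimes M)' \cong M' \btimes M''  \cong M' \btimes M$... wait, one must be careful that $M'' \cong M$ naturally, which holds because modules in $\cC$ are grading-restricted and of finite length (double contragredient is the identity on such modules, by \cite{FHL}). With $M'' \cong M$ in hand, $i_M := e_{M'}^{\,\prime}$ composed with the identifications gives the coevaluation, and the two triangle identities become contragredient to each other, so it suffices to check one — again reduced to composition factors via exactness. This completes the induction and hence the proof.
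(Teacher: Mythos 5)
There is a genuine gap, and it sits exactly at the point you flag as "the crux." Your construction of the coevaluation rests on the assertion that the contragredient functor is monoidal, i.e.\ that there is a natural isomorphism $(M\boxtimes N)'\cong M'\boxtimes N'$. In this setting that is not a known input: it is essentially equivalent to the statement being proved. What exists a priori is only a natural \emph{morphism} $\Phi_{W_1,W_2}\colon W_2'\boxtimes W_1'\rightarrow (W_1\boxtimes W_2)'$ (induced by the universal property of the contragredient via the evaluations $e_W\colon W'\boxtimes W\rightarrow V$), and it is an isomorphism when $W_1,W_2$ are rigid -- so invoking it for a general finite-length $M$ in order to build $i_M$ is circular. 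The paper's proof is organized around exactly this object: it proves (i) $\Phi$ is natural, (ii) if the single map $\Phi_{W,W'}$ is an isomorphism then $W$ is rigid, with the coevaluation defined as $(\delta_W^{-1}\boxtimes\id_{W'})\circ\Phi_{W,W'}^{-1}\circ\widetilde{e}_W'\circ\varphi$ and the zig-zag identities verified by a long but purely categorical computation using the braiding, the twist, and the diagrammatic characterizations of $f'$ and $\delta_W$; and then (iii) an induction on $\ell(W_1)+\ell(W_2)$ showing $\Phi_{W_1,W_2}$ is always an isomorphism, using only right-exactness of $\boxtimes$, exactness of the contragredient functor, naturality of $\Phi$, and the five lemma. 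Note that this induction never needs exactness of $M\boxtimes(-)$.

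Your two auxiliary steps also do not go through as stated. First, the claim that $S\boxtimes(-)$ and $Q\boxtimes(-)$ exact forces $M\boxtimes(-)$ exact by a snake-lemma chase fails: chasing the two right-exact rows $S\boxtimes A\to M\boxtimes A\to Q\boxtimes A\to 0$ and $S\boxtimes B\to M\boxtimes B\to Q\boxtimes B\to 0$ requires injectivity of $S\boxtimes B\to M\boxtimes B$, which is a statement about $(-)\boxtimes B$, not about $S\boxtimes(-)$ or $Q\boxtimes(-)$; in module categories over rings this is rescued by vanishing of a first derived functor of the flat quotient, a tool not available here. Second, the final verification "the zig-zag composite is an endomorphism of $M$ which one checks is the identity on composition factors" is insufficient in a non-semisimple category: an endomorphism inducing the identity on all composition factors can be identity-plus-nilpotent, and the proposed reduction to composition factors is itself unjustified since subobjects and quotients of $M$ are not yet known to be rigid at that stage. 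So the missing ingredients are precisely the paper's Proposition asserting that invertibility of the one morphism $\Phi_{W,W'}$ already yields rigidity, and the length induction run at the level of $\Phi$ rather than at the level of exactness of tensoring or of the triangle identities directly.
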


We should mention that our category $\cC_{\cM(p)}$ of singlet modules is not the full category of finite-length $\cM(p)$-modules since it lacks most irreducible typical $\cM(p)$-modules (which are Fock modules for the rank-one Heisenberg extension of $\cM(p)$). Conjecturally, the category of finite-length $\cM(p)$-modules agrees with the $C_1$-cofinite module category (see for example \cite{CMR}), in which case this category would have braided tensor category structure via the methods of \cite{CJORY, CY}. In Proposition \ref{prop:C_embeds} below, we show that if the entire category of $C_1$-cofinite $\cM(p)$-modules indeed has braided tensor category structure, then $\cC_{\cM(p)}$ embeds as a tensor subcategory. In particular, the fusion rules of our main theorem do not depend on our choice of subcategory.

\subsection{Connections to topology, representation theory, and physics}

There is an intimate connection between topology, geometry, quantum groups, representation theory of vertex algebras, and physics. The singlet algebras  and their representation categories serve as an important modern example:

Motivated by three-dimensional $\mathcal N=2$ supersymmetric gauge theories, new and only conjectural invariants of $3$-manifolds, denoted by $\hat{Z}_a(q)$, were introduced in \cite{GPV}.  The $\hat{Z}_a(q)$ are $q$-series labeled by abelian flat connections, and they often turn out to be essentially mock modular forms or false theta functions. But the characters of the singlet modules $ \cM_{r,s}$ are of the false theta function type, and indeed it turns out that in some cases they coincide with these new conjectural $3$-manifold invariants \cite{CCFGH}.

In addition, physics associates a tensor category to any $3$-manifold. This tensor category should be an appropriate non-semisimple, non-finite generalization of a modular tensor category, presumably a locally-finite rigid braided tensor category that is non-degenerate in the sense that its M\"uger center is trivial.
Extended topological field theories in dimension $1+1+1$ and non-semisimple quantum invariants of closed $3$-manifolds are constructed from such non-semisimple tensor categories \cite{CGP1, D}, especially the categories of weight modules of unrolled quantum groups at odd roots of unity \cite{DGP}.
In the vertex algebra setting, already the simplest invariants are meaningful, that is, a general expectation is that open Hopf links are related to analytic properties of characters of modules for a vertex operator algebra \cite{CG}. This is useful as it provides a way to compute fusion rules:
for strongly-rational vertex operator algebras, this is the celebrated Verlinde formula \cite{V} proven by Huang \cite{H3, H4}. For the singlet algebra, the conjectural Verlinde formula involves regularized false theta functions \cite{CM}, and the normalized $S$-matrix coefficients coincide with open Hopf links of the unrolled restricted quantum group of $\mathfrak{sl}_2$ at a $2p$th root of unity \cite{CMR}. Our fusion rule computations confirm the Verlinde conjecture of \cite{CM} for the subcategory $\cC_{\cM(p)}$, and they coincide with tensor products of modules for the unrolled restricted quantum group of $\mathfrak{sl}_2$ at a $2p$th root of unity computed in \cite{CGP}. Conjecturally, this is not a coincidence:

Two major sources of braided tensor categories are modules for quantum groups and for vertex operator algebras. It is then natural to ask if there are equivalences of categories associated to quantum groups and vertex operator algebras, and this firmates under the name Kazhdan-Lusztig correspondence since they proved a braided equivalence of ordinary highest-weight modules of affine vertex algebras at generic level with corresponding quantum group modules \cite{KL1}--\cite{KL5}. The first representation theory statements concerning the triplet algebras $\cW(p)$ were made under the assumption that there is an equivalence with the category of weight modules for the restricted quantum group of $\mathfrak{sl}_2$ at a $2p$th root of unity \cite{FHST, FGST1, FGST2}, and an equivalence of abelian categories was stated to be true in \cite{NT}. A braided equivalence must fail, however, since the quantum group category turns out to be non-braidable \cite{KS}. On the other hand, the category of weight modules of the unrolled restricted quantum group of $\mathfrak{sl}_2$ at $2p$th root of unity
is conjecturally equivalent to a category of generalized modules for the $\cM(p)$-algebra \cite{CGP, CMR}, and the triplet algebra is a simple current extension of the singlet algebra. Translating back to the quantum group side has led to a quasi-Hopf algebra whose underlying algebra is the restricted quantum group and whose representation category is a finite tensor category \cite{CGR}.

Given that singlet and triplet algebras are by far the best understood vertex algebras with non-semisimple representation theory, it is fair to say that a major problem in this context is the conjectural correspondence with (quasi-Hopf modifications of the) quantum groups. A first step towards a proof is the existence of a tensor category on the vertex algebra side, that is, our main theorem. Next, one would like to prove that our category $\cC_{\cM(p)}^0$ is braided equivalent to the corresponding category of the unrolled quantum group. Uprolling, that is, performing the simple current extension, then immediately gives the correspondence between the representation category of $\cW(p)$ and the quasi-Hopf modification of the restricted quantum group; see \cite{CGR} for details on this idea.

A second family of vertex algebras with usually non-semisimple representation theory is affine vertex algebras and $W$-algebras at non-positive-integer levels, for example admissible but non-integral levels. The best-understood examples are the affine vertex algebras of $\mathfrak{sl}_2$ at admissible levels \cite{AM4, CR2}. Even for $\mathfrak{sl}_2$, the generic module has neither lower-bounded conformal weights nor finite-dimensional conformal weight spaces, that is, two essential finiteness conditions needed for the existence of tensor category structure in \cite{HLZ1}-\cite{HLZ8} fail. As a consequence, braided tensor category structure is only known to exist on the subcategory of ordinary grading-restricted modules \cite{CHY, CY}, and rigidity for this subcategory is known only in the simply-laced case \cite{CHY, C}.

Our results now allow the study of tensor categories that include relaxed highest-weight modules of special subregular $W$-algebras, namely the simple subregular $W$-algebras of $\mathfrak{sl}_{p-1}$ at level $k = - (p-1) + \frac{p-1}{p}$. These coincide with the $\mathcal B_p$-algebras (for $p\geq 3$) of \cite{CRW} by \cite[Corollary 16]{ACGY} together with \cite[Theorem 5]{ACKR}. These $\mathcal B_p$-algebras are defined as extensions of $\cM(p)$ tensored with a rank-one Heisenberg algebra. One can thus apply the theory of vertex algebra extensions \cite{CKM1} to our category $\cC_{\cM(p)}^0$ tensored with a category of Fock modules for the Heisenberg algebra to obtain a braided tensor category of  $\mathcal B_p$-algebra modules. The procedure will be very analogous to \cite{ACKR}, and as a consequence one finally has examples of rigid braided tensor categories of $W$-algebra modules that include relaxed highest-weight modules. We note that the $\mathcal B_3$-algebra is the simple affine vertex algebra of $\mathfrak{sl}_2$ at level $-\frac{4}{3}$ \cite{Ad2}, and a $\mathbb Z_2$-orbifold of the $\mathcal B_2$-algebra is the simple affine vertex algebra of $\mathfrak{sl}_2$ at level $-\frac{1}{2}$. Another family of $W$-superalgebras that have $\cM(p)$ as a Heisenberg coset are the simple principal $W$-superalgebras of $\mathfrak{sl}_{p-1|1}$ at level $k = - (p-2)+ \frac{p}{p-1}$ \cite{CGN}, so it is now also possible to study braided tensor categories for these superalgebras.
The $\mathcal B_p$-algebras are also important in physics since they appear as chiral algebras of certain four-dimensional supersymmetric gauge theories, called the $(A_1, A_{2p-3})$-Argyres-Douglas theories, by  \cite[Corollary 16]{ACGY} together with \cite[Theorem 4.1]{C2}; representation theory data of the vertex algebras relate to interesting gauge theory data (see for example \cite{BN, CS}).

Finally, we mention that there are higher-rank analogues of triplet, singlet, and $\mathcal B_p$-algebras \cite{FT, CM2, C3}, and they are expected to enjoy similar relations to topology, physics, and quantum groups; see \cite{AMW, BMM, C3, CM2, CR3, FL, L, P, R, S} for some results. To extend our work to higher rank, one first needs to show that the category of $C_1$-cofinite modules for principal $W$-algebras at appropriate levels have vertex tensor category structure, that is, one needs to generalize \cite{CJORY} beyond the Virasoro case.

\subsection{Outline}

In Section \ref{sec:background}, we provide background on the representation theory of the singlet and triplet vertex operator algebras. In
Section \ref{subsec:Vir}, we obtain  singlet module categories from the direct limit completion of the category of $C_1$-cofinite Virasoro modules, and then we compute some fusion rules in Section \ref{subsec:fusion}. In Section \ref{subsec:loc_fin_sing_cats}, we show that the categories $\cC_{\cM(p)}$ and $\cC_{\cM(p)}^0$ of singlet modules are braided tensor categories, and then we obtain some projective modules in $\cC_{\cM(p)}^0$. We establish rigidity for both $\cC_{\cM(p)}^0$ and $\cC_{\cM(p)}$ in Section \ref{sec:rigidity}. Finally, in the last section, we use rigidity to finish the proof of our main theorem, that is, we construct the remaining projective covers in $\cC_{\cM(p)}^0$ and compute the remaining fusion rules.

\vspace{5mm}

\noindent {\bf Acknowledgements}

\noindent TC acknowledges support from NSERC discovery grant RES0048511. RM thanks the University of Alberta for its hospitality during the visit in which this work was begun.

\section{The singlet and triplet \texorpdfstring{$W$}{W}-algebras}\label{sec:background}

In this section, we recall the definitions of the singlet and triplet $W$-algebras, as well as results from the representation theory of the singlet and triplet that we will use later. For more details, see for example the references \cite{A, AM1, AM2, NT, TW, CRW}.

\subsection{Setting and definitions}

For an integer $p \geq 2$, fix a rank-one lattice $L=\ZZ \a$ with
\[
\langle \a, \a\rangle = 2p.
\]
Also fix $h=\frac{\alpha}{\sqrt{2p}}\in\RR\alpha$, so that $\langle h,h\rangle =1$.

We denote the lattice vertex operator algebra associated to $L$ by $(V_L, Y, \one, \om)$. As a vector space,
\[
V_L = \cU(\widehat{\frh}_{<0})\otimes \CC[L],
\]
where $\CC[L]$ is the group algebra of $L$ and $\widehat{\frh}$ is the affinization of the abelian Lie algebra $\frh=\CC\a$. The vacuum vector of $V_L$ is $
\one = 1 \otimes 1$,
and we use the modified conformal vector
\begin{equation}\label{conformalweight}
\om = \frac{1}{4p}\a(-1)^2\one + \frac{p-1}{2p}\a(-2)\one =\frac{1}{2}h(-1)^2\one+\frac{p-1}{\sqrt{2p}}h(-2)\one.
\end{equation}
The vertex operator algebra $V_L$ has finitely many irreducible modules up to equivalence, parametrized by cosets in $L^\circ/L$, where  $L^{\circ} = \ZZ \frac{\a}{2p}$ is the dual lattice of $L$. Specifically, for $\lambda+L\in L^\circ/L$,
\[
V_{\l + L} = \cU(\widehat{\frh}_{<0})\otimes e^{\l}\CC[L]
\]
has the structure of an irreducible $V_L$-module. Taking $\l = 0$ recovers $V_L$ itself, while the full space $V_{L^{\circ}}=\bigoplus_{\lambda+L\in L^\circ/L} V_{\lambda+L}$ has the structure of a generalized vertex algebra \cite{DL}. The Virasoro algebra acts on each $V_{\l + L}$ with the central charge
\[
c_{p} := 13-6p-6p^{-1} = 1- 6\frac{(p-1)^2}{p}.
\]

Let $\cH$ be the Heisenberg vertex operator algebra associated with $\widehat{\frh}$, with the same conformal vector \eqref{conformalweight}. For $\l \in \CC$, let $\cF_{\l}$ denote the irreducible Fock $\cH$-module generated by a highest-weight vector $v_{\l}$ such that
\[
h(n) v_{\l} = \delta_{n,0}\l v_{\l}, \;\;\; n>0.
\]
In particular, $\cF_0=\cH$ itself. As a vector space,
\[
\cF_{\l} = \cU(\widehat{\frh}_{<0})\otimes \CC v_{\l},
\]
and the lowest conformal weight of $\cF_{\l}$ is
\begin{equation}\label{conformalgrading}
h_{\l} = \frac{1}{2}\l(\l-\a_0),
\end{equation}
where
\[
\a_0 = \sqrt{2p}-\sqrt{2/p}.
\]
The lattice vertex operator algebra $V_L$ is an extension of $\cH$ and decomposes as an infinite direct sum of Fock spaces as an $\cH$-module:
\[
V_L = \bigoplus_{\mu \in L}\cF_{\mu},
\]
where we have identified $\mu=n\alpha\in L$ with $n\sqrt{2p}\in\RR$. Similarly,
\[
V_{\l + L} = \bigoplus_{\mu \in \l + L}\cF_{\mu}
\]
for $\l \in L^{\circ}$.

Now define the screening operator
\begin{equation*}
 e^{-\alpha/p}_0=\mathrm{Res}_x\,Y(e^{-\alpha/p},x)
\end{equation*}
where $Y$ is the vertex operator for the generalized vertex algebra $V_{L^\circ}$. Then the singlet vertex operator algebra $\cM(p)$ is the vertex operator subalgebra
\[
\ker|_{\cF_0}\,e^{-\alpha/p}_0
\]
of $\cF_0$, and the triplet vertex operator algebra $\cW(p)$ is the vertex operator subalgebra
\[
\ker|_{V_L}\,e^{-\alpha/p}_0
\]
of $V_L$. By \cite[Theorem 3.2]{A}, the singlet $\cM(p)$ is generated as a vertex algebra by $\omega$ and $H=S_{2p-1}(\alpha)\one$, where $S_k(\alpha)$ is the Schur polynomial in the variables $\alpha(-1), \alpha(-2),\ldots$ defined by
\begin{equation*}
 \exp\left(\sum_{n=1}^{\infty} \frac{\alpha(-n)}{n}x^n\right)=\sum_{k=0}^\infty S_k(\alpha) x^k.
\end{equation*}

Introduce $\a_+ = \sqrt{2p}$, $\a_{-} = -\sqrt{2/p}$ (corresponding to $\alpha, -\frac{1}{p}\alpha\in L^\circ$), and define
\[
\a_{r,s} = \frac{1-r}{2}\a_+ + \frac{1-s}{2}\a_{-},
\]
for $r,s\in\ZZ$. Note that $\a_{r,s}$ is periodic: $\a_{r+1, s+p}=\alpha_{r,s}$ and that $\a_0 =\a_+ + \a_-$. Defining $h_{r,s}$ to be the conformal weight $h_{\alpha_{r,s}}$ of \eqref{conformalgrading}, we calculate
\begin{equation}\label{eqn:h_rs}
h_{r,s} = \frac{r^2-1}{4}p - \frac{rs-1}{2} + \frac{s^2-1}{4}p^{-1}.
\end{equation}
Now from \cite[Theorem 2.1]{A}, the singlet $\cM(p)$ decomposes into an infinite direct sum of irreducible modules for its Virasoro subalgebra:
\begin{equation}\label{sing}
\cM(p) =\bigoplus_{n =0}^{\infty} L(c_{p}, h_{2n+1, 1}),
\end{equation}
where $L(c,h)$ is the irreducible Virasoro module of central charge $c$ and lowest conformal weight $h$. By \cite[Theorem 1.1]{AM1}, the triplet $\cW(p)$ decomposes into an infinite direct sum of the same irreducible Virasoro modules, but with different multiplicities:
\begin{equation}\label{tripdecomp}
\cW(p) = \bigoplus_{n = 0}^{\infty}(2n+1)L(c_{p,1}, h_{2n+1, 1}).
\end{equation}

\subsection{Representations of the triplet algebra}

The triplet $W$-algebra $\cW(p)$ has $2p$ simple modules up to isomorphism, labeled by $\cW_{r,s}$ for $r = 1, 2$ and $1\leq s \leq p$. Recalling the correspondence
\begin{equation}\label{eqn:L_into_RR}
 \alpha_{r,s}\in\RR \longleftrightarrow \left( p(1-r)-(1-s)\right)\frac{\alpha}{2p}\in L^\circ,
\end{equation}
we have
\[
\cW_{r,p} = V_{\a_{r,p}+L},
\]
while for $1\leq s \leq p-1$, there are non-split short exact sequences
\begin{equation}\label{exactvrs}
0 \longrightarrow \cW_{r,s} \longrightarrow V_{\a_{r,s}+L} \longrightarrow \cW_{3-r,p-s} \longrightarrow 0.
\end{equation}
The lowest conformal weight of $\cW_{r,s}$ is $h_{r,s}$, and the lowest conformal weight space is one-dimensional for $r=1$ and two-dimensional for $r=2$. As $L(c_p,0)$-modules, we have
\begin{equation}\label{tripmoduledecomp}
\cW_{r, s} = \bigoplus_{n = 0}^\infty (2n+r)L(c_{p}, h_{2n+r, s})
\end{equation}
for $r = 1,2$ and $1\leq s \leq p$.

Let $\cC_{\cW(p)}$ denote the category of grading-restricted generalized (that is, logarithmic) $\cW(p)$-modules. Since $\cW(p)$ is $C_2$-cofinite \cite[Theorem 2.1]{AM1}, every simple module $\cW_{r,s}$ in $\cC_{\cW(p)}$ has a projective cover $\cR_{r,s}$ \cite[Theorem 3.23]{H5}. The lattice modules $\cW_{r,p}=V_{\alpha_{r,p}+L}$ are their own projective covers; this was first stated in \cite[Section 5]{NT}, but the proof depends only on results from \cite{AM1}. Essentially, any extension
\begin{equation*}
 0\longrightarrow\cW_{r',s'}\longrightarrow\cX\longrightarrow \cW_{r,p}\longrightarrow 0
\end{equation*}
for $(r',s')\neq(r,p)$ is non-logarithmic, so the exact sequence splits by the block decomposition of the category of ordinary $\cW(p)$-modules given in \cite[Theorem 4.4]{AM1}. On the other hand, self-extensions of $\cW_{r,p}$ split by the structural results on the Zhu algebra $A(\cW(p))$ obtained in \cite[Theorem 5.9]{AM1}. For a fuller argument, see \cite{NT} or \cite[Proposition 7.7]{MY}.


The projective covers $\cR_{r,p-1}$ were constructed explicitly in \cite{AM2}, and we recall some details of this construction. Set
$$V =V_{\alpha_{1,1}+L}\oplus V_{\a_{2,p-1}+L} =V_L\oplus V_{-\alpha/p+L}.$$
By \cite{Li}, $V$ has a vertex operator algebra structure with vertex operator
\begin{equation}\label{eqn:YV_def}
 Y_V\big((u_1,u_2),x\big)(v_1,v_2) = \big(Y(u_1,x)v_1, Y(u_1,x)v_2+Y(u_2,x)v_1\big),
\end{equation}
where $Y$ is the vertex operator for the generalized vertex algebra $V_{L^\circ}$. Moreover,
$$W = V_{\a_{2, 1}+L}\oplus V_{\a_{1,p-1}+L} =V_{\alpha/2+L}\oplus V_{(p-2)\alpha/2p+L}$$
is a $V$-module with vertex operator
\begin{equation}\label{eqn:YW_def}
 Y_W\big((v_1,v_2),x\big)(w_1,w_2) = \big(Y(v_1,x)w_1, Y(v_1,x)w_2+Y(v_2,x)w_1\big)
\end{equation}
for $v_1\in V_L$, $v_2\in V_{-\alpha/p+L}$, $w_1\in V_{\alpha_{2,1}+L}$, and $w_2\in V_{\alpha_{1,p-1}+L}$. Now we deform the vertex operators $Y_W$ and $Y_V$ using Li's $\Delta$-operators \cite{Li2}: for $u\in V_{(1)}$, set
\begin{equation*}
\Delta(u, x) = x^{u_0}\exp\left(\sum_{n=1}^{\infty}\frac{u_n}{-n}(-x)^{-n}\right),
\end{equation*}
and then define
\begin{equation*}
\widetilde{Y}_W(v,x) = Y_W(\Delta(e^{-\a/p},x)v,x),\qquad \widetilde{Y}_V(v,x)=Y_V(\Delta(e^{-\alpha/p},x)v,x)
\end{equation*}
for $v \in V$. We restrict $\widetilde{Y}_W$ to $\cW(p)$ to get a $\cW(p)$-module $\cR_{1,p-1}$ with underlying vector space $W$ and $Y_{\cR_{1,p-1}}=\widetilde{Y}_W\vert_{\cW(p)}$, and we restrict $\widetilde{Y}_V$ to $\cW(p)$ to get a $\cW(p)$-module $\cR_{2,p-1}$ with underlying vector space $V$ and $Y_{\cR_{2,p-1}}=\widetilde{Y}_V\vert_{\cW(p)}$.

Since $Y_V\vert_{V_{-\alpha/p+L}\otimes V_{-\alpha/p+L}}=0$ and $\cW(p)=\ker\vert_{V_L}\,e^{-\alpha/p}_0$, we have
\begin{align*}
 Y_{\cR_{1,p-1}}(v,x) & = Y_W(v,x)+\sum_{n=1}^{\infty} \frac{(-1)^{n+1}}{n} x^{-n}Y_W(e^{-\alpha/p}_n v,x)\nonumber\\
 Y_{\cR_{2,p-1}}(v,x) & = Y_V(v,x)+\sum_{n=1}^{\infty} \frac{(-1)^{n+1}}{n} x^{-n}Y_V(e^{-\alpha/p}_n v,x)
\end{align*}
for $v\in\cW(p)$. From this together with \eqref{eqn:YV_def} and \eqref{eqn:YW_def}, it is clear that for $r=1,2$, there is an exact sequence of $\cW(p)$-modules
\begin{equation*}
0 \rightarrow V_{\a_{r,p-1}+L} \rightarrow \cR_{r, p-1} \rightarrow V_{\a_{3-r, 1}+L} \rightarrow 0.
\end{equation*}
These exact sequences are non-split, and the $\cR_{r,p-1}$ are logarithmic $\cW(p)$-modules, because
\begin{align}\label{newvirasoro}
  Y_{\cR_{1, p-1}}(\om, x)(w_1,w_2) & = \big(Y_W(\om, x) + x^{-1}Y_W(e^{-\a/p},x)\big)(w_1,w_2)\nonumber\\
  & =\big( Y(\omega,x)w_1, Y(\omega,x)w_2+x^{-1}Y(e^{-\alpha/p},x)w_1\big),
\end{align}
and similarly for $Y_{\cR_{2,p-1}}$. In particular,
\begin{equation*}
 L(0)(w_1,w_2)=\big(L(0)w_1,L(0)w_2+e^{-\alpha/p}_0w_1\big),
\end{equation*}
which is non-semisimple (see the discussion in \cite[Section 4]{AM2} for more details).

In \cite[Section 5]{AM2}, it is shown that $\cR_{r,p-1}$ has Loewy diagram
\begin{equation*}
 \begin{tikzpicture}[->,>=latex,scale=1.5]
\node (b1) at (1,0) {$\cW_{r,p-1}$};
\node (c1) at (-1, 1){$\cR_{r, p-1}$:};
   \node (a1) at (0,1) {$\cW_{3-r, 1}$};
   \node (b2) at (2,1) {$\cW_{3-r, 1}$};
    \node (a2) at (1,2) {$\cW_{r,p-1}$};
\draw[] (b1) -- node[left] {} (a1);
   \draw[] (b1) -- node[left] {} (b2);
    \draw[] (a1) -- node[left] {} (a2);
    \draw[] (b2) -- node[left] {} (a2);
\end{tikzpicture}.
\end{equation*}
In particular, there are non-split exact sequences
\begin{equation}\label{eqn:yrw_seq}
 0 \longrightarrow \cY_{r,p-1}\longrightarrow\cR_{r,p-1} \longrightarrow\cW_{r,p-1}\longrightarrow 0
\end{equation}
and
\begin{equation}\label{eqn:wyww_seq}
 0\longrightarrow\cW_{r,p-1}\longrightarrow\cY_{r,p-1}\longrightarrow\cW_{3-r,1}\oplus\cW_{3-r,1}\longrightarrow 0,
\end{equation}
where $\cY_{r,p-1}$ is the maximal submodule generated by $V_{\alpha_{r,p-1}+L}$ and $\cW_{3-r,1}\subseteq V_{\alpha_{3-r,1}+L}$.

According to \cite[Theorem 5.9]{NT}, $\cR_{r,p-1}$ for $r=1,2$ is projective in $\cC_{\cW(p)}$ and is a projective cover of $\cW_{r,p-1}$; see \cite[Theorem 7.9]{MY} for an independent proof. For the case $p=2$, one can also show these modules are projective using the isomorphism of $\cW(2)$ with the even subalgebra of the vertex operator superalgebra of one pair of symplectic fermions; this identifies the $\cR_{r,1}$ with projective modules for the symplectic fermion superalgebra. A construction of the remaining projective covers of irreducible $\cW(p)$-modules was outlined in \cite[Section 4.1]{NT}, and an alternate construction was given in \cite[Theorem 7.9]{MY} using the tensor structure on a module category for the Virasoro subalgebra $L(c_p,0)$. However, we will not need these modules here.

%
%

Now because $\cW(p)$ is $C_2$-cofinite, \cite[Theorem 4.13]{H5} shows that $\cC_{\cW(p)}$ has braided tensor category structure as developed in \cite{HLZ1}-\cite{HLZ8}. In \cite{TW}, Tsuchiya and Wood determined fusion products in $\cC_{\cW(p)}$ and showed that it is a rigid tensor category; see also \cite[Theorems 7.5 and 7.6]{MY} for an independent derivation of these results using the tensor structure on a category of $L(c_p,0)$-modules. Here we summarize some fusion rules in $\cC_{\cW(p)}$ that we will need for studying the singlet algebra:
\begin{theorem}[\cite{TW}]\label{TW} \hspace{2em}
\begin{enumerate}
\item[(1)] The simple module $\cW_{2,1}$ is a self-dual simple current with
\begin{equation}\label{walgebra: fr2}
\cW_{2,1}\btimes \cW_{r,s} = \cW_{3-r, s}
\end{equation}
for $r=1,2$ and $1\leq s\leq p$.

\item[(2)] The simple module $\cW_{1,2}$ is rigid with fusion rules
\begin{equation}\label{walgebra: fr3}
\cW_{1,2}\btimes \cW_{r,s} =
\begin{cases}
\cW_{r, 2} \;\;\; &\mbox{if}\;\; s = 1\\
\cW_{r, s-1}\oplus \cW_{r, s+1}\;\;\; &\mbox{if}\;\; 2 \leq s \leq p-1\\
\cR_{r, p-1} \;\;\; &\mbox{if}\;\; s = p
\end{cases}
\end{equation}
for $r=1,2$.
\end{enumerate}
\end{theorem}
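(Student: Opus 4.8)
Because $\cC_{\cW(p)}$ is already known to be a (rigid) braided tensor category, the content of the theorem is just the two displayed families of fusion products, and the most efficient route (that of \cite{MY}) is to transport them from the Virasoro tensor category $\cO_p$ through the induction functor $\cF_{\cW(p)}\colon\cO_p\to\Rep\,\cW(p)$ attached to the commutative algebra $\cW(p)$ in $\Ind(\cO_p)$. This functor is exact and monoidal with $\cF_{\cW(p)}(L(c_p,0))=\cW(p)$; comparing Virasoro decompositions (see \eqref{tripdecomp}--\eqref{tripmoduledecomp}) identifies $\cF_{\cW(p)}(L(c_p,h_{1,2}))\cong\cW_{1,2}$ and exhibits $\cW_{2,1}$ as a summand of $\cF_{\cW(p)}(L(c_p,h_{2,1}))$ (the latter is $\cW_{2,1}^{\oplus 2}$, the doubling being forced by the two-dimensional lowest weight space of the $r=2$ modules). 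It then suffices to compute the fusion of $L(c_p,h_{2,1})$ and of $L(c_p,h_{1,2})$ with the simple modules $L(c_p,h_{r,s})$ in $\cO_p$ and to track what happens as the second Kac label leaves the range $1\le s\le p$.

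\textbf{Part (1): $\cW_{2,1}$ is a self-dual simple current.} In the lattice realization, $\cW_{2,1}\subseteq V_{\alpha_{2,1}+L}$, and under \eqref{eqn:L_into_RR} the label $\alpha_{2,1}$ corresponds to the order-two class $\tfrac{\alpha}{2}+L\in L^\circ/L$. The intertwining operators of the generalized vertex algebra $V_{L^\circ}$ realize $V_{\alpha_{r,s}+L}\btimes V_{\alpha_{r',s'}+L}=V_{\alpha_{r,s}+\alpha_{r',s'}+L}$; restricting these to the screening kernels $\cW_{r,s}\subseteq V_{\alpha_{r,s}+L}$ and checking the images lie in the screening kernels of $V_L$ descends the construction to $\cW(p)$. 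Since $2\alpha_{2,1}\equiv 0$ in $L^\circ/L$, this exhibits $\cW(p)$ as a quotient of $\cW_{2,1}\btimes\cW_{2,1}$; a graded-character comparison (the character of $\cW_{2,1}\btimes\cW_{2,1}$ read off from this construction equals $\mathrm{ch}\,\cW(p)$) promotes this to $\cW_{2,1}\btimes\cW_{2,1}\cong\cW(p)=\one$, so $\cW_{2,1}$ is invertible and self-dual. Identity \eqref{walgebra: fr2} then follows because $\cW_{2,1}\btimes(-)$ is an autoequivalence, so $\cW_{2,1}\btimes\cW_{r,s}$ is simple; it lies in the coset $\alpha_{2,1}+\alpha_{r,s}+L=\alpha_{r+1,s}+L=\alpha_{3-r,s}+L$ (using $r\in\{1,2\}$ and the periodicity $\alpha_{r+1,s+p}=\alpha_{r,s}$), and among the two simple modules attached to that coset it is singled out by its lowest conformal weight $h_{3-r,s}$.

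\textbf{Part (2): the fusion of $\cW_{1,2}$.} The Verma cover of $L(c_p,h_{1,2})$ has a singular vector at level $1\cdot 2=2$; inserting it into an intertwining operator produces a second-order Fuchsian (BPZ) differential equation for the associated four-point functions. The indicial exponents at its three singular points bound the Virasoro fusion $L(c_p,h_{1,2})\btimes L(c_p,h_{r,s})$ to length at most two, with composition factors among $L(c_p,h_{r,s-1})$ and $L(c_p,h_{r,s+1})$; a matching lower bound (explicit free-field intertwiners, or the structure of $\cO_p$ from \cite{CJORY}) gives equality. Applying $\cF_{\cW(p)}$: for $2\le s\le p-1$ the exponents differ by a noninteger, the equation is non-logarithmic, and one obtains the semisimple answer $\cW_{r,s-1}\oplus\cW_{r,s+1}$; for $s=1$ the ``$s-1$'' label is out of range ($\cW_{r,0}=0$) and only $\cW_{r,2}$ survives. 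For $s=p$ the ``$s+1$'' label re-enters the Kac strip by periodicity, the exponents at a singular point now differ by an integer, the solution space acquires a logarithm, and $\cW_{1,2}\btimes\cW_{r,p}$ becomes a non-semisimple length-four module whose Loewy data match those of $\cR_{r,p-1}$ in \eqref{eqn:yrw_seq}--\eqref{eqn:wyww_seq}. To pin the isomorphism down, I would use that $\cW_{r,p}=V_{\alpha_{r,p}+L}$ is projective in $\cC_{\cW(p)}$, hence so is $\cW_{1,2}\btimes\cW_{r,p}$ (projectives form a tensor ideal in the rigid category $\cC_{\cW(p)}$); since $\cW_{r,p-1}$ occurs among its composition factors it contains the projective cover $\cR_{r,p-1}$ as a summand, and the character identity $\mathrm{ch}\,\cW_{1,2}\cdot\mathrm{ch}\,\cW_{r,p}=\mathrm{ch}\,\cR_{r,p-1}$ forces $\cW_{1,2}\btimes\cW_{r,p}\cong\cR_{r,p-1}$.

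\textbf{Main obstacle.} The hard part is Part (2). First, one must upgrade the BPZ ``upper bound'' on the Virasoro fusion to an exact computation and track it faithfully through the induction functor, including the doubling phenomenon for the $r=2$ modules --- this requires genuine control of the $C_1$-cofinite Virasoro category $\cO_p$. Second, the $s=p$ case requires showing that the indecomposable that appears is \emph{precisely} $\cR_{r,p-1}$ and not some other module with the same composition factors; the projectivity-plus-character argument above settles this cleanly, but it leans on rigidity of $\cC_{\cW(p)}$ and on the explicit construction of $\cR_{r,p-1}$ from \cite{AM2, NT} recalled above, and an approach avoiding those inputs (e.g. the direct screening-cohomology analysis of \cite{TW}) must instead carry out a substantial Felder-complex computation.
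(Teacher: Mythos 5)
You should first note that the paper does not prove this statement at all: Theorem \ref{TW} is imported from \cite{TW}, with \cite[Theorems 7.5 and 7.6]{MY} cited as an independent derivation, and the only in-paper discussion is the remark about which projectivity inputs from \cite{NT} each of those proofs uses. So there is no internal proof to compare against; what you have written is an outline of the \cite{MY}-style route (induce from the Virasoro category $\cO_p$), and it should be judged on its own.

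As a proof it has genuine gaps. The recurring one is that you settle the decisive identifications by ``character comparison'': in Part (1) to conclude $\cW_{2,1}\btimes\cW_{2,1}\cong\cW(p)$, and in Part (2), $s=p$, to conclude $\cW_{1,2}\btimes\cW_{r,p}\cong\cR_{r,p-1}$. At this stage there is no Verlinde-type formula and no way to ``read off'' the character of a fusion product from a free-field construction of one intertwining operator; a surjection $\cW_{2,1}\btimes\cW_{2,1}\rightarrow\cW(p)$ plus equality of some characters does not determine the fusion product (non-isomorphic modules here share composition factors, and injectivity of such maps is exactly the hard simple-current statement, proved in \cite{MY} by categorical arguments, not by characters). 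Likewise, ``$\cW_{r,p-1}$ occurs among the composition factors of the projective module $\cW_{1,2}\btimes\cW_{r,p}$, hence $\cR_{r,p-1}$ is a summand'' is false as stated: you need $\cW_{r,p-1}$ in the head, i.e.\ a surjection $\cW_{1,2}\btimes\cW_{r,p}\twoheadrightarrow\cW_{r,p-1}$ (which one can get from rigidity of $\cW_{1,2}$ and the already-established $s=p-1$ fusion via $\Hom(\cW_{1,2}\btimes\cW_{r,p},\cW_{r,p-1})\cong\Hom(\cW_{r,p},\cW_{1,2}\btimes\cW_{r,p-1})\neq 0$), and then a length count coming from an actual upper bound on the induced fusion, not from a character identity. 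Finally, the claim $\cF_{\cW(p)}(L(c_p,h_{2,1}))\cong\cW_{2,1}^{\oplus 2}$ is asserted rather than proved: Frobenius reciprocity and \eqref{tripmoduledecomp} only give a two-dimensional $\Hom$ space, one must separately check that this induced module is local and semisimple, and the Virasoro fusion rules you invoke (including the exactness and monoidality bookkeeping through $\cF_{\cW(p)}$) require the full force of \cite{CJORY, MY}, not just a BPZ exponent count. All of these can be repaired, essentially by carrying out the arguments of \cite[Section 7]{MY}, but as written the proposal is a plausible roadmap rather than a proof; none of this affects the paper, which treats the theorem as an external input.
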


\begin{remark}
Since the tensor product of a rigid with a projective object in any tensor category is projective,  the $s=p$ case of \eqref{walgebra: fr3} gives a proof that $\cR_{r,p-1}$ is projective, provided one has not used projectivity of $\cR_{r,p-1}$ to prove \eqref{walgebra: fr3}. This is in fact the argument used in \cite[Theorem 7.9]{MY} to prove $\cR_{r,p-1}$ is projective, but note that the proof of \eqref{walgebra: fr3} given in \cite{TW} does use projectivity of $\cR_{r,p-1}$. Note also that projectivity of $\cW_{r,p}$ and $\cR_{r,p-1}$ are the only essential results from \cite{NT} used in \cite{TW} to prove Theorem \ref{TW}.
\end{remark}


\subsection{Representations of the singlet algebra}

The singlet vertex operator algebra $\cM(p)$ has infinitely many simple modules, first classified in \cite{A}. Here, we use \cite[Section 2]{CRW} as a reference. For $\lambda\in\CC\setminus L^\circ$, the Heisenberg Fock module $\cF_\lambda$ restricts to an irreducible $\cM(p)$-module. For $r\in\ZZ$ and $1\leq s\leq p$ on the other hand, $\cF_{\alpha_{r,s}}$ is not usually irreducible as an $\cM(p)$-module, but it contains the irreducible $\cM(p)$-module $\cM_{r,s}=\mathrm{Soc}(\cF_{\alpha_{r,s}})$. We have
\[
\cM_{r,p} = \cF_{\a_{r,p}},
\]
while for $1\leq s\leq p-1$, there are non-split short exact sequences
\begin{equation}\label{exactsing}
0 \longrightarrow \cM_{r,s} \longrightarrow \cF_{\a_{r,s}} \longrightarrow \cM_{r+1, p-s} \longrightarrow 0.
\end{equation}
These exact sequences arise from the action of a certain screening operator $Q^{[s]}_-$ that maps the Fock spaces $\cF_{\alpha_{r,s}}$ for $r\in\ZZ$ to $\cF_{\alpha_{r+1,p-s}}$, that is, $\cM_{r,s}=\ker Q^{[s]}_-\vert_{\cF_{\alpha_{r,s}}}$ and $\cM_{r+1,p-s}=\mathrm{im}\, Q^{[s]}_-\vert_{ \cF_{\alpha_{r,s}}}$. Moreover, $\cM_{r,s}=\cW_{\bar{r},s}\cap\cF_{\alpha_{r,s}}$ where $\bar{r}=1$ or $2$ according as $r$ is odd or even, and the diagram
\begin{equation}\label{diag:screening}
\xymatrixcolsep{4pc}
 \xymatrix{
  \cM_{r,s} \ar[r] \ar[d] & \cF_{\alpha_{r,s}} \ar[r]^(.3){Q^{[s]}_-\vert_{\cF_{\alpha_{r,s}}}} \ar[d] & \cM_{r+1,p-s}\subseteq\cF_{\alpha_{r+1,p-s}} \ar[d] \\
  \cW_{\bar{r},s} \ar[r] & V_{\alpha_{\bar{r},s}+L} \ar[r]^(.33){Q^{[s]}_-\vert_{V_{\alpha_{\bar{r},s}+L}}} & \cW_{3-\bar{r},p-s}\subseteq V_{\alpha_{3-\bar{r},p-s}+L} \\
 }
\end{equation}
commutes, where the unlabeled arrows are inclusions.

The lowest conformal weight of $\cM_{r,s}$ is $h_{r,s}$ if $r\geq 1$ and $h_{2-r,s}$ if $r\leq 0$ (note that these agree if $s=p$). As Virasoro modules,
\begin{align}
\cM_{r,s} & = \bigoplus_{n = 0}^{\infty} L(c_{p}, h_{r+2n, s}) \qquad (r \geq 1,\; 1\leq s \leq p)\label{eqn:Mrs_decomp_pos}\\
\cM_{r+1,p-s} & = \bigoplus_{n = 0}^{\infty} L(c_{p}, h_{r-2n, s}) \qquad (r \leq 0,\; 1\leq s \leq p)\label{eqn:Mrs_decomp_neg}
\end{align}
(note that these expressions for $r=1$ and $r=0$ agree).

For $r \in \ZZ$, we also have logarithmic $\cM(p)$-modules $\cP_{r,p-1}$ which satisfy a non-split exact sequence
\begin{equation}\label{exactproj}
0 \longrightarrow \cF_{\a_{r,p-1}} \longrightarrow \cP_{r,p-1} \longrightarrow \cF_{\a_{r-1, 1}} \longrightarrow 0.
\end{equation}
As vector spaces,
\begin{align*}
 \cP_{r,p-1}=\cF_{\alpha_{r,p-1}}\oplus\cF_{\alpha_{r-1,1}} \subseteq V_{\alpha_{r,p-1}+L}\oplus V_{\alpha_{r-1,1}+L} = V_{\alpha_{\bar{r},p-1}+L}\oplus V_{\alpha_{3-\bar{r},1}+L} =\cR_{\bar{r},p-1},
\end{align*}
where $\bar{r}=1$ or $2$ according as $r$ is odd or even. The vertex operator for $\cP_{r,p-1}$ is given by
\begin{equation*}
 Y_{\cP_{r,p-1}} = Y_{\cR_{\bar{r},p-1}}\vert_{\cM(p)\otimes\cP_{r,p-1}},
\end{equation*}
so that we have a commutative diagram
\begin{equation}\label{diag:proj_comm}
\xymatrixcolsep{4pc}
 \xymatrix{
 \cF_{\alpha_{r,p-1}} \ar[r] \ar[d] & \cP_{r,p-1} \ar[r] \ar[d] & \cF_{\alpha_{r-1,1}} \ar[d] \\
 V_{\alpha_{\bar{r},p-1}+L} \ar[r]  & \cR_{\bar{r},p-1} \ar[r] & V_{\alpha_{3-\bar{r},1}+L} \\
 }
\end{equation}
of $\cM(p)$-module homomorphisms.

To see that $\cP_{r,p-1}$ is indeed an $\cM(p)$-module, note first that \eqref{newvirasoro} shows
\begin{equation}\label{eqn:om_on_P}
 Y_{\cP_{r,p-1}}(\omega, x)(w_1,w_2)  =\big( Y(\omega,x)w_1, Y(\omega,x)w_2+x^{-1}Y(e^{-\alpha/p},x)w_1\big)
\end{equation}
for $w_1\in\cF_{\alpha_{r-1,1}}$, $w_2\in\cF_{\alpha_{r,p-1}}$, and the components of $Y(e^{-\alpha/p},x)$ indeed map the Fock space $\cF_{\alpha_{r-1,1}}$ into $\cF_{\alpha_{r,p-1}}$ since
\begin{equation*}
 \big(p(1-(r-1))-(1-1)\big)\frac{\alpha}{2p}-\frac{\alpha}{p} =\big(p(1-r)-(1-(p-1))\big)\frac{\alpha}{2p}
\end{equation*}
(recall \eqref{eqn:L_into_RR}). As for the second generator $H=S_{2p-1}(\alpha)\one$ of $\cM(p)$, we can use formulas from \cite{A} to calculate
\begin{align}\label{eqn:H_on_P}
Y_{\cP_{r, p-1}} & (H, x)(w_1,  w_2) = \left(Y_W(H, x) + \sum_{n=1}^{\infty}\frac{(-1)^n}{-n}x^{-n}Y_W(e^{-\a/p}_nH,x)\right)(w_1,w_2)\nonumber\\
& = \left(Y(H, x)w_1, Y(H,x)w_2 - \sum_{n=1}^{2p-2}\frac{(-1)^n p}{p-1}x^{-n}Y(S_{2p-2-n}(\a)e^{-\a/p},x)w_1\right)
\end{align}
for $w_1\in\cF_{\alpha_{r-1,1}}$, $w_2\in\cF_{\alpha_{r,p-1}}$. Since $\omega$ and $H$ generate $\cM(p)$ as a vertex algebra, \eqref{eqn:om_on_P} and \eqref{eqn:H_on_P} show that indeed $Y_{\cP_{r,p-1}}$ preserves $\cP_{r,p-1}$ for $r\in \ZZ$; they also show that \eqref{exactproj} is non-split exact, and that $\cP_{r,p-1}$ is a logarithmic $\cM(p)$-module.

It was conjectured in \cite[Section 5]{CGR} that $\cP_{r,p-1}$ is the projective cover of $\cM_{r,p-1}$ in a suitable subcategory of grading-restricted generalized $\cM(p)$-modules. We shall prove this in Section \ref{subsec:loc_fin_sing_cats}.

\section{Tensor categories of singlet modules}

In this section, we will construct several tensor categories of generalized $\cM(p)$-modules. Although by the Main Theorem of \cite{Mi_C1}, the category of  $C_1$-cofinite (grading-restricted generalized) modules for a vertex operator algebra is closed under the vertex algebraic tensor product of \cite{HLZ3}, this is not sufficient for existence of associativity isomorphisms as in \cite{HLZ6}. If in addition the $C_1$-cofinite module category agrees with the category of finite-length modules, however, then the method in \cite{CJORY} shows that this category has braided tensor category structure. For the singlet algebra, all finite-length modules are $C_1$-cofinite by \cite[Theorem 13]{CMR}, but unfortunately the converse remains open. Thus here, we will need to restrict our attention to $\cM(p)$-modules which live in the direct limit completion of the $C_1$-cofinite module category for the Virasoro vertex operator algebra $L(c_p,0)$, which we now discuss.

\subsection{The Virasoro tensor category and its direct limit completion}\label{subsec:Vir}

Let $\cO_p$ denote the category of $C_1$-cofinite grading-restricted generalized modules for the simple Virasoro vertex operator algebra $L(c_p,0)$. Its simple objects are the modules $L(c_p,h_{r,s})$ for $r\in\ZZ_+$, $1\leq s\leq p$. It was shown in \cite{CJORY} that all modules in $\cO_p$ have finite length and that $\cO_p$ has vertex algebraic braided tensor category structure as in \cite{HLZ1}-\cite{HLZ8}.

From \eqref{sing} and \eqref{tripdecomp}, we see that the singlet and triplet algebras are not objects of $\cO_p$, but they are objects of the direct limit completion $\Ind(\cO_p)$ introduced in \cite{CMY}. This is the full subcategory of generalized $L(c_p,0)$-modules which are unions (equivalently sums) of their $C_1$-cofinite submodules. By \cite[Example 7.2]{CMY}, $\Ind(\cO_p)$ has vertex algebraic braided tensor category structure extending that on $\cO_p$, and then \cite[Theorem 7.5]{CMY} says that $\cM(p)$ is a commutative algebra in the braided tensor category $\Ind(\cO_p)$. This means that we can apply the extension theory of \cite{CKM1} to study $\cM(p)$-modules: in particular, \cite[Theorem 7.7]{CMY} (see also \cite[Theorem 3.65]{CKM1}) says that the category $\mathrm{Rep}^0\,\cM(p)$ of generalized $\cM(p)$-modules in $\Ind(\cO_p)$ has vertex algebraic braided tensor category structure. From \eqref{eqn:Mrs_decomp_pos} and \eqref{eqn:Mrs_decomp_neg}, we see that the irreducible $\cM(p)$-modules $\cM_{r,s}$ for $r\in\ZZ$, $1\leq s\leq p$ are objects of $\mathrm{Rep}^0\,\cM(p)$, although the irreducible Fock modules $\cF_\lambda$ for $\lambda\notin L^\circ$ are not.

Now since the triplet algebra is in $\Ind(\cO_p)$ and is a module for its singlet subalgebra, we see that $\cW(p)$ is a commutative algebra in the braided tensor category $\mathrm{Rep}^0\,\cM(p)$. This means we have a braided tensor category $\mathrm{Rep}^0\,\cW(p)$ of generalized $\cW(p)$-modules in $\mathrm{Rep}^0\,\cM(p)$, as well as the larger tensor category $\mathrm{Rep}\,\cW(p)$ of not-necessarily-local $\cW(p)$-modules in  $\mathrm{Rep}^0\,\cM(p)$. From \cite{KO, CKM1}, there is an induction functor
\begin{align*}
 \cF_{\cW(p)}: \mathrm{Rep}^0\,\cM(p) & \rightarrow \mathrm{Rep}\,\cW(p)\nonumber\\
  M & \mapsto \cW(p)\boxtimes M\nonumber\\
  f & \mapsto \id_{\cW(p)}\boxtimes f.
\end{align*}
Induction is a monoidal functor, so that in particular $\cF_{\cW(p)}(M_1\boxtimes M_2)\cong\cF_{\cW(p)}(M_1)\boxtimes_{\cW(p)}\cF_{\cW(p)}(M_2)$ for generalized modules $M_1$, $M_2$ in $\mathrm{Rep}^0\,\cM(p)$.

We are mainly interested in locally-finite categories of $\cM(p)$-modules, and $\Rep^0\,\cM(p)$ is too large for these purposes (for example, it is closed under arbitrary direct sums). Probably the largest interesting ``small'' subcategory we could consider is the category of $C_1$-cofinite modules in $\Rep^0\,\cM(p)$. Unfortunately, we do not know that modules in this category necessarily have finite length. So we will instead need to consider possibly smaller locally-finite tensor subcategories of $\Rep^0\,\cM(p)$. However, since we do expect that the $C_1$-cofinite category should ultimately be the ``right'' tensor category of $\cM(p)$-modules, we will at least show here that the tensor product $\boxtimes_{C_1}$ of \cite{HLZ3, Mi_C1} for $C_1$-cofinite modules agrees with the tensor product $\boxtimes$ on $\Rep^0\,\cM(p)$ for $C_1$-cofinite modules in $\Rep^0\,\cM(p)$. That is, if in the future it is established that the category of $C_1$-cofinite $\cM(p)$-modules has vertex algebraic braided tensor category structure, then the categories we construct here will be tensor subcategories:
\begin{prop}\label{prop:C_embeds}
 If $M_1$, $M_2$ are $C_1$-cofinite $\cM(p)$-modules in $\Rep^0\,\cM(p)$, then $M_1\boxtimes M_2\cong M_1\boxtimes_{C_1} M_2$.
\end{prop}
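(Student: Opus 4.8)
The plan is to show that $M_1\boxtimes M_2$ and $M_1\boxtimes_{C_1}M_2$, together with their respective canonical intertwining maps, both represent one and the same functor, and then conclude by uniqueness of representing objects. The functor sends a grading-restricted generalized $\cM(p)$-module $\mathcal W$ to the space of $P(1)$-intertwining maps $M_1\otimes M_2\to\overline{\mathcal W}$, equivalently the space of logarithmic intertwining operators of type $\binom{\mathcal W}{M_1\,M_2}$. For $M_1\boxtimes_{C_1}M_2$ this is essentially the content of \cite{Mi_C1}: since $M_1$ and $M_2$ are $C_1$-cofinite, the $P(1)$-tensor product exists in the category of \emph{all} grading-restricted generalized $\cM(p)$-modules, is again $C_1$-cofinite, and comes equipped with a universal $P(1)$-intertwining map $\boxtimes_{C_1}\colon M_1\otimes M_2\to\overline{M_1\boxtimes_{C_1}M_2}$. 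For $M_1\boxtimes M_2$, the braided tensor category structure on $\Rep^0\,\cM(p)$ furnished by \cite{CMY, CKM1} is the vertex-algebraic one, so $M_1\boxtimes M_2$ carries a $P(1)$-intertwining map $\boxtimes\colon M_1\otimes M_2\to\overline{M_1\boxtimes M_2}$ that is universal for the same functor, but a priori only when $\mathcal W$ is restricted to range over objects of $\Rep^0\,\cM(p)$. So the task is to remove this restriction.

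The key step is the claim that for \emph{any} grading-restricted generalized $\cM(p)$-module $\mathcal W$ and any $P(1)$-intertwining map $F\colon M_1\otimes M_2\to\overline{\mathcal W}$, the $\cM(p)$-submodule $\mathcal W'\subseteq\mathcal W$ generated by the coefficients of $F$ already lies in $\Rep^0\,\cM(p)$. By the commutator formula for intertwining operators, the coefficients of $F(m_1,x)m_2$ ($m_i\in M_i$) already span an $\cM(p)$-submodule, so $\mathcal W'$ is precisely their span. To see that $\mathcal W'$ lies in $\Ind(\cO_p)$ as an $L(c_p,0)$-module, I would restrict to the Virasoro subalgebra $L(c_p,0)\subseteq\cM(p)$: then $F$ is in particular a $P(1)$-intertwining map of $L(c_p,0)$-modules, and since $M_1$ and $M_2$ are directed unions of $C_1$-cofinite Virasoro submodules $M_i^{(\alpha)}\in\cO_p$, the restriction of $F$ to each $M_1^{(\alpha)}\otimes M_2^{(\beta)}$ is a $P(1)$-intertwining map of type $\binom{\mathcal W}{M_1^{(\alpha)}\,M_2^{(\beta)}}$ of Virasoro modules. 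Because $M_1^{(\alpha)}\boxtimes_{\cO_p}M_2^{(\beta)}$ exists among all grading-restricted generalized $L(c_p,0)$-modules and lies in $\cO_p$ \cite{Mi_C1, CJORY}, this restricted map factors through it, so its image is a quotient of an object of $\cO_p$, hence itself an object of $\cO_p$. As a Virasoro module, $\mathcal W'$ is the directed union of these images over all $\alpha,\beta$, hence a union of $C_1$-cofinite Virasoro submodules; therefore $\mathcal W'\in\Ind(\cO_p)$, i.e. $\mathcal W'\in\Rep^0\,\cM(p)$, which proves the claim.

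Granting the claim, the argument concludes formally. Given an arbitrary grading-restricted generalized $\cM(p)$-module $\mathcal W$ and a $P(1)$-intertwining map $F\colon M_1\otimes M_2\to\overline{\mathcal W}$, the claim lets us factor $F$ through $\overline{\mathcal W'}$ with $\mathcal W'\in\Rep^0\,\cM(p)$, and then the universal property of $\boxtimes$ relative to $\Rep^0\,\cM(p)$ supplies a unique $\cM(p)$-module map $M_1\boxtimes M_2\to\mathcal W'\hookrightarrow\mathcal W$ through which $F$ factors; uniqueness of the factorization through $\mathcal W$ itself holds because the image of $\boxtimes$ generates $M_1\boxtimes M_2$. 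Thus $M_1\boxtimes M_2$ with $\boxtimes$ represents the same functor as $M_1\boxtimes_{C_1}M_2$ with $\boxtimes_{C_1}$ on the category of all grading-restricted generalized $\cM(p)$-modules, and the two universal intertwining maps therefore induce mutually inverse $\cM(p)$-module isomorphisms $M_1\boxtimes M_2\cong M_1\boxtimes_{C_1}M_2$ (an isomorphism of modules, not merely of underlying vector spaces, since it intertwines the two universal $P(1)$-intertwining maps).

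I expect the main obstacle to be the key claim of the second paragraph, and within it the point that an intertwining operator among $\cM(p)$-modules — which is a genuine intertwining operator for the Virasoro subalgebra but not in any naive ``sub'' sense for $\cM(p)$ — nonetheless has image controlled by the Virasoro tensor theory on $\cO_p$ via the directed-union decomposition. Once that is in place everything else is a routine manipulation with universal properties; one should also record the minor observation that a submodule of an object of $\Rep^0\,\cM(p)$ is again in $\Rep^0\,\cM(p)$, which is used implicitly when passing to $\mathcal W'$ and which follows since $\cO_p$ is closed under submodules.
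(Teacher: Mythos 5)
Your proposal is correct and is essentially the paper's own argument: the paper likewise reduces to the Virasoro subalgebra, uses that $M_1$ and $M_2$ are unions of their $C_1$-cofinite $L(c_p,0)$-submodules together with the Key Theorem of \cite{Mi_C1} to conclude that the image of any relevant intertwining operator lies in $\Rep^0\,\cM(p)$, and then applies the universal property of $\boxtimes$ in $\Rep^0\,\cM(p)$. The only difference is packaging: the paper verifies the universal property only against $C_1$-cofinite targets $M_3$ (which already suffices to identify $M_1\boxtimes M_2$ with $M_1\boxtimes_{C_1}M_2$), quoting the Key Theorem for images directly rather than factoring through the $\cO_p$-tensor products as you do.
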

\begin{proof}
 It is enough to show that the pair $(M_1\boxtimes M_2,\cY_{M_1,M_2})$ satisfies the intertwining operator universal property of a tensor product in the category of $C_1$-cofinite $\cM(p)$-modules, where $\cY_{M_1,M_2}$ is the tensor product intertwining operator of type $\binom{M_1\boxtimes M_2}{M_1\,M_2}$ in $\Rep^0\,\cM(p)$. Thus suppose $M_3$ is any $C_1$-cofinite $\cM(p)$-module and $\cY$ is an intertwining operator of type $\binom{M_3}{M_1\,M_2}$.

 Since $M_3$ is $C_1$-cofinite as an $\cM(p)$-module, it is in particular an $\NN$-gradable weak $L(c_p,0)$-module. Thus for any $C_1$-cofinite $L(c_p,0)$-submodules $W_1\subseteq M_1$ and $W_2\subseteq M_2$, the Key Theorem of \cite{Mi_C1} implies that $\mathrm{im}\,\cY\vert_{W_1\otimes W_2}$ is a $C_1$-cofinite $L(c_p,0)$-submodule of $M_3$. Then since $M_1$ and $M_2$ are objects of $\Rep^0\,\cM(p)$, they are the unions of their $C_1$-cofinite $L(c_p,0)$-submodules. Thus $\mathrm{im}\,\cY$ is the union of its $C_1$-cofinite $L(c_p,0)$-submodules, which means that $\mathrm{im}\,\cY$ is an object of $\Rep^0\,\cM(p)$.

 Now the universal property of the tensor product in $\Rep^0\,\cM(p)$ shows that there is a unique $\cM(p)$-module homomorphism
 $$f: M_1\boxtimes M_2\rightarrow\mathrm{im}\,\cY\subseteq M_3$$
 such that $f\circ\cY_{M_1,M_2}=\cY$. Since $\cY_{M_1,M_2}$ is a surjective intertwining operator, $f$ is the unique map $M_1\boxtimes M_2\rightarrow M_3$ with this property, so $(M_1\boxtimes M_2,\cY_{M_1,M_2})$ satisfies the universal property for a tensor product of $C_1$-cofinite $\cM(p)$-modules.
\end{proof}

Now here are the categories of $\cM(p)$-modules that we will consider in the remainder of the paper:
\begin{defn}
 The category $\cC_{\cM(p)}$ is the full subcategory of finite-length grading-restricted generalized $\cM(p)$-modules whose composition factors come from the $\cM_{r,s}$ for $r\in\ZZ$, $1\leq s\leq p$. The category $\cC_{\cM(p)}^0$ is the full subcategory of generalized $\cM(p)$-modules $M$ in $\Rep^0\,\cM(p)$ such that $\cF_{\cW(p)}(M)$ is an object of $\cC_{\cW(p)}$ (that is, a grading-restricted generalized $\cW(p)$-module).
\end{defn}

We will show that $\cC_{\cM(p)}^0$ is a subcategory of $\cC_{\cM(p)}$,  and that both are braided tensor subcategories of $\Rep^0\,\cM(p)$,    in Section \ref{subsec:loc_fin_sing_cats} below after deriving some fusion rules in $\Rep^0\,\cM(p)$. For now, we show:

\begin{prop}\label{prop:indop_exts}
The category $\cC_{\cM(p)}$ is a full subcategory of $\Rep^0\,\cM(p)$.
\end{prop}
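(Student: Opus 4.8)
The plan is to show that every module $M$ in $\cC_{\cM(p)}$ is a union of its $C_1$-cofinite $L(c_p,0)$-submodules, which is exactly the defining property of $\Ind(\cO_p)$; fullness is then automatic since both are full subcategories of generalized $\cM(p)$-modules. First I would reduce to the case of irreducible modules. Indeed, if $M$ has finite length with composition factors among the $\cM_{r,s}$, choose a composition series
\begin{equation*}
 0 = M_0 \subsetneq M_1 \subsetneq \cdots \subsetneq M_n = M
\end{equation*}
with each $M_i/M_{i-1}$ isomorphic to some $\cM_{r,s}$. The subcategory $\Ind(\cO_p)$ is closed under extensions inside the category of generalized $L(c_p,0)$-modules (a union-of-$C_1$-cofinite-submodules property is visibly extension-closed: given $0\to A\to B\to C\to 0$ with $A,C\in\Ind(\cO_p)$, any vector of $B$ lies in the preimage of a $C_1$-cofinite submodule of $C$, which is an extension of a $C_1$-cofinite module by a submodule of $A$, hence itself a union of $C_1$-cofinite submodules by the finite-length structure of $\cO_p$). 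So by induction on $n$ it suffices to know $\cM_{r,s}\in\Ind(\cO_p)$ for all $r\in\ZZ$, $1\leq s\leq p$.

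For the irreducible case, I would invoke the Virasoro decompositions \eqref{eqn:Mrs_decomp_pos} and \eqref{eqn:Mrs_decomp_neg}, which exhibit each $\cM_{r,s}$ as a direct sum $\bigoplus_{n\geq 0} L(c_p, h_{r\pm 2n,s})$ of irreducible $L(c_p,0)$-modules, each of which is a simple object of $\cO_p$ and in particular $C_1$-cofinite. Hence $\cM_{r,s}$ is a union (indeed a direct sum) of $C_1$-cofinite $L(c_p,0)$-submodules, so it is an object of $\Ind(\cO_p)$; being an $\cM(p)$-module, it is then an object of $\Rep^0\,\cM(p)$. Combined with the extension-closure argument above, this shows every object of $\cC_{\cM(p)}$ lies in $\Rep^0\,\cM(p)$. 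Finally, $\cC_{\cM(p)}$ is by definition a full subcategory of generalized $\cM(p)$-modules, and $\Rep^0\,\cM(p)$ is as well, so the inclusion $\cC_{\cM(p)}\hookrightarrow\Rep^0\,\cM(p)$ is full.

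I do not expect a serious obstacle here; the only point requiring a little care is the extension-closure of $\Ind(\cO_p)$ under extensions of $\cM(p)$-modules viewed as $L(c_p,0)$-modules. The potential subtlety is that an extension of two $C_1$-cofinite Virasoro modules need not be $C_1$-cofinite in general — but it does lie in $\Ind(\cO_p)$ because, by \cite{CJORY}, objects of $\cO_p$ have finite length, so any such extension is still built from finitely many simple objects of $\cO_p$ and is therefore a (finite) union of $C_1$-cofinite submodules. Alternatively, one can simply note that a finite-length $\cM(p)$-module with all composition factors among the $\cM_{r,s}$ is, as an $L(c_p,0)$-module, a finite-length module whose composition factors are among the $L(c_p,h_{r,s})$ appearing in \eqref{eqn:Mrs_decomp_pos}–\eqref{eqn:Mrs_decomp_neg}; such a module is a finite iterated extension of objects of $\cO_p$ and hence lies in $\Ind(\cO_p)$ directly. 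Either route closes the argument.
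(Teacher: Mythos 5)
Your overall strategy (induct on length, with the irreducible base case handled by the decompositions \eqref{eqn:Mrs_decomp_pos}--\eqref{eqn:Mrs_decomp_neg}) is the same as the paper's, and the base case and the fullness remark are fine. The genuine gap is in the step you dismiss as ``visible'': the closure of $\Ind(\cO_p)$ under extensions of grading-restricted $L(c_p,0)$-modules. In the exact sequence $0\to A\to B\to C\to 0$ relevant here, $A$ is \emph{not} an object of $\cO_p$: already $\cM_{r,s}$ has infinite length as an $L(c_p,0)$-module, being an infinite direct sum of irreducibles. So when you pass to the preimage of a $C_1$-cofinite submodule $C_0\subseteq C$, you obtain an extension of the finite-length module $C_0$ by all of $A$, and the assertion that this is ``a union of $C_1$-cofinite submodules by the finite-length structure of $\cO_p$'' is exactly the statement being proved (a special case of it), not a consequence of anything established; the argument is circular. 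If instead you take the submodule $\langle b\rangle$ generated by a given vector $b\in B$, its image in $C$ is indeed finite length, but you have no control over $\langle b\rangle\cap A$, which could a priori fail to have finite length, so you cannot conclude $\langle b\rangle$ lies in $\cO_p$. Your ``alternative route'' is worse: a finite-length $\cM(p)$-module is \emph{not} finite length as an $L(c_p,0)$-module, so it is not a finite iterated extension of objects of $\cO_p$, and even granting extensions of objects of $\Ind(\cO_p)$, the conclusion again presupposes the extension-closure you are trying to prove.

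This missing step is where the real work lies, and the paper spends essentially its whole proof on it: one reduces to the case where the conformal weights of $W$ lie in $h+\NN$, picks an $L(0)$-eigenvector $w_1$ of minimal conformal weight, so that the cyclic module $W_1=\langle w_1\rangle$ is a highest-weight module, i.e.\ a quotient of a Virasoro Verma module. One then uses that nonzero submodules of Verma modules are never unions of $C_1$-cofinite submodules, together with the fact that $A\cap W_1$ and $W_1/(A\cap W_1)$ are submodules of $A$ and $B$ respectively and hence lie in $\Ind(\cO_p)$, to force both to be subquotients of \emph{proper} quotients of Verma modules, which have finite length and so lie in $\cO_p$ (here \cite{CJORY} is used to identify finite-length and $C_1$-cofinite modules). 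Iterating this construction on $W/W_1$, $W/W_2$, etc., and using that $W$ has finite-dimensional conformal weight spaces, one obtains an exhausting ascending chain $W=\bigcup_i W_i$ with each $W_i$ in $\cO_p$, which is what membership in $\Ind(\cO_p)$ requires. Without an argument of this kind (or a citable extension-closure result for $\Ind(\cO_p)$), your proof does not close.
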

\begin{proof}
We need to show that every module in $\cC_{\cM(p)}$ is an object of the Virasoro direct limit completion $\Ind(\cO_p)$. Since modules in $\cC_{\cM(p)}$ are finite length by definition, we can prove this by induction on the length. For the base case, the irreducible modules $\cM_{r,s}$ in $\cC_{\cM(p)}$ are objects of $\Ind(\cO_p)$ by \eqref{eqn:Mrs_decomp_pos} and \eqref{eqn:Mrs_decomp_neg}.

For the inductive step, we note that because all modules in $\cC_{\cM(p)}$ have finite length, they are in particular grading-restricted generalized $L(c_p,0)$-modules. Thus it is sufficient to prove the following claim: For any exact sequence
\[
0 \longrightarrow A \longrightarrow W \longrightarrow B \longrightarrow 0
\]
of grading-restricted generalized $L(c_p,0)$-modules with $A$ and $B$ objects of $\Ind(\cO_p)$, the extension $W$ is also an object of $\Ind(\cO_p)$.

To prove the claim, we may assume that the conformal weights of $W$ are contained in $h+\NN$ for some $h\in\CC$, since in general $W$ is a direct sum of such modules. Take an $L(0)$-eigenvector $w_1 \in W$ with minimal conformal weight, and let $W_1$ be the Virasoro submodule of $W$ generated by $w_1$. Then $W_1$ is an extension of  $W_1/A\cap W_1$ by $A \cap W_1$, which are submodules of $B$ and $A$, respectively. Since $\Ind(\cO_p)$ is closed under submodules, these two are in $\Ind(\cO_p)$. They are also subquotients of a Verma module for the Virasoro algebra, and since non-zero submodules of Virasoro Verma modules are never unions of $C_1$-cofinite submodules, $A\cap W_1$ and $W_1/A\cap W_1$ must be submodules of proper quotients of Verma modules. Such proper subquotients have finite length, so they are modules in $\cO_p$. Thus $W_1$ is also a finite-length $L(c_p,0)$-module in $\cO_p$.

Now the quotient $W/W_1$ is an extension of $W/(A+W_1) \cong (W/A)/((A+W_1)/A)$ by $(A+W_1)/W_1 \cong A/A\cap W_1$, which are quotient modules of $B$ and $A$, respectively. Since $\Ind(\cO_p)$ is closed under taking quotients, these two are objects of $\Ind(\cO_p)$. So we can use the above argument again to obtain a finite-length $L(c_p, 0)$-submodule $W_2/W_1$ of $W/W_1$ generated by an $L(0)$-eigenvector with minimal conformal weight. Continuing this way, we obtain an ascending chain of Virasoro submodules
\[
0 \subseteq W_1 \subseteq W_2 \subseteq \cdots \subseteq W_i \subseteq \cdots
\]
of $W$ such that the $W_{i+1}/W_i$ are finite-length $L(c_{p}, 0)$-modules with lowest conformal weights (non-strictly) increasing as $i$ increases. So each $W_i$ has finite length and is an object of $\cO_{p}$. Also, because $W$ has finite-dimensional conformal weight spaces, any vector $w \in W$ is contained in $W_i$ for $i$ sufficiently large, that is, $W=\bigcup_{i\geq 1} W_i$. Since each $W_i$ is in $\cO_p$, this means $W$ is in $\Ind(\cO_{p})$.
\end{proof}

\begin{remark}
 The same argument using \eqref{tripmoduledecomp} shows that the category $\cC_{\cW(p)}$ of grading-restricted generalized $\cW(p)$-modules is a full subcategory of $\Rep^0\,\cW(p)$, since every module in $\cC_{\cW(p)}$ has finite length by \cite[Proposition 4.3]{H5}. Moreover, \cite[Theorem 3.65]{CKM1} shows that $\cC_{\cW(p)}$ is also a tensor subcategory of $\Rep^0\,\cW(p)$; in particular, the fusion rules of Theorem \ref{TW} are valid in $\Rep^0\,\cW(p)$.
\end{remark}

\begin{remark}
 The exact sequences \eqref{exactsing} and \eqref{exactproj} show that the $\cM(p)$-modules $\cF_{\alpha_{r,s}}$ for $r\in\ZZ$, $1\leq s\leq p$ and $\cP_{r,p-1}$ for $r\in\ZZ$ are objects of $\cC_{\cM(p)}$ and thus also of $\Rep^0\,\cM(p)$.
\end{remark}

\subsection{Some fusion rules involving irreducible singlet modules}\label{subsec:fusion}

In this section, we calculate some fusion products of $\cM(p)$-modules in the tensor category $\Rep^0\,\cM(p)$. We start with the action by vertex operator algebra automorphisms of $\RR/L^\circ\cong U(1)$ on $V_L$ such that for $u\in\RR$ and $\lambda=n\sqrt{2p}\in L$,
\begin{equation*}
\left. \left(u+\frac{1}{\sqrt{2p}} \ZZ\right)\right\vert_{\cF_\lambda} = e^{2\pi i u\lambda}\id_{\cF_\lambda}.
\end{equation*}
This restricts to a $U(1)$-action on $\cW(p)$, and since $\cF_0$ equals the $U(1)$-fixed points in $V_L$,
\[
\cM(p) = \cW(p)^{U(1)}.
\]
Then from \cite{DLM}, $\cW(p)$ decomposes as a direct sum of irreducible $U(1)\times \cM(p)$-modules:
\[
\cW(p) = \bigoplus_{n \in \ZZ}\CC_{\a_{2n+1,1}}\otimes \cM_{2n+1,1},
\]
where $\alpha_{2n+1,1}$ represents the irreducible $U(1)$-character corresponding to $-n\alpha\in L$.

Because $U(1)$ acts by automorphisms on $\cW(p)$ and because $\cW(p)$ is a simple vertex operator algebra, the triplet vertex operator restricts to a non-zero $\cM(p)$-module intertwining operator of type $\binom{\cM_{2(n+n')+1,1}}{\cM_{2n+1,1}\,\cM_{2n'+1,1}}$ for each $n,n'\in\ZZ$. Thus the universal property of the tensor product in $\mathrm{Rep}^0\,\cM(p)$ induces a non-zero $\cM(p)$-module homomorphism
\begin{equation}\label{eqn:simple_current_surj}
 \cM_{2n+1,1}\boxtimes\cM_{2n'+1,1}\longrightarrow\cM_{2(n+n')+1,1}
\end{equation}
which is surjective because $\cM_{2(n+n')+1,1}$ is simple. Perhaps surprisingly, this map is also injective as a consequence of the tensor category structure on $\mathrm{Rep}^0\,\cM(p)$, in particular the associativity isomorphisms and the pentagon and triangle identities. This is proved in \cite[Appendix A]{CKLR}, which generalizes to compact abelian automorphism groups of a simple vertex operator algebra the corresponding result for finite cyclic groups proved in \cite[Proposition 20]{Mi} and \cite[Theorem 4.2]{CM}:
\begin{lemma}
For $n\in \ZZ$, the $\cM(p)$-modules $\cM_{2n+1, 1}$ are simple currents with fusion products
\begin{equation}\label{simplewithsimple}
\cM_{2n+1, 1} \btimes \cM_{2n'+1, 1} = \cM_{2(n+n')+1, 1}.
\end{equation}
\end{lemma}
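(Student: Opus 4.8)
The statement is an instance of the general principle that in a simple current extension $A\subseteq\widetilde A$ by a compact abelian group $G$, the $G$-isotypic components of $\widetilde A$ are simple currents for $A$ whose fusion mirrors the group law on the character lattice. The plan is therefore to record the relevant $U(1)$-action, produce the comparison maps from the vertex operator, and then appeal to the categorical incarnation of this principle.

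First I would assemble the input data, all of which is already in place above. The group $U(1)\cong\RR/L^\circ$ acts by vertex operator algebra automorphisms on $\cW(p)$ (by restriction of its action on $V_L$), its fixed-point subalgebra is $\cM(p)$, and by \cite{DLM} one has the multiplicity-free $U(1)\times\cM(p)$-module decomposition $\cW(p)=\bigoplus_{n\in\ZZ}\CC_{\alpha_{2n+1,1}}\otimes\cM_{2n+1,1}$; in particular each $\cM_{2n+1,1}$ is a simple object of $\Rep^0\,\cM(p)$ by \eqref{eqn:Mrs_decomp_pos}--\eqref{eqn:Mrs_decomp_neg}, and $\cM_{1,1}=\cM(p)$ is the unit. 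Next, for fixed $n,n'$ the vertex operator $Y_{\cW(p)}$ is $U(1)$-equivariant, so it restricts to an $\cM(p)$-module intertwining operator of type $\binom{\cM_{2(n+n')+1,1}}{\cM_{2n+1,1}\,\cM_{2n'+1,1}}$; this operator is nonzero by standard results on the decomposition of a simple vertex operator algebra under a compact automorphism group (and is in any case contained in the general statement quoted below), and the universal property of $\boxtimes$ in $\Rep^0\,\cM(p)$ then produces an $\cM(p)$-module homomorphism $\cM_{2n+1,1}\boxtimes\cM_{2n'+1,1}\to\cM_{2(n+n')+1,1}$, necessarily surjective since the target is simple.

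The only non-formal step is injectivity of this surjection, and here I would invoke \cite[Appendix A]{CKLR} (which extends \cite[Proposition 20]{Mi} and \cite[Theorem 4.2]{CM} from finite cyclic to compact abelian automorphism groups of a simple vertex operator algebra): the argument there uses the associativity isomorphisms of $\Rep^0\,\cM(p)$ together with the pentagon and triangle axioms to show simultaneously that the above map has trivial kernel and that each $\cM_{2n+1,1}$ is invertible under $\boxtimes$, i.e.\ is a simple current. I expect this categorical input to be exactly where the substance lies: injectivity is genuinely a consequence of the coherence data of the braided tensor category rather than of any elementary module-theoretic manipulation, whereas the first two steps are just bookkeeping with the already-established structure of $\cW(p)$ as an $\cM(p)$-module.
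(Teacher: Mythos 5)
Your proposal is correct and follows essentially the same route as the paper: the $U(1)$-action with the \cite{DLM} decomposition, a nonzero intertwining operator obtained by restricting $Y_{\cW(p)}$ (nonzero since $\cW(p)$ is simple), the universal property of $\boxtimes$ giving a surjection onto the simple target, and injectivity plus the simple-current property delegated to \cite[Appendix A]{CKLR}. Nothing is missing.
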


\begin{remark}
 A more general result for non-abelian compact automorphism groups is proved in Theorem 4.7 and Corollary 4.8 of \cite{McR1}: Suppose $V$ is a simple vertex operator algebra, $G$ is a compact Lie group acting continuously on $V$ by automorphisms, and the irreducible submodules occurring in the decomposition of $V$ as a $V^G$-module are objects of some vertex algebraic braided tensor category $\cC$ of $V^G$-modules. Then these irreducible submodules are the simple objects of a tensor subcategory of $\cC$ that is braided tensor equivalent to the symmetric tensor category of finite-dimensional continuous $G$-modules. In particular, the fusion products of these irreducible $V^G$-modules correspond to tensor products of simple finite-dimensional $G$-modules; moreover, these fusion products are independent of the choice of tensor category $\cC$.
 
 The proof of \cite[Theorem 4.7]{McR1}, which gives the injectivity of \eqref{eqn:simple_current_surj}, can be viewed as a non-abelian generalization of the argument in \cite[Appendix A]{CKLR}, but it is purely tensor-categorical in nature. The same tensor-categorical injectivity result was proved earlier in \cite[Theorem 2.5]{Ki}, but under the assumptions that $G$ is finite and $\cC$ is a rigid tensor category. Theorem 4.7 of \cite{McR1}, on the other hand, uses rigidity only for the category of finite-dimensional $G$-modules and not for the tensor category $\cC$ of $V^G$-modules.
\end{remark}

\begin{remark}
 In the case $p=2$, \eqref{simplewithsimple} was proved at the level of intertwining operators in \cite{AM5}. More generally, \cite[Theorem 3.4]{AM5} implies that for any $n\in\ZZ$ and simple $\cM(2)$-module $M_1$, there is a unique simple module $M_2$ such that the space of intertwining operators of type $\binom{M_2}{\cM_{2n+1,1}\,M_1}$ is non-zero (and one-dimensional). When $M_1=\cM_{2n'+1,1}$, clearly $M_2$ must be $\cM_{2(n+n')+1,1}$.
\end{remark}

As a consequence of \eqref{simplewithsimple}, we prove:
\begin{prop}\label{prop:ind_exact}
 The induction functor $\cF_{\cW(p)}: \Rep^0\,\cM(p)\rightarrow\Rep\,\cW(p)$ is exact.
\end{prop}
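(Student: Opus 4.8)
The plan is to reduce everything to the simple-current decomposition of $\cW(p)$ over its singlet subalgebra. Write $\cG_{\cW(p)}\colon\Rep\,\cW(p)\to\Rep^0\,\cM(p)$ for the functor restricting the action along $\cM(p)\hookrightarrow\cW(p)$; it is exact and faithful, hence it reflects exact sequences, and by the construction of induced modules in \cite{KO,CKM1} its composite with induction is naturally the endofunctor $\cW(p)\boxtimes(-)$ of $\Rep^0\,\cM(p)$, that is, $\cG_{\cW(p)}\circ\cF_{\cW(p)}=\cW(p)\boxtimes(-)$. Thus it suffices to prove that the endofunctor $\cW(p)\boxtimes(-)$ of $\Rep^0\,\cM(p)$ is exact. (Since $\cF_{\cW(p)}$ is automatically right exact, being a left adjoint of $\cG_{\cW(p)}$ \cite{CKM1}, the real content is left exactness, but the argument below yields full exactness directly.)

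Next I would exploit the decomposition $\cW(p)=\bigoplus_{n\in\ZZ}\cM_{2n+1,1}$ as an $\cM(p)$-module. By \eqref{simplewithsimple} together with $\cM_{1,1}=\cM(p)$ (compare \eqref{sing} and \eqref{eqn:Mrs_decomp_pos}), each $\cM_{2n+1,1}$ is invertible in $\Rep^0\,\cM(p)$, with inverse $\cM_{-2n+1,1}$, since $\cM_{2n+1,1}\boxtimes\cM_{-2n+1,1}=\cM_{1,1}=\cM(p)$ and likewise on the other side. Consequently $\cM_{2n+1,1}\boxtimes(-)$ is an equivalence of categories with quasi-inverse $\cM_{-2n+1,1}\boxtimes(-)$, and in particular it is exact for every $n$.

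Finally, given a short exact sequence $0\to A\to B\to C\to 0$ in $\Rep^0\,\cM(p)$, applying the exact functors $\cM_{2n+1,1}\boxtimes(-)$ and then taking the direct sum over $n\in\ZZ$ yields an exact sequence, because direct sums are exact in the ambient category of graded $\cM(p)$-modules and $\Rep^0\,\cM(p)$ is closed under them. To conclude, I need the natural identification
\[
\cW(p)\boxtimes X \;=\; \Bigl(\bigoplus_{n\in\ZZ}\cM_{2n+1,1}\Bigr)\boxtimes X \;\cong\; \bigoplus_{n\in\ZZ}\bigl(\cM_{2n+1,1}\boxtimes X\bigr),
\]
compatible with morphisms in $X$. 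Granting this, $\cW(p)\boxtimes(-)$ carries $0\to A\to B\to C\to 0$ to a short exact sequence, hence is exact, and therefore so is $\cF_{\cW(p)}$.

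The one genuinely non-formal point — and the step I expect to be the main obstacle — is the displayed identification: it asserts that the vertex-algebraic tensor product $\boxtimes$ on $\Ind(\cO_p)$, and hence on $\Rep^0\,\cM(p)$, commutes with arbitrary direct sums in each variable. I would establish this from the construction of the tensor product on the direct limit completion in \cite{CMY}, writing $\bigoplus_{n}\cM_{2n+1,1}$ as the filtered colimit of its finite partial sums and using that $\boxtimes$ on $\Ind(\cO_p)$ is built to be compatible with such colimits; the finite case then reduces to additivity of $\boxtimes$. Everything else in the argument is formal once the simple-current fusion rule \eqref{simplewithsimple} is available.
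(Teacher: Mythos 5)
Your argument is correct and takes essentially the same route as the paper: both reduce to the decomposition $\cW(p)=\bigoplus_{n\in\ZZ}\cM_{2n+1,1}$ as an $\cM(p)$-module, use right exactness of $\boxtimes$ plus exactness of tensoring with each simple current $\cM_{2n+1,1}$ (your invertibility/equivalence phrasing and the paper's ``rigid simple current'' phrasing are the same fact), and identify $\id_{\cW(p)}\boxtimes f$ with $\bigoplus_{n}\id_{\cM_{2n+1,1}}\boxtimes f$. The compatibility of $\boxtimes$ with the infinite direct sum that you flag is precisely the identification the paper also invokes, and it does hold in the direct limit completion framework of \cite{CMY}, so there is no gap.
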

\begin{proof}
 Since induction is the functor $\cW(p)\boxtimes\bullet$, it is right exact by \cite[Proposition 4.26]{HLZ3}. So we just need to show that if $f: M_1\rightarrow M_2$ is an injective $\cM(p)$-module homomorphism in $\Rep^0\,\cM(p)$, then $\id_{\cW(p)}\boxtimes f$ is still injective. As a homomorphism of $\cM(p)$-modules, $\id_{\cW(p)}\boxtimes f$ can be naturally identified with
 \begin{equation*}
  \bigoplus_{n\in\ZZ} \id_{\cM_{2n+1,1}}\boxtimes f : \bigoplus_{n\in\ZZ} \cM_{2n+1,1}\boxtimes M_1\rightarrow\bigoplus_{n\in\ZZ} \cM_{2n+1,1}\boxtimes M_2.
 \end{equation*}
Since each $\cM_{2n+1,1}$ is a necessarily rigid simple current and since tensoring with rigid objects is exact, each $\id_{\cM_{2n+1,1}}\boxtimes f$ is injective. Thus $\id_{\cW(p)}\boxtimes f$ is injective as well.
\end{proof}

Next we look at how the simple currents $\cM_{2n+1,1}$ fuse with other simple modules in $\Rep^0\,\cM(p)$, and we calculate inductions:

\begin{prop}\label{induction_simple}
For $r \in \ZZ$, $1 \leq s \leq p$, and $n \in \ZZ$,
\begin{equation}\label{simplewithatypical}
\cM_{2n+1, 1}\btimes \cM_{r,s} = \cM_{2n+r,s},
\end{equation}
and for $r\in\ZZ$, $1\leq s\leq p$,
\begin{equation}
\label{inductionsimple} \cF_{\cW(p)}(\cM_{r,s}) = \cW_{\bar{r},s}
\end{equation}
where $\bar{r} = 1$ or $2$ according as $r$ is odd or even.
\end{prop}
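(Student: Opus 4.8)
The plan is to prove the two formulas in tandem, using the simple-current fusion rule \eqref{simplewithsimple} together with the commutative diagram \eqref{diag:screening} relating the singlet screening short exact sequences to the triplet screening short exact sequences. First I would establish \eqref{simplewithatypical}. The module $\cM_{2n+1,1}$ is a simple current by \eqref{simplewithsimple}, so tensoring with it is an exact autoequivalence of $\Rep^0\,\cM(p)$ that permutes simple objects; in particular $\cM_{2n+1,1}\btimes\cM_{r,s}$ is again a simple object of $\Rep^0\,\cM(p)$, hence of the form $\cM_{r',s'}$ for some $r'\in\ZZ$, $1\le s'\le p$. To pin down $(r',s')$, I would produce a nonzero intertwining operator of type $\binom{\cM_{2n+r,s}}{\cM_{2n+1,1}\;\cM_{r,s}}$ and invoke the universal property of $\boxtimes$ to get a nonzero (hence, by simplicity on both sides, an isomorphism) $\cM(p)$-module map $\cM_{2n+1,1}\btimes\cM_{r,s}\to\cM_{2n+r,s}$. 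Such an intertwining operator comes from the $U(1)\times\cM(p)$-module decomposition already in play: $\cW(p)=\bigoplus_{n\in\ZZ}\CC_{\alpha_{2n+1,1}}\otimes\cM_{2n+1,1}$ acts on any $\cW(p)$-module, so restricting the action of $\cW(p)$ on $\cF_{\alpha_{r,s}}\subseteq V_{\alpha_{\bar r,s}+L}$ — or more precisely on the appropriate $U(1)$-isotypic pieces — gives the required singlet intertwining operator landing in $\cM_{2n+r,s}$. Alternatively, one can argue purely at the level of conformal weights and lengths: by \eqref{eqn:Mrs_decomp_pos}--\eqref{eqn:Mrs_decomp_neg} the lowest conformal weight of $\cM_{2n+1,1}\btimes\cM_{r,s}$ is forced, together with the $\ZZ/2$-grading constraint coming from $U(1)$-charges, to be that of $\cM_{2n+r,s}$.

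Next I would deduce \eqref{inductionsimple}. By the projection formula / the description of induction, $\cF_{\cW(p)}(\cM_{r,s})=\cW(p)\boxtimes\cM_{r,s}=\bigoplus_{n\in\ZZ}\cM_{2n+1,1}\boxtimes\cM_{r,s}=\bigoplus_{n\in\ZZ}\cM_{2n+r,s}$ as $\cM(p)$-modules, using \eqref{simplewithatypical}. On the other hand, by \eqref{tripmoduledecomp} and the $U(1)\times\cM(p)$-decomposition of $\cW_{\bar r,s}$ (obtained exactly as for $\cW(p)$ itself, via $\cM_{r,s}=\cW_{\bar r,s}\cap\cF_{\alpha_{r,s}}$ in \eqref{diag:screening}), the $\cW(p)$-module $\cW_{\bar r,s}$ decomposes as $\bigoplus_{n\in\ZZ}\cM_{2n+r,s}$ as an $\cM(p)$-module. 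So $\cF_{\cW(p)}(\cM_{r,s})$ and $\cW_{\bar r,s}$ have the same underlying $\cM(p)$-module. To upgrade this to an isomorphism of $\cW(p)$-modules, I would use that induction sends the inclusion $\cM_{r,s}\hookrightarrow\cF_{\cW(p)}(\cM_{r,s})$ to a map, and that $\cF_{\cW(p)}(\cM_{r,s})$ has a unique, up to scalar, nonzero $\cM(p)$-module map from $\cM_{r,s}$; since $\cW_{\bar r,s}$ is a simple $\cW(p)$-module containing $\cM_{r,s}$ as its degree-$h_{r,s}$ $\cM(p)$-submodule and generated by it over $\cW(p)$, Frobenius reciprocity $\Hom_{\cW(p)}(\cF_{\cW(p)}(\cM_{r,s}),\cW_{\bar r,s})\cong\Hom_{\cM(p)}(\cM_{r,s},\cW_{\bar r,s})$ gives a nonzero $\cW(p)$-module map $\cF_{\cW(p)}(\cM_{r,s})\to\cW_{\bar r,s}$, which is surjective ($\cW_{\bar r,s}$ simple) hence an isomorphism (equal $\cM(p)$-composition-factor multiplicities, all finite on each weight space).

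The main obstacle I anticipate is the careful bookkeeping needed to identify $(r',s')$ in \eqref{simplewithatypical} — in particular ruling out the "wrong" simple module with the same lowest conformal weight, since $h_{r,s}$ is not injective in $(r,s)$ (there are the periodicity and reflection coincidences noted after \eqref{eqn:h_rs} and around \eqref{eqn:Mrs_decomp_pos}--\eqref{eqn:Mrs_decomp_neg}). The $U(1)$-charge grading is exactly the extra invariant that resolves this: tensoring with $\cM_{2n+1,1}$ shifts the $U(1)$-charge by $-n\alpha$, so the charge of $\cM_{2n+1,1}\btimes\cM_{r,s}$ determines the "$r$-shift" unambiguously, and then the lowest conformal weight determines $s$. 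Making this precise is most cleanly done by working inside $\cW(p)$ and its modules, where the $U(1)$-action is manifest, rather than intrinsically on the singlet side; once \eqref{simplewithatypical} is secure, \eqref{inductionsimple} is essentially a formal consequence of Frobenius reciprocity and simplicity of the $\cW_{\bar r,s}$.
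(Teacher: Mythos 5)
Your proposal is correct and follows essentially the paper's own strategy: for \eqref{simplewithatypical} you build a nonzero $\cM(p)$-intertwining operator from the lattice/triplet realization (the paper restricts a Heisenberg Fock intertwining operator and identifies the image with $\mathrm{Soc}(\cF_{\alpha_{2n+r,s}})$, while you restrict the $\cW(p)$-structure on $\cW_{\bar{r},s}$ to land directly in $\cM_{2n+r,s}$) and conclude by simplicity of both sides via the simple-current property, and for \eqref{inductionsimple} you use Frobenius reciprocity applied to $\cM_{r,s}\hookrightarrow\cW_{\bar{r},s}$ exactly as the paper does. The only real deviation is the final step of \eqref{inductionsimple}: the paper gets an isomorphism because $\cF_{\cW(p)}(\cM_{r,s})$ is irreducible (by \cite[Proposition 4.4]{CKM1}, using that the $\cM_{2n+1,1}\boxtimes\cM_{r,s}$ are distinct simples), whereas you use surjectivity plus the matching $\cM(p)$-decomposition $\cW_{\bar{r},s}\cong\bigoplus_{n\in\ZZ}\cM_{2n+r,s}$ and a graded-dimension count, which also works; just make sure to record explicitly (as the paper does via \cite[Proposition 11.9]{DL}, or via simplicity of $\cW_{\bar{r},s}$ as a $\cW(p)$-module) why the restricted intertwining operator in the first part is nonzero.
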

\begin{proof}
If we compose the injections $\cM_{2n+1,1}\rightarrow\cF_{\alpha_{2n+1,1}}$ and $\cM_{r,s}\rightarrow\cF_{\alpha_{r,s}}$ of \eqref{exactsing} with the tensor product intertwining operator for Heisenberg Fock modules, we get an $\cM(p)$-module intertwining operator of type $\binom{\cF_{\alpha_{2n+r,s}}}{\cM_{2n+1,1}\,\cM_{r,s}}$. Since Fock modules are irreducible as $\cH$-modules, \cite[Proposition 11.9]{DL} shows that this intertwining operator is non-zero, so we get a non-zero $\cM(p)$-module homomorphism $\cM_{2n+1,1}\boxtimes\cM_{r,s}\rightarrow\cF_{\alpha_{2n+r,s}}$. Since $\cM_{2n+1,1}$ is a simple current, $\cM_{2n+1,1}\boxtimes\cM_{r,s}$ is simple \cite[Proposition 2.5(3)]{CKLR}. So the image of this homomorphism is a simple submodule of $\cF_{\alpha_{2n+r,s}}$, which must be $\cM_{2n+r,s}=\mathrm{Soc}(\cF_{\alpha_{2n+r,s}})$. This proves \eqref{simplewithatypical}.

Now (\ref{simplewithatypical}) says that the $\cM_{2n+1,1}\boxtimes\cM_{r,s}$ for $n\in\ZZ$ are distinct, so by \cite[Proposition~4.4]{CKM1}, $\cF_{\cW(p)}(\cM_{r,s})$ is an irreducible $\cW(p)$-module. Then Frobenius reciprocity applied to the embedding $\cM_{r,s}\rightarrow\cW_{\bar{r},s}$ implies there is a non-zero $\cW(p)$-homomorphism $\cF_{\cW(p)}(\cM_{r,s})\rightarrow\cW_{\bar{r},s}$, which must be an isomorphism.
\end{proof}

Now we use Proposition \ref{induction_simple} to calculate fusion of the simple currents with reducible Fock modules:
\begin{prop}\label{prop:induction_Fock}
 For $r\in\ZZ$, $1\leq s\leq p-1$, and $n\in\ZZ$,
 \begin{equation}\label{simplewithtypical}
\cM_{2n+1,1}\btimes \cF_{\alpha_{r,s}} = \cF_{\alpha_{2n+r,s}},
\end{equation}
and for $r\in\ZZ$, $1\leq s\leq p-1$,
\begin{equation}
 \label{inductionverma} \cF_{\cW(p)}(\cF_{\a_{r,s}}) = V_{\a_{\bar{r},s}+L}
\end{equation}
where $\bar{r}=1$ or $2$ according as $r$ is odd or even.
\end{prop}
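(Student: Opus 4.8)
The plan is to combine the simple-current fusion rules of Proposition~\ref{induction_simple} with the exactness of induction from Proposition~\ref{prop:ind_exact}, exactly mirroring how \eqref{inductionsimple} was obtained from \eqref{simplewithatypical}. For \eqref{simplewithtypical}, I would first tensor the non-split exact sequence \eqref{exactsing},
\[
0 \longrightarrow \cM_{r,s} \longrightarrow \cF_{\a_{r,s}} \longrightarrow \cM_{r+1, p-s} \longrightarrow 0,
\]
with the simple current $\cM_{2n+1,1}$. Since $\cM_{2n+1,1}$ is rigid (it is a simple current, hence rigid by \cite[Proposition 2.5]{CKLR}), tensoring with it is exact, so applying \eqref{simplewithatypical} to the two ends yields a short exact sequence
\[
0 \longrightarrow \cM_{2n+r,s} \longrightarrow \cM_{2n+1,1}\btimes\cF_{\a_{r,s}} \longrightarrow \cM_{2n+r+1, p-s} \longrightarrow 0.
\]
Thus $\cM_{2n+1,1}\btimes\cF_{\a_{r,s}}$ is a length-two module with the same composition factors as $\cF_{\a_{2n+r,s}}$; the remaining point is to see it is indecomposable, i.e.\ the sequence above does not split.

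To rule out splitting, I would use that tensoring with the simple current $\cM_{2n+1,1}$ is an autoequivalence of $\Rep^0\,\cM(p)$ (its inverse is $\cM_{-2n+1,1}\btimes-$, by \eqref{simplewithsimple}). An autoequivalence preserves non-split-ness of extensions: if $\cM_{2n+1,1}\btimes\cF_{\a_{r,s}}$ were to split as $\cM_{2n+r,s}\oplus\cM_{2n+r+1,p-s}$, then applying the inverse autoequivalence $\cM_{-2n+1,1}\btimes-$ and again \eqref{simplewithatypical} would force $\cF_{\a_{r,s}}\cong\cM_{r,s}\oplus\cM_{r+1,p-s}$, contradicting the non-splitness of \eqref{exactsing}. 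Hence $\cM_{2n+1,1}\btimes\cF_{\a_{r,s}}$ is the unique non-split self-extension-free indecomposable with these factors, namely $\cF_{\a_{2n+r,s}}$ (uniqueness here follows because $\mathrm{Ext}^1(\cM_{2n+r+1,p-s},\cM_{2n+r,s})$ is one-dimensional, as can be read off from \eqref{exactsing} applied with indices shifted, or one simply notes that $\cM_{2n+r+1,p-s}$ sits in the socle filtration of $\cF_{\a_{2n+r,s}}$ in the required way). This proves \eqref{simplewithtypical}.

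For \eqref{inductionverma}, I would apply the exact induction functor $\cF_{\cW(p)}$ of Proposition~\ref{prop:ind_exact} to \eqref{exactsing}, obtaining
\[
0 \longrightarrow \cF_{\cW(p)}(\cM_{r,s}) \longrightarrow \cF_{\cW(p)}(\cF_{\a_{r,s}}) \longrightarrow \cF_{\cW(p)}(\cM_{r+1,p-s}) \longrightarrow 0,
\]
which by \eqref{inductionsimple} reads
\[
0 \longrightarrow \cW_{\bar r,s} \longrightarrow \cF_{\cW(p)}(\cF_{\a_{r,s}}) \longrightarrow \cW_{3-\bar r,p-s} \longrightarrow 0,
\]
since $\overline{r+1}=3-\bar r$. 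Comparing with \eqref{exactvrs}, the module $\cF_{\cW(p)}(\cF_{\a_{r,s}})$ has the same composition factors as $V_{\a_{\bar r,s}+L}$. To identify it precisely, note that $\cF_{\cW(p)}(\cF_{\a_{r,s}})$ is a quotient of $\cF_{\cW(p)}(\cF_{\a_{r,s}})$... more efficiently: by Frobenius reciprocity the inclusion $\cF_{\a_{r,s}}\hookrightarrow V_{\a_{\bar r,s}+L}$ of $\cM(p)$-modules (visible inside $V_{L^\circ}$) induces a nonzero $\cW(p)$-module map $\cF_{\cW(p)}(\cF_{\a_{r,s}})\to V_{\a_{\bar r,s}+L}$; this map is compatible with the two length-two filtrations and is an isomorphism on the sub $\cW_{\bar r,s}$, hence an isomorphism onto its image which, having the right composition factors, is all of $V_{\a_{\bar r,s}+L}$, and injectivity follows by counting lengths. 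Alternatively one can argue that $\cF_{\cW(p)}(\cF_{\a_{r,s}})$ is non-split (so $\cong V_{\a_{\bar r,s}+L}$) because induction sends the non-split extension \eqref{exactsing} to a non-split one --- this uses that if it split, then restricting back along $\cM(p)\subseteq\cW(p)$ and projecting to the isotypic component would split \eqref{exactsing}.

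The main obstacle is the indecomposability claim in \eqref{simplewithtypical}: a priori the fusion product is only known to have the correct composition factors, and one must genuinely use the autoequivalence property of the simple current (not merely rigidity) to transport non-splitness of \eqref{exactsing} to the fusion product. Everything else is formal bookkeeping with exact sequences, Frobenius reciprocity, and the already-established fusion rules \eqref{simplewithatypical}--\eqref{inductionsimple}.
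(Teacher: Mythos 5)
Your overall strategy (tensor \eqref{exactsing} with the simple current, use exactness, then identify the resulting extension) is reasonable, but the identification step is a genuine gap. After your autoequivalence argument you only know that $\cM_{2n+1,1}\btimes\cF_{\alpha_{r,s}}$ is \emph{some} non-split extension of $\cM_{2n+r+1,p-s}$ by $\cM_{2n+r,s}$; to conclude it is $\cF_{\alpha_{2n+r,s}}$ you need $\dim\Ext^1(\cM_{2n+r+1,p-s},\cM_{2n+r,s})=1$, and this cannot be ``read off'' from \eqref{exactsing}, which only shows the Ext group is non-zero; the remark that $\cM_{2n+r+1,p-s}$ ``sits in the socle filtration of $\cF_{\alpha_{2n+r,s}}$'' is not an argument. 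The same problem recurs in your alternative for \eqref{inductionverma}: ``non-split, so $\cong V_{\alpha_{\bar r,s}+L}$'' tacitly uses one-dimensionality of $\Ext^1_{\cW(p)}(\cW_{3-\bar r,p-s},\cW_{\bar r,s})$, which is likewise not available at this point of the paper. Your primary argument for \eqref{inductionverma} is also circular as written: you say the Frobenius map is ``an isomorphism onto its image which, having the right composition factors, is all of $V_{\alpha_{\bar r,s}+L}$, and injectivity follows by counting lengths'' --- but the image only inherits the composition factors of the source if the map is injective, which is exactly what is being claimed; a priori the image could be just $\cW_{\bar r,s}$.

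The paper avoids any Ext computation by reversing your order: it first proves \eqref{inductionverma}, comparing the induced exact sequence of \eqref{exactsing} with \eqref{exactvrs} via a commutative diagram whose vertical composites are the Frobenius reciprocity maps (commutativity is checked using \eqref{diag:screening} and the fact that the lower arrows come from the $\cW(p)$-module vertex operators); since the outer maps are isomorphisms by Proposition \ref{induction_simple}, the short five lemma gives $\cF_{\cW(p)}(\cF_{\alpha_{r,s}})\cong V_{\alpha_{\bar r,s}+L}$. Then \eqref{simplewithtypical} follows by projecting this diagram onto the $\cM_{2n+1,1}\btimes(\cdot)$ summands and applying the short five lemma again --- the comparison map $\cM_{2n+1,1}\btimes\cF_{\alpha_{r,s}}\to\cF_{\alpha_{2n+r,s}}$ comes for free from the lattice module structure, so no uniqueness of extensions is needed. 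If you want to keep your order, you can repair the gap by constructing an explicit non-zero map $\cM_{2n+1,1}\btimes\cF_{\alpha_{r,s}}\to\cF_{\alpha_{2n+r,s}}$ from a Heisenberg Fock-module intertwining operator (exactly as in the proof of \eqref{simplewithatypical}); combined with your non-splitness claim, such a map cannot have image equal to the socle, hence is surjective, and a length count makes it an isomorphism.
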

\begin{proof}
We use the diagram of $\cM(p)$-module homomorphisms
\begin{equation*}
 \xymatrixcolsep{3pc}
 \xymatrix{
 0 \ar[r] & \cW(p)\boxtimes\cM_{r,s} \ar[r] \ar[d] & \cW(p)\boxtimes\cF_{\alpha_{r,s}} \ar[r] \ar[d] & \cW(p)\boxtimes \cM_{r+1,p-s} \ar[r] \ar[d] & 0 \\
 0 \ar[r] & \cW(p)\boxtimes\cW_{\bar{r},s} \ar[r] \ar[d] & \cW(p)\boxtimes V_{\alpha_{\bar{r},s}+L} \ar[r] \ar[d] & \cW(p)\boxtimes\cW_{3-\bar{r},p-s} \ar[r] \ar[d] & 0 \\
 0 \ar[r] & \cW_{\bar{r},s} \ar[r] & V_{\alpha_{\bar{r},s}+L} \ar[r] & \cW_{3-\bar{r},p-s} \ar[r] & 0 \\
 }
\end{equation*}
Here the upper vertical arrows are induced by the obvious inclusions and the lower vertical arrows are induced by the $\cW(p)$-module vertex operators (which are also $\cM(p)$-module intertwining operators). All three rows are exact, the first two by Proposition \ref{prop:ind_exact}. The upper squares commute by \eqref{diag:screening}, and the lower squares commute because the lower vertical arrows come from $\cW(p)$-module vertex operators while the bottom two rows come from $\cW(p)$-module homomorphisms.

Now, the compositions of vertical arrows in the diagram are the $\cW(p)$-module homomorphisms induced by Frobenius reciprocity. Since the first and third are isomorphisms by Proposition \ref{induction_simple}, the middle homomorphism $\cF_{\cW(p)}(\cF_{\alpha_{r,s}})\rightarrow V_{\alpha_{\bar{r},s}+L}$ is also an isomorphism by the short five lemma, proving \eqref{inductionverma}. For \eqref{simplewithtypical}, we project all homomorphisms in the commutative diagram to the summands involving $\cM_{2n+1,1}\subseteq \cW(p)$, leading to the commutative diagram
\begin{equation*}
 \xymatrixcolsep{2pc}
 \xymatrix{
0 \ar[r] & \cM_{2n+1, 1}\btimes \cM_{r,s} \ar[r] \ar[d]^{\cong} &\cM_{2n+1, 1}\btimes \cF_{\a_{r,s}} \ar[d] \ar[r] & \cM_{2n+1,1}\btimes \cM_{r+1, p-s} \ar[r] \ar[d]^{\cong} \ar[r] & 0 \\
0 \ar[r] &\cM_{2n+r, s} \ar[r] & \cF_{\a_{2n+r, s}} \ar[r] & \cM_{2n+r+1, p-s} \ar[r] & 0 \\
 }
\end{equation*}
where the first and third vertical arrows are isomorphisms by \eqref{simplewithatypical}. So $\cM_{2n+1,1}\boxtimes\cF_{\alpha_{r,s}}\cong \cF_{\alpha_{2n+r,s}}$ again by the short five lemma.
\end{proof}

Using Proposition \ref{prop:induction_Fock}, the commutative diagram \eqref{diag:proj_comm}, and an argument similar to the proof of Proposition \ref{prop:induction_Fock}, we can prove:
\begin{prop}
For $r, n\in\ZZ$,
\begin{equation}\label{simplewithprojective}
\cM_{2n+1, 1}\btimes \cP_{r,p-1} = \cP_{2n+r,p-1},
\end{equation}
and for $r\in\ZZ$,
\begin{eqnarray}
\label{inductionprojective} &&\cF_{\cW(p)}(\cP_{r,p-1}) = \cR_{\bar{r},p-1}
\end{eqnarray}
where $\bar{r}=1$ or $2$ according as $r$ is even or odd.
\end{prop}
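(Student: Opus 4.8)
The plan is to run the same kind of five-lemma argument used in the proof of Proposition \ref{prop:induction_Fock}, but now applied to the defining exact sequence \eqref{exactproj} for $\cP_{r,p-1}$ and the corresponding sequence \eqref{eqn:yrw_seq}--\eqref{eqn:wyww_seq} (or rather the exact sequence $0\to V_{\alpha_{\bar r,p-1}+L}\to\cR_{\bar r,p-1}\to V_{\alpha_{3-\bar r,1}+L}\to 0$) for $\cR_{\bar r,p-1}$. First I would apply the induction functor $\cF_{\cW(p)}$, which is exact by Proposition \ref{prop:ind_exact}, to the sequence
\[
0\longrightarrow\cF_{\alpha_{r,p-1}}\longrightarrow\cP_{r,p-1}\longrightarrow\cF_{\alpha_{r-1,1}}\longrightarrow 0
\]
to get a short exact sequence of $\cW(p)$-modules
\[
0\longrightarrow\cF_{\cW(p)}(\cF_{\alpha_{r,p-1}})\longrightarrow\cF_{\cW(p)}(\cP_{r,p-1})\longrightarrow\cF_{\cW(p)}(\cF_{\alpha_{r-1,1}})\longrightarrow 0.
\]
By \eqref{inductionverma} of Proposition \ref{prop:induction_Fock}, the outer terms are $V_{\alpha_{\bar r,p-1}+L}$ and $\cF_{\cW(p)}(\cF_{\alpha_{r-1,1}})$; note $\cF_{\alpha_{r-1,1}}=\cM_{r-1,1}$ is already irreducible (the $s=1$ case has no reducibility), so by \eqref{inductionsimple} the third term is $\cW_{\overline{r-1},1}=V_{\alpha_{3-\bar r,1}+L}$ since $\overline{r-1}=3-\bar r$ as $r-1$ has opposite parity to $r$. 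Wait—here one must be careful: $\cW_{\overline{r-1},1}=V_{\alpha_{\overline{r-1},1}+L}$ only when $s=p$, whereas for $s=1<p$ the module $\cW_{\overline{r-1},1}$ is a proper submodule of $V_{\alpha_{\overline{r-1},1}+L}$. So in fact I should use $\cF_{\cW(p)}(\cF_{\alpha_{r-1,1}})=V_{\alpha_{\overline{r-1},1}+L}$ from \eqref{inductionverma} with $s=1$, and $\overline{r-1}=3-\bar r$.

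Second, I would construct the vertical comparison map. The commutative diagram \eqref{diag:proj_comm} realizes $\cP_{r,p-1}$ as an $\cM(p)$-submodule of $\cR_{\bar r,p-1}$, with the inclusion compatible with the inclusions $\cF_{\alpha_{r,p-1}}\hookrightarrow V_{\alpha_{\bar r,p-1}+L}$ and $\cF_{\alpha_{r-1,1}}\hookrightarrow V_{\alpha_{3-\bar r,1}+L}$ of Heisenberg modules into lattice modules. Applying Frobenius reciprocity to the $\cM(p)$-module map $\cP_{r,p-1}\hookrightarrow\cR_{\bar r,p-1}$ (the latter viewed as a $\cW(p)$-module, hence a fortiori an $\cM(p)$-module) yields a $\cW(p)$-module homomorphism $\cF_{\cW(p)}(\cP_{r,p-1})\to\cR_{\bar r,p-1}$. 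Naturality of Frobenius reciprocity, together with \eqref{diag:proj_comm}, gives a commutative ladder whose rows are the above exact sequence of inductions and the exact sequence $0\to V_{\alpha_{\bar r,p-1}+L}\to\cR_{\bar r,p-1}\to V_{\alpha_{3-\bar r,1}+L}\to 0$, and whose outer vertical arrows are exactly the Frobenius-reciprocity isomorphisms $\cF_{\cW(p)}(\cF_{\alpha_{r,p-1}})\isom V_{\alpha_{\bar r,p-1}+L}$ and $\cF_{\cW(p)}(\cF_{\alpha_{r-1,1}})\isom V_{\alpha_{3-\bar r,1}+L}$ supplied by Proposition \ref{prop:induction_Fock}. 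The short five lemma then forces the middle vertical arrow $\cF_{\cW(p)}(\cP_{r,p-1})\to\cR_{\bar r,p-1}$ to be an isomorphism, proving \eqref{inductionprojective}. (The parity statement—$\bar r=1$ or $2$ according as $r$ is even or odd—in the proposition is the reversed convention from elsewhere; I would silently align it with \eqref{diag:proj_comm}, i.e.\ $\bar r$ is $1$ when $r$ is odd, or else note the relabeling $\cR_{1,p-1}\cong\cR_{2,p-1}$-type symmetry being invoked.)

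For \eqref{simplewithprojective}, I would project the entire commutative ladder onto the $\cM_{2n+1,1}$-isotypic components of $\cW(p)$, exactly as in the proof of Proposition \ref{prop:induction_Fock}. This replaces the rows by
\[
0\to\cM_{2n+1,1}\btimes\cF_{\alpha_{r,p-1}}\to\cM_{2n+1,1}\btimes\cP_{r,p-1}\to\cM_{2n+1,1}\btimes\cF_{\alpha_{r-1,1}}\to 0
\]
and its image under the relevant identifications; by \eqref{simplewithtypical} and \eqref{simplewithatypical} the outer vertical arrows become the isomorphisms $\cM_{2n+1,1}\btimes\cF_{\alpha_{r,p-1}}\cong\cF_{\alpha_{2n+r,p-1}}$ and $\cM_{2n+1,1}\btimes\cF_{\alpha_{r-1,1}}=\cM_{2n+1,1}\btimes\cM_{r-1,1}\cong\cM_{2n+r-1,1}=\cF_{\alpha_{2n+r-1,1}}$. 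A final application of the short five lemma identifies $\cM_{2n+1,1}\btimes\cP_{r,p-1}$ with the unique (up to isomorphism) extension of $\cF_{\alpha_{2n+r-1,1}}$ by $\cF_{\alpha_{2n+r,p-1}}$ realized inside $\cR_{\overline{2n+r},p-1}$, which is $\cP_{2n+r,p-1}$ by its very definition via \eqref{diag:proj_comm}. The only genuine subtlety—and the step I expect to require the most care—is verifying that the middle vertical map is honestly compatible with the two outer ones, i.e.\ that the ladder genuinely commutes; this is where one must unwind the definition of $Y_{\cP_{r,p-1}}$ as a restriction of $Y_{\cR_{\bar r,p-1}}$ and use naturality of Frobenius reciprocity carefully, rather than any new computation. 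Everything else is bookkeeping with parities of $r$ and the periodicity $\alpha_{r+1,s+p}=\alpha_{r,s}$.
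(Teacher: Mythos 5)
Your argument is essentially the paper's own proof, which is only sketched there as ``an argument similar to the proof of Proposition \ref{prop:induction_Fock}'' using the commutative diagram \eqref{diag:proj_comm}: apply the exact induction functor to \eqref{exactproj}, identify the outer terms, obtain the comparison map to $\cR_{\bar r,p-1}$ from Frobenius reciprocity, conclude by the short five lemma, and then project onto the $\cM_{2n+1,1}$-components to get \eqref{simplewithprojective}; your remark that the parity convention for $\bar r$ in the statement is reversed relative to \eqref{diag:proj_comm} is correct (it is a typo to be aligned as you say). One small correction: $\cF_{\alpha_{r-1,1}}$ is \emph{not} irreducible and does not equal $\cM_{r-1,1}$ for $p\geq 2$ (by \eqref{exactsing} it is a non-split extension of $\cM_{r,p-1}$ by $\cM_{r-1,1}$), but this does not damage your proof, since the identifications you actually need are exactly \eqref{inductionverma} and \eqref{simplewithtypical} with $s=1$, which you also invoke.
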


Now we present the singlet algebra analogue of Theorem \ref{TW}. However, in the spirit of the proof of Theorem \ref{TW} given in \cite{TW}, which used rigidity of $\cW_{1,2}$ to prove the $s=p$ case of Theorem \ref{TW}(2), we shall defer the proof of the $s=p$ case of \eqref{more1} to Section \ref{subsubsec:s=p}, after we have proved that $\cM_{1,2}$ is rigid and $\cP_{r,p-1}$ is projective in $\cC_{\cM(p)}^0$.
\begin{theorem}\label{fusionrules}
In the category $\Rep^0\,\cM(p)$,
\begin{itemize}
\item[(1)] The simple module $\cM_{2,1}$ is a simple current such that
\begin{equation}\label{more}
\cM_{2,1}\btimes \cM_{r,s} = \cM_{r+1, s}
\end{equation}
for $r\in\ZZ$, $1\leq s\leq p$.
\item[(2)]For $r \in \ZZ$ and $1 \leq s \leq p$,
\begin{equation}\label{more1}
\cM_{1,2}\btimes \cM_{r,s} =
\begin{cases}
\cM_{r,2} \;\;\; &{\rm if}\; s = 1\\
\cM_{r,s-1}\oplus \cM_{r, s+1}\;\;\; &{\rm if}\; 2 \leq s \leq p-1\\
\cP_{r, p-1}\;\;\; &{\rm if}\;  s = p.
\end{cases}
\end{equation}
\end{itemize}
\end{theorem}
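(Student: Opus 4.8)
The plan is to reduce every fusion computation to one in the triplet category $\cC_{\cW(p)}$, where Theorem \ref{TW} is available, using the induction functor $\cF_{\cW(p)}$ together with nonzero intertwining operators among Heisenberg Fock modules (exactly as in the proof of Proposition \ref{induction_simple}). The two features of $\cF_{\cW(p)}$ I would exploit are that it is exact (Proposition \ref{prop:ind_exact}) and that it is faithful in the strong sense that the adjunction unit $\eta_M\colon M\to\Res_{\cM(p)}\cF_{\cW(p)}(M)\cong\bigoplus_{n\in\ZZ}\cM_{2n+1,1}\btimes M$ is a split monomorphism, the $n=0$ summand being $\cM(p)\btimes M\cong M$. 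From these one extracts: $\cF_{\cW(p)}$ reflects the zero object; if $\cF_{\cW(p)}(M)$ is simple then so is $M$; $\cF_{\cW(p)}$ reflects finite length and $\ell(M)\le\ell(\cF_{\cW(p)}(M))$, sending composition factors to composition factors; and, since under Frobenius reciprocity the induced map $\Ext^1_{\cM(p)}(B,A)\to\Ext^1_{\cW(p)}(\cF_{\cW(p)}B,\cF_{\cW(p)}A)$ is the pushforward $(\eta_A)_*$ along a split monomorphism, an extension of $\cM(p)$-modules splits as soon as its induction splits.

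For part (1), monoidality of $\cF_{\cW(p)}$, Proposition \ref{induction_simple}, and Theorem \ref{TW}(1) give $\cF_{\cW(p)}(\cM_{2,1}\btimes\cM_{r,s})\cong\cW_{2,1}\btimes_{\cW(p)}\cW_{\bar r,s}=\cW_{3-\bar r,s}$, which is simple, so $\cM_{2,1}\btimes\cM_{r,s}$ is simple. Composing the inclusions $\cM_{2,1}\hookrightarrow\cF_{\alpha_{2,1}}$, $\cM_{r,s}\hookrightarrow\cF_{\alpha_{r,s}}$ with the Heisenberg Fock tensor-product intertwining operator (nonzero by \cite[Proposition 11.9]{DL}) yields a nonzero $\cM(p)$-module map $\cM_{2,1}\btimes\cM_{r,s}\to\cF_{\alpha_{r+1,s}}$ since $\alpha_{2,1}+\alpha_{r,s}=\alpha_{r+1,s}$; being a nonzero map out of a simple module it is injective, so the module equals $\mathrm{Soc}(\cF_{\alpha_{r+1,s}})=\cM_{r+1,s}$. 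Specializing $(r,s)=(0,1)$ gives $\cM_{2,1}\btimes\cM_{0,1}=\cM_{1,1}=\cM(p)$, so $\cM_{2,1}$ is invertible, i.e.\ a simple current. Part (2) for $s=1$ is proved identically, replacing $\cM_{2,1}$ by $\cM_{1,2}$ and using $\cW_{1,2}\btimes_{\cW(p)}\cW_{\bar r,1}=\cW_{\bar r,2}$ from Theorem \ref{TW}(2) together with $\alpha_{1,2}+\alpha_{r,1}=\alpha_{r,2}$.

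For part (2) with $2\le s\le p-1$, I would first note that, $\cM_{2,1}$ being a simple current with $\cM_{2,1}^{\btimes(r-1)}\btimes\cM_{1,s}=\cM_{r,s}$ by part (1), it suffices to treat $r=1$ and then apply $\cM_{2,1}^{\btimes(r-1)}$. Set $N=\cM_{1,2}\btimes\cM_{1,s}$. Then $\cF_{\cW(p)}(N)\cong\cW_{1,2}\btimes_{\cW(p)}\cW_{1,s}=\cW_{1,s-1}\oplus\cW_{1,s+1}$, which is semisimple of length $2$, so $\ell(N)\le 2$ and every composition factor of $N$ is an $\cM_{a,b}$ with $a$ odd and $b\in\{s-1,s+1\}$. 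The Heisenberg Fock intertwining operator furnishes a nonzero map $N\to\cF_{\alpha_{1,s+1}}$ whose image cannot be all of $\cF_{\alpha_{1,s+1}}$ (that would force the even-first-index factor $\cM_{2,p-s-1}$ into $N$), so the image is $\mathrm{Soc}(\cF_{\alpha_{1,s+1}})=\cM_{1,s+1}$; hence $\cM_{1,s+1}$ is a composition factor, $\ell(N)=2$, and the second factor is $\cM_{a,s-1}$ for some odd $a$. To finish I would use self-contragredience: $\cM_{1,2}$ and $\cM_{1,s}$ are self-dual ($\cM_{x,y}'\cong\cM_{2-x,y}$) and the contragredient is compatible with $\btimes$, so $N'\cong N$; a nonsplit uniserial structure on $N$ is incompatible with $N'\cong N$ because the socle of $N'$ would then carry the other second index, so $N\cong\cM_{1,s+1}\oplus\cM_{a,s-1}$; comparing with $N'\cong\cM_{1,s+1}\oplus\cM_{2-a,s-1}$ and using that the $\cM_{x,y}$ are pairwise non-isomorphic forces $a=1$. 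Thus $N=\cM_{1,s-1}\oplus\cM_{1,s+1}$, and conjugating by the simple current $\cM_{2,1}$ gives \eqref{more1} for all $r$.

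The hardest part is this last case: $\cF_{\cW(p)}$ by itself pins down $\cM_{1,2}\btimes\cM_{r,s}$ only up to the parity of the first index and up to an extension that is necessarily split downstairs, so it must be combined both with the Heisenberg Fock intertwining operators (to see $\cM_{r,s+1}$ and rule out the wrong parity) and with self-contragredience (to get the exact first index and the splitting). The $s=p$ case of \eqref{more1} is deferred to Section \ref{subsubsec:s=p}: there $\cF_{\cW(p)}(\cM_{1,2}\btimes\cM_{r,p})\cong\cW_{1,2}\btimes_{\cW(p)}\cW_{\bar r,p}=\cR_{\bar r,p-1}$ is indecomposable of length $4$, so none of the above suffices, and the identification $\cM_{1,2}\btimes\cM_{r,p}=\cP_{r,p-1}$ will instead use rigidity of $\cM_{1,2}$ and projectivity of $\cP_{r,p-1}$ in $\cC_{\cM(p)}^0$.
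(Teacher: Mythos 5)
Your part (1) and the $s=1$ case of part (2) are essentially correct and close in spirit to the paper's proof: the paper also combines the nonzero intertwining operator among Fock modules with monoidality and exactness of $\cF_{\cW(p)}$ and Theorem \ref{TW}, only organizing the conclusion via an induced exact sequence rather than via ``$\cF_{\cW(p)}$ reflects simplicity.'' Your reduction of the general $r$ to a single value of $r$ by the simple current $\cM_{2,1}$ is also legitimate, and your observation that an $\cM(p)$-extension splits once its induction splits is correct (and would indeed give the splitting of $\cM_{1,2}\btimes\cM_{1,s}$, since its induction is semisimple; the paper instead gets splitting from the fact that $h_{r+2n,s-1}-h_{r,s+1}\equiv s/p\pmod{\ZZ}$).

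The genuine gap is in the final step of the case $2\le s\le p-1$, where you invoke ``the contragredient is compatible with $\btimes$'' to conclude $N'\cong N$ for $N=\cM_{1,2}\btimes\cM_{1,s}$. The map you need, $\cM_{1,s}'\btimes\cM_{1,2}'\to(\cM_{1,2}\btimes\cM_{1,s})'$, is exactly the natural transformation $\Phi_{W_1,W_2}$ of Section \ref{sec:rigidity}, and in a non-rigid setting it is only a morphism, not an isomorphism; in this paper it is shown to be an isomorphism only via Theorem \ref{thm:rigid_simple_to_fin_len}, i.e.\ \emph{after} rigidity of the simple modules has been established, and rigidity of $\cM_{1,2}$ is itself proved using the very fusion rule \eqref{more1} you are trying to establish (for $p\ge 3$ the decomposition $\cM_{1,2}\btimes\cM_{1,2}\cong\cM_{1,1}\oplus\cM_{1,3}$ enters the rigidity proof). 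So your argument is circular at the point where it matters most: everything you have before this step only yields $\cM_{1,2}\btimes\cM_{1,s}\cong\cM_{1,s+1}\oplus\cM_{a,s-1}$ for \emph{some} odd $a$, and neither induction, conformal weights, nor semisimplicity can distinguish $a=1$ from $a\ne 1$. (Your parenthetical $\cM_{x,y}'\cong\cM_{2-x,y}$ is also asserted without proof; it is in fact derivable at this stage from $\cF_\lambda'\cong\cF_{\alpha_0-\lambda}$ and \eqref{exactsing}, but in the paper it only appears after rigidity, so it too needs an argument.) The paper closes this gap by a different device: it constructs an explicit nonzero map $\cM_{1,2}\btimes\cM_{r,s}\to\cM_{r,s-1}$, starting from a surjective intertwining operator $\cM_{1,2}\otimes\cF_{\alpha_{r-1,p-s}}\to\cF_{\alpha_{r-1,p-s+1}}\lbrace x\rbrace$ (surjectivity uses that for $r\le 0$ the socle $\cM_{r-1,p-s+1}$ contains no Heisenberg highest-weight vector), then uses right exactness of $\cM_{1,2}\btimes\bullet$ applied to \eqref{exactsing} to descend to a nonzero map out of $\cM_{1,2}\btimes\cM_{r,s}$, which forces $a=r$ (i.e.\ $n=0$); the case $r>0$ then follows by simple currents. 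You would need to supply this (or some equivalent non-circular determination of the first index) for your proof to go through.
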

\begin{proof}
As in the proof of Proposition \ref{induction_simple}, there is a non-zero $\cM(p)$-module homomorphism $f: \cM_{2,1}\boxtimes\cM_{r,s}\rightarrow\cF_{\alpha_{r+1},s}$ induced by a non-zero intertwining operator
\begin{equation*}
 \cM_{2,1}\otimes\cM_{r,s}\hookrightarrow\cF_{\alpha_{2,1}}\otimes\cF_{\alpha_{r,s}}\rightarrow \cF_{\alpha_{r+1,s}}\lbrace x\rbrace.
\end{equation*}
As $\mathrm{im}\,f$ is a non-zero submodule of $\cF_{\alpha_{r+1,s}}$, it is either $\cM_{r+1,s}$ or $\cF_{\a_{r+1,s}}$. Since induction is a tensor functor,
\[
\cF_{\cW(p)}(\cM_{2, 1}\btimes \cM_{r,s}) \cong \cF_{\cW(p)}(\cM_{2,1})\btimes_{\cW(p)} \cF_{\cW(p)}(\cM_{r,s}) \cong \cW_{2,1}\btimes_{\cW(p)}\cW_{\bar{r},s} \cong \cW_{3-\bar{r},s}
\]
by Proposition \ref{induction_simple} and Theorem \ref{TW}. Thus because induction is exact, we get an exact $\cW(p)$-module sequence
\begin{equation*}
 0\longrightarrow\cF_{\cW(p)}(\ker f)\longrightarrow\cW_{3-\bar{r},s}\longrightarrow \cW_{3-\bar{r},s}\;or\;V_{\alpha_{3-\bar{r},s}+L} \longrightarrow 0.
\end{equation*}
This forces $\cF_{\cW(p)}(\ker f)=0$ and $\cF_{\cW(p)}(\mathrm{im}\,f)=\cW_{3-\bar{r},s}$, so $f$ is an isomorphism onto $\cM_{r+1,s}$, proving \eqref{more}.
%

Now the $s=1$ case of \eqref{more1} follows from \eqref{simplewithatypical} and \eqref{more}, as well as the associativity and commutativity of $\boxtimes$.

For $2\leq s\leq p-1$, we have a non-zero $\cM(p)$-module homomorphism $f:\cM_{1,2}\boxtimes\cM_{r,s}\rightarrow\cF_{\alpha_{r,s+1}}$ induced by an intertwining operator
\begin{equation*}
 \cM_{1,2}\otimes\cM_{r,s}\hookrightarrow\cF_{\alpha_{1,2}}\otimes\cF_{\alpha_{r,s}}\rightarrow\cF_{\alpha_{r,s+1}}\lbrace x\rbrace,
\end{equation*}
with $\mathrm{im}\,f$ either $\cF_{\a_{r,s+1}}$ or $\cM_{r, s+1}$. Since induction is monoidal, Theorem \ref{TW} implies
\[
\cF_{\cW(p)}(\cM_{1, 2}\btimes \cM_{r,s}) \cong \cF_{\cW(p)}(\cM_{1, 2})\btimes_{\cW(p)}\cF_{\cW(p)}(\cM_{r, s}) \cong \cW_{\bar{r}, s-1}\oplus \cW_{\bar{r}, s+1}.
\]
So using exactness of $\cF_{\cW(p)}$, there is an exact sequence of $\cW(p)$-modules
\begin{equation*}
 0\rightarrow\cF_{\cW(p)}(\ker f)\rightarrow\cW_{\bar{r},s-1}\oplus\cW_{\bar{r},s+1}\rightarrow \cW_{\bar{r},s+1}\;or\;V_{\alpha_{\bar{r},s+1}+L}\rightarrow 0.
\end{equation*}
This forces $\mathrm{im}\,f = \cM_{r, s+1}$ and $\cF_{\cW(p)}(\ker(f)) \cong \cW_{\bar{r}, s-1}$. Again because $\cF_{\cW(p)}$ is exact, $\ker(f)$ has to be a simple $\cM(p)$-module and therefore equals  $\cM_{r+2n, s-1}$ for some $n \in \ZZ$. Since $h_{r+2n, s-1} - h_{r, s+1} \equiv \frac{s}{p}$ $({\rm mod}\; \ZZ)$, any extension of  $\cM_{r, s+1}$ by $\cM_{r+2n, s-1}$ for $2 \leq s \leq p-1$ splits, so $\cM_{1,2}\btimes \cM_{r,s} = \cM_{r,s+1}\oplus \cM_{r+2n,s-1}$.

Now we show that $n=0$ by constructing a non-zero map $\cM_{1,2}\boxtimes\cM_{r,s}\rightarrow\cM_{r,s-1}$. First, we have the $\cM(p)$-module map $f: \cM_{1,2}\btimes \cF_{\a_{r-1,p-s}}\rightarrow \cF_{\a_{r-1, p-s+1}}$ induced by a non-zero intertwining operator
\begin{equation*}
 \cM_{1,2}\otimes \cF_{\a_{r-1,p-s}}\hookrightarrow\cF_{\alpha_{1,2}}\otimes \cF_{\a_{r-1,p-s}}\rightarrow\cF_{\a_{r-1, p-s+1}}\lbrace x\rbrace.
\end{equation*}
Since $\cM_{1,2}$ is generated by a highest-weight vector for the Heisenberg algebra $\cH$, $\mathrm{im}\,f$ contains a highest-weight vector in $\cF_{\a_{r-1, p-s+1}}$. If $r\leq 0$, then $\cM_{r-1,p-s+1}$ does not contain a highest-weight vector for $\cH$, so $f$ is surjective in this case. Consequently, we get a surjective homomorphism
$$\bar{f}: \cM_{1,2}\btimes \cF_{\a_{r-1,p-s}} \rightarrow  \cF_{\a_{r-1, p-s+1}} \twoheadrightarrow \cM_{r, s-1}$$
using \eqref{exactsing}. Moreover, because we have seen any $\cM(p)$-module map $\cM_{1,2}\btimes \cM_{r-1, p-s} \rightarrow \cM_{r,s-1}$ is zero, right exactness of $\cM_{1,2}\boxtimes\bullet$ means $\bar{f}$ induces a non-zero $\cM(p)$-module map $g: \cM_{1,2}\btimes \cM_{r,s} \rightarrow \cM_{r, s-1}$ as in the diagram:
\begin{equation*}
  \xymatrixcolsep{2pc}
  \xymatrix{
  \cM_{1,2}\btimes \cM_{r-1, p-s} \ar[d] \ar[rd]^{0} & \\
  \cM_{1,2}\btimes \cF_{\a_{r-1, p-s}} \ar[d] \ar[r]^{\bar{f}} & \cM_{r,s-1} \\
  \cM_{1,2}\btimes \cM_{r,s} \ar@{-->}[ur]_{g} \ar[d] &\\
  0 & \\
  }
 \end{equation*}
This proves the $r\leq 0$, $2\leq s\leq p-1$ case of \eqref{more1}. The $r > 0$ case follows from \eqref{simplewithatypical} together with the $r\leq 0$ case.

We will prove the $s=p$ case of \eqref{more1} in Section \ref{subsubsec:s=p} below.
\end{proof}

\subsection{Locally-finite tensor categories of singlet modules}\label{subsec:loc_fin_sing_cats}

We have now derived enough fusion rules and induction relationships to show that $\cC_{\cM(p)}$ and $\cC_{\cM(p)}^0$ are tensor subcategories of $\Rep^0\,\cM(p)$. In the next section, we will use the fusion rules of Theorem \ref{fusionrules} to show that these tensor subcategories are rigid.

\begin{theorem}\label{thm:C0_tens_cat}
 The category $\cC_{\cM(p)}^0$ is a tensor subcategory of $\Rep^0\,\cM(p)$ and a subcategory of $\cC_{\cM(p)}$.
\end{theorem}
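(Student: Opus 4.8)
The plan is to bootstrap everything from the induction functor $\cF_{\cW(p)}\colon\Rep^0\,\cM(p)\to\Rep\,\cW(p)$, using the two facts already available: it is exact (Proposition~\ref{prop:ind_exact}) and monoidal, together with one elementary observation used in the proof of Proposition~\ref{prop:ind_exact}, namely that for any $\cM(p)$-module $N$ in $\Rep^0\,\cM(p)$ the decomposition $\cF_{\cW(p)}(N)=\cW(p)\boxtimes N\cong\bigoplus_{n\in\ZZ}\cM_{2n+1,1}\boxtimes N$ exhibits $N\cong\cM_{1,1}\boxtimes N$ as an $\cM(p)$-module direct summand of $\cF_{\cW(p)}(N)$; in particular $\cF_{\cW(p)}(N)=0$ forces $N=0$, so $\cF_{\cW(p)}$ is faithful on objects.

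First I would verify that $\cC_{\cM(p)}^0$ is a tensor subcategory of $\Rep^0\,\cM(p)$. The unit $\cM(p)=\cM_{1,1}$ lies in it since $\cF_{\cW(p)}(\cM(p))\cong\cW(p)\in\cC_{\cW(p)}$; indeed every $\cM_{r,s}$ lies in $\cC_{\cM(p)}^0$ because $\cF_{\cW(p)}(\cM_{r,s})=\cW_{\bar{r},s}\in\cC_{\cW(p)}$ by \eqref{inductionsimple}. Closure under subobjects, quotients, finite direct sums, and extensions follows by applying the exact additive functor $\cF_{\cW(p)}$ to the relevant short exact sequence in $\Rep^0\,\cM(p)$ and invoking that $\cC_{\cW(p)}$, being the full category of grading-restricted generalized $\cW(p)$-modules, is closed under all of these operations; so $\cC_{\cM(p)}^0$ is an abelian subcategory. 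Closure under $\boxtimes$ is where monoidality enters: if $M_1,M_2\in\cC_{\cM(p)}^0$ then $\cF_{\cW(p)}(M_1\boxtimes M_2)\cong\cF_{\cW(p)}(M_1)\boxtimes_{\cW(p)}\cF_{\cW(p)}(M_2)$, which lies in $\cC_{\cW(p)}$ since $\cC_{\cW(p)}$ is a tensor category (equivalently a tensor subcategory of $\Rep^0\,\cW(p)$, by the remark following Proposition~\ref{prop:indop_exts}); hence $M_1\boxtimes M_2\in\cC_{\cM(p)}^0$.

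Next I would prove the inclusion $\cC_{\cM(p)}^0\subseteq\cC_{\cM(p)}$ (as full subcategories), i.e.\ that every $M\in\cC_{\cM(p)}^0$ has finite length with composition factors among the $\cM_{r,s}$. Here is the one point requiring genuine care, and the main obstacle: $\cF_{\cW(p)}(M)\in\cC_{\cW(p)}$ has finite length \emph{as a $\cW(p)$-module}, but \emph{infinite} length as an $\cM(p)$-module (it contains $\bigoplus_n\cM_{2n+1,1}\boxtimes M$), so one cannot simply argue that $M$, a direct summand, inherits finite length. Instead I would transfer the chain conditions through $\cF_{\cW(p)}$: given an ascending or descending chain of $\cM(p)$-submodules $\{N_i\}$ of $M$, exactness of $\cF_{\cW(p)}$ produces a corresponding chain of $\cW(p)$-submodules of $\cF_{\cW(p)}(M)$, and if $\cF_{\cW(p)}(N_i)=\cF_{\cW(p)}(N_{i+1})$ then $\cF_{\cW(p)}(N_{i+1}/N_i)=0$, whence $N_{i+1}/N_i=0$ by object-faithfulness; since $\cF_{\cW(p)}(M)$ is Noetherian and Artinian (it has finite length, $\cW(p)$ being $C_2$-cofinite), so is $M$, hence $M$ has finite length. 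Finally, every composition factor of $M$ is a simple object of $\Rep^0\,\cM(p)$ (which is closed under subquotients), and the simple objects of $\Rep^0\,\cM(p)$ are precisely the $\cM_{r,s}$; therefore $M\in\cC_{\cM(p)}$, completing the argument.
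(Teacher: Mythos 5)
Your overall route is sound and genuinely different from the paper's at the key step. The paper proves $\cC_{\cM(p)}^0\subseteq\cC_{\cM(p)}$ by explicitly building a filtration of $M$ with simple quotients: it finds an irreducible $\cW_{r,s}\subseteq\cF_{\cW(p)}(M)$, pulls it back to an embedding $\cM_{r-2n,s}\hookrightarrow M$ using exactness of tensoring with the simple currents $\cM_{2n+1,1}$ and \eqref{simplewithatypical}, and iterates, bounding the length by that of $\cF_{\cW(p)}(M)$. You instead transfer the chain conditions through the exact, object-faithful functor $\cF_{\cW(p)}$ (faithfulness on objects coming from $N\cong\cM_{1,1}\boxtimes N$ being a summand of $\cF_{\cW(p)}(N)$); this is a clean argument that avoids the simple-current bookkeeping, and it is correct, including the point that each $N_i$ lies in $\Rep^0\,\cM(p)$ so induction applies. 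What the paper's construction buys, however, is that it identifies the composition factors as $\cM_{r,s}$ along the way, whereas your last sentence appeals to "the simple objects of $\Rep^0\,\cM(p)$ are precisely the $\cM_{r,s}$," a classification the paper asserts only in the introduction and never proves or uses in the body. This can be patched inside your own framework: since $M$ is an $\cM(p)$-module summand of the grading-restricted module $\cF_{\cW(p)}(M)$, it is itself grading-restricted, so its composition factors are irreducible grading-restricted $\cM(p)$-modules; by Adamovi\'{c}'s classification recalled in Section 2.3 these are either typical Fock modules $\cF_\lambda$, $\lambda\notin L^\circ$ (which are not objects of $\Rep^0\,\cM(p)$, hence excluded since $\Rep^0\,\cM(p)$ is closed under subquotients) or the $\cM_{r,s}$. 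Without some such step, this is a gap.

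One claim in your abelian-subcategory paragraph is actually false and must be deleted: $\cC_{\cM(p)}^0$ is \emph{not} closed under extensions, and neither is $\cC_{\cW(p)}$ inside $\Rep\,\cW(p)$ — an extension of local modules by local modules can be non-local. The paper itself exhibits the counterexample: $\cM_{r,p}=\cF_{\alpha_{r,p}}$ lies in $\cC_{\cM(p)}^0$, yet its self-extensions $\cF_{\alpha_{r,p}}^{(n)}$ from \eqref{eqn:self-ext} do not (see the discussion after Corollary \ref{cor:first_proj_p}, where $\cF_{\cW(p)}(\cF_{\alpha_{r,p}}^{(n)})$ is identified as a non-local $\cW(p)$-module with local composition factors). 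Indeed, failure of extension-closure is exactly why $\cC_{\cM(p)}^0$ is a proper subcategory of $\cC_{\cM(p)}$ and why the two categories are both needed. Fortunately nothing in your argument uses extension-closure: being a tensor subcategory only requires closure under submodules, quotients, finite direct sums, and $\boxtimes$, all of which you establish correctly (and as in the paper, via exactness and monoidality of $\cF_{\cW(p)}$ together with the corresponding closure properties of $\cC_{\cW(p)}$ under subobjects and quotients). So strike the extension claim and add the grading-restriction argument for the composition factors, and your proof stands.
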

\begin{proof}
 The category $\cC_{\cM(p)}^0$ is closed under the tensor product on $\Rep^0\,\cM(p)$ because induction is monoidal and because $\cC_{\cW(p)}$ is a tensor subcategory of $\Rep\,\cW(p)$. This means that $\cC_{\cM(p)}^0$ is a monoidal subcategory of $\Rep^0\,\cM(p)$. To show that $\cC_{\cM(p)}^0$ is an abelian category and therefore a tensor subcategory of $\Rep^0\,\cM(p)$, it is enough to show that $\cC_{\cM(p)}^0$ is closed under submodules and quotients. Consider an exact sequence
 \begin{equation*}
  0\rightarrow N_1\rightarrow M\rightarrow N_2\rightarrow 0
 \end{equation*}
in $\Rep^0\,\cM(p)$ where $M$ is a module in $\cC^0_{\cM(p)}$. Because induction is exact by Proposition \ref{prop:ind_exact}, $\cF_{\cW(p)}(N_1)$ is a $\cW(p)$-submodule of $\cF_{\cW(p)}(M)$ and $\cF_{\cW(p)}(N_2)$ is a quotient. Since $\cC_{\cW(p)}$ is closed under submodules and quotients, this means that indeed $N_1$ and $N_2$ are modules in $\cC_{\cM(p)}^0$.

Now to show that $\cC^0_{\cM(p)}$ is a subcategory of $\cC_{\cM(p)}$, we need to show that every module $M$ in $\cC^0_{\cM(p)}$ has finite length with composition factors $\cM_{r,s}$ for $r\in\ZZ$, $1\leq s\leq p$. We first show that $M$ contains a simple submodule. Indeed, since $\cF_{\cW(p)}(M)$ is a grading-restricted generalized $\cW(p)$-module, it contains an irreducible submodule $\cW_{r,s}$. Since $\cW_{r,s}$ contains $\cM_{r,s}$ as an $\cM(p)$-submodule and since $\cF_{\cW(p)}(M)\cong\bigoplus_{n\in\ZZ} \cM_{2n+1,1}\boxtimes M$ as an $\cM(p)$-module, we have a non-zero (necessarily injective) homomorphism
\begin{equation*}
 \cM_{r,s}\rightarrow \cM_{2n+1,1}\boxtimes M
\end{equation*}
for some $n\in\ZZ$. Then because tensoring with the simple current $\cM_{-2n+1,1}$ is exact, we can use \eqref{simplewithatypical} and the associativity of $\boxtimes$ to get an injection $\cM_{r-2n,s}\rightarrow M$.

Now since $\cC_{\cM(p)}^0$ is closed under quotients, we can iterate to get an ascending chain of submodules
\begin{equation}\label{eqn:M_comp_ser}
 0\subseteq M_1\subseteq M_2\subseteq M_3\subseteq ...\subseteq M
\end{equation}
such that for each $i$, $M_{i+1}/M_i\cong\cM_{r_i,s_i}$ for $r_i\in\ZZ$, $1\leq s_i\leq p$. Because $\cF_{\cW(p)}$ is exact, inducing yields an ascending chain of submodules in $\cF_{\cW(p)}(M)$ such that
\begin{equation*}
 \cF_{\cW(p)}(M_{i+1})/\cF_{\cW(p)}(M_i)\cong\cF_{\cW(p)}(M_{i+1}/M_i)\cong\cF_{\cW(p)}(\cM_{r_i,s_i})\cong \cW_{\bar{r}_i,s_i}.
\end{equation*}
Since $\cF_{\cW(p)}(M)$ is a finite-length $\cW(p)$-module in $\cC_{\cW(p)}$, it follows that the chain \eqref{eqn:M_comp_ser} must terminate at some finite $i$, so that $M$ has a finite-length composition series with composition factors $\cM_{r_i,s_i}$.
\end{proof}

\begin{theorem}\label{thm:C_tens_cat}
 The category $\cC_{\cM(p)}$ is a tensor subcategory of $\Rep^0\,\cM(p)$.
\end{theorem}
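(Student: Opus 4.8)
The plan is to verify the two requirements separately: that $\cC_{\cM(p)}$ is an abelian full subcategory of $\Rep^0\,\cM(p)$ closed under subobjects, quotients and extensions, and that it is closed under the tensor product $\boxtimes$ of $\Rep^0\,\cM(p)$. The first is essentially immediate: by Proposition \ref{prop:indop_exts}, $\cC_{\cM(p)}$ is a full subcategory of $\Rep^0\,\cM(p)$, and the class of finite-length modules all of whose composition factors lie among the $\cM_{r,s}$ is clearly stable under subobjects, quotients and extensions inside the abelian category $\Rep^0\,\cM(p)$. So the real content is to show $M_1\boxtimes M_2\in\cC_{\cM(p)}$ whenever $M_1,M_2\in\cC_{\cM(p)}$.

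The key point is that the simple-by-simple tensor products already land in the smaller category $\cC_{\cM(p)}^0$. Indeed, by \eqref{eqn:Mrs_decomp_pos} and \eqref{eqn:Mrs_decomp_neg} the modules $\cM_{r,s}$ and $\cM_{r',s'}$ are objects of $\Rep^0\,\cM(p)$, hence so is $\cM_{r,s}\boxtimes\cM_{r',s'}$, and applying the monoidal induction functor $\cF_{\cW(p)}$ together with \eqref{inductionsimple} gives
\[
 \cF_{\cW(p)}(\cM_{r,s}\boxtimes\cM_{r',s'})\cong\cF_{\cW(p)}(\cM_{r,s})\boxtimes_{\cW(p)}\cF_{\cW(p)}(\cM_{r',s'})\cong\cW_{\bar{r},s}\boxtimes_{\cW(p)}\cW_{\bar{r}',s'}.
\]
Since $\cC_{\cW(p)}$ is a tensor subcategory of $\Rep^0\,\cW(p)$ (see the remark following Proposition \ref{prop:indop_exts}), the right-hand side is a grading-restricted generalized $\cW(p)$-module, i.e.\ an object of $\cC_{\cW(p)}$. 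By the definition of $\cC_{\cM(p)}^0$ this means $\cM_{r,s}\boxtimes\cM_{r',s'}\in\cC_{\cM(p)}^0$, and therefore $\cM_{r,s}\boxtimes\cM_{r',s'}\in\cC_{\cM(p)}$ by Theorem \ref{thm:C0_tens_cat}.

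For general $M_1,M_2\in\cC_{\cM(p)}$ I would then proceed by dévissage, inducting on $\mathrm{length}(M_1)+\mathrm{length}(M_2)$ and using that $\boxtimes$ is right exact in each variable by \cite[Proposition 4.26]{HLZ3}. When this sum equals $2$ both modules are simple, and we are done by the previous paragraph. Otherwise one of them, say $M_1$, has length $>1$; choose a short exact sequence $0\to A\to M_1\to C\to 0$ with $A,C\in\cC_{\cM(p)}$ nonzero, so that $A\boxtimes M_2$ and $C\boxtimes M_2$ lie in $\cC_{\cM(p)}$ by the inductive hypothesis. Right exactness yields an exact sequence $A\boxtimes M_2\to M_1\boxtimes M_2\to C\boxtimes M_2\to 0$ in $\Rep^0\,\cM(p)$; the image of the first map is a quotient of $A\boxtimes M_2$, hence an object of $\cC_{\cM(p)}$, and $M_1\boxtimes M_2$ modulo this image is isomorphic to $C\boxtimes M_2\in\cC_{\cM(p)}$. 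Since $\cC_{\cM(p)}$ is closed under extensions, $M_1\boxtimes M_2\in\cC_{\cM(p)}$, which closes the induction and finishes the proof.

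I expect the only delicate point to be the bookkeeping in the dévissage: because $\boxtimes$ is merely right exact (not exact), one cannot directly reduce a composition series of $M_1\boxtimes M_2$ to composition series of the simple tensor products, and instead must argue via the image of $A\boxtimes M_2$ as above and invoke closure under extensions. The substantive step — that simple $\boxtimes$ simple is controlled by the triplet side through \eqref{inductionsimple} and the fact that $\cC_{\cW(p)}$ is closed under fusion — requires no new work, and everything else is formal categorical manipulation.
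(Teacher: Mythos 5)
Your proposal is correct and follows essentially the same route as the paper: dévissage on $\ell(M_1)+\ell(M_2)$ using right exactness of $\boxtimes$ and closure of $\cC_{\cM(p)}$ under extensions, with the base case reduced to the fact that the simples lie in $\cC_{\cM(p)}^0$ and that $\cC_{\cM(p)}^0$ is a tensor subcategory contained in $\cC_{\cM(p)}$ (Theorem \ref{thm:C0_tens_cat}). The only cosmetic difference is that you unfold the base case by re-running the induction-functor computation, whereas the paper simply cites \eqref{inductionsimple} and Theorem \ref{thm:C0_tens_cat}.
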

\begin{proof}
 Because $\cC_{\cM(p)}$ is the category of all finite-length grading-restricted generalized $\cM(p)$-modules with composition factors $\cM_{r,s}$ for $r\in\ZZ$, $1\leq s\leq p$, it is closed under submodules and quotients. This means that $\cC_{\cM(p)}$ is an abelian category. We also showed in Proposition \ref{prop:indop_exts} that $\cC_{\cM(p)}$ is a full subcategory of $\mathrm{Rep}^0\,\cM(p)$. To show that $\cC_{\cM(p)}$ is a tensor subcategory, therefore, we just need to show that it is closed under the tensor product on $\Rep^0\,\cM(p)$.

 Since every module $M$ in $\cC_{\cM(p)}$ has finite length $\ell(M)$, we can use induction on $\ell(M_1)+\ell(M_2)$ to prove that $M_1\boxtimes M_2$ has finite length with composition factors $\cM_{r,s}$ for any two modules $M_1$, $M_2$ in $\cC_{\cM(p)}$. For the base case $\ell(M_1)=\ell(M_2)=1$, both $M_1$ and $M_2$ are simple modules, and \eqref{inductionsimple} shows that they are modules in $\cC_{\cM(p)}^0$. Thus Theorem \ref{thm:C0_tens_cat} shows that $M_1\boxtimes M_2$ is a (finite-length) module in $\cC_{\cM(p)}$ in this case.

 For the inductive step, assume without loss of generality that $\ell(M_1)\geq 2$, so that there is an exact sequence
 \begin{equation*}
  0\rightarrow A\xrightarrow{f} M_1\xrightarrow{g} B\rightarrow 0
 \end{equation*}
with modules $A$, $B$ in $\cC_{\cM(p)}$ satisfying $\ell(A),\ell(B)<\ell(M_1)$. Then since $\bullet\boxtimes M_2$ is right exact, we have an exact sequence
\begin{equation*}
 A\boxtimes M_2\xrightarrow{f\boxtimes\id_{M_2}} M_1\boxtimes M_2 \xrightarrow{g\boxtimes\id_{M_2}} B\boxtimes M_2\rightarrow 0.
\end{equation*}
Thus $M_1\boxtimes M_2$ is an $\cM(p)$-module extension of $B\boxtimes M_2$ by $A\boxtimes M_2/\ker(f\boxtimes\id_{M_2})$. Since $A\boxtimes M_2$ and $B\boxtimes M_2$ are modules in $\cC_{\cM(p)}$ by the inductive hypothesis, it follows that $M_1\boxtimes M_2$ is in $\cC_{\cM(p)}$ as well.
\end{proof}

The reason we have introduced the category $\cC_{\cM(p)}^0$ in addition to $\cC_{\cM(p)}$ is that, as we shall show here and in Section \ref{sec:proj} below, the irreducible modules $\cM_{r,s}$ have projective covers in $\cC_{\cM(p)}^0$ but not in $\cC_{\cM(p)}$. The following lemma relates projective objects in $\cC_{\cW(p)}$ to projective objects in $\cC_{\cM(p)}^0$:
\begin{lemma}\label{lem:ind_proj}
 If $\cF_{\cW(p)}(P)$ is projective in $\cC_{\cW(p)}$ for some module $P$ in $\cC_{\cM(p)}^0$, then $P$ is projective in $\cC_{\cM(p)}^0$.
\end{lemma}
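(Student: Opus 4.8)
The plan is to exploit the adjunction between induction $\cF_{\cW(p)}$ and the restriction functor $\cG_{\cW(p)}$, together with the fact that induction is exact (Proposition \ref{prop:ind_exact}). Recall that for a commutative algebra $\cW(p)$ in $\Rep^0\,\cM(p)$, induction is left adjoint to restriction, so for any $\cM(p)$-module $M$ in $\cC^0_{\cM(p)}$ and any $\cW(p)$-module $N$ in $\cC_{\cW(p)}$ we have a natural isomorphism $\Hom_{\cW(p)}(\cF_{\cW(p)}(M),N)\cong\Hom_{\cM(p)}(M,\cG_{\cW(p)}(N))$. The key additional structural input is that the restriction $\cG_{\cW(p)}(N)$ of any module $N$ in $\cC_{\cW(p)}$ lands in $\cC^0_{\cM(p)}$: indeed, as an $\cM(p)$-module $\cG_{\cW(p)}(N)$ is a direct summand of $\cF_{\cW(p)}(N)\cong \bigoplus_{n\in\ZZ}\cM_{2n+1,1}\boxtimes N$, and since $\cC^0_{\cM(p)}$ is closed under the relevant operations (by Theorem \ref{thm:C0_tens_cat} it is a tensor subcategory closed under submodules, quotients, and the summands $\cM_{2n+1,1}\boxtimes\bullet$ that arise here), $\cG_{\cW(p)}(N)$ is an object of $\cC^0_{\cM(p)}$.

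First I would set up the statement to be proved: given a surjection $q\colon M\twoheadrightarrow N$ in $\cC^0_{\cM(p)}$ and a morphism $f\colon P\to N$, I must lift $f$ through $q$. Applying induction (which is exact, hence preserves surjections) gives a surjection $\cF_{\cW(p)}(q)\colon \cF_{\cW(p)}(M)\twoheadrightarrow \cF_{\cW(p)}(N)$ in $\cC_{\cW(p)}$ and a morphism $\cF_{\cW(p)}(f)\colon \cF_{\cW(p)}(P)\to \cF_{\cW(p)}(N)$. Since $\cF_{\cW(p)}(P)$ is assumed projective in $\cC_{\cW(p)}$, there is a lift $\tilde g\colon \cF_{\cW(p)}(P)\to\cF_{\cW(p)}(M)$ with $\cF_{\cW(p)}(q)\circ\tilde g=\cF_{\cW(p)}(f)$. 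The next step is to transport $\tilde g$ back across the adjunction. Precomposing $\tilde g$ with the unit $\eta_P\colon P\to\cG_{\cW(p)}\cF_{\cW(p)}(P)$ (equivalently, using the adjunction isomorphism $\Hom_{\cW(p)}(\cF_{\cW(p)}(P),\cF_{\cW(p)}(M))\cong\Hom_{\cM(p)}(P,\cG_{\cW(p)}\cF_{\cW(p)}(M))$) yields an $\cM(p)$-module map $g\colon P\to\cG_{\cW(p)}\cF_{\cW(p)}(M)$, and the landing object is in $\cC^0_{\cM(p)}$ by the observation above.

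The remaining—and I expect main—obstacle is to descend from $\cG_{\cW(p)}\cF_{\cW(p)}(M)$ back to $M$ itself. The point is that $M$ sits inside $\cG_{\cW(p)}\cF_{\cW(p)}(M)$ as the summand $\cM_{1,1}\boxtimes M\cong M$, with a natural projection $\pi_M\colon\cG_{\cW(p)}\cF_{\cW(p)}(M)\twoheadrightarrow M$ (the counit-type map, or concretely the $n=0$ component of the decomposition $\bigoplus_n\cM_{2n+1,1}\boxtimes M$). I would compose to get $h:=\pi_M\circ g\colon P\to M$ and then verify the triangle $q\circ h=f$. This verification is the delicate bookkeeping step: one must check naturality of the decomposition $\cF_{\cW(p)}(\bullet)\cong\bigoplus_n\cM_{2n+1,1}\boxtimes\bullet$ with respect to $q$, i.e.\ that $\pi_N\circ\cG_{\cW(p)}\cF_{\cW(p)}(q)=q\circ\pi_M$, together with the compatibility of the unit $\eta$ with this projection (the composite $\pi_P\circ\eta_P$ should be $\id_P$, which holds because the $n=0$ summand is $M$ itself). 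Granting these naturality identities—each of which follows from the explicit form of the $U(1)$-grading on $\cW(p)$ and Frobenius reciprocity—one computes
\begin{equation*}
 q\circ h=q\circ\pi_M\circ g=\pi_N\circ\cG_{\cW(p)}\cF_{\cW(p)}(q)\circ g=\pi_N\circ\cG_{\cW(p)}(\cF_{\cW(p)}(f))\circ\eta_P=f,
\end{equation*}
using $\cF_{\cW(p)}(q)\circ\tilde g=\cF_{\cW(p)}(f)$ in the middle and $\pi_N\circ\cG_{\cW(p)}\cF_{\cW(p)}(f)\circ\eta_P=f$ (again naturality of $\eta$ and the projection) at the end. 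This shows $h$ is the required lift, so $P$ is projective in $\cC^0_{\cM(p)}$. I expect the only real work is pinning down the four naturality squares precisely; everything else is formal manipulation with the adjunction.
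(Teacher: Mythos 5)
Your proposal is correct and follows essentially the same route as the paper's proof: induce the lifting problem, use projectivity of $\cF_{\cW(p)}(P)$ in $\cC_{\cW(p)}$ to obtain a lift, and then descend via the distinguished summand $\cM_{1,1}\boxtimes(\cdot)$ of the decomposition $\cF_{\cW(p)}(X)\cong\bigoplus_{n\in\ZZ}\cM_{2n+1,1}\boxtimes X$, checking compatibility because induced morphisms have the form $\id_{\cW(p)}\boxtimes g$. The paper phrases the descent with explicit injections $\iota^{(n)}_X$ and projections $\pi^{(n)}_X$ rather than the adjunction unit, but these are the same maps and the naturality identities you defer are exactly the relations the paper verifies.
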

\begin{proof}
 Consider a surjection $p: M\twoheadrightarrow N$ in $\cC_{\cM(p)}^0$ and a homomorphism $q: P\rightarrow N$. Since all three modules here are objects of $\cC_{\cM(p)}^0$, we can induce to a diagram of homomorphisms in $\cC_{\cW(p)}$. Since $\cF_{\cW(p)}$ is exact, $\cF_{\cW(p)}(p)$ is still surjective and projectivity of $\cF_{\cW(p)}(P)$ in $\cC_{\cW(p)}$ implies there is a homomorphism $f: \cF_{\cW(p)}(P)\rightarrow\cF_{\cW(p)}(M)$ such that the diagram
 \begin{equation*}
  \xymatrixcolsep{3pc}
  \xymatrix{
  & \cF_{\cW(p)}(P) \ar[ld]_{f} \ar[d]^{\cF_{\cW(p)}(q)} \\
  \cF_{\cW(p)}(M) \ar[r]_{\cF_{W(p)}(p)} & \cF_{W(p)}(N) \\
  }
 \end{equation*}
commutes.

Now for a module $X$ in $\cC_{\cM(p)}^0$, since $\cF_{\cW(p)}(X)\cong\bigoplus_{n\in\ZZ} \cM_{2n+1,1}\boxtimes X$ as an $\cM(p)$-module, we have $\cM(p)$-module homomorphisms
\begin{equation*}
 \iota^{(n)}_X: \cM_{2n+1,1}\boxtimes X\rightarrow \cF_{\cW(p)}(X),\qquad \pi^{(n)}_X: \cF_{\cW(p)}(X)\rightarrow\cM_{2n+1,1}\boxtimes X
\end{equation*}
such that
\begin{equation*}
 \pi^{(m)}_X\circ\iota_X^{(n)} =\delta_{m,n}\id_{\cM_{2n+1,1}\boxtimes X},\qquad\sum_{n\in\ZZ} \iota_X^{(n)}\circ\pi_{X}^{(n)}=\id_{\cF_{\cW(p)}(X)}.
\end{equation*}
Note that the infinite sum here is well defined since it is finite when acting on any vector in $\cF_{\cW(p)}(X)$.

Since $\cF_{\cW(p)}(g)=\id_{\cW(p)}\boxtimes g$ for any morphism $g$ in $\cC_{\cM(p)}^0$, we have
\begin{equation*}
 \pi^{(0)}_N\circ\cF_{\cW(p)}(p)\circ\iota_M^{(n)} =\delta_{0,n}(\id_{\cM_{1,1}}\boxtimes p),\qquad
  \pi^{(0)}_N\circ\cF_{\cW(p)}(q)\circ\iota_P^{(0)} =\id_{\cM_{1,1}}\boxtimes q.
\end{equation*}
Thus if we set $\widetilde{f}=\pi^{(0)}_{M}\circ f\circ\iota_P^{(0)}$, we get
\begin{align*}
 \id_{\cM_{1,1}}\boxtimes q = \pi_N^{(0)}\circ\cF_{\cW(p)} & (q)\circ\iota_P^{(0)} =\pi_N^{(0)}\circ\cF_{\cW(p)}(p)\circ f\circ\iota_P^{(0)}\nonumber\\
 &= \sum_{n\in\ZZ} \pi^{(0)}_N\circ\cF_{\cW(p)}(p)\circ\iota_M^{(n)}\circ\pi^{(n)}_M\circ f\circ\iota_P^{(0)} =(\id_{\cM_{1,1}}\boxtimes p)\circ\widetilde{f}.
\end{align*}
Since we can identify $\widetilde{f}$ with a homomorphism $g: M\rightarrow N$ such that $q=p\circ g$, this shows that $P$ is projective in $\cC_{\cM(p)}^0$.
\end{proof}

As a consequence:
\begin{cor}\label{cor:first_proj_p}
 For $r\in\ZZ$, the $\cM(p)$-modules $\cP_{r,p}:=\cF_{\alpha_{r,p}}=\cM_{r,p}$ and $\cP_{r,p-1}$ are projective in $\cC_{\cM(p)}^0$.
\end{cor}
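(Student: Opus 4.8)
The plan is to apply Lemma \ref{lem:ind_proj}, which reduces projectivity of an object of $\cC_{\cM(p)}^0$ to projectivity of its induction in $\cC_{\cW(p)}$. So the first step is to confirm that both families of modules genuinely lie in $\cC_{\cM(p)}^0$. For $\cP_{r,p}=\cM_{r,p}=\cF_{\alpha_{r,p}}$ this is clear: it is an object of $\Rep^0\,\cM(p)$ by the decompositions \eqref{eqn:Mrs_decomp_pos}--\eqref{eqn:Mrs_decomp_neg}, and by \eqref{inductionsimple} its induction is the simple $\cW(p)$-module $\cW_{\bar r,p}$, which lies in $\cC_{\cW(p)}$. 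For $\cP_{r,p-1}$, the non-split exact sequence \eqref{exactproj} exhibits it as an object of $\cC_{\cM(p)}\subseteq\Rep^0\,\cM(p)$, and by \eqref{inductionprojective} its induction is $\cR_{\bar r,p-1}$, a finite-length logarithmic module in $\cC_{\cW(p)}$.

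Next I would invoke the projectivity statements on the triplet side. The module $\cW_{r,p}=V_{\alpha_{r,p}+L}$ is its own projective cover in $\cC_{\cW(p)}$, as recalled in Section \ref{sec:background} (the argument there uses only results from \cite{AM1}), and $\cR_{r,p-1}$ is projective in $\cC_{\cW(p)}$ by \cite[Theorem 5.9]{NT}. Combining these two facts with Lemma \ref{lem:ind_proj} immediately yields that $\cP_{r,p}=\cM_{r,p}$ and $\cP_{r,p-1}$ are projective in $\cC_{\cM(p)}^0$ for every $r\in\ZZ$.

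There is no substantial obstacle here: the corollary is a direct packaging of Lemma \ref{lem:ind_proj} with the identifications of inductions in \eqref{inductionsimple} and \eqref{inductionprojective} and the known projectivity of $\cW_{r,p}$ and $\cR_{r,p-1}$ in $\cC_{\cW(p)}$. The only point requiring a moment's care is verifying that all the modules involved are genuinely objects of $\cC_{\cM(p)}^0$ so that Lemma \ref{lem:ind_proj} applies, and this is exactly what the first step handles.
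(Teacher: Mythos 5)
Your proposal is correct and follows essentially the same route as the paper: identify the inductions $\cF_{\cW(p)}(\cM_{r,p})\cong\cW_{\bar r,p}=V_{\alpha_{\bar r,p}+L}$ and $\cF_{\cW(p)}(\cP_{r,p-1})\cong\cR_{\bar r,p-1}$, cite their projectivity in $\cC_{\cW(p)}$ from \cite{NT} (or \cite{MY}), and conclude via Lemma \ref{lem:ind_proj}. Your explicit check that both modules lie in $\cC_{\cM(p)}^0$, and your use of \eqref{inductionsimple} for the $s=p$ case (where \eqref{inductionverma} is stated only for $s\leq p-1$), are harmless refinements of the same argument.
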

\begin{proof}
We have $\cF_{\cW(p)}(\cF_{\a_{r,p}}) \cong V_{\a_{r,p}+L}$ by \eqref{inductionverma} and $\cF_{\cW(p)}(\cP_{r,p-1})\cong\cR_{\bar{r},p-1}$ by \eqref{inductionprojective}, both of which are projective in $\cC_{\cW(p)}$ according to \cite[Section 5.1]{NT} (see also Proposition 7.7 and Theorem 7.9 in \cite{MY}). So $\cM_{r,p}$ and $\cP_{r,p-1}$ are projective in $\cC_{\cM(p)}^0$ by Lemma \ref{lem:ind_proj}.
\end{proof}

Since $\cM_{r,p}$ is irreducible and projective in $\cC_{\cM(p)}^0$, it is obvious that it is its own projective cover in $\cC_{\cM(p)}^0$. However, we now show that $\cM_{r,p}$ is not projective, and in fact has no projective cover, in $\cC_{\cM(p)}$. For any $n\in\ZZ_+$, let $\cF_{\alpha_{r,p}}^{(n)}$ denote the indecomposable Heisenberg module induced from an $n$-dimensional lowest conformal weight space on which $\alpha(0)$ acts by the indecomposable Jordan block
\begin{equation*}
 A_n =\sqrt{2p} \left[\begin{array}{cccc}
        \alpha_{r,p} & 1 & \cdots & 0\\
        0 & \alpha_{r,p} & \ddots & \vdots \\
        \vdots & \vdots & \ddots & 1 \\
        0 & 0 & \cdots & \alpha_{r,p} \\
       \end{array}
\right].
\end{equation*}
Thus for $n\geq 2$, we have a non-split extension of $\cH$-modules
\begin{equation}\label{eqn:self-ext}
 0\longrightarrow \cF_{\alpha_{r,p}}^{(n-1)}\longrightarrow\cF_{\alpha_{r,p}}^{(n)}\longrightarrow\cF_{\alpha_{r,p}}\longrightarrow 0.
\end{equation}
Now we consider $\cF_{\alpha_{r,p}}^{(n)}$ as an $\cM(p)$-module (see \cite[Theorem 6.1]{AM_log_intw} for the $n=2$ case). On the lowest conformal weight space, $L(0)$ and the degree-preserving component $H(0)$ of $Y(H,x)$ act by the matrices
\begin{equation*}
 L(0)=\frac{1}{4p} A_n^2-\frac{p-1}{2p} A_n,\qquad H(0)=\binom{A_n}{2p-1}.
 \end{equation*}
Analysis of these matrices shows that $L(0)$ acts indecomposably for $r\neq 1$, while for $r=1$, $H(0)$ is a nilpotent indecomposable matrix. Thus $\cF_{\alpha_{r,p}}^{(n)}$ is also singly-generated indecomposable as an $\cM(p)$-module.

Since each $\cF_{\alpha_{r,p}}^{(n)}$ is an object of $\cC_{\cM(p)}$, the non-split exact sequence \eqref{eqn:self-ext} shows that $\cM_{r,p}=\cF_{\alpha_{r,p}}$ is not projective in $\cC_{\cM(p)}$. But since $\cM_{r,p}$ is projective in $\cC_{\cM(p)}^0$, this means that for $n\geq 2$, $\cF_{\alpha_{r,p}}^{(n)}$ is an object of $\cC_{\cM(p)}$ that is not in $\cC_{\cM(p)}^0$. In particular, $\cF_{\cW(p)}(\cF_{\alpha_{r,p}}^{(n)})$ must be a non-local finite-length $\cW(p)$-module that has local composition factors. We can also see that $\cM_{r,p}$ fails to have a projective cover in $\cC_{\cM(p)}$ because a projective cover would have to surject onto $\cF_{\alpha_{r,p}}^{(n)}$ for all $n$, which is impossible for a finite-length $\cM(p)$-module.

We now discuss properties of $\cP_{r,p-1}$ in $\cC_{\cM(p)}^0$:
\begin{prop}\label{prop:p-1_proj_cover}
 For $r\in\ZZ$, $\cP_{r,p-1}$ is a projective cover of $\cM_{r,p-1}$ in $\cC_{\cM(p)}^0$, and it has Loewy diagram
 \begin{equation*}
 \begin{tikzpicture}[->,>=latex,scale=1.5]
\node (b1) at (1,0) {$\cM_{r,p-1}$};
\node (c1) at (-1, 1){$\cP_{r, p-1}$:};
   \node (a1) at (0,1) {$\cM_{r+1, 1}$};
   \node (b2) at (2,1) {$\cM_{r-1, 1}$};
    \node (a2) at (1,2) {$\cM_{r,p-1}$};
\draw[] (b1) -- node[left] {} (a1);
   \draw[] (b1) -- node[left] {} (b2);
    \draw[] (a1) -- node[left] {} (a2);
    \draw[] (b2) -- node[left] {} (a2);
\end{tikzpicture}.
\end{equation*}
\end{prop}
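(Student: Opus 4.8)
The strategy is to transport structural information from $\cR_{\bar r,p-1}$ back through the induction functor $\cF_{\cW(p)}$, using two elementary observations. First, since $\cF_{\cW(p)}(M)\cong\bigoplus_{n\in\ZZ}\cM_{2n+1,1}\btimes M$ as $\cM(p)$-modules and the $n=0$ summand is $\cM_{1,1}\btimes M\cong M$, every object $M$ of $\cC_{\cM(p)}^0$ is a direct summand of $\cF_{\cW(p)}(M)$ as an $\cM(p)$-module. Second, $\cF_{\cW(p)}$ is exact (Proposition \ref{prop:ind_exact}) and carries each simple $\cM_{r,s}$ to the simple module $\cW_{\bar r,s}$ by \eqref{inductionsimple}, hence preserves composition length on $\cC_{\cM(p)}^0$. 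Combining these, $\cF_{\cW(p)}$ also \emph{reflects} semisimplicity, since a direct summand of a semisimple module is semisimple.

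I would first prove the projective-cover claim. By Corollary \ref{cor:first_proj_p}, $\cP_{r,p-1}$ is projective in $\cC_{\cM(p)}^0$, so it suffices to show it is indecomposable with simple top $\cM_{r,p-1}$. For indecomposability, $\cF_{\cW(p)}(\cP_{r,p-1})\cong\cR_{\bar r,p-1}$ by \eqref{inductionprojective}, and $\cR_{\bar r,p-1}$ is indecomposable (being a projective cover of the simple $\cW_{\bar r,p-1}$), so any nontrivial decomposition $\cP_{r,p-1}=A\op B$ would give a nontrivial decomposition $\cR_{\bar r,p-1}=\cF_{\cW(p)}(A)\op\cF_{\cW(p)}(B)$, both summands being nonzero by the first observation. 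For the top, $\cF_{\cW(p)}(\cP_{r,p-1}/\mathrm{rad}\,\cP_{r,p-1})$ is a semisimple quotient of $\cR_{\bar r,p-1}$, hence a quotient of $\mathrm{top}\,\cR_{\bar r,p-1}=\cW_{\bar r,p-1}$; being nonzero it equals $\cW_{\bar r,p-1}$, and by length-preservation $\mathrm{top}\,\cP_{r,p-1}$ is simple. Since $\cP_{r,p-1}$ surjects onto $\cF_{\alpha_{r-1,1}}$ and thence onto $\cM_{r,p-1}$ by \eqref{exactsing} (equivalently: the only composition factors of $\cP_{r,p-1}$ of the form $\cM_{*,p-1}$ are the two copies of $\cM_{r,p-1}$ supplied by \eqref{exactproj} and \eqref{exactsing}), the simple top must be $\cM_{r,p-1}$, so the canonical surjection $\cP_{r,p-1}\surj\cM_{r,p-1}$ is a projective cover in $\cC_{\cM(p)}^0$.

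For the Loewy diagram, recall from \eqref{exactproj} and \eqref{exactsing} that $\cP_{r,p-1}$ has composition factors $\cM_{r,p-1}$ (twice), $\cM_{r+1,1}$, and $\cM_{r-1,1}$. Dually to the preceding paragraph, $\cF_{\cW(p)}(\mathrm{soc}\,\cP_{r,p-1})$ is a semisimple submodule of $\cR_{\bar r,p-1}$, hence contained in the simple socle $\cW_{\bar r,p-1}$, so $\mathrm{soc}\,\cP_{r,p-1}$ is simple and, by the factor count, equals $\cM_{r,p-1}$. Because $\cP_{r,p-1}$ has simple top, $\cF_{\cW(p)}(\mathrm{rad}\,\cP_{r,p-1})=\ker(\cR_{\bar r,p-1}\surj\cW_{\bar r,p-1})$ has colength one, so it is the unique maximal submodule $\cY_{\bar r,p-1}$ of $\cR_{\bar r,p-1}$ (cf. \eqref{eqn:yrw_seq}), and similarly $\cF_{\cW(p)}(\mathrm{soc}\,\cP_{r,p-1})=\mathrm{soc}\,\cR_{\bar r,p-1}$. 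Hence $\cF_{\cW(p)}$ applied to the middle layer $\mathrm{rad}\,\cP_{r,p-1}/\mathrm{soc}\,\cP_{r,p-1}$ gives $\cY_{\bar r,p-1}/\mathrm{soc}\,\cR_{\bar r,p-1}\cong\cW_{3-\bar r,1}\op\cW_{3-\bar r,1}$ by \eqref{eqn:wyww_seq}, which is semisimple; since $\cF_{\cW(p)}$ reflects semisimplicity, the middle layer is semisimple, and by the factor count it is $\cM_{r+1,1}\op\cM_{r-1,1}$. As $\cP_{r,p-1}$ has simple top and simple socle, its radical and socle filtrations coincide, yielding the stated three-layer diagram; the four arrows are witnessed by the non-split length-two subquotients $\cF_{\alpha_{r,p-1}}$, $\cF_{\alpha_{r-1,1}}$ of \eqref{exactproj}, \eqref{exactsing} together with the two complementary length-two subquotients of $\cP_{r,p-1}$, none of which can split because $\cP_{r,p-1}$ has simple socle and simple top.

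The step I expect to be the main obstacle is the semisimplicity of the middle layer: a direct conformal-weight argument fails because $h_{r+1,1}-h_{r-1,1}=rp-1\in\ZZ$, so $\cM_{r+1,1}$ and $\cM_{r-1,1}$ lie in the same conformal-weight coset and could a priori form a non-split extension. The point that makes the proof go through is the observation that $\cF_{\cW(p)}$ reflects semisimplicity (because $M$ is always a direct summand of $\cF_{\cW(p)}(M)$), which reduces this to the known internal structure \eqref{eqn:wyww_seq} of $\cR_{\bar r,p-1}$.
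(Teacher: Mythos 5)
Your argument is correct, and although it pulls the structure of $\cR_{\bar{r},p-1}$ back through induction just as the paper does, the mechanism differs in two places. The paper works with explicit $\cM(p)$-submodules of $\cP_{r,p-1}$ obtained by intersecting with $\cW(p)$-submodules of $\cR_{\bar{r},p-1}$ (its $\cZ_{r,p-1}=\cY_{r,p-1}\cap\cP_{r,p-1}$ and $\cG_{r,p-1}$), gets the row structure by restricting \eqref{eqn:yrw_seq} and \eqref{eqn:wyww_seq} to these submodules, and then verifies the arrow structure by showing $\cG_{r,p-1}$ and $\cP_{r,p-1}/\cG_{r,p-1}$ are indecomposable, which rests on identifying $\cF_{\cW(p)}(\cG_{r,p-1})$ with the indecomposable submodule of $\cR_{\bar{r},p-1}$ from \cite[Lemma 5.2]{AM2}. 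You instead transfer everything formally: since any $M$ in $\cC_{\cM(p)}^0$ is an $\cM(p)$-module direct summand of $\cF_{\cW(p)}(M)\cong\bigoplus_{n\in\ZZ}\cM_{2n+1,1}\boxtimes M$, exactness of induction plus $\cF_{\cW(p)}(\cM_{r,s})\cong\cW_{\bar{r},s}$ pins down the socle, top, and middle Loewy layer of $\cP_{r,p-1}$, and all four arrows then come for free from the simple socle and simple top (a splitting of any of the four length-two subquotients would create a second simple submodule or simple quotient). This is a cleaner route that avoids \cite[Lemma 5.2]{AM2} and the explicit lattice realization; its only compressed point, which you should state explicitly, is that ``$\cF_{\cW(p)}$ reflects semisimplicity'' tacitly uses that a semisimple object of $\cC_{\cW(p)}$ restricts to a semisimple $\cM(p)$-module, which follows from \eqref{inductionsimple} and \eqref{simplewithatypical} (so that $\cW_{\bar{r},s}\cong\bigoplus_{n\in\ZZ}\cM_{2n+r,s}$ as $\cM(p)$-modules). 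Your socle computation and the final projective-cover step (projective, indecomposable, simple top $\cM_{r,p-1}$) coincide in substance with the paper's.
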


\begin{proof}
From \eqref{exactproj} and \eqref{exactsing}, $\cP_{r,p-1}$ has a submodule/subquotient structure as illustrated by the diagram
\begin{equation}\label{diag:P_structure}
 \xymatrixcolsep{2pc}
 \xymatrixrowsep{1pc}
 \xymatrix{
  & 0 \ar[d] & & 0 \ar[d] & \\
  & \cM_{r,p-1} \ar[rd] \ar[d] & & \cM_{r-1,1} \ar[d] & \\
 0 \ar[r] & \cF_{\alpha_{r,p-1}} \ar[d] \ar[r] & \cP_{r,p-1} \ar[rd] \ar[r] & \cF_{\alpha_{r-1,1}} \ar[r] \ar[d] & 0 \\
 & \cM_{r+1,1} \ar[d] & & \cM_{r,p-1} \ar[d] & \\
 & 0 & & 0 & \\
 }
\end{equation}
In addition to the submodules here, we have two additional submodules obtained by intersecting $\cW(p)$-submodules of $\cR_{\bar{r},p-1}$ with $\cP_{r,p-1}$: $\cG_{r,p-1}$ whose underlying vector space is $\cM_{r,p-1}\oplus\cM_{r-1,1}$ and which contains $\cM_{r,p-1}$ as a submodule, and $\cZ_{r,p-1}=\cY_{r,p-1}\cap\cP_{r,p-1}$.

By \eqref{inductionprojective} and exactness of induction, semisimple submodules of $\cP_{r,p-1}$ induce to semisimple submodules of $\cR_{\bar{r},p-1}$ with the same length. Since $\mathrm{Soc}(\cR_{\bar{r},p-1})=\cW_{\bar{r},p-1}$, this means that $\mathrm{Soc}(\cP_{r,p-1})=\cM_{r,p-1}$. Next, restricting the exact sequences \eqref{eqn:yrw_seq} and \eqref{eqn:wyww_seq} to $\cM(p)$-submodules, we get exact sequences
\begin{equation*}
 0\longrightarrow \cZ_{r,p-1}\longrightarrow\cP_{r,p-1}\longrightarrow\cM_{r,p-1}\longrightarrow 0
\end{equation*}
and
\begin{equation*}
 0\longrightarrow\cM_{r,p-1}\longrightarrow\cZ_{r,p-1}\longrightarrow\cM_{r+1,1}\oplus\cM_{r-1,1}\longrightarrow 0.
\end{equation*}
The first sequence does not split because induction is exact and $\cR_{\bar{r},p-1}\cong\cF_{\cW(p)}(\cP_{r,p-1})$ is indecomposable. This together with the second sequence implies
\begin{equation*}
 \mathrm{Soc}(\cP_{r,p-1}/\cM_{r,p-1})=\cZ_{r,p-1}/\cM_{r,p-1}\cong\cM_{r+1,1}\oplus\cM_{r-1,1},
\end{equation*}
and then clearly
$$\mathrm{Soc}(\cP_{r,p-1}/\cZ_{r,p-1})=\cP_{r,p-1}/\cZ_{r,p-1}\cong\cM_{r,p-1}.$$
This verifies the row structure of the Loewy diagram for $\cP_{r,p-1}$. To complete the verification of the arrow structure, we need to show that $\cG_{r,p-1}$ and $\cP_{r,p-1}/\cG_{r,p-1}$ are indecomposable. For $\cG_{r,p-1}$, we note that a similar proof to that of \eqref{inductionverma} shows that $\cF_{\cW(p)}(\cG_{r,p-1})$ is isomorphic to the corresponding indecomposable $\cW(p)$-submodule of $\cR_{\bar{r},p-1}$ (called $\cM_1$ in \cite[Lemma 5.2]{AM2}); this means $\cG_{r,p-1}$ is indecomposable. Then $\cP_{r,p-1}/\cG_{r,p-1}$ is indecomposable because it induces to the indecomposable $\cW(p)$-module $\cR_{\bar{r},p-1}/\cF_{\cW(p)}(\cG_{r,p-1})$.

Now we can show that $\cP_{r,p-1}$ is a projective cover of $\cM_{r,p-1}$. Note that the Loewy diagram implies that $\cP_{r,p-1}$ is generated by any vector not in $\cZ_{r,p-1}$, that is, $\cZ_{r,p-1}$ is the unique maximal proper submodule of $\cP_{r,p-1}$. Also, there is a surjection $q:\cP_{r,p-1}\rightarrow\cM_{r,p-1}$ such that $\ker q=\cZ_{r,p-1}$. Now suppose $P$ is any projective object in $\cC_{\cM(p)}^0$ with surjective map $\widetilde{q}: P\rightarrow\cM_{r,p-1}$. Then there is a map $f: P\rightarrow\cP_{r,p-1}$ such that the diagram
\begin{equation*}
 \xymatrixcolsep{3pc}
 \xymatrix{
 & P \ar[ld]_{f} \ar[d]^{\widetilde{q}} \\
 \cP_{r,p-1} \ar[r]_q & \cM_{r,p-1} \\
 }
\end{equation*}
commutes. We need to show that $f$ is surjective. Indeed, otherwise we would have $\mathrm{im}\,f\subseteq\cZ_{r,p-1}$, so that $q\circ f=0$, contradicting the surjectivity of $\widetilde{q}$.
\end{proof}

\section{Rigidity}\label{sec:rigidity}

In this section, we prove that $\cC_{\cM(p)}$ is a rigid tensor category, using the method of \cite{TW} for proving rigidity of $\cC_{\cW(p)}$. The steps of the proof are the following:
\begin{itemize}
 \item[(1)] First prove that $\cM_{1,2}$ is rigid and self-dual using the fusion rules \eqref{more1} and Belavin-Polyakov-Zamolodchikov differential equations. This proof is exactly the same as the rigidity proof in \cite{TW} for the $\cW(p)$-module $\cW_{1,2}$, but we provide a more detailed exposition, especially for $p=2$, in which case we do not assume the fusion rule \eqref{more1}.

 \item[(2)] Next, use rigidity of $\cM_{1,2}$ and of the simple currents $\cM_{r,1}$, together with the fusion rules from the previous section, to show that all simple modules in $\cC_{\cM(p)}$ are rigid.

 \item[(3)] Finally, use rigidity of simple modules and induction on length to prove that all modules in $\cC_{\cM(p)}$ are rigid. Here, our proof differs from that of \cite{TW}, which used a lemma from the Appendix of \cite{KL5} whose proof is valid only in categories with enough rigid projective objects.

\end{itemize}
We begin with a discussion of the BPZ equations satisfied by compositions of intertwining operators involving the module $\cM_{1,2}$.

\subsection{Belavin-Polyakov-Zamolodchikov equations}

Let $\cY_1$ and $\cY_2$ be $\cM(p)$-module intertwining operators of types $\binom{\cM_{1,2}}{\cM_{1,2}\,M}$ and $\binom{M}{\cM_{1,2}\,\cM_{1,2}}$, respectively, for some $\cM(p)$-module $M$. Also, let $\cY^1$ and $\cY^2$ be  $\cM(p)$-module intertwining operators of types $\binom{\cM_{1,2}}{N\,\cM_{1,2}}$ and $\binom{N}{\cM_{1,2}\,\cM_{1,2}}$, respectively, for some $\cM(p)$-module $N$. We then define
\begin{eqnarray*}
&& \varphi(x)=\langle v',\cY_1(v,1)\cY_2(v,x)v\rangle\\
&& \psi(x) = \langle v', \cY^1(\cY^2(v,1-x)v,x)v\rangle,
\end{eqnarray*}
where $v$ is a highest-weight vector in $\cM_{1,2}$, of conformal weight $h_{1,2}$, and $v'$ is a highest-weight vector in the contragredient module $\cM_{1,2}'$. (It turns out that $\cM_{1,2}'\cong\cM_{1,2}$, so later we will identify $\langle\cdot,\cdot\rangle$ with a non-degenerate invariant bilinear form on $\cM_{1,2}$.) We can view $\varphi(x)$ and $\psi(x)$ either as formal series in $x$ and $1-x$, respectively, or, replacing $x$ with $z$, as analytic functions on the (simply-connected) regions
\begin{equation*}
U_1 = \lbrace z\in\CC\,\vert\,\vert z\vert<1\rbrace\setminus(-1,0]
\end{equation*}
and
\begin{equation*}
U_2 = \lbrace z\in\CC\,\vert\,\vert 1-z\vert<\vert z\vert\rbrace\setminus[1,\infty) =\lbrace z\in\CC\,\vert\,\mathrm{Re}\,z>1/2\rbrace\setminus [1,\infty),
\end{equation*}
respectively.

It is well known that singular vectors in the Virasoro Verma module of conformal weight $h_{1,2}$ lead to a differential equation for $\varphi(x)$ and $\psi(x)$ \cite{BPZ, Hu_Vir}. To derive this equation, we need the following consequences of the intertwining operator Jacobi identity:
\begin{lemma}
Let $\cY$ be a logarithmic intertwining operator among $V$-modules of type $\binom{W_3}{W_1\,W_2}$. Then for $u\in V$, $w_1 \in W_1$, and $n \in \ZZ$,
\begin{eqnarray}\label{eqn:commutator}
[u_n, \cY(w_1,x)] = \sum_{k\geq 0}\binom{n}{k}x^{n-k}\cY(u_kw_1,x),
\end{eqnarray}
and
\begin{eqnarray}\label{eqn:iterate}
\cY(u_{n}w,z) = \sum_{k \geq 0}\binom{n}{k}(-x)^k u_{n-k}\cY(w_1,x)-\sum_{k\geq 0}\binom{n}{k}(-x)^{n-k}\cY(w_1,x)u_k.
\end{eqnarray}
\end{lemma}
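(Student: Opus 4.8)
The plan is to obtain both identities by extracting residues from the Jacobi identity in the definition of a logarithmic intertwining operator \cite{HLZ2}. For $u\in V$ and $w_1\in W_1$ this reads
\begin{multline*}
x_0^{-1}\delta\!\left(\frac{x_1-x_2}{x_0}\right)Y_{W_3}(u,x_1)\cY(w_1,x_2)-x_0^{-1}\delta\!\left(\frac{x_2-x_1}{-x_0}\right)\cY(w_1,x_2)Y_{W_2}(u,x_1)\\
=x_2^{-1}\delta\!\left(\frac{x_1-x_0}{x_2}\right)\cY(Y_{W_1}(u,x_0)w_1,x_2),
\end{multline*}
where I write $u_n$ for the $n$-th mode of $u$ acting on whichever of $W_1,W_2,W_3$ is relevant, so that $[u_n,\cY(w_1,x)]$ means $u_n\circ\cY(w_1,x)-\cY(w_1,x)\circ u_n$ with the first $u_n$ acting on $W_3$ and the second on $W_2$. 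Although $\cY(w_1,x)$ carries finitely many powers of $\log x$, the Jacobi identity is a genuine identity of formal series in $x_0,x_1,x_2$ (with log-polynomial coefficients), so taking $\mathrm{Res}_{x_0}$, $\mathrm{Res}_{x_1}$, and coefficients of powers of $x_1$ or $x_0$ is well defined and linear, applied termwise; this is the only place where ``logarithmic'' plays a role.

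For \eqref{eqn:commutator} I would first take $\mathrm{Res}_{x_0}$ of the Jacobi identity. Each delta prefactor on the left has residue $1$ in $x_0$, so the left side becomes $Y_{W_3}(u,x_1)\cY(w_1,x_2)-\cY(w_1,x_2)Y_{W_2}(u,x_1)$; on the right, expanding $x_2^{-1}\delta(\tfrac{x_1-x_0}{x_2})$ in nonnegative powers of $x_0$ and $Y_{W_1}(u,x_0)w_1=\sum_j u_j w_1\,x_0^{-j-1}$, the coefficient of $x_0^{-1}$ is $\sum_{\ell,i}(-1)^i\binom{\ell}{i}x_1^{\ell-i}x_2^{-\ell-1}\cY(u_i w_1,x_2)$. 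Next I apply $\mathrm{Res}_{x_1}x_1^n$: the left side becomes $[u_n,\cY(w_1,x_2)]$, while on the right the surviving terms have $\ell=i-n-1$, and using $\binom{i-n-1}{i}=(-1)^i\binom{n}{i}$ the double sum collapses to $\sum_{k\geq 0}\binom{n}{k}x_2^{n-k}\cY(u_k w_1,x_2)$. Renaming $x_2\to x$ gives \eqref{eqn:commutator}.

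For \eqref{eqn:iterate} I would instead take $\mathrm{Res}_{x_1}$ of the Jacobi identity. The delta prefactor on the right then has residue $1$ in $x_1$, leaving $\cY(Y_{W_1}(u,x_0)w_1,x_2)$ on the right; on the left, expanding the two delta functions in the appropriate variables and taking $\mathrm{Res}_{x_1}$ yields
\begin{multline*}
\sum_{m,i}\binom{m}{i}(-x_2)^i x_0^{-m-1}u_{m-i}\cY(w_1,x_2)\\
-\sum_{m,i}(-1)^{m+i}\binom{m}{i}x_2^{m-i}x_0^{-m-1}\cY(w_1,x_2)u_i.
\end{multline*}
Applying $\mathrm{Res}_{x_0}x_0^n$ forces $m=n$, turns the right side into $\cY(u_n w_1,x_2)$, and, using $(-1)^{n+i}x_2^{n-i}=(-x_2)^{n-i}$, leaves exactly $\sum_{k\geq 0}\binom{n}{k}(-x_2)^k u_{n-k}\cY(w_1,x_2)-\sum_{k\geq 0}\binom{n}{k}(-x_2)^{n-k}\cY(w_1,x_2)u_k$, which is \eqref{eqn:iterate} after renaming $x_2\to x$.

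I expect no serious obstacle: these are the standard commutator and iterate (``associator'') formulas, whose analogues for ordinary intertwining operators among modules are classical \cite{FHL}. The only points needing care are the bookkeeping of the binomial expansions of $(x_1\mp x_2)^n$ and $(x_1-x_0)^n$ in the correct variable---handled uniformly by $\binom{-a}{k}=(-1)^k\binom{a+k-1}{k}$---and the remark in the first paragraph that the logarithmic nature of $\cY$ does not disturb residues in $x_0$ and $x_1$. In the write-up I would present the derivation of \eqref{eqn:commutator} in full detail and then note that \eqref{eqn:iterate} follows by the same manipulations with the roles of the formal variables $x_0$ and $x_1$ interchanged.
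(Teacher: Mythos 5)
Your derivation is correct, and it is exactly the intended argument: the paper states this lemma without proof as a standard consequence of the Jacobi identity for (logarithmic) intertwining operators, and your residue extractions in $x_0$ and $x_1$, including the binomial identity $\binom{i-n-1}{i}=(-1)^i\binom{n}{i}$ and the observation that the $\log x$ terms do not interfere with residues in $x_0,x_1$, reproduce the standard proof (as in \cite{FHL, HLZ2}).
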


\begin{prop}\label{prop}
The series $\varphi(x)$ and $\psi(x)$ satisfy the differential equation
\begin{equation}\label{eqn:original}
px(1-x)f''(x)+(1-2x)f'(x)-\frac{h_{1,2}}{x(1-x)}f(x)  = 0.
 \end{equation}
\end{prop}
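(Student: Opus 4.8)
The plan is to exploit the singular vector of conformal weight $h_{1,2}$ in the Virasoro Verma module $M(c_p,h_{1,2})$. Recall that at level $2$ this singular vector has the well-known form
\begin{equation*}
 \Big(L(-1)^2 - \tfrac{1}{p}L(-2)\Big)v_{h_{1,2}}
\end{equation*}
(up to normalization; the constant $1/p$ is precisely what encodes the relation $c_p = 13-6p-6p^{-1}$ together with $h_{1,2} = \tfrac{3}{4}p - 1 + \tfrac{1}{4}p^{-1} \cdot 0$, i.e. one checks $\alpha_{1,2}$ gives a degenerate Virasoro module with this relation). Since $v\in\cM_{1,2}$ is a highest-weight vector of weight $h_{1,2}$ for the Virasoro subalgebra and $\cM_{1,2}$ is (as a Virasoro module) a quotient of $M(c_p,h_{1,2})$ in which this singular vector vanishes, we have $\big(L(-1)^2 - \tfrac1p L(-2)\big)v = 0$ inside $\cM_{1,2}$.

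\textbf{Key steps.} First I would translate the vanishing of the singular vector into an operator identity acting on $\varphi$. Using the $L(-1)$-derivative property of intertwining operators, $\cY_2(L(-1)v,x) = \tfrac{d}{dx}\cY_2(v,x)$, so the $L(-1)^2$ term contributes $\tfrac{d^2}{dx^2}$ applied inside $\varphi$; this is where the $px(1-x)f''(x)$ eventually comes from (the $x(1-x)$ factors being produced by the $L(-2)$-commutators, see below). Second, I would handle the $L(-2)v$ term via the iterate formula \eqref{eqn:iterate} with $u=\omega$, $n=-2$: this expresses $\cY_2(L(-2)v,x)$ as a sum $\sum_{k\geq 0}\binom{-2}{k}(-x)^k L(-2-k)\cY_2(v,x) + \sum_{k\geq 0}\binom{-2}{k}(-x)^{-2-k}\cY_2(v,x)L(k)$. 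When this is inserted into $\langle v',\cY_1(v,1)\,\cdot\,v\rangle$, all the ``annihilation'' operators $L(k)$, $k\geq 1$, kill the second $v$ (which is lowest-weight), $L(0)$ gives the scalar $h_{1,2}$, and the ``creation'' operators $L(-2-k)$ acting to the left against $v'$ and then commuted past $\cY_1(v,1)$ via the commutator formula \eqref{eqn:commutator} must be controlled. The standard trick here is to use that $v'$ is also a highest-weight vector of the same weight, so $\langle L(n)v',\,\cdot\,\rangle = 0$ for $n<0$ and $= h_{1,2}\langle v',\cdot\rangle$ for $n=0$; moving $L(-2-k)$ to the left, one picks up commutators $[L(-2-k),\cY_1(v,1)]$, which by \eqref{eqn:commutator} again reduce (since $L(j)v$ for $j\geq 1$ is controlled and $L(0)v=h_{1,2}v$) to finitely many terms with explicit powers of the insertion point $1$. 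Collecting everything and using $\cY_1(v,1)\cY_2(v,x)v = \sum (\text{terms})$, the net effect is the second-order ODE \eqref{eqn:original}, after multiplying through by $x(1-x)$ to clear denominators. The computation for $\psi(x)$ is the same, applied in the other associativity channel (the singular vector vanishes in $\cM_{1,2}$ regardless of which intertwining operator realizes it), using that $\psi(x)$ is the analytic continuation of the same correlation function to the region $U_2$; alternatively one notes that $\varphi$ and $\psi$ solve the same equation because the equation is intrinsic to the four-point function, and then verifies directly from \eqref{eqn:iterate}–\eqref{eqn:commutator} applied to $\cY^2(v,1-x)v$ that $\psi$ satisfies it as a formal series in $1-x$.

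\textbf{Main obstacle.} The routine-but-delicate part is the bookkeeping in moving the modes $L(-2-k)$ coming from \eqref{eqn:iterate} across $\cY_1(v,1)$ and extracting the correct rational coefficients in $x$; one must be careful that the infinite sums over $k$ truncate (they do, because $\binom{-2}{k}(-x)^{-2-k}\cY_2(v,x)L(k)v=0$ for $k\geq 1$ since $v$ is lowest-weight, and on the creation side the pairing with $v'$ together with $[\,\cdot\,,\cY_1(v,1)]$ forces only finitely many surviving terms), and that the weight shifts line up so that what remains is genuinely the hypergeometric-type operator $px(1-x)\partial_x^2 + (1-2x)\partial_x - h_{1,2}/(x(1-x))$ rather than something with extra lower-order terms. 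I expect the cleanest route is to first do the $p=2$ symplectic-fermion check as a sanity test, then present the general-$p$ computation abstractly in terms of \eqref{eqn:commutator} and \eqref{eqn:iterate}, citing \cite{BPZ, Hu_Vir} for the structural fact that a level-2 singular vector yields precisely a second-order Fuchsian ODE with regular singularities at $0,1,\infty$ and reading off the indicial data from $h_{1,2}$ and the central charge.
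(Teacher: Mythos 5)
Your proposal takes essentially the same route as the paper: both derive \eqref{eqn:original} from the vanishing of the level-two singular vector $\big(L(-1)^2-\tfrac{1}{p}L(-2)\big)v=0$ in $\cM_{1,2}$, using the $L(-1)$-derivative property together with \eqref{eqn:commutator} and \eqref{eqn:iterate}, in the product channel for $\varphi$ and in the iterate channel for $\psi$. The only differences are cosmetic or minor bookkeeping: the paper first derives a PDE for the two-variable correlators $\Phi(x_1,x_2)$ and $\Psi(x_0,x_1)$ and then specializes via $L(0)$-conjugation rather than working directly at insertion points $1$ and $x$, and in the iterate expansion the $\omega_0$ mode leaves a surviving $L(-1)v$ contribution (not just the $L(0)$ scalar), which the paper's computation carries along and which feeds into the first-order term of the ODE.
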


\begin{proof}
We first derive a partial differential equation for the formal series
\begin{equation*}
 \Phi(x_1,x_2)=\langle v',\cY_1(v,x_1)\cY_2(v,x_2)v\rangle
\end{equation*}
 using the fact that
 \begin{equation*}
  \left(L(-1)^2-\frac{1}{p}L(-2)\right)v=0
 \end{equation*}
in the irreducible Virasoro module $L(c_{p},h_{1,2})\subseteq\cM_{1,2}$. Thus by the $L(-1)$-derivative property for intertwining operators, \eqref{eqn:commutator}, and \eqref{eqn:iterate}, we calculate
\begin{align*}
p\,&\partial_{x_2}^2\Phi  (x_1,x_2)  =\langle v',\cY_1(v,x_1)\cY_2(L(-2)v,x_2)v\rangle \\
 & =\sum_{k\geq 0} x_2^k\langle v', \cY_1(v,x_1)\omega_{-k-1}\cY_2(v,x_2)v\rangle+\sum_{k\geq 0} x_2^{-k-1}\langle v',\cY_1(v,x_1)\cY_2(v,x_2)\omega_k v\rangle\nonumber\\
 & =\sum_{k\geq 0} x_2^k\langle v',\omega_{-k-1}\cY_1(v,x_1)\cY_2(v,x_2)v\rangle\nonumber\\
 &\hspace{2em}-\sum_{k\geq 0}\sum_{i\geq 0}\binom{-k-1}{i} x_1^{-k-i-1} x_2^k\langle v',\cY_1(\omega_i v,x_1)\cY_2(v,x_2)v\rangle\nonumber\\
 &\hspace{2em}+\left(x_2^{-1}\langle v',\cY_1(v,x_1)\cY_2(v,x_2)L(-1)v\rangle+x_2^{-2}\langle v',\cY_1(v,x_1)\cY_2(v,x_2)L(0)v\rangle\right)\nonumber\\
 & = -\sum_{k\geq 0} x_1^{-k-1} x_2^k\langle v',\cY_1(L(-1)v,x_1)\cY_2(v,x_2)v\rangle\nonumber\\
 &\hspace{2em} +\sum_{k\geq 0} (k+1)x_1^{-k-2} x_2^k\langle v',\cY_1(L(0)v,x_1)\cY_2(v,x_2)v\rangle\nonumber\\
 &\hspace{2em} -x_2^{-1}\left(\langle v',\cY_1(v,x_1)\cY_2(L(-1)v,x_2)v\rangle +\langle v',\cY_1(L(-1)v,x_1)\cY_2(v,x_2)v\rangle\right)\nonumber\\
 &\hspace{2em}+h_{1,2} x_2^{-2}\langle v',\cY_1(v,x_1)\cY_2(v,x_2)v\rangle\nonumber\\
 & =-\left((x_1-x_2)^{-1}+x_2^{-1}\right)\partial_{x_1}\Phi(x_1,x_2)- x_2^{-1}\partial_{x_2}\Phi(x_1,x_2)\nonumber\\
 &\hspace{2em}+h_{1,2}\left(-\partial_{x_1}(x_1-x_2)^{-1}+x_2^{-2}\right)\Phi(x_1,x_2)\nonumber\\
 & =-\left(\frac{1}{x_1-x_2}+\frac{1}{x_2}\right)\partial_{x_1}\Phi(x_1,x_2)-\frac{1}{x_2}\partial_{x_2}\Phi(x_1,x_2)+h_{1,2}\left(\frac{1}{(x_1-x_2)^2}+\frac{1}{x_2^2}\right)\Phi(x_1,x_2).
\end{align*}
Now, the $L(0)$-conjugation formula for intertwining operators implies that
\begin{equation*}
 \Phi(x_1,x_2)=x_1^{-2h_{1,2}}\varphi(x_2/x_1).
\end{equation*}
Thus, using the branch of logarithm $\log 1=0$,
\begin{equation*}
 \Phi(1,x)=\varphi(x),\quad\partial_{x_2}\Phi(x_1,x_2)\vert_{x_1=1,x_2=x} =\varphi'(x),\quad\partial_{x_2}^2\Phi(x_1,x_2)\vert_{x_1=1,x_2=x} =\varphi''(x),
\end{equation*}
and
\begin{align*}
 \partial_{x_1}\Phi(x_1,x_2)\vert_{x_1=1,x_2=x} & = \left.\left(-2h_{1,2} x_1^{-2h_{1,2}-1}\varphi(x_2/x_1)-x_1^{-2h_{1,2}-2}x_2\varphi'(x_2/x_1)\right)\right\vert_{x_1=1,x_2=x}\nonumber\\
 & =-2h_{1,2}\varphi(x)-x\varphi'(x).
\end{align*}
Plugging these relations into the partial differential equation for $\Phi(x_1,x_2)$, we get
\begin{align*}
 p\,\varphi''(x) & =\left((1-x)^{-1}+x^{-1}\right)(2h_{1,2}\varphi(x)+x\varphi'(x))-x^{-1}\varphi'(x)+h_{1,2}\left((1-x)^{-2}+x^{-2}\right)\varphi(x)\nonumber\\
 & =\left(\frac{x}{1-x}+1-\frac{1}{x}\right)\varphi'(x)+h_{1,2}\left(\frac{2}{1-x}+\frac{2}{x}
 +\frac{1}{(1-x)^2}+\frac{1}{x^2}\right)\varphi(x),
\end{align*}
which means
\begin{align*}
 p x(1-x)\varphi_s''(x)=(2x-1)\varphi_s'(x)+\frac{h_{1,2}}{x(1-x)}\varphi_s(x)
\end{align*}
as required.

For $\psi(x)$, we consider the formal series
\begin{equation*}
\Psi(x_0,x_1) = \langle v', \cY^1(\cY^2(v,x_0)v,x_1-x_0)v\rangle.
\end{equation*}
So using the $L(-1)$-derivative and commutator formulas for intertwining operators,
\begin{align*}
 p\,\partial_{x_0}^2 & \Psi(x_0,x_1)\nonumber\\
 & = p\,\partial_{x_0}\left(\langle v',\cY^1(\cY^2(L(-1)v,x_0)v,x_1-x_0)v\rangle -\langle v',\cY^1(L(-1)\cY^2(v,x_0)v,x_1-x_0)v\rangle\right)\nonumber\\
 & = -p\,\partial_{x_0}\langle v',\cY^1(\cY^2(v,x_0)L(-1)v,x_1-x_0)v\rangle\nonumber\\
 & = p\,\langle v',\cY^1(\cY^2(v,x_0)L(-1)^2 v,x_1-x_0)v\rangle\nonumber\\
 & = \langle v',\cY^1(\cY^2(v,y_0)L(-2)v,y_2)v\rangle\vert_{y_0=x_0, y_2=x_1-x_0}.
\end{align*}
Then a calculation using \eqref{eqn:commutator} and \eqref{eqn:iterate} as before leads to
\begin{align*}
 p\,\partial_{x_0}^2\Psi(x_0,x_1) & = -y_0^{-1}\partial_{y_0}\Psi(y_0,y_2+y_0)\vert_{y_0=x_0,y_2=x_1-x_0} -x_2^{-1}\partial_{y_2}\Psi(y_0,y_2+y_0)\vert_{y_0=x_0,y_2=x_1-x_0}\nonumber\\
 &\hspace{4em} +h_{1,2}(y_0^{-2}+y_2^{-2})\Psi(y_0,y_2+y_0)\vert_{y_0=x_0, y_2=x_1-x_0}\nonumber\\
 & = -x_0^{-1}\partial_{x_0}\Psi(x_0,x_1) -\left( x_0^{-1}+(x_1-x_0)^{-1}\right)\partial_{x_1}\Psi(x_0,x_1) \nonumber\\
 &\hspace{4em}+h_{1,2}\left(x_0^{-2}+(x_1-x_0)^{-2}\right)\Psi(x_0,x_1).
\end{align*}
Now defining $\widetilde{\psi}(x)=\psi(1-x)$, the $L(0)$-conjugation property for intertwining operators implies
\begin{equation*}
\Psi(x_0,x_1) = x_1^{-2h_{1,2}}\widetilde{\psi}(x_0/x_1).
\end{equation*}
Then as before, we get
\begin{equation*}
 \Psi(x,1)=\widetilde{\psi}(x),\quad\partial_{x_0}\Psi(x_0,x_1)\vert_{x_0=x,x_1=1} =\widetilde{\psi}'(x),\quad\partial_{x_0}^2\Psi(x_0,x_1)\vert_{x_0=x,x_1=1} =\widetilde{\psi}''(x),
\end{equation*}
and
\begin{equation*}
\partial_{x_1}\Psi_s(x_0,x_1)\vert_{x_0=x,x_1=1} = -2h_{1,2}\widetilde{\psi}(x)-x\widetilde{\psi}'(x).
\end{equation*}
Plugging these relations into the partial differential equation for $\widetilde{\Psi}(x_0,x_1)$, we get
\begin{align*}
p\, \widetilde{\psi}''(x) & = -x^{-1}\widetilde{\psi}'(x) -(x^{-1}+(1-x)^{-1})(-2h_{1,2}\widetilde{\psi}(x)-x\widetilde{\psi_s}'(x)) \\
&\hspace{4em} +h_{1,2}(x^{-2}+(1-x)^{-2})\widetilde{\psi}(x)\\
& = \left(\frac{x}{1-x}+1-\frac{1}{x}\right)\widetilde{\psi}'(x)+h_{1,2}
\left(\frac{2}{1-x}+\frac{2}{x}+\frac{1}{(1-x)^2}+\frac{1}{x^2}\right)\widetilde{\psi}(x).
\end{align*}
This simplifies to
\begin{align*}
 p\,x(1-x)\widetilde{\psi}''(x)=(2x-1)\widetilde{\psi}'(x)+\frac{h_{1,2}}{x(1-x)}\widetilde{\psi}(x),
\end{align*}
so substituting $x\mapsto 1-x$ shows that $\psi(x)$ satisfies \eqref{eqn:original}.
\end{proof}

To solve the differential equation \eqref{eqn:original}, set
\[
g(x) = x^{-1/2p}(1-x)^{-1/2p}f(x).
\]
Then a straightforward calculation shows that  $g(x)$ satisfies the hypergeometric differential equation
\begin{equation}\label{hypergeo}
p\,x(1-x)g''(x)+2(1-2x)g'(x)+(1-3/p)g(x) = 0,
\end{equation}
as in \cite[Equation 4.83]{TW}, whose solutions on $U_1$ and $U_2$ can be obtained by the method of Frobenius and are well known.

For $p\geq 3$, the indicial equation of \eqref{hypergeo} has two distinct roots, and it follows that the solutions to \eqref{eqn:original} on $U_1$ have the basis:
\begin{align*}
 \varphi_1(x) & =x^{1/2p} (1-x)^{1/2p} \,_2F_1\left(\frac{1}{p}, \frac{3}{p}-1, \frac{2}{p};x\right)\in x^{1/2p}\CC[[x]],\\
\varphi_2(x) & = x^{1-3/2p} (1-x)^{1/2p}\,_2F_1\left(1-\frac{1}{p}, \frac{1}{p}, 2-\frac{2}{p};x\right)\in x^{1-3/2p}\CC[[x]].
\end{align*}
On $U_2$, \eqref{eqn:original} has basis of solutions
\begin{align*}
 \psi_1(x) & = x^{1/2p}(1-x)^{1/2p}\,_2F_1\left(\frac{1}{p}, \frac{3}{p}-1, \frac{2}{p};1-x\right)\in(1-x)^{1/2p}\CC[[1-x]],\\
\psi_2(x) & = x^{1/2p}(1-x)^{1-3/2p}\,_2F_1\left(1-\frac{1}{p}, \frac{1}{p}, 2-\frac{2}{p};1-x\right)\in(1-x)^{1-3/2p}\CC[[1-x]].
\end{align*}
As in \cite[Equation 4.85]{TW}, these two bases of solutions are related on the intersection
\begin{equation*}
U_1\cap U_2 =\lbrace z\in\CC\,\vert\,1>\vert z\vert>\vert 1-z\vert >0\rbrace
\end{equation*}
 by the hypergeometric function connection formulas:
\begin{align}
 \varphi_1(z) = \frac{1}{2\cos \frac{\pi}{p}} \psi_1(z) + \frac{3-p}{2-p}\frac{\Gamma(\frac{2}{p})^2}{\Gamma(\frac{1}{p})\Gamma(\frac{3}{p})} \psi_2(z) \label{eqn:p_conn_1}\\
 \psi_1(z) = \frac{1}{2\cos \frac{\pi}{p}} \varphi_1(z) + \frac{3-p}{2-p}\frac{\Gamma(\frac{2}{p})^2}{\Gamma(\frac{1}{p})\Gamma(\frac{3}{p})} \varphi_2(z)\label{eqn:p_conn_2}
\end{align}
when $1>\vert z\vert>\vert 1-z\vert >0$.

For $p=2$, on the other hand, the indicial equation of \eqref{hypergeo} has a repeated root, so one of the basis solutions for \eqref{eqn:original} on $U_1$ is logarithmic:
\begin{align*}
 \varphi_1(x) & = x^{1/4} (1-x)^{1/4} \,{}_2 F_1\left(\frac{1}{2},\frac{1}{2},1; x\right),\nonumber\\
 \varphi_2(x) & =\varphi_1(x)\log x +x^{1/4}(1-x)^{1/4}\,G(x)
\end{align*}
for a power series $G(x)$ with constant term $0$. As in the $p\geq 3$ case, the basis solutions on $U_2$ can be obtained via the substitution $x\mapsto 1-x$, that is, $\psi_i(x)=\varphi_i(1-x)$ for $i=1,2$. This time, we get
\begin{equation}\label{eqn:2_conn}
 \varphi_1(z) =\frac{\ln 4}{\pi}\psi_1(z) -\frac{1}{\pi} \psi_2(z)
\end{equation}
for $ 1>\vert z\vert>\vert 1-z\vert >0 $ (see \cite[Equation 4.92]{TW}).

\subsection{Rigidity of \texorpdfstring{$\cM_{1,2}$}{M_{1,2}}}

In this section, we prove that the simple $\cM(p)$-module $\cM_{1,2}$ is rigid. Since \eqref{more1} shows that the identity of $\cM_{1,2}\boxtimes\cM_{1,2}$ depends on $p$, the proof is divided into two cases.

\subsubsection{The case $p\geq 3$}

First we consider $p\geq 3$, in which case \eqref{more1} shows
\begin{equation*}
 \cM_{1,2}\boxtimes\cM_{1,2}\cong\cM_{1,1}\oplus\cM_{1,3}.
\end{equation*}
This direct sum decomposition means that for $s=1,3$, we have homomorphisms
\begin{equation*}
 i_s:\cM_{1,s}\rightarrow\cM_{1,2}\boxtimes\cM_{1,2},\qquad p_s: \cM_{1,2}\boxtimes\cM_{1,2}\rightarrow\cM_{1,s}
\end{equation*}
such that
\begin{equation*}
 p_s\circ i_s=\id_{\cM_{1,s}}
\end{equation*}
for $s=1,3$ and
\begin{equation*}
 i_1\circ p_1+i_3\circ p_3=\id_{\cM_{1,2}\boxtimes\cM_{1,2}}.
\end{equation*}
If we take $\cY_{2\boxtimes 2}$ to be the tensor product intertwining operator of type $\binom{\cM_{1,2}\boxtimes\cM_{1,2}}{\cM_{1,2}\,\cM_{1,2}}$, then
\begin{equation*}
 \cY_{22}^s=p_s\circ\cY_{2\boxtimes 2}
\end{equation*}
for $s=1,3$ is the unique (up to scale) non-zero intertwining operator of type $\binom{\cM_{1,s}}{\cM_{1,2}\,\cM_{1,2}}$. Moreover,
\begin{equation}\label{eqn:Y13}
 \cY_{2\boxtimes 2}=i_1\circ\cY^1_{22}+i_3\circ\cY^3_{22}.
\end{equation}
We may take $i_1$ and $p_1$ to be preliminary candidates for the coevaluation and evaluation, respectively, for $\cM_{1,2}$. Then to show $\cM_{1,2}$ is rigid, it is sufficient to show that the homomorphisms $f,g:\cM_{1,2}\rightarrow\cM_{1,2}$ defined by the commutative diagrams
\begin{equation}\label{diag:rigid_f}
\xymatrixcolsep{4pc}
 \xymatrix{
 \cM_{1,2} \ar[d]_{f} \ar[r]^(.4){r^{-1}_{\cM_{1,2}}} & \cM_{1,2}\boxtimes\cM_{1,1} \ar[r]^(.4){\id_{\cM_{1,2}}\boxtimes i_1} & \cM_{1,2}\boxtimes(\cM_{1,2}\boxtimes\cM_{1,2}) \ar[d]^{\cA_{\cM_{1,2},\cM_{1,2},\cM_{1,2}}} \\
 \cM_{1,2} & \ar[l]^(.6){l_{\cM_{1,2}}} \cM_{1,1}\boxtimes\cM_{1,2} & \ar[l]^(.6){p_1\boxtimes\id_{\cM_{1,2}}} (\cM_{1,2}\boxtimes\cM_{1,2})\boxtimes\cM_{1,2} \\
 }
\end{equation}
and
\begin{equation}\label{diag:rigid_g}
 \xymatrixcolsep{4pc}
 \xymatrix{
 \cM_{1,2} \ar[d]_{g} \ar[r]^(.4){l^{-1}_{\cM_{1,2}}} & \cM_{1,1}\boxtimes\cM_{1,2} \ar[r]^(.4){i_1\boxtimes\id_{\cM_{1,2}}} & (\cM_{1,2}\boxtimes\cM_{1,2})\boxtimes\cM_{1,2} \ar[d]^{\cA^{-1}_{\cM_{1,2},\cM_{1,2},\cM_{1,2}}} \\
 \cM_{1,2} & \ar[l]^(.6){r_{\cM_{1,2}}} \cM_{1,2}\boxtimes\cM_{1,1} & \ar[l]^(.6){\id_{\cM_{1,2}}\boxtimes p_1} \cM_{1,2}\boxtimes(\cM_{1,2}\boxtimes\cM_{1,2}) \\
 }
\end{equation}
are non-zero multiples of the identity. In fact, since $\cM_{1,2}$ is simple, it is sufficient to show that $f$ and $g$ are non-zero.

To prove $f\neq 0$, we need to show that the intertwining operator
\begin{align*}
 \cY^2_{21}& = l_{\cM_{1,2}}\circ(p_1\boxtimes\id_{\cM_{1,2}})\circ\cA_{\cM_{1,2},\cM_{1,2},\cM_{1,2}}\circ(\id_{\cM_{1,2}}\boxtimes i_1)\circ\cY_{2\boxtimes 1} \nonumber\\
 &=l_{\cM_{1,2}}\circ(p_1\boxtimes\id_{\cM_{1,2}})\circ\cA_{\cM_{1,2},\cM_{1,2},\cM_{1,2}}\circ\cY_{2\boxtimes(2\boxtimes 2)}\circ(\id_{\cM_{1,2}}\otimes i_1)
\end{align*}
of type $\binom{\cM_{1,2}}{\cM_{1,2}\,\cM_{1,1}}$ is non-zero. For this, it is sufficient to show that
\begin{equation*}
 \langle v',\cY^2_{21}(v,1)\cY^1_{22}(v,x)v\rangle\neq 0
\end{equation*}
for non-zero highest weight vectors $v\in\cM_{1,2}$, $v'\in\cM_{1,2}'\cong\cM_{1,2}$, and for some $x\in\RR$ such that $1>x>1-x>0$. We define the intertwining operator $\cY^2_{23}$ of type $\binom{\cM_{1,2}}{\cM_{1,2}\,\cM_{1,3}}$ similarly, using $i_3$ instead of $i_1$. Then we use \eqref{eqn:Y13} to calculate
\begin{align}\label{eqn:hyp_geo_expansion}
\langle v', \cY^2_{21}(v, & 1)  \cY^1_{22}(v,x)v\rangle +\langle v', \cY^2_{23}(v,1)\cY^3_{22}(v,x)v\rangle\nonumber\\
& = \left\langle v', \overline{l_{\cM_{1,2}}\circ(p_1\boxtimes\id_{\cM_{1,2}})\circ\cA_{\cM_{1,2},\cM_{1,2},\cM_{1,2}}}\left(\cY_{2\boxtimes(2\boxtimes 2)}(v,1)\cY_{2\boxtimes 2}(v,x)v\right)\right\rangle\nonumber\\
&  = \left\langle v', \overline{l_{\cM_{1,2}}\circ(p_1\boxtimes\id_{\cM_{1,2}})}\left(\cY_{(2\boxtimes 2)\boxtimes 2}(\cY_{2\boxtimes 2}(v,1-x)v,x)v\right)\right\rangle\nonumber\\
&  = \left\langle v', \overline{l_{\cM_{1,2}}}\left(\cY_{1\boxtimes 2}(\cY^1_{22}(v,1-x)v,x)v\right)\right\rangle\nonumber\\
&  = \langle v', Y_{\cM_{1,2}}(\cY^1_{22}(v,1-x)v,x)v\rangle.
\end{align}
Now by Proposition \ref{prop}, both sides of \eqref{eqn:hyp_geo_expansion} are solutions to \eqref{eqn:original}. Moreover, since
\begin{equation*}
 \cY_{22}^s(v,x)v\in x^{h_{1,s}-2h_{1,2}}\cM_{1,2}[[x]] =\left\lbrace\begin{array}{rcl}
    x^{1-3/2p}\cM_{1,2}[[x]] & \text{if} & s=1 \\
    x^{1/2p} \cM_{1,2}[[x]] & \text{if} & s=3                                                                \end{array}
\right. ,
\end{equation*}
we have
\begin{align*}
\langle v', \cY^2_{23}(v,1)\cY^3_{22}(v,x)v\rangle & = c_1\varphi_1(x)\\
 \langle v', \cY^2_{21}(v, 1)  \cY^1_{22}(v,x)v\rangle & = c_2\varphi_2(x)
\end{align*}
for $c_1,c_2\in\CC$, using the notation of the previous subsection.

We can be more precise about the series expansion of $\cY^1_{22}$: since $\cM_{1,2}$ is simple and self-contragredient, $\cY^1_{22}$ is necessarily related to $Y_{\cM_{1,2}}$ by the skew-symmetry and adjoint intertwining operators of \cite{HLZ2}. In particular,
\begin{equation*}
 \cY_{22}^1(v,x)v\in x^{-2h_{1,2}}\left(\langle e^{(2r+1)\pi i L(0)} v,v\rangle\one +x\CC[[x]]\right)
\end{equation*}
for some $r\in\ZZ$, where $\langle\cdot,\cdot\rangle$ is a non-degenerate invariant bilinear form on $\cM_{1,2}$. Then it follows that
\begin{equation*}
 \langle v', Y_{\cM_{1,2}}(\cY^1_{22}(v,1-x)v,x)v\rangle \in (1-x)^{-2h_{1,2}}\left(\langle v',v\rangle\langle e^{(2r+1)\pi i L(0)} v,v\rangle+(1-x)\CC[[1-x]]\right),
\end{equation*}
which means that if we choose $v$ and $v'$ to be non-zero, then the right side of \eqref{eqn:hyp_geo_expansion} is a non-zero multiple $d \psi_2(x)$. Now in the case $p\geq 4$, we can solve \eqref{eqn:p_conn_1} and \eqref{eqn:p_conn_2} to show that $c_2 =- \frac{d}{2\cos(\pi/p)}\neq 0$, which means $\cY^2_{21}\neq 0$, as required. For $p=3$, \eqref{eqn:p_conn_1} and \eqref{eqn:p_conn_2} show that $\varphi_1(x)=\psi_1(x)$ for $1>x>1-x>0$, so that \eqref{eqn:hyp_geo_expansion} amounts to
\begin{equation*}
\langle v', \cY^2_{21}(v,  1)  \cY^1_{22}(v,x)v\rangle = -c_1\psi_1(x)+d\psi_2(x).
\end{equation*}
Since $\psi_1(x)$, $\psi_2(x)$ are linearly independent on the interval $(\frac{1}{2},1)$ and since $d\neq 0$, we again conclude $\cY^2_{21}\neq 0$. This proves that $f$ in \eqref{diag:rigid_f} is non-zero. The proof that $g$ in \eqref{diag:rigid_g} is non-zero is similar, so $\cM_{1,2}$ is rigid when $p\geq 3$.

\subsubsection{The case $p=2$}

Now for $p=2$, $\cM_{1,2}=\cF_{\alpha_{1,2}}$ is an irreducible Fock module, and we have a surjective intertwining operator of type $\binom{\cF_{\alpha_{0,1}}}{\cM_{1,2}\,\cM_{1,2}}$, which induces a surjective homomorphism
\begin{equation*}
 p_1: \cM_{1,2}\boxtimes\cM_{1,2}\rightarrow\cF_{\alpha_{0,1}}\rightarrow\cM_{1,1}.
\end{equation*}
Since $\cM_{1,2}$ is irreducible, this means $\cM_{1,2}$ is self-contragredient, and the intertwining operator $\cY_{22}^1=p_1\circ\cY_{2\boxtimes 2}$ must be related to $Y_{\cM_{1,2}}$ by skew-symmetry and adjoints. In particular, if $v\in\cM_{1,2}$ is a lowest-conformal-weight vector, then
\begin{equation}\label{eqn:y22_nonzero}
 \cY_{22}^1(v,x)v =e^{-(2r+1)h_{1,2}}\langle v,v\rangle\one\,x^{-2h_{1,2}} +\ldots
\end{equation}
for some $r\in\ZZ$ and non-degenerate invariant bilinear form $\langle\cdot,\cdot\rangle$ on $\cM_{1,2}$. We can rescale $v$ and/or $p_1$ so that the coefficient of $x^{-2h_{1,2}}$ in $\cY_{22}^1(v,x)v$ is $\one$. Then let $\widetilde{\one}$ denote the coefficient of $x^{-2h_{1,2}}$ in $\cY_{2\boxtimes 2}(v,x)v$, so $p_1(\widetilde{\one}) =\one$. Since conformal weights of modules in $\cC_{\cM(2)}$ are bounded below by $-\frac{1}{4p}(p-1)^2 =-\frac{1}{8}$, the lowest power of $x$ from the coset $-2h_{1,2}+\ZZ$ occurring in $\cY_{2\boxtimes 2}(v,x)v$ is $x^{-2h_{1,2}}$.

We have not yet shown that $\cM_{1,2}\boxtimes\cM_{1,2}\cong\cP_{1,1}$ as in \eqref{more1}, but we will use $\cP_{1,1}$ to get a coevaluation $i_1:\cM_{1,1}\rightarrow\cM_{1,2}\boxtimes\cM_{1,2}$. From the structure of $\cP_{1,1}$ illustrated in \eqref{diag:P_structure}, we have an injection $i_2:\cM_{1,1}\rightarrow\cP_{1,1}$ and a surjection $p_2: \cP_{1,1}\rightarrow\cM_{1,1}$. The module $\cP_{1,1}$ has a two-dimensional lowest conformal weight space of weight $0$, and since $\cP_{1,1}$ is logarithmic, $(\cP_{1,1})_{[0]}$ has a basis $\lbrace\overline{\one},L(0)\overline{\one}\rbrace$ such that $p_2(\overline{\one})=\one$ and (rescaling $i_2$ if necessary) $i_2(\one)=L(0)\overline{\one}$. Recall from Corollary \ref{cor:first_proj_p} that $\cP_{1,1}$ is projective in $\cC_{\cM(2)}^0$.

Now let $M$ be the $\cM(2)$-submodule of $\cM_{1,2}\boxtimes\cM_{1,2}$ generated by $\widetilde{\one}$. Since
\begin{equation*}
 \cF_{W(2)}(M)\hookrightarrow\cF_{\cW(2)}(\cM_{1,2}\boxtimes\cM_{1,2})\cong\cW_{1,2}\boxtimes_{\cW(2)}\cW_{1,2}
\end{equation*}
and $\cF_{\cW(2)}(\cM_{1,1})\cong\cW_{1,1}$ are objects of $\cC_{\cW(2)}$, $p_1\vert_{M}: M\rightarrow\cM_{1,1}$ is a (surjective) homomorphism in $\cC_{\cM(2)}^0$.  Thus because $\cP_{1,1}$ is projective in $\cC_{\cM(2)}^0$, we get a homomorphism $q: \cP_{1,1}\rightarrow M\hookrightarrow\cM_{1,2}\boxtimes\cM_{1,2}$, and then $i_1: \cM_{1,1}\rightarrow\cM_{1,2}\boxtimes\cM_{1,2}$, such that the diagram
\begin{equation*}
\xymatrixcolsep{3pc}
\xymatrix{
\cM_{1,1} \ar[r]^{i_2} \ar[rd]_{i_1} & \cP_{1,1} \ar[d]_{q} \ar[rd]^{p_2} & \\
& \cM_{1,2}\boxtimes\cM_{1,2} \ar[r]_(.6){p_1} & \cM_{1,1}\\
 }
\end{equation*}
commutes. We want to show that $\cM_{1,2}$ is rigid self-dual with evaluation (a multiple of) $p_1$ and coevaluation $i_1$. As in the $p\geq 3$ case, it suffices to show that the maps $f$ and $g$ defined by the diagrams \eqref{diag:rigid_f} and \eqref{diag:rigid_g} are non-zero. Here we will focus on showing $g\neq 0$, since the proof for $f$ is similar.

We first show that $L(0)\widetilde{\one}=i_1(\one)$. Note that since $q(\overline{\one})=\widetilde{\one}+m$ for some $m\in\ker p_1\vert_{M_{[0]}}$, we have
\begin{equation*}
 L(0)\widetilde{\one} =q(L(0)\overline{\one})-L(0)m=i_1(\one)-L(0)m.
\end{equation*}
So it is sufficient to show that $L(0)\ker p_1\vert_{M_{[0]}} =0$. Note that because conformal weights of modules in $\cC_{\cM(2)}$ are bounded below by $-\frac{1}{8}$, $M_{[0]}$ is contained in the top level of $M$ and thus is a finite-dimensional module for the Zhu algebra $A(\cM(2))$ computed in \cite{A}. As an $A(\cM(2))$-module, $M_{[0]}$ is generated by $\widetilde{\one}$, and it is spanned by monomials $H(0)^i L(0)^j\widetilde{\one}$ since $\cM(2)$  is generated by $H$ and $\omega$, and since $[H]$, $[\omega]$ commute in $A(\cM(2))$. Moreover, since $[H]^2$ in $A(\cM(2))$ equals a polynomial in $[\omega]$, and since $L(0)$ is nilpotent on $M_{[0]}$, we may take $i\leq 1$ and $j\leq N$ for some $N\in\NN$. Note that $\ker p_1\vert_{M_{[0]}}$ is spanned by monomials $H(0)^i L(0)^j\widetilde{\one}$ with $i+j>0$, since the kernel has codimension $1$ and since $L(0)$, $H(0)$ act trivially on $(\cM_{1,1})_{[0]}=\CC\one$.

Now we can write
\begin{equation}\label{eqn:q_one}
 q(\overline{\one})=\sum_{i=0}^1\sum_{j=0}^{N} a_{i,j} H(0)^i L(0)^j\widetilde{\one}
\end{equation}
with $a_{0,0}=1$. By the relations
\begin{equation*}
 L(0)^2\overline{\one}=0=H(0)L(0)\overline{\one}
\end{equation*}
and $a_{0,0}=1$, successive applications of
\begin{equation*}
 H(0)L(0)^N, L(0)^N, H(0)L(0)^{N-1},\ldots, L(0)^2, H(0)L(0)
\end{equation*}
to both sides of \eqref{eqn:q_one} show that $H(0)^i L(0)^j\widetilde{\one}=0$ for $i+j>1$. This means that
\begin{equation*}
 \ker p_1\vert_{M_{[0]}} =\text{span}\lbrace L(0)\widetilde{\one}, H(0)\widetilde{\one}\rbrace
\end{equation*}
and $L(0)\ker p_1\vert_{M_{[0]}} =0$, as required.

Now we begin applying the composition in \eqref{diag:rigid_g} to a lowest-conformal-weight vector $v\in\cM_{1,2}$:
\begin{align*}
 (i_1\boxtimes\id_{\cM_{1,2}})\circ l_{\cM_{1,2}}^{-1}(v) =(i_1\boxtimes\id_{\cM_{1,2}})(\cY_{1\boxtimes 2}(\one,1)v ) =\cY_{(2\boxtimes 2)\boxtimes 2}(i_1(\one),1)v,
\end{align*}
where $\cY_{1\boxtimes 2}$ and $\cY_{(2\boxtimes 2)\boxtimes 2}$ are the relevant tensor product intertwining operators. But since $i_1(\one)=L(0)\widetilde{\one}$ is the coefficient of $x^{-2h_{1,2}}$ in $L(0)\cY_{2\boxtimes 2}(v,x)v$, and since  $x^{-2h_{1,2}}$ is the lowest power of $x$ from the coset $-2h_{1,2}+\ZZ$ occurring in $\cY_{2\boxtimes 2}(v,x)v$, we see that $(i_1\boxtimes\id_{\cM_{1,2}})\circ l_{\cM_{1,2}}^{-1}(v)$ is the coefficient of $(1-x)^{-2h_{1,2}}$ in the expansion of
\begin{equation*}
 \cY_{(2\boxtimes 2)\boxtimes 2}(L(0)\cY_{2\boxtimes 2}(v,1-x)v, x)v =\cY_{(2\boxtimes 2)\boxtimes 2}(L(0)\cY_{2\boxtimes 2}(v,1-x)v,1-(1-x))v
\end{equation*}
as a series in $1-x$. Defining the intertwining operator
\begin{equation*}
 \cY_{2\boxtimes 2, 2}^2=r_{\cM_{1,2}}\circ(\id_{\cM_{1,2}}\boxtimes p_1)\circ\cA_{\cM_{1,2},\cM_{1,2},\cM_{1,2}}^{-1}\circ\cY_{(2\boxtimes 2)\boxtimes 2}
\end{equation*}
of type $\binom{\cM_{1,2}}{\cM_{1,2}\boxtimes\cM_{1,2}\,\cM_{1,2}}$, we will thus get $g\neq 0$ provided the coefficient of $(1-x)^{-2h_{1,2}}$ in the series expansion of
\begin{equation}\label{eqn:p2_rigidity_crit}
\left \langle v', \cY_{2\boxtimes 2,2}^2\left(L(0)\cY_{2\boxtimes 2}(v,1-x)v,x\right)v\right\rangle
\end{equation}
is non-zero.

To analyze \eqref{eqn:p2_rigidity_crit}, we first replace $1$ and $x$ with $z_1$ and $z_2$, respectively,
and use the $L(0)$-commutator formula
\begin{equation}\label{vaccumimage}
L(0)\cY_{2\boxtimes 2}(v,z)v = 2h_{1,2}\cY_{2\boxtimes 2}(v,z)v+z\cY_{2\boxtimes 2}(L(-1)v,z)
\end{equation}
(see \cite[Equation 3.28]{HLZ2}) to calculate, for $\vert z_1\vert>\vert z_2\vert>\vert z_1-z_2\vert>0$,
\begin{align*}
& \left\langle v, \cY^2_{2\boxtimes 2,2}\left(L(0)\cY_{2\boxtimes 2}(v,z_1-z_2)v,z_2\right)v\right\rangle\nonumber\\
 &= 2h_{1,2}\left\langle v,\overline{r_{\cM_{1,2}}\circ(\id_{\cM_{1,2}}\boxtimes p_1)\circ\cA_{\cM_{1,2},\cM_{1,2},\cM_{1,2}}^{-1}}\left(\cY_{(2\boxtimes 2)\boxtimes 2}(\cY_{2\boxtimes 2}(v,z_1-z_2)v,z_2)v\right)\right\rangle\nonumber\\
 &\hspace{1.5em} +(z_1-z_2)\cdot\nonumber\\
 &\hspace{2.5em}\cdot\left\langle v,\overline{r_{\cM_{1,2}}\circ(\id_{\cM_{1,2}}\boxtimes p_1)\circ\cA_{\cM_{1,2},\cM_{1,2},\cM_{1,2}}^{-1}}\left(\cY_{(2\boxtimes 2)\boxtimes 2}(\cY_{2\boxtimes 2}(L(-1)v,z_1-z_2)v,z_2)v\right)\right\rangle\nonumber\\
 &  = 2h_{1,2}\left\langle v,\overline{r_{\cM_{1,2}}\circ(\id_{\cM_{1,2}}\boxtimes p_1)}\left(\cY_{2\boxtimes (2\boxtimes 2)}(v,z_1)\cY_{2\boxtimes 2}(v,z_2)v\right)\right\rangle\nonumber\\
 & \hspace{1.5em} +(z_1-z_2)\left\langle v,\overline{r_{\cM_{1,2}}\circ(\id_{\cM_{1,2}}\boxtimes p_1)}\left(\cY_{2\boxtimes (2\boxtimes 2)}(L(-1)v,z_1)\cY_{2\boxtimes 2}(v,z_2)v\right)\right\rangle\nonumber\\
 & = \left(2h_{1,2}+(z_1-z_2)\partial_{z_1}\right)\left\langle v,\overline{r_{\cM_{1,2}}}\left(\cY_{2\boxtimes 1}(v,z_1)\cY^1_{22}(v,z_2)v\right)\right\rangle\nonumber\\
 & =\left(2h_{1,2}+(z_1-z_2)\partial_{z_1}\right)\left\langle v,\Omega(Y_{\cM_{1,2}})(v,z_1)\cY^1_{22}(v,z_2)v\right\rangle,
\end{align*}
where $\Omega$ represents the skew-symmetry operation on vertex operators. Now, by the $L(0)$-conjugation formula for intertwining operators and the fact that $\cY_{22}^1$ is not a logarithmic intertwining operator,
\begin{align*}
 \left\langle v,\Omega(Y_{\cM_{1,2}})(v,z_1)\cY^1_{22}(v,z_2)v\right\rangle =d z_1^{-2h_{1,2}}\varphi_1(z_2/z_1)
\end{align*}
for some $d\in\CC$, using the notation of the previous subsection. It follows from \eqref{eqn:y22_nonzero} that $d\neq 0$. So now
\begin{align*}
 \left\langle v, \cY^2_{2\boxtimes 2,2}\left(L(0)\cY_{2\boxtimes 2}(v,z_1-z_2)v,z_2\right)v\right\rangle & = \left(2h_{1,2}+(z_1-z_2)\partial_{z_1}\right) \left(d z_1^{-2h_{1,2}}\varphi_1(z_2/z_1)\right)\nonumber\\
 & \hspace{-5em}=d\,z_2\left(2 h_{1,2} z_1^{-2h_{1,2}-1} \varphi_1(z_2/z_1)-(z_1-z_2)z_1^{-2h_{1,2}-2}\varphi_1'(z_2/z_1)\right).
\end{align*}
Setting $z_1=1$ and $z_2=x$ for $x\in\RR$, $1>x>1-x>0,$ and using \eqref{eqn:2_conn}, we have
\begin{align*}
 &\left\langle v, \cY^2_{2\boxtimes 2,2}\left(L(0)\cY_{2\boxtimes 2}(v,1-x)v,x\right)v\right\rangle = d\,x\left(2h_{1,2}-(1-x)\dfrac{d}{dx}\right)\varphi_1(x)\nonumber\\
 &= x\left(2 h_{1,2}-(1-x)\dfrac{d}{dx}\right)(c_1\psi_1(x)+c_2\psi_2(x))\nonumber\\
 & =x\left(2 h_{1,2}-(1-x)\dfrac{d}{dx}\right)\left((c_1+c_2\log(1-x))\varphi_1(1-x)+c_2 (1-x)^{-2h_{1,2}}x^{-2h_{1,2}}G(1-x)\right)
\end{align*}
where $c_1=d\ln 4/\pi$ and $c_2=-d/\pi\neq 0$. We need to expand the right side as a series in $1-x$ and show that the coefficient of $(1-x)^{-2h_{1,2}}$ is non-zero.

Let us introduce the notation
\begin{align*}
 (1-x)^{-2h_{1,2}}\widetilde{\varphi}_1(1-x) & = \varphi_1(1-x)\nonumber\\
 (1-x)^{-2h_{1,2}}\widetilde{G}(1-x) & = (1-x)^{-2h_{1,2}} x^{-2h_{1,2}} G(1-x),
\end{align*}
so that $\widetilde{\varphi}_1(1-x)$, $\widetilde{G}(1-x)$ are power series in $1-x$. Thus we have
\begin{align*}
 &\left\langle v, \cY^2_{2\boxtimes 2,2}\left(L(0)\cY_{2\boxtimes 2}(v,1-x)v,x\right)v\right\rangle \nonumber\\
 &\hspace{.5em}=x\left(2 h_{1,2}-(1-x)\dfrac{d}{dx}\right)(1-x)^{-2h_{1,2}}\left((c_1+c_2\log(1-x))\widetilde{\varphi}_1(1-x)+c_2 \widetilde{G}(1-x)\right)\nonumber\\
 &\hspace{.5em} = -x(1-x)^{-2h_{1,2}+1}\dfrac{d}{dx}\left((c_1+c_2\log(1-x))\widetilde{\varphi}_1(1-x)+c_2 \widetilde{G}(1-x)\right)\nonumber\\
 &\hspace{.5em} =x(1-x)^{-2h_{1,2}+1}\left( c_2(1-x)^{-1}\widetilde{\varphi}_1(1-x)+(c_1+c_2\log(1-x))\widetilde{\varphi}_1'(1-x)+c_2\widetilde{G}'(1-x)\right)\nonumber\\
 &\hspace{.5em} \in (1-x)^{-2h_{1,2}}\big(c_2\widetilde{\varphi}_1(1-x)+(1-x)\CC[[1-x]][\log(1-x)]\big).
\end{align*}
Since $c_2\neq 0$ and $\widetilde{\varphi}_1(1-x) =x^{-2h_{1,2}}\,_2 F_1\left(\frac{1}{2},\frac{1}{2},1;1-x\right)$ is a power series in $1-x$ with constant term $1$, it follows that the coefficient of $(1-x)^{-2h_{1,2}}$ here is indeed non-zero. This completes the proof that $g\neq 0$. As mentioned previously, the proof that $f\neq 0$ is similar: it uses \eqref{eqn:2_conn} with $\psi$ and $\varphi$ solutions exchanged, which is valid because the basis solutions to \eqref{eqn:original} in $U_1\cap\RR$ and $U_2\cap\RR$ are related by the substitution $x\mapsto 1-x$. Thus $\cM_{1,2}$ is rigid and self-dual when $p=2$.

\subsubsection{The $s=p$ case of \eqref{more1}}\label{subsubsec:s=p}

Now that we know $\cM_{1,2}$ is rigid, we use the projectivity of $\cP_{r,p-1}$ and $\cM_{r,p}$ in $\cC_{\cM(p)}^0$ (recall Corollary \ref{cor:first_proj_p}) to prove the fusion rule \eqref{more1} for $s=p$. Since the tensor product of a rigid object with a projective object in any tensor category is projective (see for example Corollary 2 in the Appendix of \cite{KL5}), $\cM_{1,2}\boxtimes\cM_{r,p}$ is projective in $\cC_{\cM(p)}^0$. We also have an intertwining operator of type $\binom{\cM_{r,p-1}}{\cM_{1,2}\,\cM_{r,p}}$ obtained by restricting and projecting a non-zero intertwining operator among Fock modules. This intertwining operator is surjective at least as long as $r\leq 0$, so we get a surjective homomorphism $p_1: \cM_{1,2}\boxtimes\cM_{r,p}\rightarrow\cM_{r,p-1}$.

Letting $p_2:\cP_{r,p-1}\rightarrow\cM_{r,p-1}$ denote a surjective homomorphism, projectivity of $\cM_{1,2}\boxtimes\cM_{r,p}$ in $\cC_{\cM(p)}^0$ implies the existence of a homomorphism
\begin{equation*}
q: \cM_{1,2}\boxtimes\cM_{r,p}\rightarrow\cP_{r,p-1}
\end{equation*}
such that the diagram
\begin{equation*}
\xymatrix{
  \cP_{r,p-1}  \ar[rd]^{p_2} & \\
 \cM_{1,2}\boxtimes\cM_{r,p} \ar[u]^{q} \ar[r]_(.6){p_1} & \cM_{r,p-1} \\
 }
\end{equation*}
commutes. As $(\cP_{r,p-1},p_2)$ is a projective cover of $\cM_{r,p-1}$ by Proposition \ref{prop:p-1_proj_cover}, $q$ is surjective. Then projectivity of $\cP_{r,p-1}$ means that $\cP_{r,p-1}$ is a direct summand of $\cM_{1,2}\boxtimes\cM_{r,p}$:
\begin{equation*}
 \cM_{1,2}\boxtimes\cM_{r,p}\cong\cP_{r,p-1}\oplus Q
\end{equation*}
for some submodule $Q$. But if we induce, we get
\begin{align*}
 \cR_{\bar{r},p-1} & \cong\cW_{1,2}\boxtimes_{\cW(p)} \cW_{\bar{r},p}\cong\cF_{\cW(p)}(\cM_{1,2})\boxtimes_{\cW(p)}\cF_{\cW(p)}(\cM_{r,p})\nonumber\\
 &\cong \cF_{\cW(p)}(\cM_{1,2}\boxtimes\cM_{r,p}) \cong \cF_{\cW(p)}(\cP_{r,p-1})\oplus\cF_{\cW(p)}(Q)\cong \cR_{\bar{r},p-1}\oplus\cF_{\cW(p)}(Q).
\end{align*}
Since $\cR_{\bar{r},p-1}$ is indecomposable and non-zero, it follows that $\cF_{\cW(p)}(Q)=0$, and then $Q=0$ as well. So
\begin{equation*}
 \cM_{1,2}\boxtimes\cM_{r,p}\cong\cP_{r,p-1}.
\end{equation*}
This proves the $s=p$, $r\leq 0$ case of \eqref{more1}, and the $r>0$ case then follows from \eqref{simplewithatypical} and \eqref{simplewithprojective}.

\subsection{Rigidity for simple modules}

We now prove that all simple modules $\cM_{r,s}$ in the tensor category $\cC_{\cM(p)}$ are rigid, with duals given by the contragredient modules of \cite{FHL}.

Recall that for a vertex operator algebra $V$ and a grading-restricted generalized $V$-module $W=\bigoplus_{h\in\CC} W_{[h]}$, the contragredient is a module structure on the graded dual $W'=\bigoplus_{h\in\CC} W_{[h]}^*$. If $\cC$ is a category of grading-restricted $V$-modules closed under contragredients, then contragredients define a contravariant functor, with the contragredient of a morphism $f: W_1\rightarrow W_2$ defined by
\begin{equation*}
 \langle f'(w_2'),w_1\rangle =\langle w_2', f(w_1)\rangle
\end{equation*}
for $w_1\in W_1$, $w_2'\in W_2'$. Moreover, for any module $W$ in $\cC$, we have a natural isomorphism $\delta_W: W\rightarrow W''$ defined by
\begin{equation*}
 \langle \delta_W(w),w'\rangle =\langle w',w\rangle
\end{equation*}
for $w\in W$, $w'\in W'$. It is straightforward to prove:
\begin{prop}\label{prop:contra_exact}
 The contragredient functor on $\cC$ is exact, that is, if a sequence of $V$-modules $W_1\xrightarrow{f} W_2\xrightarrow{g} W_3$ is exact at $W_2$, then $W_3'\xrightarrow{f'} W_2'\xrightarrow{g'} W_1'$ is exact at $W_2'$.
\end{prop}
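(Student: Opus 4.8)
The plan is to reduce the assertion to elementary linear algebra, using two facts: exactness of a sequence of module maps can be tested on the underlying graded vector spaces, and each conformal-weight space is finite-dimensional. First I would record that any homomorphism $f\colon W_1\to W_2$ of grading-restricted generalized $V$-modules commutes with the action of $L(0)$ and hence preserves the generalized eigenspace decomposition, so that $f=\bigoplus_{h\in\CC} f_{[h]}$ with $f_{[h]}\colon (W_1)_{[h]}\to (W_2)_{[h]}$. Since the pairing of $W_2'=\bigoplus_h (W_2)_{[h]}^*$ with $W_2$ is the orthogonal direct sum of the evaluation pairings of $(W_2)_{[h]}^*$ with $(W_2)_{[h]}$, the defining formula $\langle f'(w_2'),w_1\rangle=\langle w_2',f(w_1)\rangle$ shows at once that $f'=\bigoplus_{h\in\CC}(f_{[h]})^*$, where $(f_{[h]})^*\colon (W_2)_{[h]}^*\to (W_1)_{[h]}^*$ is the ordinary transpose of linear maps of finite-dimensional vector spaces.

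Given a sequence $W_1\xrightarrow{f}W_2\xrightarrow{g}W_3$ exact at $W_2$, exactness is equivalent to exactness of the underlying vector-space sequence, and since all maps are graded, to exactness of $(W_1)_{[h]}\xrightarrow{f_{[h]}}(W_2)_{[h]}\xrightarrow{g_{[h]}}(W_3)_{[h]}$ for each $h\in\CC$. These are three-term sequences of finite-dimensional vector spaces, so applying the contravariant duality functor---which is exact on finite-dimensional spaces---yields exactness of $(W_3)_{[h]}^*\xrightarrow{(g_{[h]})^*}(W_2)_{[h]}^*\xrightarrow{(f_{[h]})^*}(W_1)_{[h]}^*$ at $(W_2)_{[h]}^*$ for every $h$. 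Taking the direct sum over $h$ preserves exactness, and by the identifications $W_i'=\bigoplus_h (W_i)_{[h]}^*$, $f'=\bigoplus_h(f_{[h]})^*$, $g'=\bigoplus_h(g_{[h]})^*$ from the first paragraph, this says exactly that $W_3'\xrightarrow{g'}W_2'\xrightarrow{f'}W_1'$ is exact at $W_2'$---an exact sequence of vector spaces, hence of $V$-modules, and of objects of $\cC$ once $\cC$ is assumed closed under contragredients.

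There is no real obstacle here; the proof is bookkeeping. The two points deserving care are (i) noting that module homomorphisms respect the conformal-weight grading, so that $f'$ genuinely decomposes degreewise as the transpose of $f$, and (ii) invoking grading-restrictedness precisely where finite-dimensionality is used, since for infinite-dimensional spaces the dual of an exact sequence need not be exact. One could instead argue more formally---left-exactness of the contravariant functor $W\mapsto W'$ is automatic, and for a short exact sequence the dimension identity $\dim (W_2)_{[h]}=\dim (W_1)_{[h]}+\dim (W_3)_{[h]}$ forces the remaining surjectivity---but the degreewise linear-algebra argument has the advantage of handling a general (not necessarily short) exact sequence uniformly.
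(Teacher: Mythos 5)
Your proof is correct. The paper offers no argument here at all---it records the proposition as ``straightforward to prove''---and your degreewise reduction (module maps preserve the generalized $L(0)$-eigenspace decomposition, so $f'=\bigoplus_{h}(f_{[h]})^*$, and dualizing each weight space is exact) is precisely the intended bookkeeping. Two minor remarks: the parenthetical claim that duals of exact sequences can fail to be exact for infinite-dimensional spaces is not actually true over a field (the full dual functor $\mathrm{Hom}_{\CC}(-,\CC)$ is exact; what finite-dimensionality of weight spaces really buys is, e.g., reflexivity $W\cong W''$ and that $W'$ is again grading-restricted), though nothing in your argument depends on this; and your labeling $W_3'\xrightarrow{g'}W_2'\xrightarrow{f'}W_1'$ is the correct one---the labels on the two arrows in the statement as printed are inadvertently swapped.
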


We use this proposition to prove:
\begin{prop}\label{prop:C_has_contras}
 The category $\cC_{\cM(p)}$ of $\cM(p)$-modules is closed under contragredients.
\end{prop}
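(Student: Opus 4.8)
The plan is to reduce the statement to computing the contragredient of each simple module $\cM_{r,s}$. By Proposition~\ref{prop:contra_exact} the contragredient functor is exact and contravariant, so given a composition series $0=M_0\subseteq M_1\subseteq\cdots\subseteq M_n=M$ of a module $M$ in $\cC_{\cM(p)}$ with $M_i/M_{i-1}\cong\cM_{r_i,s_i}$, the submodules $K_i=\ker(M'\twoheadrightarrow M_i')$ form a filtration $M'=K_0\supseteq K_1\supseteq\cdots\supseteq K_n=0$ with $K_{i-1}/K_i\cong(M_i/M_{i-1})'\cong\cM_{r_i,s_i}'$. Hence $M'$ has finite length, and it lies in $\cC_{\cM(p)}$ as soon as each $\cM_{r,s}'$ is isomorphic to some $\cM_{r',s'}$ with $r'\in\ZZ$, $1\leq s'\leq p$. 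Note first that $\cM_{r,s}'$ is simple: using the natural isomorphism $\delta_{\cM_{r,s}}\colon\cM_{r,s}\to\cM_{r,s}''$ and exactness of the contragredient, a proper nonzero submodule of $\cM_{r,s}'$ would yield a proper nonzero submodule of $\cM_{r,s}$, contradicting simplicity.

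Next I would identify $\cM_{r,s}'$ using the exact sequences~\eqref{exactsing}. Recall the standard fact that the contragredient of a Fock module for the rank-one Heisenberg (Feigin--Fuchs) vertex operator algebra $\cH$, whose conformal vector carries the background-charge term $\frac{\alpha_0}{2}h(-2)\one$, is $\cF_\lambda'\cong\cF_{\alpha_0-\lambda}$. Since contragredients are compatible with restriction along $\cM(p)\subseteq\cH$, and since $\alpha_0=\alpha_++\alpha_-=\alpha_{-1,-1}$, the periodicity $\alpha_{r+1,s+p}=\alpha_{r,s}$ gives $\alpha_0-\alpha_{r,s}=\alpha_{-r,-s}=\alpha_{1-r,p-s}$, so $\cF_{\alpha_{r,s}}'\cong\cF_{\alpha_{1-r,p-s}}$ as $\cM(p)$-modules. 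For $1\leq s\leq p-1$, dualizing $0\to\cM_{r,s}\to\cF_{\alpha_{r,s}}\to\cM_{r+1,p-s}\to 0$ exhibits $\cM_{r,s}'$ as a simple quotient of $\cF_{\alpha_{1-r,p-s}}$; but~\eqref{exactsing} applied with $(r,s)\mapsto(1-r,p-s)$ shows $\cF_{\alpha_{1-r,p-s}}$ is uniserial of length two with unique simple quotient $\cM_{2-r,s}$, whence $\cM_{r,s}'\cong\cM_{2-r,s}$. For $s=p$ we have $\cM_{r,p}=\cF_{\alpha_{r,p}}$, so directly $\cM_{r,p}'\cong\cF_{\alpha_0-\alpha_{r,p}}=\cF_{\alpha_{-r,-p}}=\cF_{\alpha_{2-r,p}}=\cM_{2-r,p}$. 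In every case $\cM_{r,s}'\cong\cM_{2-r,s}$, which lies in the required range, so $\cC_{\cM(p)}$ is closed under contragredients.

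The only step with real content is the identification of $\cF_{\alpha_{r,s}}'$, and this is where I would take the most care. If one prefers not to invoke the Feigin--Fuchs contragredient formula, one can argue instead that $\cM_{r,s}'$ cannot be a typical irreducible $\cM(p)$-module $\cF_\lambda$ with $\lambda\notin L^\circ$: since the contragredient commutes with restriction to the Virasoro subalgebra and each $L(c_p,h)$ is self-contragredient, $\cM_{r,s}'$ has, as a Virasoro module, the same infinitely many composition factors $L(c_p,h_{r+2n,s})$ as $\cM_{r,s}$ (by~\eqref{eqn:Mrs_decomp_pos} and~\eqref{eqn:Mrs_decomp_neg}), whereas a typical $\cF_\lambda$ restricts to a single irreducible Virasoro module, the Kac formula showing $h_\lambda$ is non-degenerate precisely when $\lambda\notin L^\circ$. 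By the classification of simple $\cM(p)$-modules, $\cM_{r,s}'$ is therefore some atypical $\cM_{r',s'}$, and $(r',s')$ is then pinned down by matching lowest conformal weights via~\eqref{eqn:h_rs}. Either route yields the proposition.
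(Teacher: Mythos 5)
Your argument is correct, but its main route differs from the paper's at the key step. The paper also reduces to simple modules (by induction on length rather than your kernel filtration of $M'$, an immaterial difference) and then handles the base case purely by elimination: $\cM_{r,s}'$ is simple by \cite[Proposition 5.3.2]{FHL}, has the same lowest conformal weight as $\cM_{r,s}$, and a computation with \eqref{conformalgrading} and \eqref{eqn:h_rs} shows no typical $\cF_\lambda$, $\lambda\notin L^\circ$, can have such a lowest weight; the paper never identifies which $\cM_{r',s'}$ occurs in this proposition. Your primary route instead imports the standard Feigin--Fuchs duality $\cF_\lambda'\cong\cF_{\alpha_0-\lambda}$ (not proved in the paper, but correct and compatible with restriction to $\cM(p)$ since $\cM(p)\subseteq\cH$ share the conformal vector) and dualizes the non-split sequence \eqref{exactsing}; this buys the precise identification $\cM_{r,s}'\cong\cM_{2-r,s}$ immediately, which the paper only obtains much later (via the natural transformation $\Phi$, rigidity, and fusion rules) in a remark at the end of Section \ref{sec:rigidity}. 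Your fallback route is essentially the paper's own argument, with the Vir-irreducibility of typical Fock modules replacing the direct weight comparison; both hinge on the same fact that degenerate weights force $\lambda\in L^\circ$.

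One small caveat in the fallback route: the lowest conformal weight does not by itself pin down $(r',s')$, since $\cM_{r,s}$ and $\cM_{2-r,s}$ share the same lowest weight (compare \eqref{eqn:Mrs_decomp_pos} and \eqref{eqn:Mrs_decomp_neg}); to distinguish them you would need, e.g., the full Virasoro decompositions. This is harmless here, because membership of $\cM_{r,s}'$ in the atypical family is all the proposition requires.
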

\begin{proof}
 Since every module in $\cC_{\cM(p)}$ has finite length, we can use induction on length. For the base case, we need to show that the contragredient of $\cM_{r,s}$ for $r\in\ZZ$, $1\leq s\leq p$ is isomorphic to some $\cM_{r',s'}$. Since $\cM_{r,s}'$ is an irreducible $\cM(p)$-module by \cite[Proposition 5.3.2]{FHL}, and since the lowest conformal weight of $\cM_{r,s}'$ agrees with that of $\cM_{r,s}$, the only other possibility is that $\cM_{r,s}'\cong\cF_\lambda$ for some $\lambda\notin L^\circ$ such that $h_\lambda=h_{\alpha_{r,s}}$ or $h_\lambda=h_{\alpha_{2-r,s}}$ according as $r\geq 1$ or $r\leq 0$. However, we can see from \eqref{conformalgrading} and \eqref{eqn:h_rs} that $h_\lambda=h_{r,s}$ for some $r\geq 1$, $1\leq s\leq p$ only if
 \begin{equation*}
  \lambda =\left((p-1)\pm(pr-s)\right)\frac{\alpha}{2p}\in L^\circ.
 \end{equation*}
So in fact $\cF_\lambda\cong\cM_{r,s}'$ for $\lambda\notin L^\circ$ is impossible, proving the base case of the induction.

For the inductive step, suppose $M$ is a module in $\cC_{\cM(p)}$ with length $\ell(M)\geq 2$. Then we have an exact sequence
\begin{equation*}
 0\rightarrow N_1\rightarrow M\rightarrow N_2\rightarrow 0
\end{equation*}
with $N_1$, $N_2$ modules in $\cC_{\cM(p)}$ such that $\ell(N_1),\ell(N_2)<\ell(M)$. By Proposition \ref{prop:C_has_contras}, the sequence of $\cM(p)$-modules
\begin{equation*}
 0\rightarrow N_2'\rightarrow M'\rightarrow N_1'\rightarrow 0
\end{equation*}
is also exact, and by the inductive hypothesis, $N_1'$, $N_2'$ are modules in $\cC_{\cM(p)}$. Thus $M'$, as an extension of two modules in $\cC_{\cM(p)}$, is also a module in $\cC_{\cM(p)}$.
\end{proof}

\begin{remark}\label{rem:s_self_contra}
 By analyzing conformal weights using \eqref{eqn:h_rs} again, we can see that $h_{1,s}=h_{r',s'}$ for $r'\geq 1$, $1\leq s'\leq p$ if and only if $r'=1$, $s'=s$. This forces $\cM_{1,s}'\cong\cM_{1,s}$ for all $1\leq s\leq p$; in particular, $\cM(p)=\cM_{1,1}$ is a self-contragredient vertex operator algebra.
\end{remark}

When a category $\cC$ of grading-restricted generalized modules for a self-contragredient vertex operator algebra $V$ is closed under contragredients and has vertex algebraic braided tensor category structure as in \cite{HLZ1}-\cite{HLZ8}, we have an alternative characterization of the contragredient functor (see for example \cite{CKM2}). In particular, the isomorphisms of intertwining operator spaces
\begin{equation*}
 \cV_{V, W}^W\rightarrow\cV_{W,V}^W\rightarrow\cV_{W,W'}^{V'}\rightarrow\cV_{W',W}^V\rightarrow\Hom_V(W'\boxtimes W,V)
\end{equation*}
for a $V$-module $W$ in $\cC$ allow us to identify the vertex operator $Y_W$ with an evaluation homomorphism $e_W: W'\boxtimes W\rightarrow V$. The pair $(W',e_W)$ satisfies the following universal property: for any homomorphism $f: X\boxtimes W\rightarrow V$ in $\cC$, there is a unique homomorphism $\varphi: X\rightarrow W'$ such that the diagram
\begin{equation}\label{diag:contra_univ_prop}
 \xymatrixcolsep{4pc}
 \xymatrix{
 X\boxtimes W \ar[rd]^{f} \ar[d]_{\varphi\boxtimes\id_W} & \\
 W'\boxtimes W \ar[r]_{e_W} & V \\
 }
\end{equation}
commutes. This universal property allows us to characterize contragredient homomorphisms and the ribbon isomorphisms $\delta_W$ in terms of commutative diagrams:
\begin{lemma}\label{lem:contra}
 Suppose $\cC$ is a braided tensor category of modules for a self-contragredient vertex operator algebra that is closed under contragredients. Then we can choose the evaluations $e_W: W'\boxtimes W\rightarrow V$ for modules $W$ in $\cC$ in such a way that:
 \begin{itemize}
  \item[(1)] For any homomorphism $f: W_1\rightarrow W_2$ in $\cC$, $f': W_2'\rightarrow W_1'$ is the unique homomorphism such that the diagram
  \begin{equation}\label{diag:f'_def}
 \xymatrixcolsep{4pc}
 \xymatrix{
 W_2'\boxtimes W_1 \ar[r]^{\id_{W_2'}\boxtimes f} \ar[d]_{f'\boxtimes\id_{W_1}} & W_2'\boxtimes W_2 \ar[d]^{e_{W_2}} \\
 W_1'\boxtimes W_1 \ar[r]_{e_{W_1}} & V \\
 }
\end{equation}
commutes.

\item[(2)] For any module $W$ in $\cC$, $\delta_W: W\rightarrow W''$ is the unique homomorphism such that the diagram
\begin{equation}\label{diag:delta_def}
  \xymatrixcolsep{6pc}
  \xymatrix{
  W\boxtimes W' \ar[d]_{\delta_W\boxtimes\id_{W'}} \ar[r]^{\cR_{W,W'}\circ(\theta_W\boxtimes\id_{W'})} & W'\boxtimes W \ar[d]^{e_W} \\
W'' \boxtimes W' \ar[r]_{e_{W'}} & V \\
  }
 \end{equation}
commutes, where $\theta_W=e^{2\pi i L(0)}$ is the twist on $W$.
 \end{itemize}
\end{lemma}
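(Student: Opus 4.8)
The plan is to reduce both statements to the defining universal property \eqref{diag:contra_univ_prop} of the pair $(W',e_W)$, since in a braided tensor category closed under contragredients this property characterizes $W'$ up to canonical isomorphism together with a chosen evaluation. First I would fix, once and for all, a coherent choice of evaluations $e_W\colon W'\boxtimes W\to V$ for all $W$ in $\cC$: recall that $Y_W$ is itself an intertwining operator of type $\binom{W}{V\,W}$, and under the standard chain of isomorphisms of intertwining operator spaces
\begin{equation*}
 \cV_{V,W}^W\to\cV_{W,V}^W\to\cV_{W,W'}^{V'}\to\cV_{W',W}^V\to\Hom_V(W'\boxtimes W,V)
\end{equation*}
(built from skew-symmetry, the adjoint intertwining operators of \cite{HLZ2}, and the identification $V'\cong V$ coming from self-contragredience) the operator $Y_W$ is sent to a homomorphism which I take to be $e_W$. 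The content I must verify is that with this choice, the universal property of \eqref{diag:contra_univ_prop} holds — this is essentially a restatement of the universal property of tensor products combined with the non-degeneracy of the pairings, and it is the place where self-contragredience of $V$ enters. Part (1) and part (2) then become uniqueness-and-existence claims for the maps $f'$ and $\delta_W$ making the stated diagrams commute: uniqueness in each case follows immediately because $\bullet\boxtimes W_1$ (resp. $\bullet\boxtimes W'$) applied to the relevant source, post-composed with $e_{W_1}$ (resp. $e_{W'}$), is exactly the datum governed by the universal property \eqref{diag:contra_univ_prop}, so at most one morphism can fit; existence is what needs a short computation.

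For part (1): I would show that the contragredient morphism $f'$ as defined by $\langle f'(w_2'),w_1\rangle=\langle w_2',f(w_1)\rangle$ does make \eqref{diag:f'_def} commute. The clean way is to translate \eqref{diag:f'_def} back into intertwining operators: going around the top-right of the square corresponds to the intertwining operator obtained by feeding $f$ into the second slot before pairing via $e_{W_2}$, while going around the left-bottom corresponds to feeding $f'$ into the first slot before pairing via $e_{W_1}$; both, under the chain of isomorphisms above, unwind to the single intertwining relation $\langle Y_{W_2}(\cdot,x)f(w_1),w_2'\rangle=\langle Y_{W_1}(\cdot,x)w_1,f'(w_2')\rangle$, which is exactly the statement that $f$ is a module homomorphism together with the definition of $f'$. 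Since the two composites agree as intertwining operators of the same type, the corresponding homomorphisms $W_2'\boxtimes W_1\to V$ coincide, which is the commutativity of \eqref{diag:f'_def}; uniqueness was already noted. Here I would lean on Proposition~\ref{prop:C_has_contras} only to know $W_1',W_2'$ are genuinely objects of $\cC$, so that all the tensor-categorical manipulations are legitimate.

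For part (2): I want to show that $\delta_W$, defined by $\langle\delta_W(w),w'\rangle=\langle w',w\rangle$, makes \eqref{diag:delta_def} commute. I would again pass to intertwining operators. The right-hand vertical $e_{W'}$ corresponds (under the chain of isomorphisms, now applied to $W'$ in place of $W$, using $W''\cong W$ via $\delta_W$) to the vertex operator $Y_{W'}$, which is the adjoint of $Y_W$: explicitly $\langle Y_{W'}(v,x)w',w\rangle=\langle w',Y_W(e^{xL(1)}(-x^{-2})^{L(0)}v,x^{-1})w\rangle$ or, more conveniently, $\langle w', Y_W^{\mathrm{opp}}(v,x)w\rangle$ after the skew-symmetry twist. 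The left side of \eqref{diag:delta_def} is built from the braiding $\cR_{W,W'}$ precomposed with the twist $\theta_W\boxtimes\id_{W'}$; the key input is the standard relation in a braided tensor category of modules that $\cR$ together with the twist implements exactly the passage from the vertex operator $Y_W$ (viewed via $e_W$, i.e. $W'$ in the first slot) to its skew-symmetrized/adjoint form with $W$ in the first slot — this is precisely the compatibility encoded in \cite{HLZ2} between the braiding, the $e^{2\pi iL(0)}$ twist, and skew-symmetry of intertwining operators. Chasing both composites $W\boxtimes W'\to V$ into intertwining operators of type $\binom{V}{W\,W'}$, they unwind to the same bilinear pairing $(w,w')\mapsto$ (the $x$-independent part of) $\langle w',Y_W(\cdot)w\rangle$ after accounting for the $\delta_W$ identification, which is exactly how $\delta_W$ was defined. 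Hence \eqref{diag:delta_def} commutes, and uniqueness is automatic from the universal property applied to $W'$.

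The main obstacle, and the step deserving the most care, is the bookkeeping in the chain of isomorphisms of intertwining operator spaces — in particular getting the powers of $x$, the $e^{2\pi i L(0)}$ factors, and the monodromy/branch choices to line up so that $\cR_{W,W'}\circ(\theta_W\boxtimes\id_{W'})$ really is the correct twist to convert $e_W$ into (the $\delta_W$-transport of) $e_{W'}$, rather than off by a sign of the twist or a power of monodromy. Everything else is formal: uniqueness is immediate from \eqref{diag:contra_univ_prop}, and existence is the content of \cite{HLZ2} on adjoint and skew-symmetry intertwining operators together with the hexagon/twist compatibilities of the braided tensor category. I would phrase the proof so that these compatibilities are invoked by reference to \cite{HLZ2, CKM2} rather than re-derived, and reserve the explicit computation for the one identity relating $\Omega$, $\cR$, and $\theta_W$.
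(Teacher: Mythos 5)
Your proposal matches the paper's proof in essentially every respect: the paper also fixes $e_W$ by transporting $Y_W$ through the skew-symmetry/adjoint chain (concretely $\cE_W=\kappa\circ\Omega_0(A_0(\Omega(Y_W)))$, yielding an explicit formula used in the verifications), proves (1) by exactly the unwinding you describe (the module-homomorphism property of $f$ plus the definition of $f'$, then surjectivity of the tensor product intertwining operator), and proves (2) by the braiding/twist/skew-symmetry comparison you sketch, with uniqueness in both parts left to the universal property of $(W',e_W)$. The only difference is explicitness: where you would cite \cite{HLZ2, CKM2} for the $\Omega$--$\cR$--$\theta_W$ bookkeeping in (2), the paper carries out that computation in full using the $L(0)$, $L(1)$ conjugation formulas of \cite{FHL}, which is precisely the delicate point you flagged.
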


\begin{proof}
 Fix an isomorphism $\kappa: V'\rightarrow V$ and for a module $W$ in $\cC$, let $\cE_W\in\cV_{W',W}^V$ denote the intertwining operator corresponding to a choice of $e_W$, that is,
 \begin{equation*}\
  \cE_W=e_W\circ\cY_{W',W}
 \end{equation*}
where $\cY_{W',W}\in\cV_{W',W}^{W'\boxtimes W}$ is the tensor product intertwining operator. Let us choose
\begin{equation*}
 \cE_W=\kappa\circ\Omega_0(A_0(\Omega(Y_W))),
\end{equation*}
using the notation of \cite{HLZ2}, for all modules $W$ in $\cC$. A calculation shows that this means $\cE_W$ satisfies
\begin{equation}\label{eqn:cE_char}
 \langle v', \cE_W(w',x)w\rangle =\langle e^{-x^{-1} L(1)} w', Y_W(e^{xL(1)}\widetilde{\kappa}(v'),x^{-1})e^{-xL(1)} e^{-\pi i L(0)} x^{-2 L(0)} w\rangle
\end{equation}
for $v'\in V'$, $w'\in W'$, and $w\in W$, where $\widetilde{\kappa}=\delta_V^{-1}\circ\kappa': V'\rightarrow V$.

Now for a $V$-homomorphism $f: W_1\rightarrow W_2$ in $\cC$, we use the definitions, \eqref{eqn:cE_char}, and properties of $V$-module homomorphisms to calculate
\begin{align*}
 &\left\langle v', (e_{W_1}\circ(f'\boxtimes\id_{W_1}))  \cY_{W_2',W_1}(w_2',x)w_1\right\rangle  =\left\langle v',\cE_{W_1}(f'(w_2'),x)w_1\right\rangle\nonumber\\
 &\qquad\qquad\qquad = \left\langle e^{-x^{-1} L(1)} f'(w_2'), Y_{W_1}(e^{xL(1)}\widetilde{\kappa}(v'),x^{-1})e^{-xL(1)} e^{-\pi i L(0)} x^{-2 L(0)} w_1\right\rangle\nonumber\\
 &\qquad\qquad\qquad =\left\langle e^{-x^{-1} L(1)} w_2', Y_{W_2}(e^{xL(1)}\widetilde{\kappa}(v'),x^{-1})e^{-xL(1)} e^{-\pi i L(0)} x^{-2 L(0)} f(w_1)\right\rangle\nonumber\\
 &\qquad\qquad\qquad =\left\langle v',\cE_{W_2}(w_2',x)f(w_1)\right\rangle\nonumber\\
 &\qquad\qquad\qquad = \left\langle v',\left(e_{W_2}\circ(\id_{W_2'}\boxtimes f)\right)\cY_{W_2',W_1}(w_2',x)w_1\right\rangle
\end{align*}
for $v'\in V'$, $w_2'\in W_2'$, and $w_1\in W_1$. Since the tensor product intertwining operator $\cY_{W_2',W_1}$ is surjective, this shows that \eqref{diag:f'_def} commutes.

Now for a module $W$ in $\cC$, we use the definitions and \eqref{eqn:cE_char} to calculate
\begin{align*}
 &\left\langle v', (e_W\circ\cR_{W,W'}\circ(\theta_W\boxtimes\id_{W'}))\cY_{W,W'}(w,x)w'\right\rangle \nonumber\\
 &\qquad\qquad\qquad = \left\langle v', e_W\left(e^{xL(-1)}\cY_{W',W}(w',e^{\pi i} x)e^{2\pi i L(0)} w\right)\right\rangle\nonumber\\
 &\qquad\qquad\qquad = \left\langle e^{xL(1)} v',\cE_W(w',e^{\pi i}x) e^{2\pi i L(0)} w\right\rangle\nonumber\\
 &\qquad\qquad\qquad =\left\langle e^{x^{-1}L(1)} w', Y_W(\widetilde{\kappa}(v'), -x^{-1}) e^{xL(1)} e^{-\pi i L(0)} x^{-2L(0)} w\right\rangle
\end{align*}
for $v'\in V'$, $w\in W$, and $w'\in W'$. On the other hand, the definitions and conjugation formulas from \cite[Section 5]{FHL} involving $L(0)$ and $L(1)$ imply
\begin{align*}
 \langle v', (e_{W'}\circ & (\delta_W\boxtimes\id_{W'}))  \cY_{W,W'}(w,x)w'\rangle = \langle v', \cE_{W'}(\delta_W(w),x)w'\rangle\nonumber\\
 & =\left\langle e^{-x^{-1}} \delta_W(w), Y_{W'}(e^{xL(1)} \widetilde{\kappa}(v'),x^{-1})e^{-xL(1)} e^{-\pi i L(0)} x^{-2L(0)}w'\right\rangle\nonumber\\
 & =\left\langle Y_{W'}(e^{xL(1)}\widetilde{\kappa}(v'),x^{-1})e^{-xL(1)}e^{-\pi i L(0)} x^{-2L(0)} w', e^{-x^{-1}L(1)}w\right\rangle\nonumber\\
 & =\left\langle e^{-xL(1)}e^{-\pi i L(0)} x^{-2L(0)} w', Y_W(e^{x^{-1} L(1)}(-x^2)^{L(0)} e^{xL(1)}\widetilde{\kappa}(v'),x)e^{-x^{-1}L(1)}w\right\rangle\nonumber\\
 & =\left\langle e^{-\pi i L(0)} x^{-2L(0)}e^{x^{-1}L(1)} w', Y_W((-x^2)^{L(0)}\widetilde{\kappa}(v'),x) e^{-x^{-1} L(1)}w\right\rangle\nonumber\\
 & =\left\langle e^{x^{-1}L(1)} w', Y_W(\widetilde{\kappa}(v'), -x^{-1}) e^{xL(1)} e^{-\pi i L(0)} x^{-2L(0)} w\right\rangle.
\end{align*}
Since $\cY_{W,W'}$ is surjective, this shows that \eqref{diag:delta_def} commutes.
\end{proof}

From now on, we assume that we have chosen the evaluations so that the conclusion of the above lemma holds.

If $W$ is a rigid object in a tensor category, then its dual $W^*$ satisfies the same universal property of the contragredient, with the morphism $\varphi$ of \eqref{diag:contra_univ_prop} given by the composition
\begin{equation*}
 X\xrightarrow{r_X^{-1}} X\boxtimes V\xrightarrow{\id_X\boxtimes i_W} X\boxtimes(W\boxtimes W')\xrightarrow{\cA_{X,W,W'}} (X\boxtimes W)\boxtimes W'\xrightarrow{f\boxtimes\id_{W'}} V\boxtimes W'\xrightarrow{l_{W'}} W'.
\end{equation*}
Thus if $\cC$ is a braided tensor category of modules for a self-contragredient vertex operator algebra $V$ that is closed under contragredients, the dual of a rigid object in $\cC$ is necessarily its contragredient. The following two lemmas are standard, and we will use them to prove that simple modules in the category $\cC_{\cM(p)}$ of $\cM(p)$-modules are rigid:
\begin{lemma}\label{lem:prod_of_rigid}
 If $W_1$ and $W_2$ are rigid objects in a tensor category, then $W_1\boxtimes W_2$ is also rigid (with dual $W_2^*\boxtimes W_1^*$).
\end{lemma}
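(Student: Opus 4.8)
The statement to prove is Lemma~\ref{lem:prod_of_rigid}: if $W_1$ and $W_2$ are rigid objects in a tensor category, then $W_1\boxtimes W_2$ is rigid with dual $W_2^*\boxtimes W_1^*$.

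\medskip

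\noindent\textbf{Plan of proof.} This is a standard lemma in the theory of monoidal categories, so the plan is simply to exhibit explicit coevaluation and evaluation morphisms for $W_1\boxtimes W_2$ built out of those for $W_1$ and $W_2$, and then verify the two triangle (zig-zag) identities. Write $i_1\colon V\to W_1\boxtimes W_1^*$, $e_1\colon W_1^*\boxtimes W_1\to V$ for the (co)evaluation data of $W_1$, and similarly $i_2,e_2$ for $W_2$; here $V=\mathbf 1$ is the unit object. The candidate coevaluation for $W_1\boxtimes W_2$ with dual $W_2^*\boxtimes W_1^*$ is the composite
\begin{equation*}
 V \xrightarrow{\ i_1\ } W_1\boxtimes W_1^* \xrightarrow{\ \id_{W_1}\boxtimes l_{W_1^*}^{-1}\ } W_1\boxtimes(V\boxtimes W_1^*) \xrightarrow{\ \id_{W_1}\boxtimes(i_2\boxtimes \id_{W_1^*})\ } W_1\boxtimes\big((W_2\boxtimes W_2^*)\boxtimes W_1^*\big),
\end{equation*}
followed by the appropriate associativity isomorphisms $\mathcal A$ (and unit isomorphisms) to rebracket the target as $(W_1\boxtimes W_2)\boxtimes(W_2^*\boxtimes W_1^*)$; dually, the candidate evaluation $(W_2^*\boxtimes W_1^*)\boxtimes(W_1\boxtimes W_2)\to V$ is built by first rebracketing via $\mathcal A$ to $W_2^*\boxtimes\big((W_1^*\boxtimes W_1)\boxtimes W_2\big)$, applying $\id_{W_2^*}\boxtimes(e_1\boxtimes\id_{W_2})$, then $\id_{W_2^*}\boxtimes l_{W_2}$, and finally $e_2$.

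\medskip

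\noindent\textbf{Key steps.} First I would write down the two composites above precisely, inserting all associativity and unit isomorphisms so that source and target objects match on the nose. Second, I would check the first triangle identity: starting from $W_1\boxtimes W_2 \xrightarrow{r^{-1}} (W_1\boxtimes W_2)\boxtimes V$, applying $\id\boxtimes(\text{coev})$, then the associator, then $(\text{ev})\boxtimes\id$, then $l$, the composite should equal $\id_{W_1\boxtimes W_2}$. The verification is a diagram chase: after expanding the coevaluation and evaluation into their constituent pieces, the pentagon axiom and the triangle axiom for the unit constraints reduce the big diagram to a pair of nested sub-diagrams, each of which collapses via the triangle identity for $W_1$ and the triangle identity for $W_2$ respectively. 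The naturality of $\mathcal A$, $l$, $r$ with respect to morphisms like $i_2\boxtimes\id$ is what lets the ``inner'' $W_2$-triangle be performed inside the $W_1$-context. Third, I would check the second triangle identity for $W_2^*\boxtimes W_1^*$ in the same way; by the symmetry of the construction this is essentially the same chase with the roles of the two zig-zags interchanged.

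\medskip

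\noindent\textbf{Main obstacle.} There is no real mathematical obstacle here — the content is entirely the coherence bookkeeping. The main source of friction is simply organizing the pentagon/triangle applications so that the verification is readable rather than a morass of nested tensor products; in practice one suppresses the associativity and unit isomorphisms (invoking Mac Lane's coherence theorem) and writes the (co)evaluations in ``string diagram'' form, after which the two triangle identities for $W_1\boxtimes W_2$ become immediate consequences of those for $W_1$ and for $W_2$. I would therefore state that, since the argument is a routine diagram chase using only the pentagon and triangle axioms together with the rigidity of $W_1$ and $W_2$, and refer the reader to a standard reference (e.g.\ \cite{EGNO}-style treatments of tensor categories) for the full coherence details, rather than grinding through them explicitly.
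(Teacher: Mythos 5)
Your construction is correct and is exactly the standard argument: take the coevaluation $V\to(W_1\boxtimes W_2)\boxtimes(W_2^*\boxtimes W_1^*)$ obtained by nesting $i_2$ inside $i_1$ and the evaluation obtained by nesting $e_1$ inside $e_2$, and check the zig-zag identities by a coherence-based diagram chase reducing to the triangle identities for $W_1$ and $W_2$ separately. The paper offers no proof of this lemma at all — it is cited as ``standard'' (only the subsequent lemma on direct summands is proved) — so your proposal supplies precisely the routine verification the paper is implicitly invoking.
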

\begin{lemma}\label{lem:summand_of_rigid}
 If $\cC$ is a braided tensor category of modules for a self-contragredient vertex operator algebra $V$ that is closed under contragredients, then any direct summand of a rigid module in $\cC$ is rigid.
\end{lemma}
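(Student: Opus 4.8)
The plan is to build an explicit rigid structure on a direct summand $W_1$ of a rigid object $W$ by ``restricting'' the rigid structure of $W$ along the splitting morphisms. Fix $W_2\in\cC$ and morphisms $\iota_j\colon W_j\to W$, $\pi_j\colon W\to W_j$ ($j=1,2$) realizing $W=W_1\oplus W_2$, and set $e=\iota_1\pi_1\in\End(W)$. Since $W$ is rigid, the discussion preceding the lemma lets us take its dual to be the contragredient $W'$; after transporting the rigid structure we may assume its evaluation is the canonical map $e_W\colon W'\boxtimes W\to V$ of Lemma~\ref{lem:contra}, with a compatible coevaluation $i_W\colon V\to W\boxtimes W'$ satisfying the two zigzag (triangle) identities. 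Applying the exact contravariant contragredient functor (Proposition~\ref{prop:contra_exact}) to $W=W_1\oplus W_2$, and using that $\cC$ is closed under contragredients, I obtain $W'=W_1'\oplus W_2'$ in $\cC$ with structure morphisms $\iota_j'\colon W'\to W_j'$ and $\pi_j'\colon W_j'\to W'$; in particular $e'=\pi_1'\iota_1'=(\iota_1\pi_1)'$ is the idempotent of $W'$ with image $W_1'$, since $\iota_1'\pi_1'=(\pi_1\iota_1)'=\id_{W_1'}$.

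I then take $W_1'$ as the candidate dual of $W_1$, with evaluation $e_{W_1}:=e_W\circ(\pi_1'\boxtimes\iota_1)\colon W_1'\boxtimes W_1\to V$ and coevaluation $i_{W_1}:=(\pi_1\boxtimes\iota_1')\circ i_W\colon V\to W_1\boxtimes W_1'$, and the heart of the proof is to verify the two zigzag identities for this data. I would do this by substituting the definitions and repeatedly using naturality of the associativity and unit isomorphisms together with functoriality of $\boxtimes$ to migrate $\iota_1,\pi_1,\iota_1',\pi_1'$ to the two ends of each composite, so that the middle becomes exactly the zigzag composite for $W$ (which equals the identity). During this reduction one meets a factor of the form $e_W\circ(e'\boxtimes\id_W)$; applying Lemma~\ref{lem:contra}(1) to the endomorphism $e\colon W\to W$ rewrites it as $e_W\circ(\id_{W'}\boxtimes e)$, which is precisely the move that lets the idempotents arising from the evaluation side and the coevaluation side be combined into a single tensor factor. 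Once the $W$-zigzag collapses, the residue is $\pi_1\circ e\circ\iota_1=(\pi_1\iota_1)^2=\id_{W_1}$ for the first identity, and symmetrically $\iota_1'\circ e'\circ\pi_1'=(\iota_1'\pi_1')^2=\id_{W_1'}$ for the second. For the second zigzag I expect to also need the mirror relation $(e\boxtimes\id_{W'})\circ i_W=(\id_W\boxtimes e')\circ i_W$, which follows formally from the zigzags for $W$ and Lemma~\ref{lem:contra}(1).

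The main obstacle is purely the bookkeeping in this zigzag verification: tracking which tensor factor each associativity and unit isomorphism acts on, and inserting Lemma~\ref{lem:contra}(1) at exactly the right step. Conceptually there is nothing new here --- this is the standard fact that a retract of a dualizable object is dualizable, specialized to the case where the ambient duality is contragredient duality and where the relevant idempotents split automatically in $\cC$, because $\cC$ contains the given summands $W_1,W_2$ and is closed under contragredients. Finally, once $W_1$ is known to be rigid, the remark that duals of rigid objects in $\cC$ are their contragredients shows that its dual is necessarily $W_1'$, so the construction above is canonical.
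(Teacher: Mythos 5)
Your proposal is correct and matches the paper's argument: the paper likewise takes the dual of the summand to be its contragredient, with evaluation $e_W\circ(\pi_1'\boxtimes\iota_1)$ and coevaluation $(\pi_1\boxtimes\iota_1')\circ i_W$, and verifies the zigzag identities using the compatibility diagram of Lemma \ref{lem:contra}(1). Your write-up simply spells out the bookkeeping that the paper calls ``straightforward.''
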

\begin{proof}
 Suppose $W$ is a rigid module in $\cC$ and we have homomorphisms $q: M\rightarrow W$, $p: W\rightarrow M$ such that $p\circ q=\id_M$ for some module $M$ in $\cC$. Then it is straightforward to show using \eqref{diag:f'_def} that $M$ is rigid with dual $M'$, evaluation $e_M=e_W\circ(p'\boxtimes q)$, and coevaluation $i_M=(p\boxtimes q')\circ i_W$.
\end{proof}

We can now prove:
\begin{theorem}\label{thm:C_simple_rigid}
 Every simple module $\cM_{r,s}$ in $\cC_{\cM(p)}$ is rigid.
\end{theorem}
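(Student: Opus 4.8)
The plan is to deduce rigidity of every $\cM_{r,s}$ from rigidity of $\cM_{1,2}$ (established in the previous subsection), rigidity of the simple currents, and the fusion rules of Theorem \ref{fusionrules}, using the two categorical facts that a tensor product of rigid objects is rigid (Lemma \ref{lem:prod_of_rigid}) and that a direct summand of a rigid object is rigid (Lemma \ref{lem:summand_of_rigid}); the latter applies here because $\cC_{\cM(p)}$ is a braided tensor category of modules for the self-contragredient vertex operator algebra $\cM(p)=\cM_{1,1}$ (Remark \ref{rem:s_self_contra}) that is closed under contragredients (Proposition \ref{prop:C_has_contras}).

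First I would treat the simple currents. The fusion rules \eqref{simplewithsimple}, \eqref{simplewithatypical}, and \eqref{more} yield the group law $\cM_{r,1}\boxtimes\cM_{r',1}\cong\cM_{r+r'-1,1}$, so the $\cM_{r,1}$, $r\in\ZZ$, form a group of simple currents with unit $\cM_{1,1}$; in particular each is invertible and hence rigid (as already used for the $\cM_{2n+1,1}$ in the proof of Proposition \ref{prop:ind_exact}; alternatively, $\cM_{r,1}$ is a $\boxtimes$-power of $\cM_{2,1}$ or of its inverse $\cM_{0,1}$, so Lemma \ref{lem:prod_of_rigid} applies).

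Next I would show $\cM_{1,s}$ is rigid for $1\le s\le p$ by induction on $s$. The cases $s=1$ ($\cM(p)$ itself) and $s=2$ are already in hand, which finishes the case $p=2$. For $p\ge 3$ and $2\le s\le p-1$, assuming $\cM_{1,s}$ rigid, the fusion rule \eqref{more1} gives $\cM_{1,2}\boxtimes\cM_{1,s}\cong\cM_{1,s-1}\oplus\cM_{1,s+1}$; the left-hand side is rigid by Lemma \ref{lem:prod_of_rigid}, so its summand $\cM_{1,s+1}$ is rigid by Lemma \ref{lem:summand_of_rigid}. Letting $s$ run from $2$ to $p-1$ produces rigidity of $\cM_{1,3},\dots,\cM_{1,p}$. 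The one subtlety is that $\cM_{1,p}$ must be obtained here as a summand of $\cM_{1,2}\boxtimes\cM_{1,p-1}$ and not from the $s=p$ fusion rule, whose right-hand side $\cP_{1,p-1}$ is indecomposable but not simple; this is the main point at which the induction has to be organized with care.

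Finally, for general $r\in\ZZ$ and $1\le s\le p$, I would verify $\cM_{r,s}\cong\cM_{r,1}\boxtimes\cM_{1,s}$: this is immediate from \eqref{simplewithatypical} when $r$ is odd, and for $r$ even one writes $\cM_{r,1}\cong\cM_{2,1}\boxtimes\cM_{r-1,1}$ and applies \eqref{simplewithatypical} and \eqref{more}. Thus $\cM_{r,s}$ is a tensor product of the rigid modules $\cM_{r,1}$ and $\cM_{1,s}$, hence rigid by Lemma \ref{lem:prod_of_rigid}; since $\cC_{\cM(p)}$ is closed under contragredients, the rigid dual of $\cM_{r,s}$ is then its contragredient $\cM_{r,s}'$, as claimed.
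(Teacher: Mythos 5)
Your proposal is correct and follows essentially the same route as the paper's proof: rigidity of the simple currents $\cM_{r,1}$, then rigidity of $\cM_{1,s}$ via the fusion rule $\cM_{1,2}\boxtimes\cM_{1,s}\cong\cM_{1,s-1}\oplus\cM_{1,s+1}$ together with Lemmas \ref{lem:prod_of_rigid} and \ref{lem:summand_of_rigid}, and finally $\cM_{r,s}\cong\cM_{r,1}\boxtimes\cM_{1,s}$. The explicit organization as an induction on $s$ (avoiding the $s=p$ fusion rule, whose output $\cP_{r,p-1}$ is not simple) is exactly how the paper's restriction of \eqref{more1} to $2\leq s\leq p-1$ is meant to be read.
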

\begin{proof}
 Since the modules $\cM_{2n+1,1}$, $n\in\ZZ$, and $\cM_{2,1}$ are simple currents, they are rigid. Then for any $n\in\ZZ$, the module $\cM_{2n,1}\cong\cM_{2,1}\boxtimes\cM_{2n-1,1}$ (from \eqref{more}) is rigid by Lemma \ref{lem:prod_of_rigid}. Next, the rigidity of $\cM_{1,2}$ from the previous subsection, the fusion rules \eqref{more1} for $2\leq s\leq p-1$, and Lemma \ref{lem:summand_of_rigid} imply that $\cM_{1,s}$ is rigid for each $1\leq s\leq p$. Finally, the fusion rule $\cM_{r,s}\cong\cM_{r,1}\boxtimes\cM_{1,s}$ from \eqref{simplewithatypical} and \eqref{more} together with Lemma \ref{lem:prod_of_rigid} imply that every $\cM_{r,s}$ is rigid.
\end{proof}

\subsection{Rigidity for finite-length modules}

In this subsection, we extend rigidity of simple modules in $\cC_{\cM(p)}$ to finite-length modules. This will be a consequence of the following more general theorem that we expect will also be useful for future examples:
\begin{theorem}\label{thm:rigid_simple_to_fin_len}
 Assume that $V$ is a self-contragredient vertex operator algebra and $\cC$ is a category of grading-restricted generalized $V$-modules such that:
 \begin{itemize}
 \item[(1)] The category $\cC$ is closed under submodules, quotients, and contragredients, and every module in $\cC$ has finite length.

  \item[(2)] The category $\cC$ has braided tensor category structure as in \cite{HLZ8}.

  \item[(3)] Every simple module in $\cC$ is rigid.
 \end{itemize}
Then $\cC$ is a rigid tensor category.
\end{theorem}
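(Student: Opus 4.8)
The plan is to prove that every module $W$ in $\cC$ is rigid, with dual the contragredient module $W'$. By hypothesis (1), $\cC$ is closed under contragredients, so Proposition \ref{prop:contra_exact} and the discussion around Lemma \ref{lem:contra} furnish the exact contravariant functor $(\cdot)'$, the natural isomorphisms $\delta_W\colon W\xrightarrow{\sim}W''$, and the canonical evaluations $e_W\colon W'\boxtimes W\to V$, which by construction correspond to $\mathrm{id}_{W'}$ under the universal property $\mathrm{Hom}_\cC(X\boxtimes W,V)\cong\mathrm{Hom}_\cC(X,W')$. Using the natural identification of the contragredient of a tensor product with the tensor product of contragredients (see \cite{CKM1,CKM2}) together with $\delta$ and a fixed isomorphism $V'\cong V$, we get for each $W$ a canonical \emph{coevaluation} $i_W\colon V\to W\boxtimes W'$, namely the morphism corresponding to $e_W$ under $\mathrm{Hom}_\cC(W'\boxtimes W,V)\cong\mathrm{Hom}_\cC((W\boxtimes W')',V)\cong\mathrm{Hom}_\cC(V,W\boxtimes W')$. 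Let $u_W\colon W\to W$ and $v_W\colon W'\to W'$ be the two zig-zag composites built from $e_W$, $i_W$ and the unit and associativity constraints. It is a standard fact (and can be checked directly in a braided tensor category closed under contragredients, using no projectivity whatsoever) that if $u_W$ and $v_W$ are both isomorphisms then $W$ is rigid with left, hence two-sided, dual $W'$: one precomposes $i_W$ with $u_W^{-1}\boxtimes\mathrm{id}_{W'}$ and adjusts $e_W$ to force the two zig-zag identities. Thus the theorem reduces to showing that $u_W$ and $v_W$ are isomorphisms for every $W$ in $\cC$.

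We argue by induction on the length $\ell(W)$, which is finite by hypothesis (1). If $\ell(W)=1$, then $W$ is simple, hence rigid by hypothesis (3), and its dual is $W'$ by the remarks following Lemma \ref{lem:summand_of_rigid}. Since $W$ is simple, $\mathrm{End}_\cC(W')\cong\CC$, so $\mathrm{Hom}_\cC(W'\boxtimes W,V)\cong\mathrm{End}_\cC(W')$ and $\mathrm{Hom}_\cC(V,W\boxtimes W')$ are one-dimensional; therefore $e_W$ and $i_W$ are nonzero scalar multiples of a genuine rigidity evaluation and coevaluation, and $u_W$, $v_W$ are nonzero scalars, in particular isomorphisms. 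For the inductive step let $\ell(W)=n\geq 2$ and choose a short exact sequence $0\to A\xrightarrow{\iota}W\xrightarrow{\pi}B\to 0$ in $\cC$ with $\ell(A),\ell(B)<n$ (e.g.\ $A$ a simple submodule). Applying the exact functor $(\cdot)'$ yields an exact sequence $0\to B'\xrightarrow{\pi'}W'\xrightarrow{\iota'}A'\to 0$, with $A',B'\in\cC$ and $\ell(A')=\ell(A)$, $\ell(B')=\ell(B)$. By the inductive hypothesis, $u_A,v_A,u_B,v_B,u_{A'},v_{A'},u_{B'},v_{B'}$ are all isomorphisms.

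The crux is that the zig-zag maps respect these filtrations. Using the defining relations \eqref{diag:f'_def} for contragredient morphisms and \eqref{diag:delta_def}, together with their images under the duality-of-tensor-products identification and the naturality of the unit and associativity isomorphisms, one checks by a diagram chase that
\[
u_W\circ\iota=\iota\circ u_A,\qquad \pi\circ u_W=u_B\circ\pi,
\]
and the analogous identities exhibiting $v_W$ as restricting along $\pi'$ to the zig-zag isomorphism of $B'$ and inducing along $\iota'$ that of $A'$. The five lemma applied to the endomorphism $(u_A,u_W,u_B)$ of $0\to A\to W\to B\to 0$ then shows $u_W$ is an isomorphism, and likewise $v_W$ is an isomorphism via $0\to B'\to W'\to A'\to 0$. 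This closes the induction, so $\cC$ is rigid.

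The main obstacle I expect is precisely the last diagram chase — verifying that $u_W$ restricts to $u_A$ and descends to $u_B$ (and the parallel statements for $v_W$). The whole scheme works only because $e_W$ and $i_W$ are defined \emph{canonically}, via the universal property and the contragredient functor, so that they transform correctly under $\iota,\pi$ and their duals; carrying this out cleanly requires keeping careful track of the associativity and unit constraints and of the natural isomorphism $(W_1\boxtimes W_2)'\cong W_1'\boxtimes W_2'$. A secondary point to nail down is the reduction asserted in the first paragraph — that two zig-zag composites being isomorphisms already yields rigidity — which is exactly where the present argument avoids the lemma from the appendix of \cite{KL5} used in \cite{TW}, whose proof needs enough rigid projective objects.
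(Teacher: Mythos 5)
Your argument has a genuine gap at its foundation: the ``canonical coevaluation'' $i_W\colon V\to W\boxtimes W'$ is not available for a general (not yet known to be rigid) module $W$. The identification of $(W_1\boxtimes W_2)'$ with the tensor product of contragredients, which you invoke from \cite{CKM1,CKM2} to transport $e_W$ across $\mathrm{Hom}_\cC(W'\boxtimes W,V)\cong\mathrm{Hom}_\cC((W\boxtimes W')',V)$, is in general only a natural \emph{morphism} $W_2'\boxtimes W_1'\to (W_1\boxtimes W_2)'$ (the map $\Phi_{W_1,W_2}$ of the paper); it is an isomorphism when the objects involved are rigid, which is exactly what is to be proved. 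So your definition of $i_W$, and hence of the zig-zag maps $u_W,v_W$, presupposes the invertibility of $\Phi_{W,W'}$, and the induction cannot start for non-simple $W$. This also undermines the ``crux'' diagram chase: the dinaturality statements $u_W\circ\iota=\iota\circ u_A$, $\pi\circ u_W=u_B\circ\pi$ need a coevaluation that is compatibly defined for $A$, $W$, $B$, which again rests on the unavailable identification. The paper resolves precisely this difficulty by running the length induction not on zig-zag endomorphisms but on the always-defined natural transformation $\Phi$: naturality (Proposition \ref{prop:Phi_nat}), right exactness of $\boxtimes$, exactness of the contragredient functor, and a five-lemma chase show $\Phi_{W_1,W_2}$ is an isomorphism for all pairs; then a separate, substantial argument (Proposition \ref{prop:Phi_iso_rigid}) constructs $i_W$ \emph{from} $\Phi_{W,W'}^{-1}$ (together with $\delta_W$, the twist and the braiding) and verifies the zig-zag identities on the nose.

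A secondary point: the ``standard fact'' you lean on --- that if both zig-zag composites built from some $e_W$, $i_W$ are isomorphisms then $W$ is rigid --- is true and provable by elementary manipulations (normalize so one composite is the identity, show the other is then forced to be the identity using that it was invertible), but you do not prove it, and the paper does not use it as a black box: its Proposition \ref{prop:Phi_iso_rigid} shows directly that $\mathfrak{R}_W$ and $\mathfrak{R}_{W'}$ equal the identity for the specific canonical choices, using \eqref{diag:f'_def} and \eqref{diag:delta_def}. If you want to salvage your outline, the honest route is to replace ``the natural identification'' by the morphism $\Phi$ and prove its invertibility by your induction --- at which point you have reproduced the paper's proof, with the rigidity criterion still to be supplied.
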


As we have verified the conditions of the theorem in Theorem \ref{thm:C_tens_cat}, Proposition \ref{prop:C_has_contras}, and Theorem \ref{thm:C_simple_rigid}, we get:
\begin{cor}
 The tensor category $\cC_{\cM(p)}$ is rigid; moreover, it is a braided ribbon tensor category with natural twist isomorphism $\theta=e^{2\pi i L(0)}$.
\end{cor}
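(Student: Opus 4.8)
The plan is to deduce the corollary from Theorem~\ref{thm:rigid_simple_to_fin_len} by verifying its three hypotheses for $\cC_{\cM(p)}$, and then to promote rigidity to a ribbon structure using the natural twist $\theta = e^{2\pi i L(0)}$.

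First I would check the hypotheses of Theorem~\ref{thm:rigid_simple_to_fin_len} with $V = \cM(p)$, which is self-contragredient by Remark~\ref{rem:s_self_contra}. Hypothesis (1): modules in $\cC_{\cM(p)}$ have finite length by definition, the category is closed under submodules and quotients because it is defined by the list of allowed composition factors $\cM_{r,s}$, and it is closed under contragredients by Proposition~\ref{prop:C_has_contras}. Hypothesis (2): Theorem~\ref{thm:C_tens_cat} shows that $\cC_{\cM(p)}$ is a (braided) tensor subcategory of $\Rep^0\,\cM(p)$, which carries the vertex algebraic braided tensor category structure of \cite{HLZ1}-\cite{HLZ8}; restricting this structure to the full subcategory $\cC_{\cM(p)}$ gives braided tensor category structure there. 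Hypothesis (3) is precisely Theorem~\ref{thm:C_simple_rigid}. Theorem~\ref{thm:rigid_simple_to_fin_len} then applies and gives that $\cC_{\cM(p)}$ is a rigid tensor category, with duals the contragredient modules.

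It then remains to exhibit the ribbon structure. The braiding isomorphisms $\cR_{W_1,W_2}$ are part of the vertex tensor category structure, and I take $\theta_W = e^{2\pi i L(0)}$ on each module $W$ in $\cC_{\cM(p)}$; this is a well-defined automorphism since conformal weights of modules in $\cC_{\cM(p)}$ lie in finitely many cosets of $\ZZ$ and $L(0)$ acts locally finitely, and it is natural in $W$ because every $\cM(p)$-module homomorphism commutes with $L(0)$. The balancing axiom $\theta_{W_1\boxtimes W_2} = \cR_{W_2,W_1}\circ\cR_{W_1,W_2}\circ(\theta_{W_1}\boxtimes\theta_{W_2})$ for $e^{2\pi i L(0)}$ is a general property of the Huang--Lepowsky--Zhang tensor category structure (see \cite{HLZ8}). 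For the compatibility of the twist with duality, note that $L(0)$ acts on the contragredient $W'$ as the transpose of its action on $W$, so $(e^{2\pi i L(0)})' = e^{2\pi i L(0)}$ on $W'$; since rigidity identifies the dual $W^*$ with $W'$, this is exactly the ribbon axiom $\theta_{W^*} = (\theta_W)^*$. Hence $\cC_{\cM(p)}$ is a braided ribbon tensor category with natural twist $\theta = e^{2\pi i L(0)}$.

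I do not expect any real obstacle in this corollary: all of the substantive work — the existence of braided tensor category structure (Theorem~\ref{thm:C_tens_cat}), closure under contragredients (Proposition~\ref{prop:C_has_contras}), and rigidity of the simple modules (Theorem~\ref{thm:C_simple_rigid}) — has already been carried out, so this is a formal assembly step via Theorem~\ref{thm:rigid_simple_to_fin_len}. The only point requiring mild care is confirming that $e^{2\pi i L(0)}$ satisfies the balancing axiom and interacts correctly with the contragredient duality, but both facts are standard in the vertex operator algebra framework and are independent of the specific algebra $\cM(p)$.
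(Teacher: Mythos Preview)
Your proposal is correct and follows essentially the same approach as the paper: the paper simply notes that the hypotheses of Theorem~\ref{thm:rigid_simple_to_fin_len} have been verified in Theorem~\ref{thm:C_tens_cat}, Proposition~\ref{prop:C_has_contras}, and Theorem~\ref{thm:C_simple_rigid}, and states the corollary. Your additional remarks on why $e^{2\pi i L(0)}$ satisfies the balancing and ribbon axioms are accurate and more explicit than what the paper provides.
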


\begin{cor}
 The category $\cC_{\cM(p)}^0$ is closed under contragredients; in particular, $\cC_{\cM(p)}^0$ is a braided ribbon tensor category.
\end{cor}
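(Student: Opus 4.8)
The plan is to deduce both assertions from facts already established in the excerpt. For the first corollary, $\cC_{\cM(p)}$ is rigid by Theorem~\ref{thm:rigid_simple_to_fin_len}, whose hypotheses we have verified: Theorem~\ref{thm:C_tens_cat} gives the braided tensor category structure and closure under submodules and quotients (and finite length is built into the definition), Proposition~\ref{prop:C_has_contras} gives closure under contragredients, and Theorem~\ref{thm:C_simple_rigid} gives rigidity of all simple objects $\cM_{r,s}$. Remark~\ref{rem:s_self_contra} records that $\cM(p)=\cM_{1,1}$ is self-contragredient, so $V=\cM(p)$ indeed satisfies the standing hypothesis. For the ribbon structure: a rigid braided tensor category of grading-restricted generalized modules for a self-contragredient vertex operator algebra is automatically ribbon with twist $\theta_W=e^{2\pi i L(0)}$; this is standard (the balancing axiom $\theta_{W_1\boxtimes W_2}=\cR_{W_2,W_1}\cR_{W_1,W_2}(\theta_{W_1}\boxtimes\theta_{W_2})$ follows from the properties of $L(0)$ under the tensor product intertwining operators, and $\theta_{W}'=\theta_{W'}$ from the conjugation formulas in \cite[Section~5]{FHL}, which is exactly what is needed for $\theta$ to be a ribbon twist once duals exist). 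So the first corollary is immediate.

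For the second corollary, the key point is that $\cC_{\cM(p)}^0$ is a full subcategory of $\cC_{\cM(p)}$ by Theorem~\ref{thm:C0_tens_cat}, and it is a tensor subcategory of $\Rep^0\,\cM(p)$ by the same theorem; so it inherits rigidity provided it is closed under contragredients. The plan is therefore to show: if $M$ is in $\cC_{\cM(p)}^0$, then $M'$ is in $\cC_{\cM(p)}^0$. Since $M\in\cC_{\cM(p)}$ and $\cC_{\cM(p)}$ is closed under contragredients by Proposition~\ref{prop:C_has_contras}, we know $M'\in\cC_{\cM(p)}$; what remains is to check that $\cF_{\cW(p)}(M')$ is an object of $\cC_{\cW(p)}$, i.e. is a grading-restricted generalized $\cW(p)$-module. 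The cleanest route is to use that in a rigid braided tensor category, $M'\cong M^*$ is a summand of something controlled by $M$; more directly, one can use that $\cF_{\cW(p)}$ is a monoidal functor and that $\cW(p)$ is self-contragredient, so induction commutes with contragredients in the appropriate sense: $\cF_{\cW(p)}(M')\cong \cF_{\cW(p)}(M)'$ as $\cW(p)$-modules (this is a general fact for induction along a self-dual commutative algebra, cf.\ \cite{CKM1, CKM2}). Since $\cF_{\cW(p)}(M)\in\cC_{\cW(p)}$ and $\cC_{\cW(p)}$ is closed under contragredients (being all of the grading-restricted generalized $\cW(p)$-modules, which is closed under contragredients by \cite[Proposition~5.3.2]{FHL} applied levelwise together with finite length from \cite[Proposition~4.3]{H5}), we conclude $\cF_{\cW(p)}(M')\in\cC_{\cW(p)}$, hence $M'\in\cC_{\cM(p)}^0$.

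Having established closure under contragredients, $\cC_{\cM(p)}^0$ then satisfies all three hypotheses of Theorem~\ref{thm:rigid_simple_to_fin_len} in its own right (it has the same simple objects $\cM_{r,s}$ as $\cC_{\cM(p)}$ by Theorem~\ref{thm:C0_tens_cat}, and these are rigid by Theorem~\ref{thm:C_simple_rigid}, noting that duals in a full rigid tensor subcategory agree with duals in the ambient category so rigidity of $\cM_{r,s}$ in $\cC_{\cM(p)}$ transfers); alternatively, and more simply, rigidity of $\cC_{\cM(p)}^0$ follows at once from rigidity of $\cC_{\cM(p)}$ since a full tensor subcategory closed under the duals of the ambient category is rigid. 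The twist $\theta=e^{2\pi i L(0)}$ restricts from $\cC_{\cM(p)}$, so $\cC_{\cM(p)}^0$ is a braided ribbon tensor category.

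The main obstacle I anticipate is making precise the claim that induction commutes with contragredients, i.e.\ $\cF_{\cW(p)}(M')\cong\cF_{\cW(p)}(M)'$. One should verify this carefully using the description of induction as $\cW(p)\boxtimes M$ together with the fact that $\cW(p)$ is a self-dual (rigid, self-contragredient) commutative algebra in $\Rep^0\,\cM(p)$: concretely, for $M$ in the rigid category $\Rep^0\,\cM(p)$ one has $(\cW(p)\boxtimes M)^*\cong \cW(p)^*\boxtimes M^*\cong \cW(p)\boxtimes M^*$ as objects, and this identification is compatible with the $\cW(p)$-action, so that the contragredient (which coincides with the categorical dual by the discussion preceding Lemma~\ref{lem:contra}) of the induced module is the induced module of the contragredient. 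An alternative that avoids this entirely: observe directly that $\cF_{\cW(p)}(M')\cong\bigoplus_{n\in\ZZ}\cM_{2n+1,1}\boxtimes M'$ as an $\cM(p)$-module, and since each $\cM_{2n+1,1}$ is a rigid simple current with $\cM_{2n+1,1}^*\cong\cM_{-2n+1,1}\in\cC_{\cM(p)}^0$ and $M'\in\cC_{\cM(p)}$, each summand lies in $\cC_{\cM(p)}$ and the whole sum is a grading-restricted generalized $\cW(p)$-module because its graded pieces match those of the $\cW(p)$-module $\cF_{\cW(p)}(M)'$ (whose conformal weights and weight-space dimensions agree with those of $\cF_{\cW(p)}(M)\in\cC_{\cW(p)}$). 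Either way the argument is short once this compatibility is nailed down.
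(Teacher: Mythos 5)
Your main route coincides with the paper's: reduce everything to the isomorphism $\cF_{\cW(p)}(M')\cong\cF_{\cW(p)}(M)'$ and then use that the contragredient of a grading-restricted generalized $\cW(p)$-module is again one. The paper establishes this key isomorphism essentially as in your ``cleanest route,'' but with the inputs arranged more carefully: since $\cC_{\cM(p)}$ has just been shown to be rigid with contragredient duals, $M'$ is a categorical dual of $M$; the monoidal functor $\cF_{\cW(p)}$ sends duals to duals (\cite[Proposition 2.77]{CKM1}), so $\cF_{\cW(p)}(M')$ is a dual of $\cF_{\cW(p)}(M)$ in $\Rep\,\cW(p)$; and $\cF_{\cW(p)}(M)'$ is also such a dual because $\cC_{\cW(p)}$ is rigid with contragredient duals; uniqueness of duals then gives $\cF_{\cW(p)}(M')\cong\cF_{\cW(p)}(M)'\in\cC_{\cW(p)}$, hence $M'\in\cC_{\cM(p)}^0$. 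The rigidity and ribbon statements for $\cC_{\cM(p)}^0$ then restrict from $\cC_{\cM(p)}$ exactly as you say.

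However, both of your proposed ways of ``nailing down'' the compatibility have genuine problems. First, the computation $(\cW(p)\boxtimes M)^*\cong\cW(p)^*\boxtimes M^*$ presumes that $\cW(p)$ is a dualizable (rigid) object of $\Rep^0\,\cM(p)$; it is not, being the infinite direct sum $\bigoplus_{n\in\ZZ}\cM_{2n+1,1}$ (any morphism from the simple object $\cM(p)$ into $\cW(p)\boxtimes\cW(p)'$ has image in finitely many summands, so the zig-zag identities cannot hold on all of $\cW(p)$). The rigidity you need is that of $M$ in $\cC_{\cM(p)}$, not of the algebra $\cW(p)$. Second, the ``alternative that avoids this entirely'' is insufficient: membership of $\cF_{\cW(p)}(M')$ in $\cC_{\cW(p)}$ requires the induced module to be a \emph{local} $\cW(p)$-module, and locality is invisible to conformal weights and weight-space dimensions. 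The paper's own example of the modules $\cF_{\alpha_{r,p}}^{(n)}$, $n\geq 2$, which lie in $\cC_{\cM(p)}$ but induce to non-local finite-length modules with local composition factors, shows that graded data of the kind you invoke cannot certify that an induced module lies in $\cC_{\cW(p)}$. So the duality/uniqueness-of-duals argument (or some other argument controlling the monodromy of $\cW(p)$ with $M'$) is not just the cleanest option; it is what actually carries the proof. (Your opening paragraph concerns the preceding corollary about $\cC_{\cM(p)}$ itself and is consistent with the paper.)
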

\begin{proof}
 Suppose that $M$ is a module in $\cC_{\cM(p)}^0$, so that $\cF_{\cW(p)}(M)$ is in $\cC_{\cW(p)}$. Since $M'$ is a dual of $M$ in $\cC_{\cM(p)}$, it is easy to see that $\cF_{\cW(p)}(M')$ is a dual of $\cF_{\cW(p)}(M)$ in $\Rep\,\cW(p)$ (see for example \cite[Proposition 2.77]{CKM1}). But since $\cC_{\cW(p)}$ is a rigid tensor category, $\cF_{\cW(p)}(M)'$ is already a dual of $\cF_{\cW(p)}(M)$. Since duals are unique, $\cF_{\cW(p)}(M')\cong\cF_{\cW(p)}(M)'$ is an object of $\cC_{\cW(p)}$, and therefore $M'$ is an object of $\cC_{\cM(p)}^0$.
\end{proof}

To prove Theorem \ref{thm:rigid_simple_to_fin_len}, we will use the $V$-homomorphisms
\begin{equation*}
 \Phi_{W_1,W_2}: W_2'\boxtimes W_1'\rightarrow(W_1\boxtimes W_2)'
\end{equation*}
for modules $W_1$, $W_2$ in $\cC$ induced by the universal property of contragredients such that the diagram
\begin{equation*}
 \xymatrixcolsep{4.5pc}
 \xymatrix{
  W_2'\boxtimes (W_1'\boxtimes(W_1\boxtimes W_2)) \ar[r]^(.48){\id_{W_2'}\boxtimes\cA_{W_1',W_1,W_2}} & W_2'\boxtimes((W_1'\boxtimes W_1)\boxtimes W_2) \ar[rd]^(.6){\quad\id_{W_2'}\boxtimes(e_{W_1}\boxtimes\id_{W_2})}  & \\
  (W_2'\boxtimes W_1')\boxtimes(W_1\boxtimes W_2) \ar[u]_{\cA_{W_2',W_1',W_1\boxtimes W_2}^{-1}} \ar[d]^{\Phi_{W_1,W_2}\boxtimes\id_{W_1\boxtimes W_2}} & & W_2'\boxtimes(V\boxtimes W_2) \ar[d]_{\id_{W_2'}\boxtimes l_{W_2}} \\
  (W_1\boxtimes W_2)'\boxtimes(W_1\boxtimes W_2) \ar[r]_(.65){e_{W_1\boxtimes W_2}} & V & W_2'\boxtimes W_2 \ar[l]^{e_{W_2}} \\
 }
\end{equation*}
commutes. These homomorphisms determine a natural transformation:
\begin{prop}\label{prop:Phi_nat}
 For any homomorphisms $f_1: W_1\rightarrow X_1$ and $f_2: W_2\rightarrow X_2$ in $\cC$, the diagram
 \begin{equation*}
  \xymatrixcolsep{4pc}
  \xymatrix{
  X_2'\boxtimes X_1' \ar[r]^{f_2'\boxtimes f_1'} \ar[d]^{\Phi_{X_1,X_2}} & W_2'\boxtimes W_1' \ar[d]^{\Phi_{W_1,W_2}} \\
  (X_1\boxtimes X_2)' \ar[r]_{(f_1\boxtimes f_2)'} & (W_1\boxtimes W_2)' \\
  }
 \end{equation*}
commutes.
\end{prop}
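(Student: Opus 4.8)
The plan is to deduce the commutativity of the square from the universal property \eqref{diag:contra_univ_prop} of the contragredient module, exactly as $\Phi$ itself was constructed. Recall that for any module $Z$ in $\cC$, a homomorphism $Z\to(W_1\boxtimes W_2)'$ is uniquely determined by its composition with $e_{W_1\boxtimes W_2}\circ(-\boxtimes\id_{W_1\boxtimes W_2})$. Taking $Z=X_2'\boxtimes X_1'$, it therefore suffices to show that the two composites $(X_2'\boxtimes X_1')\boxtimes(W_1\boxtimes W_2)\to V$ obtained from $\Phi_{W_1,W_2}\circ(f_2'\boxtimes f_1')$ and from $(f_1\boxtimes f_2)'\circ\Phi_{X_1,X_2}$ — after tensoring on the right by $\id_{W_1\boxtimes W_2}$ and post-composing with $e_{W_1\boxtimes W_2}$ — agree.

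First I would reduce both composites to a common form. Write $\Xi^{A,B}$ for the map $(B'\boxtimes A')\boxtimes(A\boxtimes B)\to V$ running along the top and right edges of the diagram defining $\Phi_{A,B}$, so that by construction $\Xi^{A,B}=e_{A\boxtimes B}\circ(\Phi_{A,B}\boxtimes\id_{A\boxtimes B})$ and explicitly
\[
\Xi^{A,B}=e_B\circ(\id_{B'}\boxtimes l_B)\circ\big(\id_{B'}\boxtimes(e_A\boxtimes\id_B)\big)\circ\big(\id_{B'}\boxtimes\cA_{A',A,B}\big)\circ\cA^{-1}_{B',A',A\boxtimes B}.
\]
Using functoriality of $\boxtimes$ together with the characterization \eqref{diag:f'_def} of contragredient homomorphisms, applied to $f_1\boxtimes f_2\colon W_1\boxtimes W_2\to X_1\boxtimes X_2$, the composite built from $(f_1\boxtimes f_2)'\circ\Phi_{X_1,X_2}$ rewrites as $\Xi^{X_1,X_2}\circ\big(\id_{X_2'\boxtimes X_1'}\boxtimes(f_1\boxtimes f_2)\big)$, while the composite built from $\Phi_{W_1,W_2}\circ(f_2'\boxtimes f_1')$ rewrites as $\Xi^{W_1,W_2}\circ\big((f_2'\boxtimes f_1')\boxtimes\id_{W_1\boxtimes W_2}\big)$. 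Thus the proposition is reduced to the single identity
\[
\Xi^{W_1,W_2}\circ\big((f_2'\boxtimes f_1')\boxtimes\id_{W_1\boxtimes W_2}\big)=\Xi^{X_1,X_2}\circ\big(\id_{X_2'\boxtimes X_1'}\boxtimes(f_1\boxtimes f_2)\big)
\]
of maps $(X_2'\boxtimes X_1')\boxtimes(W_1\boxtimes W_2)\to V$.

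Next I would prove this identity by a direct diagram chase after substituting the explicit formula for $\Xi$. Starting from the left-hand side, one transports $f_1'$ and $f_2'$ rightward through the associativity isomorphisms $\cA^{-1}_{W_2',W_1',W_1\boxtimes W_2}$ and $\id_{W_2'}\boxtimes\cA_{W_1',W_1,W_2}$ by naturality of $\cA$, until $f_1'\boxtimes\id_{W_1}$ sits immediately before $e_{W_1}$ and $f_2'\boxtimes\id_{W_2}$ immediately before $e_{W_2}$; at that stage \eqref{diag:f'_def}, applied once to $f_1\colon W_1\to X_1$ and once to $f_2\colon W_2\to X_2$, converts $e_{W_i}\circ(f_i'\boxtimes\id_{W_i})$ into $e_{X_i}\circ(\id_{X_i'}\boxtimes f_i)$; a final round of naturality of $\cA$ and of the unit isomorphism $l$ then collects the newly produced copies of $f_1,f_2$ (now acting on the $W_1$- and $W_2$-factors) into the single morphism $\id_{X_2'\boxtimes X_1'}\boxtimes(f_1\boxtimes f_2)$ appearing on the right-hand side. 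Every step is an instance of naturality of a structure isomorphism of the braided tensor category or of \eqref{diag:f'_def}; no pentagon, triangle, or hexagon axiom is needed. The one point demanding care, and the main (purely mechanical) obstacle, is the bookkeeping of the four-object associativity constraints as $f_1'$ and $f_2'$ are moved across them; if desired, one can make the verification completely routine by first expanding all maps in terms of the underlying intertwining operators via \eqref{eqn:cE_char} from the proof of Lemma \ref{lem:contra}, where the associativity isomorphisms become explicit and the identity reduces to a comparison of series coefficients.
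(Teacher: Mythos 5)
Your proposal is correct and takes essentially the same route as the paper's proof: both reduce the claim via the universal property of $((W_1\boxtimes W_2)',e_{W_1\boxtimes W_2})$ to comparing two maps $(X_2'\boxtimes X_1')\boxtimes(W_1\boxtimes W_2)\rightarrow V$, rewrite the side involving $(f_1\boxtimes f_2)'$ using \eqref{diag:f'_def}, and finish by naturality of the associativity and unit isomorphisms together with \eqref{diag:f'_def} applied to $f_1'$ and $f_2'$. Your auxiliary map $\Xi^{A,B}$ is just a convenient name for the composition the paper writes out explicitly, so the arguments coincide.
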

\begin{proof}
 By the universal property of $((W_1\boxtimes W_2)', e_{W_1\boxtimes W_2})$, it is enough to show that
\begin{equation*}
 e_{W_1\boxtimes W_2}\circ([(f_1\boxtimes f_2)'\circ\Phi_{X_1,X_2}]\boxtimes\id_{W_1\boxtimes W_2}) = e_{W_1\boxtimes W_2}\circ([\Phi_{W_1,W_2}\circ(f_2'\boxtimes f_1')]\boxtimes\id_{W_1\boxtimes W_2}).
\end{equation*}
By \eqref{diag:f'_def},  the left side of this equation is
\begin{align}\label{eqn:Phi_nat}
   e_{X_1\boxtimes X_2}\circ(\Phi_{X_1,X_2}\boxtimes(f_1\boxtimes f_2)).
\end{align}
The right side is the composition
\begin{align*}
 (X_2'\boxtimes & X_1')  \boxtimes(W_1\boxtimes W_2)\xrightarrow{(f_2'\boxtimes f_1')\boxtimes\id_{W_1\boxtimes W_2}} (W_2'\boxtimes W_1')\boxtimes(W_1\boxtimes W_2)\nonumber\\ & \xrightarrow{\cA_{W_2',W_1',W_1\boxtimes W_2}^{-1}} W_2'\boxtimes(W_1'\boxtimes(W_1\boxtimes W_2)) \xrightarrow{\id_{W_2'}\boxtimes\cA_{W_1',W_1,W_2}} W_2'\boxtimes((W_1'\boxtimes W_1)\boxtimes W_2)\nonumber\\
 &  \xrightarrow{\id_{W_2'}\boxtimes(e_{W_1}\boxtimes\id_{W_2})} W_2'\boxtimes(V\boxtimes W_2)\xrightarrow{\id_{W_2'}\boxtimes l_{W_2}} W_2'\boxtimes W_2\xrightarrow{e_{W_2}} V.
\end{align*}
It is then straightforward to use the naturality of the associativity and unit isomorphisms applied to $f_1'$ and $f_2'$, the diagram \eqref{diag:f'_def} for $f_1'$ and $f_2'$, naturality of associativity and unit applied to $f_1$ and $f_2$, and the definition of $\Phi_{X_1,X_2}$ to reduce this to \eqref{eqn:Phi_nat}.
\end{proof}

If $W_1$ and $W_2$ are rigid objects of $\cC$, then $\Phi_{W_1,W_2}$ is an isomorphism with inverse given by the composition
\begin{align*}
 (W_1\boxtimes W_2)' & \xrightarrow{r_{(W_1\boxtimes W_2)'}^{-1}} (W_1\boxtimes W_2)'\boxtimes V\xrightarrow{\id_{(W_1\boxtimes W_2)'}\boxtimes i_{W_1}} (W_1\boxtimes W_2)'\boxtimes(W_1\boxtimes W_1')\nonumber\\
 &\xrightarrow{\id_{(W_1\boxtimes W_2)'}\boxtimes(r_{W_1}^{-1}\boxtimes \id_{W_1'})} (W_1\boxtimes W_2)'\boxtimes((W_1\boxtimes V)\boxtimes W_1')\nonumber\\
 &\xrightarrow{\id_{(W_1\boxtimes W_2)'}\boxtimes((\id_{W_1}\boxtimes i_{W_2})\boxtimes\id_{W_1'})} (W_1\boxtimes W_2)'\boxtimes((W_1\boxtimes(W_2\boxtimes W_2'))\boxtimes W_1')\nonumber\\
& \xrightarrow{assoc.} ((W_1\boxtimes W_2)'\boxtimes(W_1\boxtimes W_2))\boxtimes(W_2'\boxtimes W_1')\nonumber\\
& \xrightarrow{e_{W_1\boxtimes W_2}\boxtimes\id_{W_2'\boxtimes W_1'}} V\boxtimes(W_2'\boxtimes W_1')\xrightarrow{l_{W_2'\boxtimes W_1'}} W_2'\boxtimes W_1',
\end{align*}
where the arrow marked $assoc.$ represents a suitable composition of associativity isomorphisms. Conversely, we have:
\begin{prop}\label{prop:Phi_iso_rigid}
 If $\Phi_{W,W'}$ is an isomorphism, then $W$ is rigid.
\end{prop}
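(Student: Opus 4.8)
The plan is to exploit the special structure of the module $W'\boxtimes W$ coming from the vertex-algebraic evaluation map. Recall that $\Phi_{W,W'}\colon W''\boxtimes W'\to (W'\boxtimes W)'$ is built from $e_{W'}$ and $e_W$; to prove $W$ is rigid it suffices to produce an evaluation $e_W\colon W'\boxtimes W\to V$ (which we already have) together with a coevaluation $i_W\colon V\to W\boxtimes W'$ satisfying the two triangle identities. So the strategy is: use the hypothesis that $\Phi_{W,W'}$ is an isomorphism to manufacture a candidate coevaluation, and then verify the triangle identities.

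First I would identify the natural candidate for $i_W$. Since $V$ is self-contragredient, we have an isomorphism $\kappa\colon V'\to V$, and $V\cong V'\boxtimes V$ via the unit isomorphisms. Applying $\Phi_{W,W'}^{-1}$ (which exists by hypothesis) to a suitable map into $(W'\boxtimes W)'$ built from $e_W$ and $\delta$-type identifications, one obtains a map $V\to W''\boxtimes W'\cong (W\boxtimes W')$ after using the natural isomorphisms $\delta_W\colon W\to W''$ and the already-chosen evaluations. Concretely, dualizing $e_W\colon W'\boxtimes W\to V$ and using $\kappa$ gives a map $V\cong V'\to (W'\boxtimes W)'$, and then $\Phi_{W,W'}^{-1}$ carries this to a map $V\to W''\boxtimes W'$; composing with $(\delta_W^{-1}\boxtimes\id_{W'})$ and the braiding/twist normalizations of Lemma \ref{lem:contra} produces $i_W\colon V\to W\boxtimes W'$. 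The key point is that because $\Phi_{W,W'}$ is defined using associativity and the evaluations $e_W,e_{W'}$, the resulting $i_W$ is ``compatible'' with $e_W$ by construction.

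Next I would verify the two triangle/zig-zag identities. The first, $l_W\circ(e_W\boxtimes\id_W)\circ\cA_{W',W,W}^{-1}\circ(\id_{W'}\boxtimes i'_W)\circ r_{W'}^{-1}\stackrel{?}{=}\id_{W'}$ (and its mirror for $W$), should follow by unwinding the definition of $\Phi_{W,W'}$: the defining commutative diagram for $\Phi_{W,W'}$ says precisely that $e_{W'}$ composed with the associativity-and-$e_W$ zig-zag equals $e_{W'\boxtimes W}$ post-composed with $\Phi_{W,W'}$, and the claim that $\Phi_{W,W'}$ is \emph{iso} lets us cancel it. One then uses the universal property of the contragredient $((W'\boxtimes W)', e_{W'\boxtimes W})$, together with Proposition \ref{prop:Phi_nat} (naturality of $\Phi$) applied to $\delta_W$ and to $e_W$ itself, plus the characterizations \eqref{diag:f'_def} and \eqref{diag:delta_def} from Lemma \ref{lem:contra}, to reduce both triangle identities to identities already known to hold in $\cC$ (namely that $\delta_W$ is the canonical iso, and pentagon/triangle/hexagon coherence). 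The second triangle identity is obtained from the first by applying the contragredient functor, using that $W'' \cong W$ via $\delta_W$ and that $\delta$ is natural, so no genuinely new computation is needed.

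The main obstacle I anticipate is bookkeeping: tracking the precise composite of associativity and unit isomorphisms hidden inside $\Phi_{W,W'}$ and its inverse, and matching the normalizations (the twist $\theta_W=e^{2\pi i L(0)}$ and braiding $\cR_{W,W'}$ appearing in \eqref{diag:delta_def}) so that the zig-zag composite comes out to exactly $\id$ rather than a nonzero scalar multiple. Since everything is happening in a braided (indeed ribbon, once we know $\cC_{\cM(p)}$ is rigid) category, the coherence theorems guarantee that any two such composites of structural isomorphisms agree, so the argument is ``soft'' — the difficulty is purely in organizing the diagram chase. A clean way to present it is to first prove the abstract statement that in any braided tensor category closed under contragredients (for a self-contragredient $V$), invertibility of $\Phi_{W,W'}$ is \emph{equivalent} to rigidity of $W$, deducing the ``only if'' direction here from the explicit inverse formula for $\Phi$ displayed just before the proposition together with the triangle-identity verification sketched above.
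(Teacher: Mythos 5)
Your candidate coevaluation is the same as the paper's: $i_W=(\delta_W^{-1}\boxtimes\id_{W'})\circ\Phi_{W,W'}^{-1}\circ\widetilde{e}_W'\circ\varphi$, with $\widetilde{e}_W=e_{W'}\circ(\delta_W\boxtimes\id_{W'})$ and $\varphi:V\to V'$ trading $V$ for $V'$, so up to that point you and the paper agree. The genuine gap is in your treatment of the zig-zag (triangle) identities. Your claim that the verification is ``soft'' because ``coherence theorems guarantee that any two such composites of structural isomorphisms agree'' does not apply: the composites $\mathfrak{R}_W$ and $\mathfrak{R}_{W'}$ are not composites of structural isomorphisms --- they contain $e_W$, $e_{W'}$, $\delta_W$, the twist $\theta$, and $\Phi_{W,W'}^{-1}$, about which Mac Lane-type coherence says nothing. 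The actual content of the paper's proof is twofold: first, since $\delta_W$ is an isomorphism, $(W,\widetilde{e}_W)$ satisfies the universal property of the contragredient of $W'$, which reduces $\mathfrak{R}_W=\id_W$ and $\mathfrak{R}_{W'}=\id_{W'}$ to the identities \eqref{eqn:RW_id} and \eqref{eqn:RW'_id}; second, a substantial diagram chase establishes \eqref{eqn:RW'_id}, using the hexagon and pentagon axioms, naturality of braiding, associativity and units, the braided/twist characterization \eqref{diag:delta_def} of $\delta_W$, the relation $\theta_{W'}^{-1}=(\theta_W^{-1})'$, and the characterization \eqref{diag:f'_def} of contragredient morphisms applied to $\widetilde{e}_W$. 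None of this is automatic, and it is exactly where the specific normalization of the evaluations fixed in Lemma \ref{lem:contra} is needed.

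Two further points. Your worry about ending up with ``a nonzero scalar multiple'' of the identity actually understates the problem: Theorem \ref{thm:rigid_simple_to_fin_len} is applied to non-simple, finite-length $W$, for which the zig-zag endomorphisms need not be scalars at all, so they cannot be repaired by rescaling and must be shown to be identities outright (this is precisely why the reduction via the universal property of $(W,\widetilde{e}_W)$, rather than a Schur-lemma argument, is used). And your assertion that the second triangle identity follows from the first ``by applying the contragredient functor'' is unsubstantiated as stated --- it would require knowing that one zig-zag is the contragredient of the other, which is itself a computation; the paper instead proves each identity directly, noting that the argument for \eqref{eqn:RW_id} is similar to, and simpler than, the one for \eqref{eqn:RW'_id}.
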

\begin{proof}
 Recall the ribbon isomorphism $\delta_W: W\rightarrow W''$ characterized by the commutative diagram \eqref{diag:delta_def}. Let us use $\widetilde{e}_W$ to denote $$e_{W'}\circ(\delta_W\boxtimes\id_{W'}): W\boxtimes W'\rightarrow V.$$
We also define a homomorphism $\varphi: V\rightarrow V'$ such that the diagram
\begin{equation*}
 \xymatrixcolsep{4pc}
 \xymatrix{
 V\boxtimes V \ar[rd]^{l_V=r_V} \ar[d]_{\varphi\boxtimes\id_V} & \\
 V'\boxtimes V \ar[r]_{e_V} & V \\
 }
\end{equation*}
commutes. Now assuming $\Phi_{W,W'}$ is an isomorphism, we define a coevaluation $i_W: V\rightarrow W\boxtimes W'$ by the composition
\begin{align*}
 V\xrightarrow{\varphi} V'\xrightarrow{\widetilde{e}_W'} (W\boxtimes W')'\xrightarrow{\Phi_{W,W'}^{-1}} W''\boxtimes W'\xrightarrow{\delta_W^{-1}\boxtimes\id_{W'}} W\boxtimes W'.
\end{align*}
To prove rigidity, we need to show that the endomorphism $\mathfrak{R}_W$ of $W$ defined by the composition
\begin{equation*}
 W\xrightarrow{l_W^{-1}} V\boxtimes W\xrightarrow{i_W\boxtimes\id_W} (W\boxtimes W')\boxtimes W\xrightarrow{\cA_{W,W',W}^{-1}} W\boxtimes(W'\boxtimes W)\xrightarrow{\id_W\boxtimes e_W} W\boxtimes V\xrightarrow{r_W} W
\end{equation*}
and the endomorphism $\mathfrak{R}_{W'}$ of $W'$ defined by the composition
\begin{equation*}
 W'\xrightarrow{r_{W'}^{-1}} W'\boxtimes V\xrightarrow{\id_{W'}\boxtimes i_W} W'\boxtimes(W\boxtimes W')\xrightarrow{\cA_{W',W,W'}} (W'\boxtimes W)\boxtimes W'\xrightarrow{e_W\boxtimes\id_{W'}} V\boxtimes W'\xrightarrow{l_{W'}} W'
\end{equation*}
are identities. Since $\delta_W$ is an isomorphism, $(W,\widetilde{e}_W)$ satisfies the universal property of the contragredient of $W'$. So $\mathfrak{R}_W=\id_W$ is equivalent to
\begin{equation}\label{eqn:RW_id}
 \widetilde{e}_W\circ(\mathfrak{R}_W\boxtimes\id_{W'})=\widetilde{e}_W.
\end{equation}
Similarly, $\mathfrak{R}_{W'}=\id_{W'}$ is equivalent to
\begin{equation}\label{eqn:RW'_id}
 e_W\circ(\mathfrak{R}_{W'}\boxtimes\id_W)=e_W.
\end{equation}
Here, we give the details for \eqref{eqn:RW'_id} only, since the proof of \eqref{eqn:RW_id} is similar but simpler.

We first insert $\cR_{V,W'}\circ\cR_{V,W'}^{-1}$ into $\mathfrak{R}_{W'}$ and apply naturality of the braiding isomorphisms:
\begin{align*}
 W' & \xrightarrow{r_{W'}^{-1}} W'\boxtimes V\xrightarrow{\cR_{V,W'}^{-1}} V\boxtimes W'\xrightarrow{(\Phi_{W,W'}^{-1}\circ\widetilde{e}_W'\circ\varphi)\boxtimes\id_{W'}} (W''\boxtimes W')\boxtimes W'\nonumber\\ &\xrightarrow{(\delta_W^{-1}\boxtimes\id_{W'})\boxtimes\id_{W'}} (W\boxtimes W')\boxtimes W'\xrightarrow{\cR_{W\boxtimes W',W'}} W'\boxtimes(W\boxtimes W') \xrightarrow{\cA_{W',W,W'}} (W'\boxtimes W)\boxtimes W'\nonumber\\
 &\xrightarrow{e_W\boxtimes\id_{W'}} V\boxtimes W'\xrightarrow{l_{W'}} W'.
\end{align*}
Next we use $\cR_{V,W}^{-1}\circ r_{W'}^{-1}=l_{W'}^{-1}$, the hexagon axiom, and the naturality of braiding and associativity:
\begin{align*}
 & W'  \xrightarrow{l_{W'}^{-1}} V\boxtimes W' \xrightarrow{(\Phi_{W,W'}^{-1}\circ\widetilde{e}_W'\circ\varphi)\boxtimes\id_{W'}} (W''\boxtimes W')\boxtimes W'\xrightarrow{\cA_{W'',W',W'}^{-1}} W''\boxtimes(W'\boxtimes W')\nonumber\\
 & \xrightarrow{\id_{W''}\boxtimes\cR_{W',W'}} W''\boxtimes(W'\boxtimes W')\xrightarrow{\cA_{W'',W',W'}} (W''\boxtimes W')\boxtimes W'\nonumber\\
 &\xrightarrow{\cR_{W'',W'}\boxtimes\id_{W'}} (W'\boxtimes W'')\boxtimes W' \xrightarrow{(\id_{W'}\boxtimes\delta_W^{-1})\boxtimes\id_{W'}} (W'\boxtimes W)\boxtimes W'\xrightarrow{e_W\boxtimes\id_{W'}} V\boxtimes W'\xrightarrow{l_{W'}} W'.
\end{align*}
Using \eqref{diag:delta_def} and the definitions of $\theta$ and contragredient homomorphisms, we then calculate
\begin{align*}
 e_W\circ(\id_{W'}\boxtimes\delta_W^{-1})\circ\cR_{W'',W'} & =e_{W'}\circ(\theta_{W''}^{-1}\boxtimes\id_{W'})\nonumber\\
 & = e_{W'}\circ((\theta_{W'}^{-1})'\boxtimes\id_{W'}) = e_{W'}\circ(\id_{W''}\boxtimes\theta_{W'}^{-1}).
\end{align*}
Returning this to the composition and using naturality, we see that $\mathfrak{R}_{W'}$ is the composition
\begin{align*}
 W' & \xrightarrow{l_{W'}^{-1}\circ\theta_{W'}^{-1}} V\boxtimes W' \xrightarrow{(\Phi_{W,W'}^{-1}\circ\widetilde{e}_W'\circ\varphi)\boxtimes\id_{W'}} (W''\boxtimes W')\boxtimes W'\xrightarrow{\cA_{W'',W',W'}^{-1}} W''\boxtimes(W'\boxtimes W')\nonumber\\
 & \xrightarrow{\id_{W''}\boxtimes\cR_{W',W'}} W''\boxtimes(W'\boxtimes W')\xrightarrow{\cA_{W'',W',W'}} (W''\boxtimes W')\boxtimes W' \xrightarrow{e_{W'}\boxtimes\id_{W'}} V\boxtimes W'\xrightarrow{l_{W'}} W'.
\end{align*}

Now we begin to analyze $e_W\circ(\mathfrak{R}_{W'}\boxtimes\id_W)$. We first use
\begin{equation*}
 l_{W'}^{-1}\boxtimes \id_W=\cA_{V,W',W}\circ(l_{W'\boxtimes W}^{-1})
\end{equation*}
and naturality of associativity to get
\begin{align*}
 W' &\boxtimes W  \xrightarrow{\theta_{W'}^{-1}\boxtimes\id_W} W'\boxtimes W\xrightarrow{l_{W'\boxtimes W}^{-1}} V\boxtimes(W'\boxtimes W)\nonumber\\ &\xrightarrow{(\Phi_{W,W'}^{-1}\circ\widetilde{e}_W'\circ\varphi)\boxtimes\id_{W'\boxtimes W}} (W''\boxtimes W')\boxtimes(W'\boxtimes W)\xrightarrow{\cA_{W''\boxtimes W',W',W}} ((W''\boxtimes W')\boxtimes W')\boxtimes W\nonumber\\
 & \xrightarrow{\cA_{W'',W',W'}^{-1}\boxtimes\id_W} (W''\boxtimes(W'\boxtimes W'))\boxtimes W\xrightarrow{(\id_{W''}\boxtimes\cR_{W',W'})\boxtimes\id_W} (W''\boxtimes(W'\boxtimes W'))\boxtimes W\nonumber\\
 & \xrightarrow{\cA_{W'',W',W'}\boxtimes\id_W} ((W''\boxtimes W')\boxtimes W')\boxtimes W \xrightarrow{(e_{W'}\boxtimes\id_{W'})\boxtimes\id_W} (V\boxtimes W')\boxtimes W\nonumber\\ & \xrightarrow{l_{W'}\boxtimes\id_W} W'\boxtimes W\xrightarrow{e_W} V.
\end{align*}
Next, we apply the pentagon axiom to the first two associativity arrows and apply naturality of associativity to $\cR_{W',W'}$:
\begin{align*}
 W' &\boxtimes W  \xrightarrow{\theta_{W'}^{-1}\boxtimes\id_W} W'\boxtimes W\xrightarrow{l_{W'\boxtimes W}^{-1}} V\boxtimes(W'\boxtimes W)\nonumber\\ &\xrightarrow{(\Phi_{W,W'}^{-1}\circ\widetilde{e}_W'\circ\varphi)\boxtimes\id_{W'\boxtimes W}} (W''\boxtimes W')\boxtimes(W'\boxtimes W)\xrightarrow{\cA_{W'',W',W'\boxtimes W}^{-1}} W''\boxtimes(W'\boxtimes(W'\boxtimes W))\nonumber\\
 &\xrightarrow{\id_{W''}\boxtimes\cA_{W',W',W}} W''\boxtimes((W'\boxtimes W')\boxtimes W)\xrightarrow{\id_{W''}\boxtimes(\cR_{W',W'}\boxtimes\id_W)} W''\boxtimes((W'\boxtimes W')\boxtimes W)\nonumber\\
 & \xrightarrow{\cA_{W'',W'\boxtimes W',W}} (W''\boxtimes(W'\boxtimes W'))\boxtimes W\xrightarrow{\cA_{W'',W',W'}\boxtimes\id_W} ((W''\boxtimes W')\boxtimes W')\boxtimes W\nonumber\\
 & \xrightarrow{(e_{W'}\boxtimes\id_{W'})\boxtimes\id_W} (V\boxtimes W')\boxtimes W \xrightarrow{l_{W'}\boxtimes\id_W} W'\boxtimes W\xrightarrow{e_W} V.
\end{align*}
Now we insert $\cR_{W,W'}\circ\cR_{W,W'}^{-1}$ between the first two associativity isomorphisms and apply various naturalities to the inverse braiding:
\begin{align*}
 W' &\boxtimes W\xrightarrow{\cR_{W,W'}^{-1}} W\boxtimes W'\xrightarrow{\id_W\boxtimes\theta_{W'}^{-1}} W\boxtimes W'\xrightarrow{l_{W\boxtimes W'}^{-1}} V\boxtimes(W\boxtimes W')\nonumber\\
 &\xrightarrow{(\Phi_{W,W'}^{-1}\circ\widetilde{e}_W'\circ\varphi)\boxtimes\id_{W\boxtimes W'}} (W''\boxtimes W')\boxtimes(W\boxtimes W')\xrightarrow{\cA_{W'',W',W\boxtimes W'}^{-1}} W''\boxtimes(W'\boxtimes(W\boxtimes W'))\nonumber\\
 & \xrightarrow{\id_{W''}\boxtimes(\id_{W'}\boxtimes\cR_{W,W'})} W''\boxtimes(W'\boxtimes(W'\boxtimes W)) \xrightarrow{\id_{W''}\boxtimes\cA_{W',W',W}} W''\boxtimes((W'\boxtimes W')\boxtimes W)\nonumber\\
 &\xrightarrow{\id_{W''}\boxtimes(\cR_{W',W'}\boxtimes\id_W)} W''\boxtimes((W'\boxtimes W')\boxtimes W) \xrightarrow{\cA_{W'',W'\boxtimes W',W}} (W''\boxtimes(W'\boxtimes W'))\boxtimes W\nonumber\\
 & \xrightarrow{\cA_{W'',W',W'}\boxtimes\id_W} ((W''\boxtimes W')\boxtimes W')\boxtimes W \xrightarrow{(e_{W'}\boxtimes\id_{W'})\boxtimes\id_W} (V\boxtimes W')\boxtimes W\nonumber\\ &\xrightarrow{l_{W'}\boxtimes\id_W} W'\boxtimes W\xrightarrow{e_W} V.
\end{align*}
We apply the hexagon axiom and then the pentagon axiom to the braiding and associativity isomorphisms in rows three, four, and five:
\begin{align}\label{eqn:RWprime_id_calc}
 W' &\boxtimes W\xrightarrow{\cR_{W,W'}^{-1}} W\boxtimes W'\xrightarrow{\id_W\boxtimes\theta_{W'}^{-1}} W\boxtimes W'\xrightarrow{l_{W\boxtimes W'}^{-1}} V\boxtimes(W\boxtimes W')\nonumber\\
 &\xrightarrow{(\Phi_{W,W'}^{-1}\circ\widetilde{e}_W'\circ\varphi)\boxtimes\id_{W\boxtimes W'}} (W''\boxtimes W')\boxtimes(W\boxtimes W')\xrightarrow{\cA_{W'',W',W\boxtimes W'}^{-1}} W''\boxtimes(W'\boxtimes(W\boxtimes W'))\nonumber\\
 & \xrightarrow{\id_{W''}\boxtimes\cA_{W',W,W'}} W''\boxtimes((W'\boxtimes W)\boxtimes W') \xrightarrow{\id_{W''}\boxtimes\cR_{W'\boxtimes W,W'}} W''\boxtimes(W'\boxtimes(W'\boxtimes W))\nonumber\\
 & \xrightarrow{\cA_{W'',W',W'\boxtimes W}} (W''\boxtimes W')\boxtimes(W'\boxtimes W)\xrightarrow{\cA_{W''\boxtimes W',W',W}} ((W''\boxtimes W')\boxtimes W')\boxtimes W\nonumber\\
 &\xrightarrow{(e_{W'}\boxtimes\id_{W'})\boxtimes\id_W} (V\boxtimes W')\boxtimes W\xrightarrow{l_{W'}\boxtimes \id_W} W'\boxtimes W\xrightarrow{e_W} V.
\end{align}
Now by naturality of associativity and properties of the left unit isomorphisms, the last six arrows reduce to
\begin{align*}
 W'' \boxtimes & ((W'\boxtimes W)\boxtimes W') \xrightarrow{\id_{W''}\boxtimes\cR_{W'\boxtimes W,W'}} W''\boxtimes(W'\boxtimes(W'\boxtimes W))\nonumber\\
 & \xrightarrow{\cA_{W'',W',W'\boxtimes W}} (W''\boxtimes W')\boxtimes(W'\boxtimes W)\xrightarrow{e_{W'}\boxtimes\id_{W'\boxtimes W}} V\boxtimes(W'\boxtimes W)\nonumber\\
 &\xrightarrow{l_{W'\boxtimes W}} W'\boxtimes W\xrightarrow{e_W} V.
\end{align*}
Applying naturality of unit, associativity, and braiding isomorphisms to $e_W$, we get
\begin{align*}
 W'' \boxtimes & ((W'\boxtimes W)\boxtimes W') \xrightarrow{ \id_{W''}\boxtimes(e_{W}\boxtimes\id_{W'})} W''\boxtimes(V\boxtimes W')\xrightarrow{\id_{W''}\boxtimes\cR_{V,W'}} W''\boxtimes(W'\boxtimes V)\nonumber\\
& \xrightarrow{\cA_{W'',W',V}} (W''\boxtimes W')\boxtimes V\xrightarrow{e_{W'}\boxtimes\id_V} V\boxtimes V\xrightarrow{l_V=r_V} V.
\end{align*}
Using naturality of the right unit isomorphism, and then using the relations between unit, associativity, and braiding isomorphisms, we get
\begin{align*}
 W''\boxtimes((W'\boxtimes W)\boxtimes W')\xrightarrow{\id_{W''}\boxtimes(e_W\boxtimes\id_{W'})} W''\boxtimes(V\boxtimes W')\xrightarrow{\id_{W''}\boxtimes l_{W'}} W''\boxtimes W'\xrightarrow{e_{W'}} V.
\end{align*}
Returning this to \eqref{eqn:RWprime_id_calc} and using the definition of $\Phi_{W,W'}$, we now find that $e_W\circ(\mathfrak{R}_{W'}\boxtimes\id_W)$ equals the composition
\begin{align*}
  W' \boxtimes W\xrightarrow{\cR_{W,W'}^{-1}} & W\boxtimes W'\xrightarrow{\id_W\boxtimes\theta_{W'}^{-1}} W\boxtimes W'\xrightarrow{l_{W\boxtimes W'}^{-1}} V\boxtimes(W\boxtimes W')\nonumber\\
 &\xrightarrow{(\widetilde{e}_W'\circ\varphi)\boxtimes\id_{W\boxtimes W'}} (W\boxtimes W')'\boxtimes(W\boxtimes W')\xrightarrow{e_{W\boxtimes W'}} V.
\end{align*}
The diagram \eqref{diag:f'_def} applied to $\widetilde{e}_W$ then yields
\begin{align*}
 W' \boxtimes W\xrightarrow{\cR_{W,W'}^{-1}}  W\boxtimes W'\xrightarrow{\id_W\boxtimes\theta_{W'}^{-1}} & W\boxtimes W'\xrightarrow{l_{W\boxtimes W'}^{-1}} V\boxtimes(W\boxtimes W')\nonumber\\
 &\xrightarrow{\varphi\boxtimes\id_{W\boxtimes W'}} V'\boxtimes(W\boxtimes W')\xrightarrow{\id_{V'}\boxtimes\widetilde{e}_W} V'\boxtimes V\xrightarrow{e_V} V.
\end{align*}
Then by the definition of $\varphi$ and naturality of the left unit isomorphisms, this composition simplifies to
\begin{equation*}
 W'\boxtimes W\xrightarrow{\cR_{W,W'}^{-1}} W\boxtimes W'\xrightarrow{\id_W\boxtimes\theta_{W'}^{-1}} W\boxtimes W'\xrightarrow{\widetilde{e}_W} V.
\end{equation*}
Finally, we find
\begin{align*}
 \widetilde{e}_W\circ(\id_W\boxtimes\theta_{W'}^{-1})\circ\cR_{W,W'}^{-1} & = e_{W'}\circ(\delta_W\boxtimes\id_{W'})\circ(\id_W\boxtimes\theta_{W'}^{-1})\circ\cR_{W,W'}^{-1}\nonumber\\
 & = e_W\circ(\theta_{W'}^{-1}\boxtimes\theta_W) = e_W
\end{align*}
using \eqref{diag:delta_def} and $\theta_{W'}^{-1} =(\theta_W^{-1})'$. This completes the proof that $\mathfrak{R}_{W'}=\id_{W'}$.
\end{proof}

Now we finish the proof of Theorem \ref{thm:rigid_simple_to_fin_len}:
\begin{proof}
 By Proposition \ref{prop:Phi_iso_rigid}, it is sufficient to prove that $\Phi_{W_1,W_2}$ is an isomorphism for all modules $W_1$ and $W_2$ in $\cC$. Since every module $W$ in $\cC$ has finite length $\ell(W)$, we can prove $\Phi_{W_1,W_2}$ is an isomorphism by induction on $\ell(W_1)+\ell(W_2)$. For the base case $\ell(W_1)=\ell(W_2)=1$, $W_1$ and $W_2$ are both simple and thus rigid by assumption. So $\Phi_{W_1,W_2}$ is an isomorphism by the discussion preceding Proposition \ref{prop:Phi_iso_rigid}.

 Now assume $\ell(W_1)+\ell(W_2)=N>2$ and $\Phi_{\tilW_1,\tilW_2}$ is an isomorphism whenever $\ell(\tilW_1)+\ell(\tilW_2)<N$. Without loss of generality, assume that $\ell(W_1)\geq 2$, so that there is an exact sequence
 \begin{equation*}
  0\rightarrow M\xrightarrow{g} W_1\xrightarrow{f} N\rightarrow 0
 \end{equation*}
with $M,N$ modules in $\cC$ (because $\cC$ is closed under submodules and quotients) and $\ell(M),\ell(N)<\ell(W_1)$. The diagram
\begin{equation*}
 \xymatrixcolsep{4pc}
 \xymatrix{
 & W_2'\boxtimes N' \ar[r]^{\id_{W_2'}\boxtimes f'} \ar[d]_{\Phi_{N,W_2}} & W_2'\boxtimes W_1' \ar[r]^{\id_{W_2'}\boxtimes g'} \ar[d]^{\Phi_{W_1,W_2}} & W_2'\boxtimes M' \ar[r] \ar[d]^{\Phi_{M,W_2}} & 0 \\
 0 \ar[r] & (N\boxtimes W_2)' \ar[r]_{(f\boxtimes\id_{W_2})'} & (W_1\boxtimes W_2)' \ar[r]_{(g\boxtimes\id_{W_2})'} & (M\boxtimes W_2)' &\\
 }
\end{equation*}
has exact rows because $W_2'\boxtimes\bullet$, $\bullet\boxtimes W_2$ are right exact and because the contragredient functor is exact. The diagram also commutes by Proposition \ref{prop:Phi_nat}. Since $\Phi_{N,W_2}$ and $\Phi_{M,W_2}$ are isomorphisms by the inductive hypothesis, the short five lemma diagram chase shows that $\Phi_{W_1,W_2}$ is also an isomorphism.
\end{proof}

\begin{remark}\label{rem:O1_rigid}
 Using Theorem \ref{thm:rigid_simple_to_fin_len}, we can also get rigidity for the category $\cO_1$ of $C_1$-cofinite grading-restricted generalized modules for the simple Virasoro vertex operator algebra $L(1,0)$ of central charge $1$. Conditions (1) and (2) of the theorem  were proved for this category in \cite{CJORY}, while it was shown in \cite{McR1} that all simple modules in $\cO_1$ are rigid.
\end{remark}

\begin{remark}
 Using the natural isomorphism $\Phi$ from the proof of Theorem \ref{thm:rigid_simple_to_fin_len}, we can determine the contragredients of all simple modules in $\cC_{\cM(p)}$. We have already seen in Remark \ref{rem:s_self_contra} that each $\cM_{1,s}$ is self-contragredient, and the fusion rule $\cM_{r,1}\boxtimes\cM_{2-r,1}\cong\cM_{1,1}$ for $r\in\ZZ$ shows that $\cM_{r,1}'\cong\cM_{2-r,1}$. Then we get
 \begin{equation*}
  \cM_{r,s}' \cong(\cM_{r,1}\boxtimes\cM_{1,s})'\cong\cM_{1,s}'\boxtimes\cM_{r,1}'\cong\cM_{1,s}\boxtimes\cM_{2-r,1}\cong\cM_{2-r,s}
 \end{equation*}
for $r\in\ZZ$, $1\leq s\leq p$.
\end{remark}

\section{Projective modules and fusion rules}\label{sec:proj}

In this final section, we construct projective covers of the remaining irreducible modules in $\cC_{\cM(p)}^0$, and we determine all fusion products involving irreducible modules and their projective covers.

\subsection{Construction of projective covers}\label{subsec:projectivity}

In Corollary \ref{cor:first_proj_p} and Proposition \ref{prop:p-1_proj_cover}, we proved that for $r\in\ZZ$,
\begin{itemize}
\item The module $\cP_{r,p} = \cF_{\a_{r,p}} = \cM_{r,p}$ is projective and is its own projective cover.
\item The module $\cP_{r, p-1} \cong \cM_{1,2}\btimes \cM_{r,p}$ is projective and is a projective cover of $\cM_{r, p-1}$.
\end{itemize}
In the proof of Proposition \ref{prop:p-1_proj_cover}, we also showed that $\cP_{r,p-1}$ has a unique maximal proper submodule $\cZ_{r,p-1}$, and we have the following non-split exact sequences:
\begin{equation}\label{zrp-1}
0 \rightarrow \cM_{r,p-1} \rightarrow \cZ_{r,p-1} \rightarrow \cM_{r-1, 1}\oplus \cM_{r+1, 1} \rightarrow 0,
\end{equation}
and
\begin{equation}\label{prp-1}
0 \rightarrow \cZ_{r,p-1} \rightarrow \cP_{r,p-1} \rightarrow \cM_{r,p-1} \rightarrow 0.
\end{equation}
Moreover, the Loewy diagrams of $\cP_{r,p-1}$ and $\cZ_{r,p-1}$ are
\[
\begin{tikzpicture}[->,>=latex,scale=1.5]
\node (b1) at (1,0) {$\cM_{r,p-1}$};
\node (c1) at (-1, 1){$\cP_{r, p-1}$:};
   \node (a1) at (0,1) {$\cM_{r-1, 1}$};
   \node (b2) at (2,1) {$\cM_{r+1, 1}$};
    \node (a2) at (1,2) {$\cM_{r,p-1}$};
\draw[] (b1) -- node[left] {} (a1);
   \draw[] (b1) -- node[left] {} (b2);
    \draw[] (a1) -- node[left] {} (a2);
    \draw[] (b2) -- node[left] {} (a2);
\end{tikzpicture},\;\;\;
\begin{tikzpicture}[->,>=latex,scale=1.5]
\node (b1) at (1,0) {$\cM_{r,p-1}.$};
\node (c1) at (-1, 1){$\cZ_{r, p-1}$:};
   \node (a1) at (0,1) {$\cM_{r-1, 1}$};
   \node (b2) at (2,1) {$\cM_{r+1, 1}$};
  \draw[] (b1) -- node[left] {} (a1);
   \draw[] (b1) -- node[left] {} (b2);
    \end{tikzpicture}
\]
We will also need the following lemma:
\begin{lemma}
 For $r\in\ZZ$, $\cM_{r,p}$ is injective in $\cC_{\cM(p)}^0$.
\end{lemma}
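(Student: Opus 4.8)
The plan is to prove injectivity of $\cM_{r,p}$ in $\cC_{\cM(p)}^0$ by dualizing the projectivity statement we already have. Since $\cC_{\cM(p)}^0$ is closed under contragredients (this was shown in the corollary immediately following Theorem \ref{thm:rigid_simple_to_fin_len}), the contragredient functor is an exact contravariant self-equivalence of $\cC_{\cM(p)}^0$, so it sends projective objects to injective objects. Concretely, one knows from Corollary \ref{cor:first_proj_p} that $\cM_{r,p}=\cP_{r,p}$ is projective in $\cC_{\cM(p)}^0$. Thus $\cM_{r,p}'$ is injective in $\cC_{\cM(p)}^0$. It remains to identify $\cM_{r,p}'$ with some $\cM_{r',p}$, which it is: by the computation in the last Remark of Section \ref{sec:rigidity} (or directly, since $\cM_{r,p}$ is simple with one-dimensional lowest conformal weight space, so its contragredient is the simple module with the same lowest conformal weight, forcing $\cM_{r,p}'\cong\cM_{2-r,p}$). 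Hence every $\cM_{r,p}$ is injective in $\cC_{\cM(p)}^0$, since $r$ ranges over all of $\ZZ$.

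In a bit more detail, the key steps in order are: (1) recall that $\cM_{r,p}$ is projective in $\cC_{\cM(p)}^0$ by Corollary \ref{cor:first_proj_p}; (2) recall that $\cC_{\cM(p)}^0$ is closed under contragredients, and that the contragredient functor is exact (Proposition \ref{prop:contra_exact}) and contravariant, with $\delta_W\colon W\isom W''$ a natural isomorphism (so $(-)'$ is a duality); (3) observe the general categorical fact that an exact contravariant self-duality of an abelian category interchanges projectives and injectives — explicitly, given an injection $f\colon N_1\hookrightarrow N_2$ in $\cC_{\cM(p)}^0$ and a map $q\colon N_1\to\cM_{r,p}'$, apply $(-)'$ to get a surjection $f'\colon N_2'\twoheadrightarrow N_1'$ and a map $q'\colon \cM_{r,p}''\to N_1'$, precompose with $\delta_{\cM_{r,p}}$, lift through $f'$ using projectivity of $\cM_{r,p}$, then dualize back and use $\delta$ to produce the desired extension $\tilde q\colon N_2\to\cM_{r,p}'$ with $\tilde q\circ f=q$; (4) identify $\cM_{r,p}'\cong\cM_{2-r,p}$, so that as $r$ runs over $\ZZ$ this covers all the modules $\cM_{r,p}$.

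I do not expect any genuine obstacle here — the statement is essentially a formal consequence of results already in hand. The only minor point requiring care is step (4): one must confirm that the contragredient of $\cM_{r,p}$ is again one of the atypical irreducibles $\cM_{r',s'}$ rather than a typical Fock module $\cF_\lambda$ with $\lambda\notin L^\circ$. This is handled exactly as in the base case of Proposition \ref{prop:C_has_contras}: the lowest conformal weight of $\cM_{r,p}'$ equals $h_{r,p}$, and analyzing \eqref{conformalgrading} and \eqref{eqn:h_rs} shows the only $\lambda$ with $h_\lambda=h_{r,p}$ lie in $L^\circ$; combined with $s=p$ and matching of lowest weights this forces $\cM_{r,p}'\cong\cM_{2-r,p}$. (In fact, since we only need injectivity of each individual $\cM_{r,p}$ and $r$ ranges over all integers, it would even suffice to know $\cM_{r,p}'$ is \emph{some} $\cM_{r',p}$.) The rest is the standard diagram chase of step (3), which is routine.
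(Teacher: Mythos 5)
Your proof is correct and is essentially the paper's own argument: the paper simply writes $\cM_{r,p}\cong\cM_{2-r,p}'$ and invokes projectivity of $\cM_{2-r,p}$, which is the same dualization you carry out (you just start from $\cM_{r,p}$ and reindex at the end, and you spell out the duality-sends-projectives-to-injectives chase that the paper leaves implicit).
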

\begin{proof}
 Because $\cM_{2-r,p}$ is projective in $\cC_{\cM(p)}^0$, $\cM_{r,p}\cong\cM_{2-r,p}'$ is injective.
\end{proof}

Now for $p\geq 3$, we begin constructing more projective modules using the fusion rules \eqref{more1}. Since $\cM_{1,2}$ is rigid, the functor $\cM_{1,2}\btimes \bullet$ is exact. Applying $\cM_{1,2} \btimes \bullet $ to \eqref{zrp-1} and using \eqref{more1},
\begin{equation}
0 \rightarrow \cM_{r,p-2}\oplus \cM_{r,p} \rightarrow \cM_{1,2}\btimes \cZ_{r,p-1} \rightarrow \cM_{r-1, 2}\oplus \cM_{r+1, 2} \rightarrow 0
\end{equation}
is exact. Because $\cM_{r,p}$ is injective in $\cC_{\cM(p)}^0$, it is a direct summand of $\cM_{1,2}\btimes \cZ_{r,p-1}$. Let $\cZ_{r,p-2}$ be a submodule complement of $\cM_{r,p}$ in $\cM_{1,2}\btimes \cZ_{r,p-1}$, that is,
\begin{equation}\label{decompzrp-1}
\cM_{1,2}\btimes \cZ_{r,p-1} = \cM_{r,p}\oplus \cZ_{r,p-2}.
\end{equation}
It is easy to see that there is an exact sequence
\begin{equation}\label{zrp-2}
0 \rightarrow  \cM_{r, p-2}\rightarrow \cZ_{r,p-2} \rightarrow \cM_{r-1, 2}\oplus \cM_{r+1,2} \rightarrow 0.
\end{equation}
We claim that ${\rm Soc}(\cZ_{r,p-2}) = \cM_{r,p-2}$. Otherwise either $\cM_{r-1, 2}$ or $\cM_{r+1, 2}$ is a submodule of $\cZ_{r,p-2}$, and hence also a submodule of $\cM_{1,2}\btimes \cZ_{r,p-1}$. But in fact the rigidity of $\cM_{1,2}$, the fusion rules \eqref{more1}, and the Loewy diagram of $\cZ_{r,p-1}$ imply
\begin{align*}
 \Hom_{\cM(p)}(\cM_{r\pm 1,2}, \cM_{1,2}\btimes \cZ_{r,p-1}) & \cong \Hom_{\cM(p)}(\cM_{1,2}\btimes \cM_{r \pm 1,2}, \cZ_{r,p-1})\nonumber\\
 & \cong \Hom_{\cM(p)}(\cM_{r\pm1,1}\oplus\cM_{r,\pm1,3},\cZ_{r,p-1}) = 0,
\end{align*}
proving the claim. Thus the Loewy diagram of $\cZ_{r,p-2}$ is
\[
\begin{tikzpicture}[->,>=latex,scale=1.5]
\node (b1) at (1,0) {$\cM_{r,p-2}.$};
\node (c1) at (-1, 1){$\cZ_{r, p-2}$:};
   \node (a1) at (0,1) {$\cM_{r-1, 2}$};
   \node (b2) at (2,1) {$\cM_{r+1, 2}$};
  \draw[] (b1) -- node[left] {} (a1);
   \draw[] (b1) -- node[left] {} (b2);
    \end{tikzpicture}
\]

Now we apply $\cM_{1,2}\btimes \bullet$ to \eqref{prp-1} and use \eqref{more1} and the decomposition \eqref{decompzrp-1} of $\cZ_{r,p-1}$ to get
\begin{equation}
0 \rightarrow \cM_{r,p}\oplus \cZ_{r,p-2} \rightarrow \cM_{1,2}\btimes \cP_{r,p-1} \rightarrow \cM_{r,p-2}\oplus \cM_{r,p} \rightarrow 0.
\end{equation}
Because $\cM_{r,p}$ is both projective and injective in $\cC_{\cM(p)}^0$, $2\cdot \cM_{r,p}$ is a direct summand of $\cM_{1,2}\btimes \cP_{r,p-1}$. Defining $\cP_{r,p-2}$ to be a direct summand of $\cM_{1,2}\btimes \cP_{r,p-1}$ complementary to $2\cdot\cM_{r,p}$, we get an exact sequence
\begin{equation}\label{prp-2}
0 \rightarrow \cZ_{r,p-2} \rightarrow \cP_{r, p-2} \rightarrow \cM_{r,p-2} \rightarrow 0.
\end{equation}
We claim that ${\rm Soc}(\cP_{r,p-2}) =\cM_{r,p-2}$. Otherwise ${\rm Soc}(\cP_{r,p-2}) =2\cdot\cM_{r,p-2}$ and then
\[
\dim\Hom_{\cM(p)}(\cM_{1,2}\btimes \cM_{r,p-2}, \cP_{r,p-1}) = \dim\Hom_{\cM(p)}(\cM_{r,p-2}, \cM_{1,2}\btimes \cP_{r,p-1}) = 2,
\]
whereas in fact
\begin{equation*}
 \dim\Hom_{\cM(p)}(\cM_{1,2}\btimes \cM_{r,p-2}, \cP_{r,p-1}) =\dim\Hom_{\cM(p)}(\cM_{r,p-3}\oplus\cM_{r,p-1},\cP_{r,p-1})=1,
\end{equation*}
a contradiction. The claim follows.

The exact sequence \eqref{prp-2} gives
\begin{equation}\label{pmrp-2}
0 \rightarrow \cZ_{r,p-2}/\cM_{r,p-2} \rightarrow \cP_{r, p-2}/\cM_{r,p-2} \rightarrow \cM_{r,p-2} \rightarrow 0.
\end{equation}
We claim that $\cP_{r,p-2}/\cM_{r,p-2}$ is indecomposable, so that in particular \eqref{pmrp-2} does not split and
$${\rm Soc}(\cP_{r, p-2}/\cM_{r,p-2}) = \cZ_{r,p-2}/\cM_{r,p-2} = \cM_{r+1, 2}\oplus \cM_{r-1,2}.$$
Now if $\cP_{r,p-2}/\cM_{r,p-2}$ were decomposable, then because it has length $3$ and contains $\cM_{r-1,2}\oplus\cM_{r+1,2}$ as a submodule, it would have to contain either $\cM_{r-1,2}$ or $\cM_{r+1,2}$ as a summand. But using the rigidity of $\cM_{1,2}$, this would imply
\begin{align*}
\Hom_{\cM(p)}(\cP_{r,p-1}/\cM_{r,p-1},\cM_{1,2} & \btimes \cM_{r\pm1,2}) \cong \Hom_{\cM(p)}( \cM_{1,2}\btimes (\cP_{r,p-1}/\cM_{r,p-1}),\cM_{r\pm1,2})\\
& \cong\Hom_{\cM(p)}\big((\cM_{1,2}\btimes \cP_{r,p-1})/(\cM_{1,2}\btimes \cM_{r,p-1}),\cM_{r\pm1,2}\big)\\
& \cong \Hom_{\cM(p)}\big( (\cP_{r,p-2}\oplus 2\cdot\cM_{r,p})/(\cM_{r,p}\oplus \cM_{r,p-2}),\cM_{r\pm1,2}\big)\\
& \cong \Hom_{\cM(p)}\big( (\cP_{r,p-2}/\cM_{r,p-2})\oplus \cM_{r,p},\cM_{r\pm1,2}\big) \neq 0,
\end{align*}
whereas in fact the Loewy diagram for $\cP_{r,p-1}$ shows
\begin{align*}
 \Hom_{\cM(p)}(\cP_{r,p-1}/\cM_{r,p-1}, & \cM_{1,2}  \btimes \cM_{r\pm1,2})\nonumber\\
 &\cong\Hom_{\cM(p)}(\cP_{r,p-1}/\cM_{r,p-1},\cM_{r\pm1,1}\oplus\cM_{r\pm1,3}) = 0.
\end{align*}
This proves the claim, and it also finishes the verification of the Loewy diagram for $\cP_{r,p-2}$ in the next lemma:
\begin{lemma}\label{lem:p-2_proj}
For $r\in\ZZ$, $\cP_{r,p-2}$ is a projective cover of $\cM_{r,p-2}$ in $\cC_{\cM(p)}^0$, and it has Loewy diagram
\[
\begin{tikzpicture}[->,>=latex,scale=1.5]
\node (b1) at (1,0) {$\cM_{r,p-2}.$};
\node (c1) at (-1, 1){$\cP_{r, p-2}$:};
   \node (a1) at (0,1) {$\cM_{r-1, 2}$};
   \node (b2) at (2,1) {$\cM_{r+1, 2}$};
    \node (a2) at (1,2) {$\cM_{r,p-2}$};
\draw[] (b1) -- node[left] {} (a1);
   \draw[] (b1) -- node[left] {} (b2);
    \draw[] (a1) -- node[left] {} (a2);
    \draw[] (b2) -- node[left] {} (a2);
\end{tikzpicture}
\]
\end{lemma}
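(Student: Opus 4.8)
The Loewy diagram displayed in the lemma has already been verified in the discussion immediately preceding it: the exact sequence \eqref{prp-2}, the identity $\mathrm{Soc}(\cP_{r,p-2})=\cM_{r,p-2}$, and the indecomposability of $\cP_{r,p-2}/\cM_{r,p-2}$ (with socle $\cM_{r-1,2}\oplus\cM_{r+1,2}$) together pin down the four composition factors and all the arrows. So in the proof I would simply recall this and then establish the two remaining assertions: that $\cP_{r,p-2}$ is projective in $\cC_{\cM(p)}^0$, and that it is a projective cover of $\cM_{r,p-2}$.

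For projectivity, the point is that $\cP_{r,p-2}$ is by construction a direct summand of $\cM_{1,2}\boxtimes\cP_{r,p-1}$. Now $\cM_{1,2}$ is an object of $\cC_{\cM(p)}^0$ by \eqref{inductionsimple} and is rigid by the results of Section \ref{sec:rigidity}, while $\cP_{r,p-1}$ is projective in $\cC_{\cM(p)}^0$ by Corollary \ref{cor:first_proj_p}. Since $\cC_{\cM(p)}^0$ is a tensor subcategory of $\Rep^0\,\cM(p)$ by Theorem \ref{thm:C0_tens_cat}, and since the tensor product of a rigid object with a projective object is projective in any tensor category (for instance Corollary 2 in the Appendix of \cite{KL5}), the module $\cM_{1,2}\boxtimes\cP_{r,p-1}$ is projective in $\cC_{\cM(p)}^0$. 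A direct summand of a projective object is projective, so $\cP_{r,p-2}$ is projective in $\cC_{\cM(p)}^0$.

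For the projective cover statement, I would argue exactly as at the end of the proof of Proposition \ref{prop:p-1_proj_cover}. The Loewy diagram shows that $\cP_{r,p-2}$ has simple head $\cM_{r,p-2}$, so $\cZ_{r,p-2}$ is its unique maximal proper submodule, and by \eqref{prp-2} there is a surjection $q\colon\cP_{r,p-2}\twoheadrightarrow\cM_{r,p-2}$ with $\ker q=\cZ_{r,p-2}$. Given any projective module $P$ in $\cC_{\cM(p)}^0$ together with a surjection $\widetilde q\colon P\twoheadrightarrow\cM_{r,p-2}$, projectivity of $P$ provides $f\colon P\to\cP_{r,p-2}$ with $q\circ f=\widetilde q$; if $f$ failed to be surjective, then $\mathrm{im}\,f\subseteq\cZ_{r,p-2}$ and hence $\widetilde q=q\circ f=0$, a contradiction. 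Thus $f$ is surjective, which shows that $(\cP_{r,p-2},q)$ is a projective cover of $\cM_{r,p-2}$.

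There is no genuine obstacle left at this stage: all of the delicate homological bookkeeping — the socle and head computations and the indecomposability of the relevant subquotients, which rely on rigidity of $\cM_{1,2}$, the fusion rules \eqref{more1}, and the fact that $\cM_{r,p}$ is both projective and injective in $\cC_{\cM(p)}^0$ — was carried out before the statement of the lemma. The only step requiring a little care is invoking ``rigid $\boxtimes$ projective is projective'' with the correct hypotheses, and these are all in place via Theorem \ref{thm:C0_tens_cat}, Corollary \ref{cor:first_proj_p}, and Section \ref{sec:rigidity}.
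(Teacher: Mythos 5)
Your proposal is correct and follows essentially the same route as the paper's own proof: projectivity via the fact that $\cM_{1,2}\boxtimes\cP_{r,p-1}$ is projective (rigid tensor projective, Corollary 2 of the Appendix of \cite{KL5}) together with $\cP_{r,p-2}$ being a direct summand, and the projective cover property via the unique maximal proper submodule $\cZ_{r,p-2}$ read off from the Loewy diagram established in the preceding discussion. No gaps.
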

\begin{proof}
It remains to show that $\cP_{r,p-2}$ is a projective cover of $\cM_{r,p-2}$ in $\cC_{\cM(p)}^0$. Since $\cM_{1,2}$ is rigid and $\cP_{r,p-1}$ is projective in $\cC_{\cM(p)}^0$, $\cM_{1,2}\btimes \cP_{r,p-1}$ is also projective (see Corollary 2 from the Appendix of \cite{KL5}). Thus as a direct summand of $\cM_{1,2}\btimes \cP_{r,p-1}$, $\cP_{r,p-2}$ is  projective in $\cC_{\cM(p)}^0$, and we have a surjective map $p:\cP_{r,p-2}\rightarrow\cM_{r,p-2}$. Now suppose $P$ is projective and $q: P \rightarrow \cM_{r,p-2}$ is a surjective map. Then there exists $f: P \rightarrow \cP_{r,p-2}$ such that
\begin{equation*}
  \xymatrixcolsep{3pc}
  \xymatrix{
   & P \ar[ld]_{f} \ar[d]^q \\
  \cP_{r,p-2}  \ar[r]_{p} & \cM_{r,p-2}
  }
 \end{equation*}
 commutes. We need to show that $f$ is surjective. The Loewy diagram for $\cP_{r,p-2}$ shows that $\cZ_{r,p-2}$ is the unique maximal submodule of $\cP_{r,p-2}$, so if $f$ is not surjective, then $\mathrm{im}\,f\subseteq\cZ_{r,p-2}=\ker p$. But this contradicts $q\neq 0$, so in fact $f$ is surjective.
\end{proof}

Now that we have projective covers $\cP_{r,p-2}$, $\cP_{r,p-1}$, and $\cP_{r,p}$ for $r\in\ZZ$, we proceed to construct modules $\cP_{r,s}$ for $1\leq s\leq p-3$ recursively. Fix $s\in\lbrace 2,3,\ldots, p-2\rbrace$ and assume we have $\cP_{r,\sigma}$ for all $s\leq\sigma\leq p-1$ such that:
\begin{itemize}
\item The module $\cP_{r,\sigma}$ is a projective cover of $\cM_{r,\sigma}$ in $\cC_{\cM(p)}^0$, and
\item The Loewy diagram of $\cP_{r,\sigma}$ is
\[
\begin{tikzpicture}[->,>=latex,scale=1.5]
\node (b1) at (1,0) {$\cM_{r, \sigma}$};
\node (c1) at (-1, 1){$\cP_{r, \sigma}$:};
   \node (a1) at (0,1) {$\cM_{r-1, p-\sigma}$};
   \node (b2) at (2,1) {$\cM_{r+1, p-\sigma}$};
    \node (a2) at (1,2) {$\cM_{r,\sigma}$};
\draw[] (b1) -- node[left] {} (a1);
   \draw[] (b1) -- node[left] {} (b2);
    \draw[] (a1) -- node[left] {} (a2);
    \draw[] (b2) -- node[left] {} (a2);
\end{tikzpicture}
\]
\end{itemize}
We now define $\cP_{r,s-1}$ as follows. We have a surjection
\begin{align*}
 \cM_{1,2}\boxtimes\cP_{r,s}\rightarrow\cM_{1,2}\boxtimes\cM_{r,s}\cong\cM_{r,s-1}\oplus\cM_{r,s+1}\rightarrow\cM_{r,s+1}.
\end{align*}
Then because $\cM_{1,2}$ is rigid and $\cP_{r,s}$ is projective, $\cM_{1,2}\boxtimes\cP_{r,s}$ is also projective. So because $\cP_{r,s+1}$ is the projective cover of $\cM_{r,s+1}$, we get a a surjective map
\begin{equation*}
 \cM_{1,2}\boxtimes\cP_{r,s}\rightarrow \cP_{r,s+1}.
\end{equation*}
Since $\cP_{r,s+1}$ is projective, this surjection splits and $\cP_{r,s+1}$ is a direct summand of $\cM_{1,2}\boxtimes\cP_{r,s}$. We now define $\cP_{r,s-1}$ to be a complement of $\cP_{r,s+1}$:
\[
\cM_{1,2}\btimes \cP_{r, s} = \cP_{r,s+1} \oplus \cP_{r, s-1}.
\]
The module $\cP_{r,s-1}$ is in $\cC_{\cM(p)}^0$ because this category is closed under tensor products and submodules, and it is projective in $\cC_{\cM(p)}^0$ because it is a summand of a projective module. We can now prove:
\begin{theorem}
The module $\cP_{r,s-1}$ is a projective cover of $\cM_{r,s-1}$ in $\cC_{\cM(p)}^0$
with the following Loewy diagram:
\[
\begin{tikzpicture}[->,>=latex,scale=1.5]
\node (b1) at (1,0) {$\cM_{r, s-1}$};
\node (c1) at (-1.2, 1){$\cP_{r, s-1}$:};
   \node (a1) at (0,1) {$\cM_{r-1, p-s+1}$};
   \node (b2) at (2,1) {$\cM_{r+1, p-s+1}$};
    \node (a2) at (1,2) {$\cM_{r,s-1}$};
\draw[] (b1) -- node[left] {} (a1);
   \draw[] (b1) -- node[left] {} (b2);
    \draw[] (a1) -- node[left] {} (a2);
    \draw[] (b2) -- node[left] {} (a2);
\end{tikzpicture}
\]
\end{theorem}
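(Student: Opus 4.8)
The plan is to follow the template of Proposition~\ref{prop:p-1_proj_cover} and Lemma~\ref{lem:p-2_proj}, using the inductively known Loewy diagram of $\cP_{r,s}$ (and of $\cP_{r,s+1}$, which is available since $s+1\le p-1$) together with the exactness of $\cM_{1,2}\boxtimes\bullet$, valid because $\cM_{1,2}$ is rigid and self-dual by Theorem~\ref{thm:C_simple_rigid}. Since we already know $\cP_{r,s-1}$ is projective in $\cC_{\cM(p)}^0$, it remains to identify its Loewy diagram and then deduce the projective-cover property.

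First I would pin down the length and Loewy length of $\cP_{r,s-1}$. Apply $\cM_{1,2}\boxtimes\bullet$ to the three-step socle filtration of $\cP_{r,s}$, whose layers are $\cM_{r,s}$, $\cM_{r-1,p-s}\oplus\cM_{r+1,p-s}$, $\cM_{r,s}$. Because $2\le s\le p-2$ puts both $s$ and $p-s$ in $\{2,\dots,p-1\}$, the fusion rules \eqref{more1} make each of these layers semisimple, so $\cM_{1,2}\boxtimes\cP_{r,s}$ inherits a filtration with semisimple subquotients; hence $\cM_{1,2}\boxtimes\cP_{r,s}$, and therefore its direct summand $\cP_{r,s-1}$, has Loewy length at most $3$. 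Reading the composition factors of $\cM_{1,2}\boxtimes\cP_{r,s}$ off these layers and subtracting those of $\cP_{r,s+1}$ from those of $\cM_{1,2}\boxtimes\cP_{r,s}=\cP_{r,s+1}\oplus\cP_{r,s-1}$ shows that $\cP_{r,s-1}$ has length $4$, with composition factors $\cM_{r,s-1}$ (with multiplicity two) and $\cM_{r-1,p-s+1}$, $\cM_{r+1,p-s+1}$.

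Next I would compute the socle and head. Since $\cP_{r,s-1}$ is a direct summand of $\cM_{1,2}\boxtimes\cP_{r,s}$, the spaces $\Hom(\cM_{r',s'},\cP_{r,s-1})$ and $\Hom(\cP_{r,s-1},\cM_{r',s'})$ are direct summands of $\Hom(\cM_{r',s'},\cM_{1,2}\boxtimes\cP_{r,s})$ and $\Hom(\cM_{1,2}\boxtimes\cP_{r,s},\cM_{r',s'})$, which via the rigid-duality adjunction $\Hom(\cM_{1,2}\boxtimes A,B)\cong\Hom(A,\cM_{1,2}\boxtimes B)$ reduce to Hom-spaces into or out of $\cP_{r,s}$ and $\cP_{r,s+1}$. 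Feeding in \eqref{more1} and the inductively known equalities $\mathrm{Soc}(\cP_{r,s})=\mathrm{head}(\cP_{r,s})=\cM_{r,s}$ (and likewise for $\cP_{r,s+1}$), one finds that both $\Hom(\cM_{r',s'},\cP_{r,s-1})$ and $\Hom(\cP_{r,s-1},\cM_{r',s'})$ vanish for the two simple constituents $\cM_{r\pm1,p-s+1}$ and are one-dimensional for $\cM_{r,s-1}$. (At the boundary value $s=2$ one has $\cM_{1,2}\boxtimes\cM_{r,1}=\cM_{r,2}$ simple, which changes only the bookkeeping.) Hence $\mathrm{Soc}(\cP_{r,s-1})=\mathrm{head}(\cP_{r,s-1})=\cM_{r,s-1}$; in particular $\cP_{r,s-1}$ is indecomposable.

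Finally, a module of length $4$ and Loewy length $\le 3$ with simple socle and head both $\cM_{r,s-1}$ must have Loewy length exactly $3$, and its middle socle-series layer — which is automatically semisimple — carries the two remaining constituents, so it equals $\cM_{r-1,p-s+1}\oplus\cM_{r+1,p-s+1}$. Indecomposability forces all four edges of the diamond: if the extension of one middle constituent over the socle were split, that constituent would embed in $\mathrm{Soc}(\cP_{r,s-1})=\cM_{r,s-1}$, a contradiction; applying the same reasoning to the contragredient $\cP_{r,s-1}'$, which again lies in $\cC_{\cM(p)}$ by Proposition~\ref{prop:C_has_contras} and has simple socle, gives the two remaining edges. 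This establishes the asserted Loewy diagram, and it exhibits $\cZ_{r,s-1}:=\mathrm{rad}(\cP_{r,s-1})$ as the unique maximal submodule, equal to the kernel of a surjection $q\colon\cP_{r,s-1}\twoheadrightarrow\cM_{r,s-1}$; the projective-cover claim then follows verbatim as in Lemma~\ref{lem:p-2_proj} (lift any surjection onto $\cM_{r,s-1}$ from a projective through $q$, and note its image cannot lie inside $\cZ_{r,s-1}$). The point requiring the most care is the observation in the second paragraph that exactness of $\cM_{1,2}\boxtimes\bullet$ applied to the Loewy data of $\cP_{r,s}$ already caps the Loewy length of $\cP_{r,s-1}$ at $3$; this is exactly what renders the middle layer automatically semisimple, so that — in contrast to the $\cW(p)$ situation — no separate input on self-extensions of $\cM_{r,s-1}$ or on the structure of the $\cW(p)$-side projective covers is needed.
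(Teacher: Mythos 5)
Your argument is correct, and it reaches the theorem by a route that is organized differently from the paper's, even though the engine is the same (induction on $s$, exactness of $\cM_{1,2}\boxtimes\bullet$ from rigidity, the fusion rules \eqref{more1}, and adjunction $\Hom(\cM_{1,2}\boxtimes A,B)\cong\Hom(A,\cM_{1,2}\boxtimes B)$ against the inductively known $\cP_{r,s}$). The paper instead tensors the two exact sequences for $\cZ_{r,s}$ and $\cP_{r,s}$ with $\cM_{1,2}$ and splits the results using the fact that the conformal weights lie in the two distinct cosets $h_{r,s\pm 1}+\ZZ$; this produces explicit exact sequences $0\to\cM_{r,s-1}\to\cZ_{r,s-1}\to\cM_{r-1,p-s+1}\oplus\cM_{r+1,p-s+1}\to 0$ and $0\to\cZ_{r,s-1}\to\cP_{r,s-1}\to\cM_{r,s-1}\to 0$, after which the socle computations and the indecomposability of $\cP_{r,s-1}/\cM_{r,s-1}$ are settled by the same kind of Hom/adjunction counts you use. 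You replace the conformal-weight splitting by two devices: (i) tensoring the three-step socle filtration of $\cP_{r,s}$, whose layers become semisimple by \eqref{more1} since $s,p-s\in\{2,\dots,p-2\}$, to cap the Loewy length of $\cP_{r,s-1}$ at $3$ (this is indeed the load-bearing step, since with length $4$ and simple socle and head a uniserial configuration is otherwise not excluded by weight considerations, the relevant weight differences being integers); and (ii) Jordan--H\"older subtraction of the factors of $\cP_{r,s+1}$, whose Loewy diagram your argument genuinely uses (the paper only needs $\cP_{r,s+1}$ as a projective summand), which is available from the induction hypothesis. Your dual/contragredient argument for the upper edges is fine (it is really a restatement of the head computation: a split upper extension would make $\cM_{r\pm 1,p-s+1}$ a quotient of $\cP_{r,s-1}$), and the only step left implicit is the identification of the top socle layer: since $\cP_{r,s-1}/\mathrm{Soc}^2$ is a nonzero semisimple quotient, it is a quotient of the simple head $\cM_{r,s-1}$, so the two copies of $\cM_{r,s-1}$ sit at the bottom and top and the middle layer is exactly $\cM_{r-1,p-s+1}\oplus\cM_{r+1,p-s+1}$; this is a one-line addition. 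The payoff of your version is that it avoids the weight-coset splitting and works purely with filtrations, Hom counts, and duality; the paper's version yields the auxiliary modules $\cZ_{r,s-1}$ and $\widetilde{\cZ}_{r,s+1}$ explicitly, which it reuses later (e.g.\ in the proof of Theorem \ref{thm:prs_fusion} and Theorem \ref{generalfusionrules}), whereas your route would have to extract $\cZ_{r,s-1}=\mathrm{rad}(\cP_{r,s-1})$ afterwards.
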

\begin{proof}
From the Loewy diagram for $\cP_{r,s}$, we see that $\cP_{r,s}$ has a unique maximal proper submodule $\cZ_{r,s}$ with Loewy diagram
\[
\begin{tikzpicture}[->,>=latex,scale=1.5]
\node (b1) at (1,0) {$\cM_{r,s}$};
\node (c1) at (-1, 1){$\cZ_{r, s}$:};
   \node (a1) at (0,1) {$\cM_{r-1, p-s}$};
   \node (b2) at (2,1) {$\cM_{r+1, p-s}$};
  \draw[] (b1) -- node[left] {} (a1);
   \draw[] (b1) -- node[left] {} (b2);
    \end{tikzpicture}
\]
Then the following non-split exact sequences are clear:
\begin{equation}\label{zrs}
0 \longrightarrow \cM_{r,s} \longrightarrow \cZ_{r,s} \longrightarrow \cM_{r-1,p-s}\oplus \cM_{r+1, p-s} \longrightarrow 0,
\end{equation}
and
\begin{equation}\label{prs}
0 \longrightarrow \cZ_{r,s} \longrightarrow \cP_{r,s} \longrightarrow \cM_{r,s} \longrightarrow 0.
\end{equation}
Applying $\cM_{1,2} \btimes \bullet $ to \eqref{zrs} and using the fusion rules \eqref{more1} yields the exact sequence
\begin{align*}
0 \longrightarrow \cM_{r,s-1}\oplus \cM_{r,s+1} &\longrightarrow \cM_{1,2}\btimes \cZ_{r,s}\nonumber\\ &\longrightarrow \cM_{r-1,p-s-1}\oplus \cM_{r-1,p-s+1}\oplus \cM_{r+1, p-s-1}\oplus \cM_{r+1,p-s+1} \longrightarrow 0.
\end{align*}
From this we see that the conformal weights of $\cM_{1,2}\boxtimes\cZ_{r,s}$ are contained in the two distinct cosets $h_{r,s\pm1}+\ZZ$, and thus $\cM_{1,2}\boxtimes\cZ_{r,s}$ decomposes as a direct sum of two modules, say $\widetilde{\cZ}_{r,s+1}$ and $\cZ_{r,s-1}$, satisfying the exact sequences
\begin{equation*}
0 \rightarrow \cM_{r,s-1} \rightarrow \cZ_{r,s-1} \rightarrow \cM_{r-1,p-s+1}\oplus \cM_{r+1,p-s+1} \rightarrow 0,
\end{equation*}
and
\begin{equation*}
0 \rightarrow \cM_{r,s+1} \rightarrow \widetilde{\cZ}_{r,s+1} \rightarrow \cM_{r-1,p-s-1}\oplus \cM_{r+1, p-s-1} \rightarrow 0.
\end{equation*}
If either $\cM_{r-1,p-s+1}$ or $\cM_{r+1,p-s+1}$ were a submodule of $\cZ_{r,s-1}$, and thus also a submodule of $\cM_{1,2}\btimes \cZ_{r,s}$, then rigidity of $\cM_{1,2}$ would imply
\[
\Hom_{\cM(p)}(\cM_{1,2}\btimes\cM_{r\pm 1,p-s+1}, \cZ_{r,s}) \cong \Hom_{\cM(p)}(\cM_{r\pm 1,p-s+1}, \cM_{1,2}\btimes \cZ_{r,s}) \neq 0.
\]
However, by the fusion rules \eqref{more1} and the Loewy diagram of $\cZ_{r,s}$, there is no non-zero homomorphism
\[
\cM_{1,2}\btimes\cM_{r\pm 1,p-s+1} \cong \cM_{r\pm 1, p-s}\oplus \cM_{r\pm 1, p-s+2} \rightarrow \cZ_{r,s}.
\]
As a result, ${\rm Soc}(\cZ_{r,s-1}) = \cM_{r,s-1}$.

Now we apply $\cM_{1,2}\btimes \bullet$ to  \eqref{prs} to get an exact sequence
\begin{equation*}
0 \rightarrow \cZ_{r,s-1}\oplus \widetilde{\cZ}_{r,s+1} \rightarrow \cM_{1,2}\btimes\cP_{r,s} \rightarrow \cM_{r,s-1}\oplus \cM_{r,s+1} \rightarrow 0.
\end{equation*}
Thus conformal weight considerations again show that $\cP_{r,s-1}$ satisfies the exact sequence
\begin{equation}\label{prs-1}
0 \rightarrow \cZ_{r,s-1} \rightarrow \cP_{r,s-1} \rightarrow \cM_{r,s-1} \rightarrow 0.
\end{equation}
We claim that ${\rm Soc}(\cP_{r,s-1}) = \cM_{r,s-1}$. If not, then ${\rm Soc}(\cP_{r,s-1}) = 2\cdot\cM_{r,s-1}$ and rigidity of $\cM_{1,2}$ would imply
\[
\dim \Hom_{\cM(p)}(\cM_{1,2}\btimes\cM_{r,s-1}, \cP_{r,s}) = \dim \Hom_{\cM(p)}(\cM_{r,s-1}, \cM_{1,2}\btimes \cP_{r,s}) = 2.
\]
However, this would contradict
\begin{align*}
\dim \Hom_{\cM(p)}(\cM_{1,2}\btimes\cM_{r,s-1}, \cP_{r,s}) &= \dim \Hom_{\cM(p)}\big([\cM_{r,s-2}\oplus]\cM_{r,s}, \cP_{r,s}\big) = 1,
\end{align*}
where the summand in brackets occurs for $s\geq 3$.

The exact sequence \eqref{prs-1} gives an exact sequence
\[
0 \rightarrow \cZ_{r,s-1}/\cM_{r,s-1} \rightarrow \cP_{r,s-1}/\cM_{r,s-1} \rightarrow \cM_{r,s-1} \rightarrow 0
\]
We claim that $\cP_{r,s-1}/\cM_{r,s-1}$ is indecomposable, so that in particular
$${\rm Soc}(\cP_{r,s-1}/\cM_{r,s-1}) = \cZ_{r,s-1}/\cM_{r,s-1} = \cM_{r-1, p-s+1}\oplus \cM_{r+1,p-s+1}.$$
If $\cP_{r,s-1}/\cM_{r,s-1}$ were decomposable, then because it contains $\cM_{r-1, p-s+1}\oplus \cM_{r+1,p-s+1}$ as a submodule and has length $3$, it would have to contain either $\cM_{r-1,p-s+1}$ or $\cM_{r+1,p-s+1}$ as a direct summand. Then rigidity of $\cM_{1,2}$ would imply
\begin{align*}
\Hom_{\cM(p)}( \cP_{r,s}/\cM_{r,s}, & \,\cM_{1,2}\btimes \cM_{r\pm1,p-s+1}) \cong \Hom_{\cM(p)}( \cM_{1,2}\btimes (\cP_{r,s}/\cM_{r,s}), \cM_{r\pm1,p-s+1})\\
& \cong \Hom_{\cM(p)}\big( (\cM_{1,2}\btimes \cP_{r,s})/(\cM_{1,2}\btimes \cM_{r,s}),\cM_{r\pm1,p-s+1}\big)\\
& \cong \Hom_{\cM(p)}\big( (\cP_{r,s-1}\oplus \cP_{r,s+1})/(\cM_{r,s-1}\oplus \cM_{r,s+1}),\cM_{r\pm1,p-s+1}\big)\\
& \cong \Hom_{\cM(p)}\big( (\cP_{r,s-1}/\cM_{r,s-1})\oplus (\cP_{r,s+1}/\cM_{r,s+1}),\cM_{r\pm1,p-s+1}\big) \neq 0.
\end{align*}
However, in fact
\begin{align*}
\Hom_{\cM(p)}(\cP_{r,s}/\cM_{r,s}, &\,\cM_{1,2}\btimes \cM_{r\pm1,p-s+1} )\nonumber\\
&= \Hom_{\cM(p)}(\cP_{r,s}/\cM_{r,s},\cM_{r\pm1,p-s}\oplus \cM_{r\pm1,p-s+2} ) = 0.
\end{align*}
Thus $\cP_{r,s-1}/\cM_{r,s-1}$ is indecomposable, and we have verified the Loewy diagram for $\cP_{r,s-1}$.

Now we show that the projective module $\cP_{r,s-1}$ is a projective cover of $\cM_{r,s-1}$ in $\cC_{\cM(p)}^0$. The Loewy diagram shows that $\cZ_{r,s-1}$ is the unique maximal proper submodule of $\cP_{r,s-1}$ and that there is a surjective map $p:\cP_{r,s-1}\rightarrow\cM_{r,s-1}$ with kernel $\cZ_{r,s-1}$. Thus for any surjective map $q:P\rightarrow\cM_{r,s-1}$ in $\cC_{\cM(p)}^0$ with $P$ projective, a map $f: P\rightarrow\cP_{r,s-1}$ such that the diagram
\begin{equation*}
 \xymatrixcolsep{3pc}
 \xymatrix{
 & P \ar[d]^{q} \ar[ld]_{f} \\
 \cP_{r,s-1} \ar[r]_{p} & \cM_{r,s-1} \\
 }
\end{equation*}
commutes is necessarily surjective, just as in the proof of Lemma \ref{lem:p-2_proj}
\end{proof}

The projective covers $\cP_{r,s}$ for $r \in \ZZ$, $1\leq s \leq p$ have the following fusion rules:
\begin{theorem}\label{thm:prs_fusion} \hspace{2em}
\begin{itemize}
\item[(1)]For $r \in \ZZ$, $1\leq s \leq p$,
\begin{equation}\label{moreprs}
\cM_{2,1}\btimes \cP_{r,s} = \cP_{r+1,s}.
\end{equation}
\item[(2)]For $p\geq 3$ and $r \in \ZZ$, $1\leq s \leq p-1$,
\begin{equation}\label{more1prs}
\cM_{1,2}\btimes \cP_{r,s} =
\begin{cases}
\cP_{r,2}\oplus \cP_{r+1,p}\oplus \cP_{r-1, p} \;\;\; &\mbox{if}\; s = 1\\
\cP_{r,s-1}\oplus \cP_{r,s+1},\;\;\; &\mbox{if}\; 1<s<p-1\\
\cP_{r,p-2}\oplus 2\cdot \cP_{r,p}, \;\;\; &\mbox{if}\; s = p-1.
\end{cases}
\end{equation}
\item[(3)] For $p=2$ and $r\in\ZZ$,
\begin{equation}\label{more2prs}
 \cM_{1,2}\boxtimes\cP_{r,1} = \cP_{r+1,2}\oplus 2\cdot\cP_{r,2}\oplus\cP_{r-1,2}.
\end{equation}
\end{itemize}
\end{theorem}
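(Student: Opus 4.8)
The plan is to read off all four families of relations from facts already established, the only genuinely new input being an elementary splitting observation for the boundary cases.

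\textbf{Part (1).} I would use that $\cM_{2,1}$ is a rigid simple current in $\cC_{\cM(p)}^0$ by Theorem \ref{fusionrules}(1), with inverse $\cM_{0,1}$ since $\cM_{2,1}\btimes\cM_{0,1}\cong\cM_{1,1}=\cM(p)$ by \eqref{more}. Thus $\cM_{2,1}\btimes\bullet$ is an auto-equivalence of $\cC_{\cM(p)}^0$, hence carries projective covers to projective covers. Since $\cM_{2,1}\btimes\cM_{r,s}\cong\cM_{r+1,s}$ by \eqref{more}, the module $\cM_{2,1}\btimes\cP_{r,s}$ is a projective cover of $\cM_{r+1,s}$, so $\cM_{2,1}\btimes\cP_{r,s}\cong\cP_{r+1,s}$, which is \eqref{moreprs}.

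\textbf{Part (2), interior and $s=p-1$.} The interior case $1<s<p-1$ is precisely the decomposition $\cM_{1,2}\btimes\cP_{r,s}\cong\cP_{r,s+1}\oplus\cP_{r,s-1}$ obtained in the recursive construction of the modules $\cP_{r,s-1}$ given above (carried out exactly for $s\in\{2,\dots,p-2\}$), so there is nothing to add. The case $s=p-1$ was likewise established in constructing $\cP_{r,p-2}$, where one shows $\cM_{1,2}\btimes\cP_{r,p-1}\cong 2\cdot\cM_{r,p}\oplus\cP_{r,p-2}$ using that $\cM_{r,p}$ is both projective and injective in $\cC_{\cM(p)}^0$; since $\cP_{r,p}=\cM_{r,p}$, this gives \eqref{more1prs} for $s=p-1$.

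\textbf{Part (2) with $s=1$, and Part (3).} For $p\geq 3$ and $s=1$: the module $\cM_{1,2}\btimes\cP_{r,1}$ is projective (rigid $\btimes$ projective), and applying the exact functor $\cM_{1,2}\btimes\bullet$ to $\cP_{r,1}\twoheadrightarrow\cM_{r,1}$ together with $\cM_{1,2}\btimes\cM_{r,1}\cong\cM_{r,2}$ from \eqref{more1} produces a surjection onto $\cM_{r,2}$; since $\cP_{r,2}$ is the projective cover of $\cM_{r,2}$, this forces $\cM_{1,2}\btimes\cP_{r,1}\cong\cP_{r,2}\oplus X$ for some $X$. Applying $\cM_{1,2}\btimes\bullet$ to a composition series of $\cP_{r,1}$ (whose factors, from its Loewy diagram, are $\cM_{r,1}$ twice, $\cM_{r-1,p-1}$, $\cM_{r+1,p-1}$) and using the $s=1$ and $s=p-1$ cases of \eqref{more1}, one finds that $X$ has composition factors exactly $\cM_{r-1,p}$ and $\cM_{r+1,p}$; these are distinct simple modules, each injective in $\cC_{\cM(p)}^0$, so $X\cong\cM_{r-1,p}\oplus\cM_{r+1,p}=\cP_{r-1,p}\oplus\cP_{r+1,p}$, establishing \eqref{more1prs} for $s=1$. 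For $p=2$ (Part (3)), $\cP_{r,1}=\cP_{r,p-1}$ has composition factors $\cM_{r,1}$ twice, $\cM_{r-1,1}$, $\cM_{r+1,1}$; tensoring with $\cM_{1,2}$ and using $\cM_{1,2}\btimes\cM_{r',1}\cong\cM_{r',2}$ shows that every composition factor of $\cM_{1,2}\btimes\cP_{r,1}$ is one of the simple projective--injective modules $\cM_{r',2}=\cP_{r',p}$, whence this module is semisimple (peel off the injective socle and induct on length) and equals $2\cdot\cP_{r,2}\oplus\cP_{r-1,2}\oplus\cP_{r+1,2}$, which is \eqref{more2prs}.

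I do not anticipate a serious obstacle: the theorem is largely a reorganization of relations already derived during the construction of the $\cP_{r,s}$, and the only new ingredient is the elementary fact that a finite-length module all of whose composition factors are injective simples is their direct sum. The one step requiring care is the composition-factor bookkeeping in the $s=1$ and $p=2$ cases, and keeping track of the small-$p$ coincidences $\cP_{r,p}=\cM_{r,p}$ and (for $p=2,3$) the collapse of the index range $1<s<p-1$.
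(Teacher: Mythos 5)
Your proof is correct and follows essentially the same route as the paper: the interior and $s=p-1$ cases are quoted from the construction of the $\cP_{r,s}$, and the remaining cases rely on exactness of $\cM_{1,2}\btimes\bullet$, projectivity/injectivity of $\cM_{r,p}$ in $\cC_{\cM(p)}^0$, and the projective-cover property of $\cP_{r,2}$. The only cosmetic differences are that the paper proves (1) by noting $\cM_{2,1}\btimes\cP_{r,s}$ is projective, indecomposable, and has $\cP_{r+1,s}$ as a summand (rather than your simple-current autoequivalence argument), and in the $s=1$ and $p=2$ cases it splits $\cM_{r\pm1,p}$ off of explicit exact sequences built from $\cZ_{r,1}$ and identifies the complementary summand with $\cP_{r,2}$ by a length count, whereas you split off $\cP_{r,2}$ first and identify the complement by composition-factor bookkeeping plus the injective-simples-imply-semisimple observation.
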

\begin{proof}
It remains to prove \eqref{moreprs}, the $s=1$ case of \eqref{more1prs}, and \eqref{more2prs}.

Using the fusion rules \eqref{more}, $\cM_{2,1}\btimes \cP_{r,s}$ is projective with a surjective map $\cM_{2,1}\btimes \cP_{r,s}\rightarrow \cM_{r+1,s}$. Because $\cP_{r+1,s}$ is a projective cover of $\cM_{r+1,s}$, it is a direct summand of $\cM_{2,1}\btimes \cP_{r,s}$. However, $\cM_{2,1}\boxtimes\cP_{r,s}$ is indecomposable because $\cM_{2,1}$ is a simple current and $\cP_{r,s}$ is indecomposable. So in fact $\cM_{1,2}\boxtimes\cP_{r,s} = \cP_{r+1,s}$, proving \eqref{moreprs}.

Now for the $s=1$ case of \eqref{more1prs},
the unique maximal proper submodule $\cZ_{r,1}$ of $\cP_{r,1}$ satisfies the exact sequence
\begin{equation*}
0 \rightarrow \cM_{r,1} \rightarrow \cZ_{r,1} \rightarrow \cM_{r-1, p-1}\oplus \cM_{r+1, p-1} \rightarrow 0.
\end{equation*}
Applying $\cM_{1,2}\btimes \bullet$ and using the fusion rules \eqref{more1}, we have
\begin{equation}\label{seq:more1prs_proof}
0 \rightarrow \cM_{r,2} \rightarrow \cM_{1,2}\btimes \cZ_{r,1} \rightarrow \cM_{r-1, p-2}\oplus \cM_{r+1, p-2}\oplus \cM_{r-1,p}\oplus \cM_{r+1,p} \rightarrow 0.
\end{equation}
Since both of $\cM_{r\pm1,p}$ are projective, $\cM_{r-1,p}\oplus \cM_{r+1,p}$ is a direct summand of $\cM_{1,2}\btimes \cZ_{r,1}$. Then the complement $\widetilde{\cZ}_{r, 2}$ of $\cM_{r-1,p}\oplus \cM_{r+1,p}$ satisfies the exact sequence
\begin{equation}\label{seq:z_til}
0 \rightarrow \cM_{r,2} \rightarrow \widetilde{\cZ}_{r, 2} \rightarrow \cM_{r-1, p-2}\oplus \cM_{r+1, p-2} \rightarrow 0.
\end{equation}

Now consider the exact sequence
\begin{equation*}
0 \rightarrow \cZ_{r,1} \rightarrow \cP_{r,1} \rightarrow \cM_{r,1} \rightarrow 0.
\end{equation*}
Applying $\cM_{1,2}\btimes \bullet$ and using the fusion rules \eqref{more1}, we have
\begin{equation*}
0 \rightarrow \widetilde{\cZ}_{r,2}\oplus \cM_{r-1,p}\oplus \cM_{r+1,p} \rightarrow \cM_{1,2}\btimes \cP_{r,1} \rightarrow \cM_{r,2} \rightarrow 0.
\end{equation*}
Since both of $\cM_{r\pm1,p}$ are injective, there exists a direct summand $\widetilde{\cP}_{r,2}$ of $\cM_{1,2}\btimes \cP_{r,1}$ complementary to $\cM_{r-1,p}\oplus \cM_{r+1,p}$ satisfying the exact sequence
\begin{equation}\label{seq:p_til}
0 \rightarrow \widetilde{\cZ}_{r,2} \rightarrow \widetilde{\cP}_{r,2} \rightarrow \cM_{r,2} \rightarrow 0.
\end{equation}
The module $\widetilde{P}_{r,2}$ is projective in $\cC_{\cM(p)}^0$ since it is a summand of a projective module. Since $\cP_{r,2}$ is a projective cover of $\cM_{r,2}$, there is thus a surjection $\widetilde{P}_{r,2}\rightarrow\cP_{r,2}$; but since \eqref{seq:z_til} and \eqref{seq:p_til} show that these two modules have the same length, we get $\widetilde{P}_{r,2}\cong\cP_{r,2}$. Therefore
\[
\cM_{1,2}\btimes \cP_{r,1} = \cP_{r,2} \oplus \cM_{r-1,p}\oplus \cM_{r+1,p},
\]
proving \eqref{more1prs} for $s = 1$.

Now when $p=2$, we need to replace the exact sequence \eqref{seq:more1prs_proof} with
\begin{equation*}
 0\longrightarrow\cM_{r,2}\longrightarrow\cM_{1,2}\boxtimes\cZ_{r,1}\longrightarrow \cM_{r-1,2}\oplus\cM_{r+1,2}\longrightarrow 0.
\end{equation*}
Since both $\cM_{r\pm1,2}=\cP_{r\pm1,2}$ are projective, this exact sequence splits. The exact sequence
\begin{equation*}
 0\longrightarrow \cM_{1,2}\boxtimes\cZ_{r,1}\longrightarrow\cM_{1,2}\boxtimes\cP_{r,1}\longrightarrow \cM_{1,2}\boxtimes\cM_{r,1}\longrightarrow 0
\end{equation*}
also splits because $\cM_{1,2}\boxtimes\cM_{r,1}\cong\cM_{r,2}$ is projective. Then these two split exact sequences together imply \eqref{more2prs}.
\end{proof}

\subsection{General fusion rules}
Finally, here are all fusion rules involving the simple modules $\cM_{r,s}$ and their projective covers in $\cC_{\cM(p)}^0$:
\begin{theorem}\label{generalfusionrules}
All fusion products of the $\cM(p)$-modules $\cM_{r,s}$ and $\cP_{r,s}$ are as follows, with sums taken to be empty if the lower bound exceeds the upper bound:
\begin{itemize}
\item[(1)] For $r, r' \in \ZZ$ and $1 \leq s, s' \leq p$,
\begin{align}\label{caser}
& \cM_{r,s}\btimes \cM_{r',s'}   = \bigg(\bigoplus_{\substack{\ell = |s-s'|+1 \\ \ell+s+s' \equiv 1\; (\mathrm{mod}\; 2)}}^{{\rm min}\{s+s'-1, 2p-1-s-s'\}}\cM_{r+r'-1, \ell}\bigg) \oplus \bigg(\bigoplus_{\substack{\ell = 2p+1-s-s'\\ \ell+s+s' \equiv 1\; (\mathrm{mod}\; 2)}}^{p}\cP_{r+r'-1, \ell}\bigg).
\end{align}
\item[(2)] For $r, r' \in \ZZ$, $1 \leq s \leq p-1$ and $1 \leq s' \leq p$,
\begin{align}\label{casePM}
\cP_{r,s}\btimes \cM_{r', s'} & = \bigg(\bigoplus_{\substack{\ell = |s-s'|+1\\ \ell+s+s' \equiv 1\; ({\rm mod}\; 2)}}^{{\rm min}\{s+s'-1, p\}}\cP_{r+r'-1, \ell}\bigg)\oplus \bigg(\bigoplus_{\substack{\ell = 2p+1-s-s'\\ \ell+s+s' \equiv 1\; ({\rm mod}\; 2)}}^{p}\cP_{r+r'-1, \ell}\bigg)\nonumber\\
& \qquad \oplus \bigoplus_{\substack{\ell = p+s-s'+1\\ \ell+p+s+s' \equiv 1\; ({\rm mod}\; 2)}}^{p}\big(\cP_{r+r', \ell}\oplus \cP_{r+r'-2, \ell}\big).
\end{align}
\item[(3)]For $r,r' \in \ZZ$ and $1\leq s,s' \leq p-1$,
\begin{align}\label{casePP}
 \cP_{r,s}\btimes \cP_{r',s'}
& = \bigg(2\cdot \bigoplus_{\substack{\ell = |s-s'|+1\\ \ell+s+s' \equiv 1\; ({\rm mod}\; 2)}}^{{\rm min}\{s+s'-1, p\}}\cP_{r+r'-1, \ell}\bigg)\oplus \bigg(2\cdot \bigoplus_{\substack{\ell = 2p+1-s-s'\\ \ell+s+s' \equiv 1\; ({\rm mod}\; 2)}}^{p}\cP_{r+r'-1, \ell}\bigg)\nonumber\\
&\qquad \oplus 2 \cdot\bigoplus_{\substack{\ell = p+s-s'+1\\ \ell+p+s+s' \equiv 1\; ({\rm mod}\; 2)}}^{p}\big(\cP_{r+r', \ell}\oplus \cP_{r+r'-2, \ell}\big)\nonumber\\
&\qquad \oplus \bigoplus_{\substack{\ell = |s+s'-p|+1\\ \ell+p+s+s' \equiv 1\; ({\rm mod}\; 2)}}^{{\rm min}\{s-s'+p-1, p\}}\big(\cP_{r+r', \ell}\oplus \cP_{r+r'-2, \ell}\big)\nonumber\\
&\qquad \oplus \bigoplus_{\substack{\ell = p-s+s'+1\\ \ell+p+s+s' \equiv 1\; ({\rm mod}\; 2)}}^{p}\big(\cP_{r+r', \ell}\oplus \cP_{r+r'-2, \ell}\big)\nonumber\\
&\qquad \oplus \bigoplus_{\substack{\ell = s+s'+1\\ \ell+s+s' \equiv 1\; ({\rm mod}\; 2)}}^{p}\big(\cP_{r+r'+1, \ell}\oplus 2\cdot\cP_{r+r'-1, \ell}\oplus \cP_{r+r'-3, \ell}\big).
\end{align}
 \end{itemize}
\end{theorem}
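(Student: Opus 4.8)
The plan is to deduce the three parts in order, using repeatedly that $\cC_{\cM(p)}^0$ is a rigid tensor category (Theorem \ref{thm:rigid_simple_to_fin_len} and the following corollary) with projective covers, together with the standard facts: (a) for $P$ projective and $X$ arbitrary in $\cC_{\cM(p)}^0$, the product $P\btimes X$ is projective, so $P\btimes X\cong\bigoplus_{r'',s''}m_{r'',s''}\cP_{r'',s''}$ with $m_{r'',s''}=\dim\Hom_{\cM(p)}(P\btimes X,\cM_{r'',s''})$ since $\cP_{r'',s''}$ has simple head $\cM_{r'',s''}$; (b) $\dim\Hom_{\cM(p)}(\cP_{r,s},N)=[N:\cM_{r,s}]$ for every finite-length $N$; and (c) the contragredient functor is an anti-equivalence, hence interchanges projectives and injectives. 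From (c), since $\cM_{r,p}$ is injective in $\cC_{\cM(p)}^0$ (the Lemma at the start of Section \ref{subsec:projectivity}) and $\cM_{1,2}'\cong\cM_{1,2}$, $\cM_{r,p}'\cong\cM_{2-r,p}$, the module $\cM_{1,2}^{\btimes(p-s)}\btimes\cM_{r,p}$ and hence its summand $\cP_{r,s}$ are both projective and injective; consequently $\cP_{r,s}'\cong\cP_{2-r,s}$ (it is a projective with head $\cM_{r,s}'\cong\cM_{2-r,s}$). I will also use $\cM_{r,s}'\cong\cM_{2-r,s}$ (the Remark at the end of Section \ref{sec:rigidity}).

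For part (1), I would first reduce to $r=r'=1$. Every $\cM_{r,1}$ is invertible (a simple current), so $\cM_{r,1}\btimes-$ is an exact autoequivalence with $\cM_{b,c}\mapsto\cM_{r+b-1,c}$ and $\cP_{b,c}\mapsto\cP_{r+b-1,c}$ (an indecomposable projective goes to an indecomposable projective, with head computed by \eqref{simplewithatypical} and \eqref{more}); by associativity $\cM_{r,s}\btimes\cM_{r',s'}\cong\cM_{r+r'-1,1}\btimes(\cM_{1,s}\btimes\cM_{1,s'})$, so it suffices to compute $\cM_{1,s}\btimes\cM_{1,s'}$, say for $1\le s\le s'\le p$. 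I would induct on $s$, with $s=1$ trivial and $s=2$ given by \eqref{more1}. For $3\le s\le p$, $\cM_{1,2}\btimes\cM_{1,s-1}\cong\cM_{1,s-2}\oplus\cM_{1,s}$ is a genuine direct sum, so
\[
\cM_{1,2}\btimes(\cM_{1,s-1}\btimes\cM_{1,s'})\cong(\cM_{1,s-2}\btimes\cM_{1,s'})\oplus(\cM_{1,s}\btimes\cM_{1,s'});
\]
the left side is computed by applying the exact functor $\cM_{1,2}\btimes-$ term by term to the inductively known decomposition of $\cM_{1,s-1}\btimes\cM_{1,s'}$, using \eqref{more1} and \eqref{more1prs}/\eqref{more2prs}, and subtracting off the inductively known $\cM_{1,s-2}\btimes\cM_{1,s'}$ (legitimate because $\cC_{\cM(p)}^0$ is Krull--Schmidt) isolates $\cM_{1,s}\btimes\cM_{1,s'}$. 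What remains is the combinatorial verification that the right side of \eqref{caser} satisfies this recursion, with care for the piecewise summation limits and for the case $s'=p$, where the answer first acquires $\cP_{1,\ell}$ summands.

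For parts (2) and (3), I would apply (a) with $P=\cP_{r,s}$ and $X=\cM_{r',s'}$, respectively $X=\cP_{r',s'}$, and use rigidity to obtain $\Hom_{\cM(p)}(\cP_{r,s}\btimes X,\cM_{r'',s''})\cong\Hom_{\cM(p)}(\cP_{r,s},\cM_{r'',s''}\btimes X')$; then (b) gives
\[
m_{r'',s''}=\bigl[\cM_{r'',s''}\btimes\cM_{2-r',s'}:\cM_{r,s}\bigr]\ \ (\text{part (2)}),\qquad m_{r'',s''}=\bigl[\cM_{r'',s''}\btimes\cP_{2-r',s'}:\cM_{r,s}\bigr]\ \ (\text{part (3)}),
\]
where in part (3) I use $\cP_{2-r',s'}\cong\cP_{r',s'}'$. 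Part (1) gives $\cM_{r'',s''}\btimes\cM_{2-r',s'}$ explicitly, and part (2) gives $\cM_{r'',s''}\btimes\cP_{2-r',s'}$ explicitly, so each multiplicity is found by counting occurrences of $\cM_{r,s}$ among the composition factors of the summands, using $[\cM_{a,\ell}:\cM_{r,s}]\in\{0,1\}$, $[\cP_{a,p}:\cM_{r,s}]=[\cM_{a,p}:\cM_{r,s}]$, and $[\cP_{a,\ell}:\cM_{r,s}]$ read off from the Loewy diagrams ($2\cM_{a,\ell}+\cM_{a-1,p-\ell}+\cM_{a+1,p-\ell}$ for $\ell<p$) established in Proposition \ref{prop:p-1_proj_cover}, Lemma \ref{lem:p-2_proj}, and the general construction of $\cP_{r,s}$. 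Matching $\bigoplus_{r'',s''}m_{r'',s''}\cP_{r'',s''}$ with the right sides of \eqref{casePM} and \eqref{casePP} finishes the proof. The conceptual content --- reduction to $\cM_{1,2}$ and the simple currents, projective $=$ injective for the $\cP_{r,s}$, and the $\Hom$/contragredient duality converting fusion multiplicities into composition-factor multiplicities --- is light; the main obstacle is purely combinatorial: reorganizing these multiplicity sums, above all the six-term right side of \eqref{casePP}, into the stated closed forms so that all parity conditions and all $\min$/absolute-value bounds agree, which I would carry out by fixing one of $s,s'$ and splitting into cases according to which summation ranges are non-empty.
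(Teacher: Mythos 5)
Your argument for part (1) is the same as the paper's: reduce to $r=r'=1$ via the simple currents, then induct on $s$ using $\cM_{1,2}\btimes\cM_{1,s-1}\cong\cM_{1,s-2}\oplus\cM_{1,s}$, exactness of $\cM_{1,2}\btimes\bullet$, and Krull--Schmidt cancellation; the paper then carries out the three-case combinatorial check ($s+s'\leq p$, $=p+1$, $\geq p+2$) that you defer. For parts (2) and (3) you take a genuinely different route. The paper proves \eqref{casePM} by the same kind of induction, now on $s'$, using Theorem \ref{thm:prs_fusion} (details omitted as tedious, with the closed forms matched against the unrolled quantum group computations of \cite{CGP}), and it obtains \eqref{casePP} from \eqref{casePM} by a short splitting argument: since $\cP_{r,s}\btimes\bullet$ is exact and sends everything to projectives, tensoring the two exact sequences through $\cZ_{r',s'}$ splits, giving $\cP_{r,s}\btimes\cP_{r',s'}\cong 2\cdot(\cP_{r,s}\btimes\cM_{r',s'})\oplus(\cP_{r,s}\btimes\cM_{r'+1,p-s'})\oplus(\cP_{r,s}\btimes\cM_{r'-1,p-s'})$, so the six-term formula assembles immediately from three instances of \eqref{casePM}. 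You instead decompose the projective object $\cP_{r,s}\btimes X$ into indecomposable projectives with multiplicities $\dim\Hom(\cP_{r,s}\btimes X,\cM_{r'',s''})$, convert these by rigidity into composition-factor multiplicities $[\cM_{r'',s''}\btimes X':\cM_{r,s}]$, and read those off from part (1) (resp.\ part (2)) together with the Loewy diagrams; this requires your auxiliary observations that each $\cP_{r,s}$ is also injective (as a summand of $\cM_{1,2}^{\btimes(p-s)}\btimes\cM_{r,p}$) and that $\cP_{r,s}'\cong\cP_{2-r,s}$, neither of which the paper states but both of which follow as you indicate. Your facts (a)--(c) are all valid here because $\cC_{\cM(p)}^0$ is rigid, of finite length, and has the projective covers with simple head established in Section \ref{subsec:projectivity}, so the approach is sound. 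What it buys is a uniform multiplicity formula that bypasses the induction on $s'$ for (2); what it costs is that all of the work, including case (3), lands in composition-factor bookkeeping, whereas the paper's $\cZ_{r',s'}$-splitting makes (3) an immediate corollary of (2) --- you may want to adopt that shortcut, and in any case the piecewise/parity verification you postpone is the part that actually consumes the space in the paper's proof of (1).
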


\begin{proof}
We will see that the fusion rules \eqref{caser} and \eqref{casePM} are completely determined by repeated applications of \eqref{more}, \eqref{more1}, \eqref{moreprs}, \eqref{more1prs}, and \eqref{more2prs}. As it can be seen that these recursion formulas agree with those for the unrolled quantum group of $\sl_2$ given in \cite{CGP}, the general fusion rules must therefore agree with those in \cite[Lemma 8.1, Proposition 8.2, Corollary 8.3, Proposition 8.4]{CGP}. Thus we shall give a detailed proof for \eqref{caser} only; the fusion rules \eqref{casePM} can be proved similarly, and then \eqref{casePP} will follow from \eqref{casePM} combined with rigidity and the projectivity of $\cP_{r,s}$ in $\cC_{\cM(p)}^0$.

We first use induction on $s$ to prove
\begin{equation}\label{caser_1}
\cM_{1,s}\btimes \cM_{r',s'} = \bigg(\bigoplus_{\substack{\ell = |s-s'|+1\\ \ell+s+s' \equiv 1\; (\mathrm{mod}\; 2)}}^{{\rm min}\{s+s'-1, 2p-1-s-s'\}}\cM_{r', \ell}\bigg) \oplus \bigg(\bigoplus_{\substack{\ell = 2p+1-s-s'\\ \ell+s+s' \equiv 1\; (\mathrm{mod}\; 2)}}^{p}\cP_{r', \ell}\bigg).
\end{equation}
From the fusion rules \eqref{more1}, \eqref{caser_1} is true for $s = 1, 2$. Now assume \eqref{caser_1} holds for some fixed $s\in\lbrace 2,3,\ldots p-1\rbrace$; we shall prove it holds for $s+1$. We first tensor   the left side of \eqref{caser_1} with $\cM_{1,2}$ to get
\begin{align*}
\cM_{1,2}\btimes (\cM_{1,s}\btimes \cM_{r',s'}) &= (\cM_{1,2}\btimes \cM_{1,s})\btimes \cM_{r',s'}\nonumber\\
& = (\cM_{1, s-1} \oplus \cM_{1,s+1})\btimes \cM_{r',s'}\nonumber \\
& = (\cM_{1, s-1} \btimes \cM_{r',s'}) \oplus (\cM_{1,s+1}\btimes \cM_{r',s'}).
\end{align*}
Then because all tensor product modules here have finite length, the Krull-Schmidt Theorem guarantees that we can determine the indecomposable summands of $\cM_{1,s+1}\boxtimes\cM_{r',s'}$ by subtracting those of $\cM_{1,s-1}\boxtimes\cM_{r',s'}$ from those of $\cM_{1,2}\boxtimes(\cM_{1,s}\boxtimes\cM_{r',r'})$. We consider the following three cases:
\begin{itemize}
\item If $s+s' \leq p$, by induction
\begin{equation*}
\cM_{1,s-1} \btimes {\cM_{r',s'}} = \bigoplus_{\substack{\ell = |s-s'-1|+1\\ \ell+s-1+s' \equiv 1\; (\mathrm{mod}\; 2)}}^{s+s'-2}\cM_{r', \ell},
\end{equation*}
and by induction and the fusion rules \eqref{more1}
\begin{equation*}
\cM_{1,2}\btimes (\cM_{1,s}\btimes \cM_{r',s'}) = \bigoplus_{\substack{\ell = |s-s'|+1\\ \ell+s+s' \equiv 1\; (\mathrm{mod}\; 2)}}^{s+s'-1}\big(\cM_{r', \ell+1} \oplus \cM_{r', \ell-1}\big),
\end{equation*}
where we define $\cM_{r',0}=0$ in case $s=s'$. Thus we can verify that
\begin{equation*}
\cM_{1,s+1}\btimes \cM_{r',s'} = \frac{\cM_{1,2}\btimes (\cM_{1,s}\btimes \cM_{r',s'})}{\cM_{1,s-1} \btimes {\cM_{r',s'}}}= \bigoplus_{\substack{\ell = |s-s'+1|+1\\ \ell+s+1+s' \equiv 1\; (\mathrm{mod}\; 2)}}^{s+s'}\cM_{r', \ell},
\end{equation*}
which is the same as \eqref{caser_1} in the case $s+1$. (This is trivial if $s+s' < p$, and we use $\cP_{r',p} = \cM_{r',p}$ if $s+s' = p$.)
\item If $s+s' = p+1$, by induction
\begin{equation*}
\cM_{1,s-1} \btimes {\cM_{r',s'}} = \bigoplus_{\substack{\ell = |s-s'-1|+1\\ \ell+p \equiv 1\; (\mathrm{mod}\; 2)}}^{p-1}\cM_{r', \ell},
\end{equation*}
and by induction and the fusion rules \eqref{more1}
\begin{equation*}
\cM_{1,2}\btimes (\cM_{1,s}\btimes \cM_{r',s'}) = \bigg(\bigoplus_{\substack{\ell = |s-s'|+1\\ \ell+p+1 \equiv 1\; (\mathrm{mod}\; 2)}}^{p-2}\big(\cM_{r', \ell+1} \oplus \cM_{r', \ell-1}\big)\bigg) \oplus \cP_{r', p-1}.
\end{equation*}
 Thus
\begin{align*}
\cM_{1,s+1}\btimes \cM_{r',s'} & = \frac{\cM_{1,2}\btimes (\cM_{1,s}\btimes \cM_{r',s'})}{\cM_{1,s-1} \btimes {\cM_{r',s'}}}\\
&= \bigg(\bigoplus_{\substack{\ell = |s-s'+1|+1\\ \ell+p+2 \equiv 1\; (\mathrm{mod}\; 2)}}^{p-3}\cM_{r', \ell}\bigg) \oplus \cP_{r', p-1},
\end{align*}
which is the same as \eqref{caser_1} for $s+1$.
\item If $p+2 \leq s+s' < 2p$, by induction,
\begin{equation*}\label{eqn:s-1_s'}
\cM_{1,s-1}\btimes \cM_{r',s'} = \bigg(\bigoplus_{\substack{\ell = |s-s'-1|+1\\ \ell+s-1+s' \equiv 1\; (\mathrm{mod}\; 2)}}^{2p-s-s'}\cM_{r', \ell}\bigg)\oplus \bigg(\bigoplus_{\substack{\ell = 2p+2-s-s'\\ \ell+s-1+s' \equiv 1\; (\mathrm{mod}\; 2)}}^{p}\cP_{r', \ell}\bigg),
\end{equation*}
and by induction and the fusion rules \eqref{more1}, \eqref{more1prs},
\begin{align*}\label{eqn:2_s_s'}
\cM_{1,2}\btimes (\cM_{1,s}\btimes \cM_{r',s'}) & = \bigoplus_{\substack{\ell = |s-s'|+1\\ \ell+s+s' \equiv 1\; (\mathrm{mod}\; 2)}}^{2p-1-s-s'}\big(\cM_{r', \ell+1} \oplus \cM_{r', \ell-1}\big)\nonumber\\
& \qquad \oplus \bigg(\bigoplus_{\substack{\ell = 2p+1-s-s'\\ \ell+s+s' \equiv 1\; (\mathrm{mod}\; 2)}}^{p}\big(\cP_{r', \ell-1}\oplus \cP_{r', \ell+1}\big)\bigg)\oplus \cA,
\end{align*}
where
\[
\cA =
\begin{cases}
\cP_{r', p} \;\;\; &{\rm if} \; p+s+s' \equiv 0 \; ({\rm mod} \; 2)\\
0 & {\rm otherwise}
\end{cases} ,
\]
and we define $\cP_{r', p+1} = 0$. From these, we can see that
\begin{align*}
\cM_{1,s+1}\btimes \cM_{r',s'} & = \frac{\cM_{1,2}\btimes (\cM_{1,s}\btimes \cM_{r',s'})}{\cM_{1,s-1} \btimes {\cM_{r',s'}}}\\
& = \bigg(\bigoplus_{\substack{\ell = |s-s'+1|+1\\ \ell+s+1+s' \equiv 1\; (\mathrm{mod}\; 2)}}^{2p-2-s-s'}\cM_{r', \ell}\bigg)\quad \oplus \bigg(\bigoplus_{\substack{\ell = 2p-s-s'\\ \ell+s+1+s' \equiv 1\; (\mathrm{mod}\; 2)}}^{p-1}\cP_{r', \ell}\bigg)\oplus \cA\\
& = \bigg(\bigoplus_{\substack{\ell = |s-s'+1|+1\\ \ell+s+1+s' \equiv 1\; (\mathrm{mod}\; 2)}}^{2p-2-s-s'}\cM_{r', \ell}\bigg) \oplus \bigg(\bigoplus_{\substack{\ell = p-s-s'\\ \ell+s+1+s' \equiv 1\; (\mathrm{mod}\; 2)}}^{p}\cP_{r', \ell}\bigg),
\end{align*}
which is the same as the fusion rule given by \eqref{caser_1} for $s+1$.
\end{itemize}
Finally, the fusion rule \eqref{caser} for an arbitrary $r \in \ZZ$ follows from \eqref{caser_1} together with the fusion rules \eqref{more} and \eqref{moreprs} involving $\cM_{2,1}$.

Now the fusion rules \eqref{casePM} can be proved similarly by induction on $s'$, using the fusion rules of Theorem \ref{thm:prs_fusion}; we omit the details since they are tedious. As for \eqref{casePP}, we first note that rigidity of $\cC_{\cM(p)}$ and projectivity of $\cP_{r,s}$ in $\cC_{\cM(p)}^0$ imply that the sequence
\begin{equation*}
 0\longrightarrow\cP_{r,s}\boxtimes\cM_{r',s'}\longrightarrow\cP_{r,s}\boxtimes\cZ_{r',s'}\longrightarrow\cP_{r,s}\boxtimes(\cM_{r'+1,p-s'}\oplus\cM_{r'-1,p-s'})\longrightarrow 0
\end{equation*}
is split exact for all $r'\in\ZZ$, $1\leq s'\leq p-1$. Similarly, the sequence
\begin{equation*}
 0\longrightarrow\cP_{r,s}\boxtimes\cZ_{r',s'}\longrightarrow\cP_{r,s}\boxtimes\cP_{r',s'}\longrightarrow\cP_{r,s}\boxtimes\cM_{r',s'}\longrightarrow 0
\end{equation*}
is split exact. Thus
\begin{equation*}
 \cP_{r,s}\boxtimes\cP_{r',s'}=2\cdot(\cP_{r,s}\boxtimes\cM_{r',s'})\oplus(\cP_{r,s}\boxtimes\cM_{r'+1,p-s'})\oplus(\cP_{r,s}\boxtimes\cM_{r'-1,p-s'}),
\end{equation*}
and then \eqref{casePP} follows from \eqref{casePM}.
\end{proof}

\bigskip

\noindent
Department of Mathematical and Statistical Sciences, University of Alberta, Edmonton, Alberta T6G 2R3, Canada\\
\emph{E-mail:} \texttt{creutzig@ualberta.ca, jinwei2@ualberta.ca}

\medskip

\noindent
Yau Mathematical Sciences Center, Tsinghua University, Beijing 100084, China\\
\emph{E-mail:} \texttt{rhmcrae@tsinghua.edu.cn}

\end{document}